   \OR\ifentrytype{incollection}\OR\ifentrytype{inproceedings}%
   \OR\ifentrytype{inreference}} {\printtext[title]{%
\DeclareFontFamily{U}{BOONDOX-calo}{\skewchar\font=45 }
\DeclareFontShape{U}{BOONDOX-calo}{m}{n}{
  <-> s*[1.05] BOONDOX-r-calo}{}
\DeclareFontShape{U}{BOONDOX-calo}{b}{n}{
  <-> s*[1.05] BOONDOX-b-calo}{}
\DeclareMathAlphabet{\mcb}{U}{BOONDOX-calo}{m}{n}
\SetMathAlphabet{\mcb}{bold}{U}{BOONDOX-calo}{b}{n}
\newcommand*\bigcdot{\mathpalette\bigcdot@{.5}}
\newcommand*\bigcdot@[2]{\mathbin{\vcenter{\hbox{\scalebox{#2}{$\m@th#1\bullet$}}}}}
\definecolor{darkblue}{rgb}{0.13,0.13,0.39}
\newtheorem{thm}{Theorem}[section] 
\newtheorem{lem}[thm]{Lemma}
\newtheorem{prop}[thm]{Proposition} 
\newtheorem{cor}[thm]{Corollary}
\theoremstyle{definition} 
\newtheorem{rem}[thm]{Remark}
\newtheorem*{notation}{Notation}
\newcommand{\ext}{{\uptext{ext}}}
\newcommand\distr{\mathrel{\overset{\makebox[0pt]{\mbox{\normalfont\tiny (d)}}}{=}}}
\newcommand{\leqdot}{\text{\rlap{\raisebox{1pt}{$\,\,\ts\cdot$}}$\leq$}}
\newcommand{\geqdot}{\text{\rlap{\raisebox{1pt}{$\ts\cdot$}}$\geq$}}
\newcommand{\SL}{\mathscr{L}}
\renewcommand{\d}{\mathrm{d}}
\newcommand{\W}{W}
\newcommand{\fh}{\mathfrak{h}}
\newcommand{\cX}{\mathcal{X}}
\newcommand{\cY}{\mathcal{Y}}
\newcommand{\cU}{\mathcal{U}}
\newcommand{\cV}{\mathcal{V}}
\newcommand{\ux}{\underline{x}}
\newcommand{\ox}{\overline{x}}
\newcommand{\y}{y}
\newcommand{\I}{{\rm i}} 
\newcommand{\pp}{\mathbb{P}} 
\newcommand{\ee}{\mathbb{E}} 
\newcommand{\rr}{\mathbb{R}}
\newcommand{\nn}{\mathbb{N}} 
\newcommand{\zz}{\mathbb{Z}}
\newcommand{\cc}{{\mathbb{C}}}
\newcommand{\p}{\partial}
\newcommand{\uno}[1]{\mathbf{1}_{#1}}
\newcommand{\ep}{\varepsilon}
\newcommand{\vs}{\vspace{6pt}}
\newcommand{\qand}{\quad\text{and}\quad}
\newcommand{\qqand}{\qquad\text{and}\qquad}
\newcommand{\Fep}{\FF}
\newcommand{\FF}{\mathfrak{F}}
\newcommand{\QQ}{\mathfrak{Q}}
\newcommand{\gq}{\mathfrak{q}}
\newcommand{\FFF}{\hat{F}}
\newcommand{\PPP}{\mathscr{P}}
\newcommand{\J}{\mathscr{T}}
\newcommand{\ts}{\hspace{0.1em}}
\newcommand{\tts}{\hspace{0.05em}}
\newcommand{\tsm}{\hspace{-0.1em}}
\newcommand{\ttsm}{\hspace{-0.05em}}
\newcommand\RedeclareMathOperator{%
  \@ifstar{\def\rmo@s{m}\rmo@redeclare}{\def\rmo@s{o}\rmo@redeclare}%
}
\newcommand\rmo@redeclare[2]{%
  \begingroup \escapechar\m@ne\xdef\@gtempa{{\string#1}}\endgroup
  \expandafter\@ifundefined\@gtempa
     {\@latex@error{\noexpand#1undefined}\@ehc}%
     \relax
  \expandafter\rmo@declmathop\rmo@s{#1}{#2}}
\newcommand\rmo@declmathop[3]{%
  \DeclareRobustCommand{#2}{\qopname\newmcodes@#1{#3}}%
}
\newcommand{\uptext}[1]{\text{\upshape{#1}}}
\DeclareMathOperator{\tr}{\uptext{tr}}
\DeclareMathOperator{\argmax}{\uptext{argmax}}
\RedeclareMathOperator{\det}{\mathop{\uptext{det}}}
\RedeclareMathOperator{\exp}{\mathop{\uptext{exp}}}
\RedeclareMathOperator{\log}{\mathop{\uptext{log}}}
\RedeclareMathOperator*{\lim}{\mathop{\uptext{lim}}}
\RedeclareMathOperator*{\sup}{\mathop{\uptext{sup}}}
\RedeclareMathOperator*{\limsup}{\mathop{\uptext{lim\hspace{1pt}sup}}}
\RedeclareMathOperator*{\liminf}{\mathop{\uptext{lim\hspace{1pt}inf}}}
\RedeclareMathOperator*{\max}{\mathop{\uptext{max}}}
\RedeclareMathOperator*{\inf}{\mathop{\uptext{inf}}}
\RedeclareMathOperator*{\min}{\mathop{\uptext{min}}}
\RedeclareMathOperator*{\cos}{\mathop{\uptext{cos}}}
\RedeclareMathOperator*{\sin}{\mathop{\uptext{sin}}}
\RedeclareMathOperator*{\arg}{\mathop{\uptext{arg}}}
\RedeclareMathOperator{\Re}{\uptext{Re}}
\RedeclareMathOperator{\Im}{\uptext{Im}}
\DeclareMathOperator{\PNG}{\uptext{PNG}}
\newcommand{\twopii}[1]{\ifthenelse{#1=1}{2\pi\I}{(2\pi\I)^{#1}}}
\newcommand{\ft}{t}
\newcommand{\fx}{x}
\newcommand{\fN}{\mathsf{N}}
\newcommand{\fE}{\mathbf{E}}
\newcommand{\fK}{\mathbf{K}}
\newcommand{\fI}{\mathbf{I}}
\newcommand{\ftau}{\tau}
\renewcommand{\P}{\chi}
\newcommand{\bP}{\bar{\P}}
\newcommand\munderbar[1]{\underaccent{\bar}{#1}}
\let\Re\relax
\DeclareMathOperator{\Re}{Re}
\DeclareMathOperator{\hypo}{\uptext{hypo}}
\DeclareMathOperator{\hit}{\uptext{hit}}
\DeclareMathOperator{\nohit}{\uptext{no hit}}
\DeclareMathOperator{\UC}{\mathscr{U\!C}}
\DeclareMathOperator{\LC}{\mathscr{L\!C}}
\DeclareMathOperator{\gdet}{\uptext{det}}
\DeclareMathOperator{\gcr}{\uptext{cr}}
\def\dash---{\kern.16667em---\penalty\exhyphenpenalty\hskip.16667em\relax}
\numberwithin{equation}{section}
\let\oldmarginpar\marginpar
\renewcommand\marginpar[1]{\-\oldmarginpar[\raggedleft\footnotesize #1]%
  {\raggedright{\small\textsf{#1}}}}
\let\oldFootnote\footnote
\newcommand\nextToken\relax
\renewcommand\footnote[1]{%
    \oldFootnote{#1}\futurelet\nextToken\isFootnote}
\newcommand\isFootnote{%
    \ifx\footnote\nextToken\textsuperscript{,}\fi}
\begin{document}

\title{Polynuclear growth and the Toda lattice}

\author{Konstantin Matetski} \address[K.~Matetski]{
  Department of Mathematics\\
  Michigan State University\\
  619 Red Cedar Road\\
  East Lansing, MI 48824\\
  USA} \email{matetski@msu.edu}

\author{Jeremy Quastel} \address[J.~Quastel]{
  Department of Mathematics\\
  University of Toronto\\
  40 St. George Street\\
  Toronto, Ontario\\
  Canada M5S 2E4} \email{quastel@math.toronto.edu}

\author{Daniel Remenik} \address[D.~Remenik]{
  Departamento de Ingenier\'ia Matem\'atica and Centro de Modelamiento Matem\'atico (IRL-CNRS 2807)\\
  Universidad de Chile\\
  Av. Beauchef 851, Torre Norte, Piso 5\\
  Santiago\\
  Chile} \email{dremenik@dim.uchile.cl}

\begin{abstract}  
It is shown that the polynuclear growth model is a completely integrable Markov process in the sense that its transition probabilities are given by Fredholm determinants of kernels produced by a scattering transform based on the invariant measures modulo the absolute height, continuous time simple random walks.
From the linear evolution of the kernels, it is shown that the $n$-point distributions are determinants of $n\times n$ matrices evolving according to the \emph{two dimensional non-Abelian Toda lattice}.
\end{abstract}

\maketitle

\settocdepth{section}

\tableofcontents 

\section{Introduction}

One-dimensional polynuclear growth (PNG) is a model for randomly growing interfaces which has been extensively studied as a solvable model in the Kardar-Parisi-Zhang universality class.
The KPZ class is characterized by non-standard fluctuations, which are universal for models in the class, but do remember the initial data.  Most previous work on PNG has considered the \emph{droplet} or \emph{narrow wedge} initial condition.
For this special initial condition, the one-point distributions of the model can be identified with a Poissonized version of the longest increasing subsequence of a random permutation.
Ulam's problem of determining the law of large numbers can be understood through the connection with the Hammersley process \cite{hammersley,MR1355056,seppalainenLIS,aldousDiaconis2}, and a representation as a Toeplitz, then later a Fredholm, determinant led to the identification of the fluctuations around this limit as coinciding with those of the top eigenvalue of a matrix from the Gaussian unitary ensemble \cite{gessel-P-rec, MR1618351,MR1727236, baikDeiftJohansson,johanssonShape,johanssonPlancherel}.
Multipoint distributions in this geometry can be computed based on a multilayer extension, through which the Airy process was discovered \cite{prahoferSpohn,johansson}.
Flat and stationary initial distributions were also considered in \cite{baikRainsF0,sasamotoImamuraPNG1,sasamotoImamuraPNG2,prahoferSpohn2, ferrariPNGGOE,ferrariSpohnRGM}.
Recently \cite{johanssonRahmanPNG2,johanssonRahmanPNG1} were able to produce certain multitime formulas for some of these special initial conditions.
One thing all such results had in common, for PNG as well as other related models, was the reliance on tricks particular to a few special initial conditions in order to obtain Fredholm determinant formulas suitable for asymptotics.

In \cite{fixedpt,kp} we introduced the \emph{KPZ fixed point}, a scaling invariant Markov process expected to be the universal limit of 1:2:3 scaled height functions, or their analogues, within the entire KPZ class.
The transition probabilities are given by Fredholm determinants of certain kernels based on a Brownian scattering transform of the initial data.
Furthermore, the $n$-point distributions, starting from an essentially arbitrary deterministic height function, come from matrix Kadomtsev-Petviashvilli (KP) equations: In the one-point case, they simply satisfy the KP-II equation; for higher $n$, they are given as traces of matrix solutions of such equations.

Scaling invariant fixed points are natural places to look for integrability. But it is also interesting to ask whether such an integrable structure exists already in one of the discrete or semi-discrete models in the KPZ class.
The question is particularly relevant as it may shed light on the physical origin of the integrable equations.
The KPZ fixed point formulas were derived through asymptotic analysis of similar determinantal formulas for the totally asymmetric simple exclusion process (TASEP).
However, the TASEP transition probabilities appear not to satisfy any of the standard integrable discretizations of KP.

In this article we consider PNG as a Markov process.
Its transition probabilities, again from essentially arbitrary deterministic initial data, are given by Fredholm determinants of kernels produced by a scattering transform based on continuous time symmetric random walks -- the invariant measures of the process modulo the absolute height.
For PNG, the $n$-point distributions turn out to be associated to the most appealing discretization of the matrix KP: In the one-point case, they are solutions of the \emph{two-dimensional Toda lattice}; for higher $n$, they are given as determinants of $n\times n$ matrices solving the \emph{non-Abelian Toda lattice}.
All of these are well-known \cite{MR810623} completely integrable extensions of the Toda lattice.
In this sense, PNG is shown to be a \emph{completely integrable Markov process.}

\subsection*{Outline} 

Sec. \ref{sec:main} contains the statement of our main results, including the Fredholm determinant formula for PNG and its connection to the Toda lattice.
In that section we also provide a heuristic argument showing that solutions of the Toda lattice equations in our context converge to solutions of the KP equation under the appropriate KPZ scaling, as well as a discussion of some connections to earlier work.
In Sec. \ref{sec:nonAbelianToda} we provide an abstract result which derives the non-Abelian Toda lattice equations for a general class of kernels satisfying certain structural conditions, and then show that the kernel appearing in the PNG formulas satisfies those conditions.
Sec. \ref{PNGMarkov} is devoted to the proof of the Fredholm determinant formula for PNG, which proceeds by checking directly that the determinant solves the Kolmogorov backward equation for the process.
Appdx. \ref{sec:multipt-ini} derives the initial conditions for the non-Abelian Toda equation for PNG, while Appdx. \ref{sec:tr-cl-estimate} provides some trace norm estimates which are required in the proofs.

\settocdepth{subsection}

\section{Main results} \label{sec:main}

\subsection{Model and setting}\label{sec:model}

The \emph{polynuclear growth model} (PNG) is a Markov process whose state space $\UC$ consists of upper semi-continuous height functions $h$ mapping $\mathbb{R}$ into the integers $\mathbb{Z}$ compactified at $\{-\infty\}$.
The topology on $\UC$ is that of local Hausdorff convergence of hypographs, which is natural for models of random growth (see Sec. \ref{states} for a precise definition).
The dynamics consists of two parts; a continuous, deterministic dynamics $h\mapsto \sup_{|y-x|\le t} h(y)$ and a discontinuous, stochastic dynamics.
In the stochastic part of the dynamics, there is a Poisson point process in space-time with rate $2$ (this choice of rate is arbitrary, but convenient in formulas).
If $(t^*,x^*)$ is such a point, then $h(t^*,x)=h((t^*)^-,x) + \mathbf{1}(x=x^*)$.
In other words, in each interval $[x,x + \mathrm{d}x]$, with probability $2\tts\mathrm{d}x\tts\mathrm{d}t$, a \emph{nucleation} appears in $[t,t+\mathrm{d}t]$ and the height function is increased by $1$ at $x$.
Both dynamics are running simultaneously\footnote{Note that the dynamics makes sense starting from \emph{any} function in $\UC$, as $ \sup_{|y-x|\le t} h(y)$ is finite for any function in $\UC$, at any $x$, since the supremum is achieved, and $h$ is not allowed to take the value $+\infty$ at any point.}, so the continuous part of the dynamics implies that the nucleations spread in both directions deterministically at speed $1$.

Almost equivalently, the model can be thought of as a collection of one-dimensional ‘kinks’ (down steps) and ‘antikinks’ (up steps) moving on $\mathbb R$.  They move ballistically at speed one:  kinks to the right, antikinks to the left.  They annihilate upon collision \cite{MR607609}, and are also produced, in pairs, a kink at $x^+$, an antikink at $x^-$, according to a rate $2$ space-time Poisson process.  The height $h(t,0)$ is just its value $h(0,0)$ at time $0$,  plus the number of kinks and antikinks which have crossed the spatial point $0$ up to time $t$.  One doesn't have to quibble about whether to count as crossings events like creations or annihilations exactly at $0$, as these happen with probability $0$.

\begin{figure}
  \centering
  \vspace{7pt}
  \includegraphics[width=5.25in]{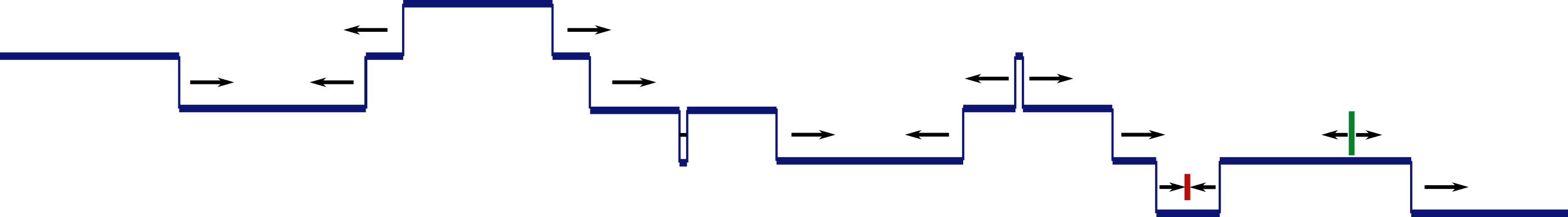}
  \caption*{\small \textsc{Figure.} ~In the PNG dynamics, flat islands with integer heights expand deterministically in both directions with speed $1$, merging when they collide (red), and with new islands (green) appearing above existing ones at the times of a space-time Poisson process with rate $2$.}
\end{figure}

If one thinks of the $\frac12(t+x)$ direction as time and the $\frac12(t-x)$ direction as space, or vice-versa, the dynamics becomes that of the Hammersley process \cite{MR1355056}.
Note, however, that this transformation does not take our initial data into the standard initial data for the Hammersley process.

We will denote the PNG height function at time $t$ by $h(t,x)$, or sometimes $h(t,x; h_0)$ if we want to indicate the initial data.
The \emph{narrow wedge at $y\in\rr$} is defined by
\begin{equation}\label{eq:def-nw}
\mathfrak{d}_y(y) = 0\qqand\mathfrak{d}_y(x) = -\infty\uptext{ for $x \neq y$}.
\end{equation}
Define
\[\mathscr{A}_t(x,y)=h(t,x;\mathfrak{d}_y),\]
i.e. the PNG height function at $x$ and at time $t$ when we start the dynamics with a narrow wedge at $y$.
The PNG dynamics (under the natural coupling where one starts with two or more different initial conditions and runs them using the same Poisson noise) preserve the max operation,
\begin{equation}
h(t,x; \max\{h_1,h_2\}) =\max\{ h(t,x;h_1),h(t,x;h_2)\},
\end{equation}
and hence one has the variational formula
\begin{equation}\label{varform}
h(t,x; h_0) =\sup_y\{\mathscr{A}_t(x,y) + h_0(y)\}.
\end{equation}
The equality holds simultaneously in $x\in \rr, t\ge 0$.  

$\mathscr{A}_t(x,y)$ can be recast in terms of a {\em Poissonian last passage percolation} model. 
Here one looks for Lipschitz-$1$ paths from $(0,x)$ to $(t,y)$ which pick up, along the way, a maximal number of points from the background space-time Poisson process. 
It is not hard to see that this maximal number, the {\em point-to-point last passage time}, coincides with $\mathscr{A}_t(x,y)$\footnote{See also the discussion leading to \eqref{varform2} below, which essentially proves this for PNG with general initial data in connection to curve-to-point Poissonian last passage times.}\footnote{This also establishes the connection between PNG with narrow wedge initial data $\mathfrak{d}_0$ and a Poissonized version of the longest increasing subsequence problem. Indeed, we have that $h(t,0)$ equals the maximal number of Poisson points visited by a Lipschitz-$1$ path going from $(0,0)$ to $(t,0)$, and all such paths lie inside the rhombus $R$ with vertices at those points and at $(t/2,\pm t/2)$. Now rotate the picture by $-45^\circ$, let $N$ denote the number of Poisson points inside the square corresponding to $R$ (so that $N$ is a Poisson$[t^2]$ random variable), and order these $N$ points according to their $x$ coordinate. The $y$ coordinates of these points define a random permutation $\sigma$ of $\{1,\dotsc,N\}$, which is clearly chosen uniformly from $S_N$, and $h(t,0)$ is then nothing but the length of the longest increasing subsequence in $\sigma$.\label{foot:poissonized}}.
From the symmetry of the background space-time Poisson process we also get
\begin{equation}\label{sym}
  \mathscr{A}_t(x,y)\distr \mathscr{A}_t(y,x) .
\end{equation}

A special property, connected to solvability, is \emph{skew time reversal invariance}:
\begin{equation}\label{eq:timerev}
\pp (h(t; g) \le -f) = \pp(h(t;f)\le -g)
\end{equation}
for all $f,g\in\UC$.
Here $h(t;g)$ is the height function starting from $g$, and $h(t;g)\le -f$ means $h(t,x;g)\le -f(x)$ for all $x\in \rr$.
The skew time reversal invariance is a simple consequence of the variational formula \eqref{varform} since, as events,
\begin{equation}
\left\{ h(t,x; f) \le -g \right\} =\left\{ \sup_{x,y} \{\mathscr{A}_t(x,y) + f(y) +g(x) \}\le 0\right\} .
\end{equation}
Switching $f$ and $g$ is the same as switching $x$ and $y$, and using \eqref{sym} we get \eqref{eq:timerev}.
In particular, skew time reversal invariance appears to come from the fact that the model has a polymer interpretation.
The skew time reversal invariance can also be stated as 
\begin{equation}\label{eq:timerev2}
\pp (h(t; g) \le f) = \pp(h^{\gets}(t; f)\ge g),
\end{equation}
where $h^{\gets}(t;f)$ is the PNG dynamics running backwards in time from $f$: Up steps move to the right, down steps move to the left, upward peaks merge on contact, and  downward troughs are created at rate $2$.  Another way to say it is that $-h$ evolves according to the usual PNG dynamics.
This last process naturally takes values in the space of lower semi-continuous functions $\LC\coloneqq\{h\!:-h\in\UC\}$, and in \eqref{eq:timerev2} we take $f\in\LC$ and $g\in\UC$.  We will show that the generator of this backward evolution is given by the adjoint $\hat{\SL}^*$ of the PNG generator $\hat{\SL}$.

Let $\Phi(g)$ be the indicator of the event that a path $g$ ever becomes less than or equal to $0$,
\begin{equation}\label{eq:def-Phi}
\Phi(g)=1-\uno{g> 0}.
\end{equation}  
The identity \eqref{eq:timerev2} also yields $\pp (h(t,x; g) \ge f(x)~\uptext{for some $x$}) = \pp(h^{\gets}(t,x; f)\le g(x)~\uptext{for some $x$})$, which we can then write as $\ee_g [\Phi(f-h(t))] = \ee_f[\Phi(h^{\gets}(t)-g ) ]$, and now taking the derivative at $t=0$ we get {\em{infinitesimal skew time reversibility}},
\begin{equation}\label{infinitesimalstr}
   \hat{\SL}_g\Phi(f-g)=  \hat{\SL}^*_f\Phi(f-g).
\end{equation}
Here the subscript indicates which variable the operator is acting on.

We record a few other key properties.  Typically for KPZ class models,
PNG is statistically invariant under spatial shifts and under reflections:
\begin{align} 
T_{-y}h(t,x; T_yh_0)&\stackrel{\uptext{dist}}{=}h(t,x;h_0), \qquad T_yf(x)=f(x-y),\\
Rh(t, x; Rh_0 )&\stackrel{\uptext{dist}}{=}h(t,x;h_0), \qquad Rf(x)=f(-x).
\end{align} 
Unusually among continuous time random growth models, PNG has a \emph{finite propagation speed}: Given everything up to time $t$, for $s>0$, $h(t+s,x)$ only depends on $h(t,y)$, $|y-x|\le s$, and on the points in the background Poisson process inside the space-time region $\{(u,y)\in\rr^2\!:t\leq u\leq t+s,\,|y-x|\le t+s-u$\}.

Modulo a non-trivial upward height shift, the invariant measures for the process consist of the following one-parameter family of two-sided continuous time random walks.  
Let $h^+(x)$, $h^-(x)$ be independent Poisson process on $\mathbb{R}$ with rates $\rho$ and $\rho^{-1}$.  
Then $h(x)=h^+(x)-h^-(x)$ is invariant modulo height shifts, in the sense that, starting PNG from such an initial state, with $h(0)=0$, at time $t>0$, $h(t,x) -h(t,0)$ will have the same distribution as the process in $x$. This is proved in Sec. \ref{sec:inv}.
A concrete choice is the symmetric invariant state with $\rho=1$, which we denote by $\fN(x)$.
In the hitting probabilities below, the determinant turns out to be unaffected by the choice of $\rho$.

The transition probabilities of a Markovian growth model such as PNG are completely determined by the finite dimensional distributions
\begin{equation}\label{mtp}
F(t,x_1,\ldots,x_n, r_1,\ldots,r_n) = \pp_{h_0}( h(t,x_i)\le r_i, i=1,\ldots,n),
\end{equation}
for an initial condition $h_0\in\UC$, $t>0$, $x_1<\cdots <x_n$, and $r_1,\ldots,r_n\in \mathbb{Z}$.

The PNG dynamics is surprisingly similar to \emph{ballistic aggregation} (sometimes called \emph{ballistic deposition}) where the height functions live over $\zz$ instead of $\rr$ and $h(t,x)$ jumps to $\max\{ h(t,x-1), h(t,x)+1, h(t,x+1)\}$ at rate one. 
One of the challenges in the field of random growth is that we know almost nothing about many natural and basic models such as ballistic aggregation (in this case, only that it grows at some rate $Ct$ \cite{MR1797390} and plenty of computer simulations).
In contrast, for PNG we are now going to explain how the entire family \eqref{mtp} comes from known integrable non-linear wave equations.

\subsection{Toda}

We need to introduce some notation.
For $a\leq b$ consider the \emph{hit operator}, with kernel
\begin{equation}\label{eq:P-hit}
P^{\hit(h_0)}_{a, b}(u,v) = \pp_{\fN(a)=u}(\ftau \in [a, b],\,\fN(b) = v),\qquad \ftau=\inf\{x\ge a : \fN(x)\in \hypo(h_0)\},
\end{equation}
where $\hypo(h_0)=\{(x,g)\in\rr\times\zz\!:g\leq h_0(x)\}$ is the \emph{hypograph} of $h_0$, closed since $h_0\in\UC$, and $\fN$ is the continuous time random walk invariant measure introduced in the paragraph before \eqref{mtp}.
If $a>b$ we set $P^{\hit(h_0)}_{a, b}\equiv0$.
We need the symmetric first and second order discrete difference operators
\begin{align}\label{eq:nabla}
\nabla f(u) &= \tfrac{1}{2}(f(u+1) - f(u-1)),\qquad
\Delta f(u) = f(u+1) + f(u-1) - 2 f(u).
\end{align}
The latter is the infinitesimal generator of the random walk $\fN$, that is, for $x \geq 0$ and $u,v\in\zz$ we have
\begin{equation}\label{eq:PNG-random-walk}
\pp_{\fN(0)=u}(\fN(x) = v) = e^{x\Delta}(u,v).
\end{equation}
The right hand side has an explicit contour integral formula, see \eqref{eq:Scontour}, which is in fact valid for all $x\in\rr$ and exhibits $(e^{x\Delta})_{x\in\rr}$ as a group of operators acting on suitably decaying functions in $\ell^2(\zz)$.
The following should be thought of as a \emph{scattering transform} of $h_0$ by random walks,
\begin{equation}\label{eq:hatKh0ab}
\J^{h_0}_{a, b} = e^{a\Delta} P^{\hit(h_0)}_{a, b} e^{-b\Delta}.
\end{equation}
Out of it, and for a fixed choice of $t\geq0$ and $x_1<\dotsm<x_n$, we build an \emph{extended kernel} acting on $\ell^2(\{x_1,\dotsc,x_n\}\times\zz)$:
\begin{align}
K^{\uptext{ext}}(x_i,\cdot;x_j,\cdot) &= - \uno{\fx_i < \fx_j} e^{(\fx_j-\fx_i) \Delta} + e^{- 2 \ft \nabla - \fx_i \Delta} \J^{h_0}_{\fx_i - \ft, \fx_j + \ft} e^{2 \ft \nabla + \fx_j \Delta}\label{eq:PNG-kernel-two-sided-hit};
\end{align} 
as before, $e^{t\nabla}$ is well defined for all $t\in\rr$ through a contour integral formula for its kernel, see \eqref{eq:Scontour}.

Now given $r=(r_1,\dotsc,r_n)\in\zz^n$, we use the extended kernel $K^{\uptext{ext}}$ to build an $n\times n$ matrix kernel $K=K(t;x_1,\dotsc,x_n;r_1,\dotsc,r_n;h_0)$ acting on $\oplus_n\ell^2(\zz_{>0})\coloneqq\ell^2(\zz_{>0})\oplus\dotsm\oplus\ell^2(\zz_{>0})$, the $n$-fold direct sum of $\ell^2(\zz_{>0})$:
\begin{equation}
(K_r)_{ij}(u,v)=K^{\uptext{ext}}(x_i,u+r_i;x_j,v+r_j).\label{eq:matrixKPNG}
\end{equation}
We stress that, although we only include the vector $r$ in the notation, $K_r$ depends on $t$, $x_1,\dotsc,x_n$, and $h_0$ as well.
Let
\begin{equation}
r_0(t,x)=\sup_{|y-x|\leq t}h_0(y)\label{eq:r0},
\end{equation}
which is simply the level reached at $x$ by the deterministic part of the dynamics, starting with $h_0$, so
\[F(t,x_1,\dotsc,x_n,r_1,\dotsc,r_n)=0\quad\uptext{if $r_i<r_0(t,x_i)~~$ for some $i\in\{1,\dotsc,n\}$}.\]
We will show that if $r_i\geq r_0(t,x_i)$ for each $i$, then $I-K_r$ is invertible; we will write 
\begin{equation}
R_r=(I-K_r)^{-1}
\end{equation}
for this inverse. 
Introduce also the variables
\begin{equation}
\zeta= \tfrac12(t-\overline x)\qand \eta = \tfrac12(t+\overline x),\qquad \overline x = x_1+\cdots+x_n,
\end{equation}
write 
\[r\pm1=(r_1,\dotsc,r_n)\pm (1,\dotsc,1)\qqand\p_{\overline x}=\p_{x_1}+\dotsm+\p_{x_n},\]
and consider the following element of $GL(n)$, depending on $\zeta,\eta$, and $r$, and obtained by setting $(u,v)=(1,1)$ in the extended kernel $R_r(u,v)$:
\begin{align}\label{eq:26}
Q_r=R_r(1,1).
\end{align}

\subsubsection{Non-Abelian 2D Toda equation}

We can now state the main result.

\begin{thm}\label{thm:nonAbelian}
Let $h_0\in\UC$ be deterministic.
For $r=(r_1,\dotsc,r_n)$ such that $r_i>r_0(t,x_i)$ for each $i$, the matrix function $Q_r$
satisfies the non-Abelian 2D Toda equation
\begin{equation}
\p_{\zeta}(\p_\eta Q_{r} Q_{r}^{-1})+  Q_{r} Q_{r-1}^{-1}-  Q_{r+1} Q_{r}^{-1}=0,\label{3h'}
\end{equation}
while
\begin{equation}
\frac{F(t, x_1,\ldots, x_n, r_1+1,\ldots, r_n+1)}{F(t, x_1,\ldots, x_n, r_1,\ldots, r_n)}=\det Q_r.\label{eq:Frs-detQr1}
\end{equation}
\end{thm}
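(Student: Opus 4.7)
My plan is to handle the two statements of the theorem independently: the product identity~\eqref{eq:Frs-detQr1} and the evolution equation~\eqref{3h'}.

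For~\eqref{eq:Frs-detQr1}, the key observation is that the shift $r_i\mapsto r_i+1$ acts on the kernel by a simultaneous translation of both arguments, $(K_{r+1})_{ij}(u,v)=(K_r)_{ij}(u+1,v+1)$, so the Fredholm determinant of $I-K_{r+1}$ on $\oplus_n\ell^2(\zz_{>0})$ equals that of $I-K_r$ on $\oplus_n\ell^2(\zz_{>1})$. Decompose $\oplus_n\ell^2(\zz_{>0})=P_1\oplus P'$, where $P_1$ is the $n$-dimensional span of the indicators $\uno{u=1}$ in each block and $P'=\oplus_n\ell^2(\zz_{>1})$. Writing $I-K_r$ as a $2\times 2$ block operator in this decomposition, the Schur complement formula for determinants together with the block inversion formula identify the $P_1$-block of $R_r$ with the inverse of the Schur complement; by definition this block is $Q_r$, so $\det((I-K_r)|_{P'})=\det((I-K_r)|_{P})\cdot\det Q_r$, and rearranging gives~\eqref{eq:Frs-detQr1}. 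The trace-class estimates needed to justify this Jacobi-type identity in the infinite dimensional setting come from Appendix~\ref{sec:tr-cl-estimate}.

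For the Toda equation~\eqref{3h'}, I would invoke the abstract theorem of Section~\ref{sec:nonAbelianToda}, which should reduce the claim to verifying two structural conditions on $K^\ext$. The first is an \emph{incrementation identity} relating $K^\ext$ at neighbouring values of $r$ (i.e.\ with a single level shifted by $\pm 1$) as a factored, rank-controlled modification built from the hit operator $P^{\hit(h_0)}_{\cdot,\cdot}$ evaluated at that boundary level; this reflects a strong Markov decomposition of the random walk $\fN$ at its first hitting time, and is what eventually turns the $(1,1)$-extraction into the neighbour products $Q_{r\pm 1}Q_r^{-1}$. The second is a \emph{Lax-type} evolution expressing $\p_\zeta K^\ext$ and $\p_\eta K^\ext$ as commutators with $\Delta$ and $\nabla$ plus hit-operator boundary terms. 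Since $\zeta=(t-\overline x)/2$ and $\eta=(t+\overline x)/2$, so that $\p_\zeta=\p_t-\p_{\overline x}$ and $\p_\eta=\p_t+\p_{\overline x}$, these combinations are designed precisely so that each moves only one of the endpoints $x_i-t$ and $x_j+t$ of $\J^{h_0}_{x_i-t,x_j+t}$, giving a clean evolution.

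Given these two properties, the matrix equation follows by algebraic manipulation: differentiating $R_r(I-K_r)=I$ in $\eta$, sandwiching by $P_1$, and using the Lax-type identity yields an expression for $\p_\eta Q_r\cdot Q_r^{-1}$; differentiating again in $\zeta$ and then invoking the incrementation identity to rewrite the remaining operator products in terms of neighbour resolvents produces $Q_{r+1}Q_r^{-1}-Q_rQ_{r-1}^{-1}$, which gives~\eqref{3h'}. The hardest step, I expect, will be verifying the Lax-type evolution for the PNG kernel: while $\Delta$, $\nabla$ and the semigroups $e^{\pm 2t\nabla}$, $e^{\pm x_i\Delta}$ all commute amongst themselves, none commutes with the hit kernel $P^{\hit(h_0)}_{a,b}$, so the $t$- and $x_i$-derivatives must be carefully tracked through both the conjugating factors and the moving endpoints of $\J^{h_0}_{\cdot,\cdot}$, and the precise boundary contributions at $u=1$, $v=1$ that produce the neighbour matrices $Q_{r\pm 1}$ must be extracted via a one-step analysis of $\fN$ at the level $r_i$. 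Once these identities are in place, the extraction of~\eqref{3h'} is purely algebraic.
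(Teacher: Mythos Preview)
Your treatment of~\eqref{eq:Frs-detQr1} via a Schur complement on the decomposition $\oplus_n\ell^2(\zz_{>0})=P_1\oplus P'$ is correct and is essentially what the paper does (the paper phrases it as $\det(I-\sigma K_r\sigma^*)=\det(I+PR_rK_r)\det(I-K_r)$ and then identifies $\det(I+PR_rK_r)$ with $\det Q_r$, which is the same computation).

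For~\eqref{3h'}, however, the structural conditions you describe are not the ones the paper uses, and the mechanism you sketch for how the neighbour matrices $Q_{r\pm1}$ arise is off. The abstract result (Thm.~\ref{thm:nonAbelianToda-general}) requires three conditions: (1) $K_{r+1}(u,v)=K_r(u+1,v+1)$, which is \emph{trivial} from the definition~\eqref{eq:matrixKPNG} and involves no hitting-time decomposition or rank structure; and (2)--(3) the shift-difference relations $\p_\eta K_r=\sigma(K_{r-1}-K_r)$, $\p_\zeta K_r=(K_{r-1}-K_r)\sigma^*$. These are not commutators with $\Delta$ or $\nabla$; they come directly from differentiating the conjugating factors $e^{\pm2t\nabla\pm x_i\Delta}$ in~\eqref{eq:PNG-kernel-two-sided-hit}, and the entire difficulty is that the derivative also hits the endpoints of $\J^{h_0}_{x_i-t,x_j+t}$, producing extra ``forcing'' terms $W^{(\eta)}$, $W^{(\zeta)}$ (see~\eqref{eq:petazeta3}). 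The crucial point---which your proposal inverts---is that these forcing terms must be shown to \emph{vanish} in the region $u>r_0(t,x_i)$, $v>r_0(t,x_j)$ (Lem.~\ref{lem:petazeta4}); they are not the source of the $Q_{r\pm1}$ terms. Once (1)--(3) hold exactly, the emergence of $Q_{r+1}Q_r^{-1}$ and $Q_rQ_{r-1}^{-1}$ is a purely algebraic consequence of the shift structure $K_{r+1}=\sigma K_r\sigma^*$ and the rank-$n$ projection $P=I-\sigma^*\sigma$ (Lem.~\ref{lemmafortoda} and the proof of Thm.~\ref{thm:nonAbelianToda-general}), with no random-walk analysis at level~$r_i$ required. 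Your ``one-step analysis of $\fN$'' and ``strong Markov decomposition'' are not part of the argument and would lead you to look for the wrong identities.
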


The equation has to be supplemented by initial conditions and boundary conditions. See Sec. \ref{sec:bdc} and Appdx. \ref{sec:multipt-ini} for a discussion.

The non-Abelian 2D Toda equation \eqref{3h'} arises from the special dependence of the kernel $K_r$ on $r$, $\eta$ and $\zeta$ (or, more precisely, the $r_i$'s, $t$ and the $x_i$'s).
The theorem is proved in Sec. \ref{sec:nonAbelianToda}.  It was inspired by \cite{MR891103}, where a special case is considered in an abstract setting (see Rem. \ref{rem:bp}). 

The non-Abelian 2D Toda equation forms a well-known integrable system \cite{MIKHAILOV198173}. 
Lax and Zakharov-Shabat (zero-curvature) forms of the equation can be found in Sec. 3.1 of \cite{MR810623} (see also \cite{MR3803581}).
They arise from compatibility of the equations for 
vectors $\Phi_r$
\cite{MR1490247}
\begin{equation}
\p_\eta\Phi_r=V_r\Phi_r+\lambda_r\Phi_{r-1},\qquad \p_t\Phi_r = U_r\Phi_{r+1},
\end{equation}
with
\begin{equation}
U_r=Q_rQ_{r-1}^{-1},\qquad V_r=-\p_\eta Q_{r}Q_{r}^{-1}.\label{eq:27}
\end{equation}
The non-Abelian 2D Toda equation is sometimes written in terms of $U_r$ and $V_r$ as
\begin{equation}
\p_\eta U_r + V_{r}U_r - U_rV_{r-1} =0,\qquad \p_\zeta V_r +U_{r+1}- U_{r} =0
\label{2a'}
\end{equation}
(the first equation is an elementary consequence of the definition \eqref{eq:27} of $U_r$ and $V_r$, while the second one is equivalent to \eqref{3h'}).

When $n=1$, there is no need to take the determinant in \eqref{eq:Frs-detQr1} and the previous theorem reduces to the following more accessible statement:

\begin{cor}\label{thm:PNGToda}  
Let $h_0\in\UC$ be deterministic.
Then  
\begin{equation}\label{oneddist}
F_r(t,x) = \pp_{h_0}( h(t,x) \le r)
\end{equation} 
satisfies the scalar 2D Toda equation
\begin{equation}\label{eq:waveToda}
\tfrac14(\p_t^2-\p_x^2)\log F_{r}=\frac{F_{r+1}F_{r-1}}{F_r^2}-1
\end{equation}
for $t>0$ and $r> r_0(t,x)$ given by \eqref{eq:r0}.
\end{cor}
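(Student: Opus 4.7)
The plan is to specialize Theorem \ref{thm:nonAbelian} to $n=1$, reduce the non-Abelian Toda equation to a scalar identity, and then dealias the resulting telescoping-in-$r$ relation by a boundary condition at $r\to\infty$.

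When $n=1$, $Q_r$ is a scalar, $\det Q_r=Q_r$, $\overline x=x$, $\p_{\overline x}=\p_x$, and \eqref{eq:Frs-detQr1} reads
\[
Q_r\;=\;\frac{F_{r+1}(t,x)}{F_r(t,x)}.
\]
Since all factors now commute, equation \eqref{3h'} collapses to the scalar relation
\[
\p_\zeta\p_\eta\log Q_r \;=\; \frac{Q_{r+1}}{Q_r}-\frac{Q_r}{Q_{r-1}}.
\]
Substituting $Q_r=F_{r+1}/F_r$ on both sides and expanding yields
\[
\p_\zeta\p_\eta\log F_{r+1}\,-\,\p_\zeta\p_\eta\log F_r \;=\; \frac{F_{r+2}F_r}{F_{r+1}^{\,2}}\,-\,\frac{F_{r+1}F_{r-1}}{F_r^{\,2}}.
\]
In other words, the function
\[
G_r \,\coloneqq\, \p_\zeta\p_\eta\log F_r\,-\,\Big(\frac{F_{r+1}F_{r-1}}{F_r^{\,2}}-1\Big)
\]
is independent of $r$ on $\{r>r_0(t,x)\}$.

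The constant value of $G_r$ is then pinned down by letting $r\to\infty$. Since $F_r(t,x)=\pp_{h_0}(h(t,x)\le r)\uparrow 1$, one has $F_{r+1}F_{r-1}/F_r^{\,2}-1\to 0$; if the tail $1-F_r(t,x)$ decays fast enough, with enough local uniformity in $(t,x)$, to pull two derivatives through, then also $\p_\zeta\p_\eta\log F_r\to 0$, so $G_r\equiv 0$. Translating back via the chain rule $\p_\zeta\p_\eta = \p_t^2-\p_x^2$ for $\zeta=(t-x)/2,\eta=(t+x)/2$ (up to the constant dictated by the paper's $(\zeta,\eta)$ normalization) gives \eqref{eq:waveToda}.

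The main obstacle is this last interchange of the $r\to\infty$ limit with the two $(t,x)$-derivatives. For PNG the tails of $h(t,x)$ are dominated, via the variational formula \eqref{varform} and the Poissonian last-passage representation of $\mathscr{A}_t$ (cf.\ the footnote after \eqref{sym}), by the longest-increasing-subsequence observable, which has super-exponential decay in $r$; the $(t,x)$-regularity needed to differentiate under the limit is accessible through the explicit contour integral formula underlying the extended kernel \eqref{eq:matrixKPNG}, which makes $F_r(t,x)$ smooth in $(t,x)$ with bounds that are uniform in $r$ on compact neighbourhoods. Once this estimate is in hand, the algebraic telescoping above completes the proof; no further integrability input is needed beyond Theorem \ref{thm:nonAbelian}.
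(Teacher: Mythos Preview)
Your algebraic reduction is essentially the paper's: specialize \eqref{3h'} to $n=1$, use $Q_r=F_{r+1}/F_r$, and observe that the scalar equation telescopes in $r$. The paper phrases this via $U_r=Q_rQ_{r-1}^{-1}=F_{r+1}F_{r-1}/F_r^2$, arriving at $\partial^2_{\zeta\eta}\log U_r=U_{r+1}-2U_r+U_{r-1}$, which it notes is ``a second difference version of \eqref{eq:waveToda}'' and stops there. You instead keep the first-difference form and explicitly dealias by sending $r\to\infty$.

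So your route is actually more complete than the paper's, in that you name and attempt to resolve the constant-in-$r$ ambiguity that the paper leaves implicit. The gap you flag---pulling the two $(t,x)$-derivatives through the $r\to\infty$ limit---is genuine, and your sketch via Poissonian LPP tail bounds plus smoothness from the contour-integral kernel is plausible but not carried out; one would want uniform-in-$r$ control of $\partial_t^2\log F_r$ and $\partial_x^2\log F_r$ on compact $(t,x)$-sets, which the trace-class estimates of Appendix~\ref{sec:tr-cl-estimate} together with the differentiability in Appendix~\ref{sec:trcl-diff} make accessible. The paper does not address this step at all, so on this point you are ahead rather than behind.
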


To see how the corollary follows from Thm. \ref{thm:nonAbelian}, note that from \eqref{eq:Frs-detQr1} and \eqref{eq:27} we have  $U_r= \frac{F_{r+1}F_{r-1}}{F_r^2}$.  On the other hand, since in this case $U_r$ and $V_r$ are scalars, the first equation in \eqref{2a'} reads $\p_\eta\log U_r+V_r-V_{r-1}=0$.
Taking the $\zeta$ derivative of this identity and using the second equation in \eqref{2a'} gives $\partial^2_{\zeta\eta} \log U_r = U_{r+1}-2U_r + U_{r-1}$.
But $\log U_r = \log F_{r+1}-2\log F_r + \log F_{r-1}$ and $\partial^2_{\zeta\eta}= \tfrac14(\p_t^2-\p_x^2)$, so this is just a second difference version of \eqref{eq:waveToda}.

\subsubsection{1D Toda lattice}

In the case of flat initial data, $h_0\equiv 0$, $F_r$ is independent of $x$, the $x$ derivative drops out of \eqref{eq:waveToda}, and taking $g_r=\log F_r-\log F_{r-1}$ and $t\longmapsto 2t$ we obtain the classic Toda lattice,
\begin{equation}\label{1ode}
\ddot{g}_r = e^{ g_{r+1}-g_r } - e^{ g_r- g_{r-1}}. 
\end{equation}

\subsubsection{Boundary conditions}\label{sec:bdc}

In the one-point case, one needs to supplement \eqref{eq:waveToda} with boundary conditions at 
$t=0$ and  $r=r_0(t,x)$.  At $t=0$ we clearly have from the definition \eqref{oneddist} of $F_r(t,x)$ that
 \begin{equation}
F_r(0,x)=\uno{r\geq h_0(x)}.\label{eq:1pt-ini-1}\end{equation}
Since we are dealing with a wave equation, we require an extra piece of data at $t=0$, which in simple cases is given by 
\begin{equation}
\p_tF_r(0,x)=-\textstyle\sum_{y}\!\left((h_0(y)-h_0(y^-))\uno{h_0(y^-)\leq r<h_0(y)}+(h_0(y)-h_0(y^+))\uno{h_0(y^+)\leq r<h_0(y)}\right)\tsm\delta_y(x).\label{eq:1pt-ini-2}
\end{equation}
This comes from the deterministic part of the dynamics: The random part does not contribute to the derivative because a jump at $x$ coming from a nucleation in a time interval of length $\ep$ has probability of order $\ep^2$.
From the dynamics of the process, $h(t,x)$ can never be smaller than $r_0(t,x)$.  Hence $F_r(t,x)=0$ for $r<r_0(t,x)$.  This leaves open the fate of $F_{r_0(t,x)}(t,x)$.  We clearly have $\frac{F_{r_0+1}F_{r_0-1}}{F_{r_0}^2}=0$, since $F_{r_0-1}=0$, and $F_{r_0}>0$, since if there are no nucleations in the backward light cone of $(t,x)$, then 
$h(t,x)=r_0(t,x)$, and this has positive probability (note however, that other scenarios can also lead to $h(t,x)=r_0(t,x)$ so $F_{r_0}$ is not trivial to compute.)  
This would appear to give $\frac14(\p_t^2-\p_x^2) \log F_{r}(t,x)\mid_{r=r_0(t,x)} = -1$, but it is not quite true because $F_{r_0}$ has jumps when $r_0$ has jumps.  These can be worked out in the following way.
First of all, $r_0(t,x)$ is computed in an elementary way from the initial data $h_0(x)$.  It has discontinuities of the first kind (jumps) on (annihilating) lines of slope either $1$ or $-1$ emerging from the initial jumps of $h_0$ and at each discontinuity point $(t,x)$ we have $ r_0(t^-,x)<r_0(t^+,x)$.  It is not hard to see that $F_{r_0(t,x)}(t,x)$ must jump from $F_{r_0(t^-,x)}(t^-,x)$ to  $F_{r_0(t^+,x)}(t^-,x)$ as we cross the discontinuity line.  In the interior of the regions bounded by the discontinuity lines, we do have $\frac14(\p_t^2-\p_x^2) \log F_{r}(t,x)\mid_{r=r_0(t,x)} = -1$.  Because the jump is actually computed using the value of $F$ at $t^-$, $F_{r_0(t,x)}(t,x)$ can now be computed everywhere.   
Again one requires an extra boundary condition which is provided by $\lim_{r\to \infty}F_r(t,x)=1$.
We believe that under these conditions the 2D Toda equation is well posed, but this is very far from anything available in the literature, and we leave it for future work.
In the special narrow wedge and flat cases, these boundary conditions are very easy to state:  

\vskip3pt
\noindent\emph{Narrow wedge}.    
$F_r(0,x)=\uno{x\neq0}+\uno{x=0,\,r\geq0}$,   $\p_tF_r(0,x)=-2\delta_0(x)$. For $|x|\leq t$,  $r_0(t,x) = 0$, while $r_0=-\infty$ otherwise, and  $F_0(t,x)=\exp\{-(t^2-x^2)\}$, $F_{k}(t,x) = 0$, $k<0$, $|x|\leq t$.

\vskip3pt
\noindent\emph{Flat}.
$F_r(0,x)= \uno{r\ge 0}$, $\p_t F_r(0,x) =0$ and $r_0\equiv 0$ with $F_0(t,x)=\exp\{-2t^2\}$ and $F_{k}(t,x) = 0$, $k<0$.

\vskip3pt
The boundary conditions for the non-Abelian Toda equation are complicated to state and will be partially addressed in Appdx. \ref{sec:multipt-ini}.

\subsubsection{Conserved quantities}

It is natural to inquire as to the physical meaning of the integrals of motion (see for example \cite{MIKHAILOV198173}) of the 2D Toda equations in the present context. However, the initial data is so singular that all the standard ones are infinite.  The same situation holds for the KP equations for the KPZ fixed point. Hope is not lost; presumably others can be found.  We are simply in a situation where the dynamics moves through states which have not been previously studied.  For example, it seems in our context, solitons are nowhere to be seen.   In an analogous situation for the KdV equation starting with white noise,  a host of supplementary integrals requiring far less regularity than the usual ones turn out to be available \cite{MR2365449,MR4145790}. We leave this problem in the  Toda context for future work.   However, integrability is far more than the collection of integrals. In the present situation we have in a sense constructed the transition probabilities via determinants out of a vast class of martingales $f(t,h(t))$ with $f$ coming from the integral kernel $\langle \phi,K(T-t,h)\psi\rangle$ and test functions $\phi,\psi$ on $\zz$ (see \eqref{kol}).
Constructability of these transition probabilities is our goal, and is what we mean in this context by complete integrability, and therefore these martingales play the role of the relevant conserved quantities. 

\subsubsection{Related work}

It was known that the narrow wedge PNG one-point distribution is connected to Toda $\tau$-functions  via Schur measures \cite{MR1856553}.  This can be partially extended to the flat case (see Sec. \ref{flat1}).  It is far from clear however, if, or how, such methods could apply to general initial data.  Because of this prevailing paradigm, the fact that general initial data lead to Toda actually comes as something of a surprise.  There is a point of view that all these integrable models lead to $\tau$-functions.  But this is refuted by the key integrable model which led to the discovery of the KPZ fixed point, TASEP, which does not appear to fit this mold so nicely.

Very recently \citet{cafassoRuzza} studied a finite temperature version of the narrow wedge solution, with a Fermi factor analogous to the deformation of the Airy kernel which gives the narrow wedge solution of KPZ \cite{MR2796514}.     Note that it is far from clear what a positive temperature deformation of PNG should look like; an interesting suggestion is studied in \cite{deformedPNG}.
Using Riemann-Hilbert methods, \cite{cafassoRuzza} show that the Fredholm determinant satisfies the same equation as ours in the one-point narrow wedge case, and also obtains refined $t\searrow0$ asymptotics there.  

The scalar 2D Toda lattice also comes up in generating functions for Hurwitz numbers \cite{MR1783622} (see also \cite{MR3349849}), ``moments'' of conformal maps of simply connected domains \cite{MR1785428},  as well as in the intriguingly related situation of the multilayer heat equation \cite{MR3439221} (see also \cite{oconnnel-quantumtoda}, where the partition function of a Brownian directed polymer model is characterized in terms of a diffusion process associated with the quantum Toda lattice). 
The 1D non-Abelian case appeared earlier, governing spin correlations in the inhomogeneous XY model \cite{PERK1978163}.
A mean field analysis of PNG in \cite{Ben_Naim_1998} somewhat surprisingly leads to a directed version of the Toda lattice.

\subsection{Fredholm determinant formula}\label{sec:fred}

The proof of Thm. \ref{thm:nonAbelian} and Cor. \ref{thm:PNGToda} is based on the following explicit Fredholm determinant formula for the multipoint distribution of the PNG height function, given in terms of the kernel $K_r$ from \eqref{eq:matrixKPNG}:

\begin{thm}\label{thm:PNG-fred}
Let $h_0\in\UC$ be deterministic and fix any $x_1,\dotsc,x_n\in\rr$ and $r_1,\dotsc,r_n\in\zz$ such that  $r_i\geq r_0(t,x_i)$ (see \eqref{eq:r0}) for each $i=1,\dotsc,n$.
The finite dimensional distributions \eqref{mtp} are given by the Fredholm determinant
\begin{equation}\label{fred} 
F(t, x_1,\ldots, x_n, r_1,\ldots, r_n)= \det(I - K_r)_{\oplus_n\ell^2(\zz_{>0})}.
\end{equation}
\end{thm}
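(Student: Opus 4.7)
\emph{The plan} is to show that both sides of \eqref{fred}, viewed as functions of $h_0\in\UC$ at fixed $(x_i,r_i)$ and $t\ge 0$, solve the same Kolmogorov backward equation (KBE) with the same initial data, and then invoke uniqueness. Write $F(t;h_0)$ for the LHS and $G(t;h_0)=\det(I-K_r)_{\oplus_n\ell^2(\zz_{>0})}$ for the RHS. By the Markov property,
\[
\p_t F=\hat{\SL}_{h_0} F,\qquad F(0;h_0)=\prod_i\uno{h_0(x_i)\le r_i},
\]
where $\hat{\SL}=\hat{\SL}_{\uptext{det}}+\hat{\SL}_{\uptext{stoch}}$ with $\hat{\SL}_{\uptext{det}}F(h_0)=\p_s|_{s=0^+}F(T_sh_0)$, $T_sh_0(x)=\sup_{|y-x|\le s}h_0(y)$, and $\hat{\SL}_{\uptext{stoch}}F(h_0)=2\int_\rr[F(h_0+\uno{\cdot=y})-F(h_0)]\ts dy$. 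The goal is to verify the same equation for $G$.

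\emph{Initial condition.} At $t=0$ the conjugators $e^{\pm 2t\nabla}$ trivialize and the extended kernel reduces to $-\uno{x_i<x_j}e^{(x_j-x_i)\Delta}+P^{\hit(h_0)}_{x_i,x_j}$. A cycle expansion of the Fredholm determinant combined with a random-walk hitting argument on $\ell^2(\zz_{>0})$, in the spirit of the corresponding computation for the KPZ fixed point in \cite{fixedpt,kp}, identifies $G(0;h_0)$ with $\prod_i\uno{h_0(x_i)\le r_i}$.

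\emph{Kolmogorov backward equation.} Using Jacobi's formula, $\p_tG=-G\tts\tr[R_r\,\p_tK_r]$, while $\hat{\SL}_{h_0}G$ decomposes as $-G\tts\tr[R_r\,\hat{\SL}_{\uptext{det}}K_r]$ (since $\hat{\SL}_{\uptext{det}}$ is a first-order differential operator in $h_0$) plus the finite integral $2\int_\rr[G(h_0+\uno{\cdot=y})-G(h_0)]\ts dy$ coming from the stochastic part. The time variable enters $K_r$ in two ways: through the conjugators $e^{\pm 2t\nabla}$, contributing a commutator with $\pm 2\nabla$, and through the endpoints of the window $[x_i-t,x_j+t]$ in $P^{\hit(h_0)}$, contributing boundary terms in $a$ and $b$. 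Meanwhile, the $h_0$-dependence of $K_r$ is concentrated entirely in $P^{\hit(h_0)}$: $\hat{\SL}_{\uptext{det}}$ enlarges the hypograph spatially (matching the window-expansion piece of $\p_t K_r$), while a nucleation at $y$ adjoins the single integer point $(y,h_0(y)+1)$ to the hypograph, producing a rank-one-like perturbation of $P^{\hit(h_0)}$. Assembling these pieces should yield $\p_tG=\hat{\SL}_{h_0}G$.

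\emph{Main obstacle.} The crux of the proof is to match the $\nabla$-commutator part of $\p_tK_r$ with the integrated nucleation contribution to $\hat{\SL}_{\uptext{stoch}}G$. This is a telescoping identity in disguise: the continuous integration over nucleation locations $y\in\rr$ must, via the hitting probabilities of $\fN$ encoded in $e^{a\Delta}$, reproduce the discrete difference operator $\nabla$ acting on the height variable. The scattering-transform construction \eqref{eq:hatKh0ab} is engineered precisely to make this matching clean. Trace-class control of the various kernel pieces, as developed in Appdx.~\ref{sec:tr-cl-estimate}, is needed to justify the interchange of $\tr$, $\p_t$, and the nucleation integral. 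Once the kernel identity and the initial condition are in place, uniqueness of bounded solutions to the KBE on $\UC$ (exploiting the finite propagation speed noted in Sec.~\ref{sec:model}) yields $F=G$ as claimed.
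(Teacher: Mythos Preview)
Your overall strategy coincides with the paper's, but the proposal has a genuine gap at exactly the point you flag as the ``main obstacle'': you do not supply the mechanism that matches the $\nabla$-commutator arising from $\p_t$ of the outer conjugation with the action of $\hat{\SL}$ on the hit kernel. The paper's proof hinges on the identity
\[
\hat{\SL}_h P^{\hit(h)}_{a,b}(u,v)=2[P^{\hit(h)}_{a,b},\nabla](u,v)+(\text{boundary terms}),
\]
and this is \emph{not} obtained by any telescoping of the nucleation integral. Instead one writes $P^{\hit(h)}_{a,b}(u,v)=\fE_{a,u;b,v}[\Phi(g-h)]\,e^{(b-a)\Delta}(u,v)$, applies infinitesimal skew time reversibility $\hat{\SL}_h\Phi(g-h)=\hat{\SL}^*_g\Phi(g-h)$ to trade the generator in $h$ for the adjoint generator in the random-walk path $g$, and then invokes the pinned invariant-measure computation (integration of $\SL^*$ against $\mu_\rho$ produces only boundary terms at $u$ and $v$, which become precisely the commutator with $\nabla$). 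Without this idea the matching you describe remains a wish; your description of $\hat{\SL}_{\uptext{det}}$ as ``matching the window-expansion piece'' is not how the cancellation actually occurs---both $\p_t K$ and $\hat{\SL}K$ independently equal the same commutator plus identical edge terms (coming from Lem.~\ref{lem:petazeta4} and Prop.~\ref{prop:LP} respectively).

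There are two further technical issues. First, your use of Jacobi's formula $\p_t G=-G\,\tr[R_r\p_tK_r]$ presupposes that $I-K_r$ is invertible, but invertibility is a \emph{consequence} of the theorem (via positivity of the probability); the paper avoids this circularity by using the Fr\'echet derivative $\mathsf{D}'(K)$ of the Fredholm determinant, which is defined for all trace-class $K$ and agrees with $-\det(I-K)\tr[(I-K)^{-1}\cdot]$ only when the inverse exists (and crucially satisfies $\det(I-K-A)-\det(I-K)=\mathsf{D}'(K)(A)$ for rank-one $A$ regardless). Second, the indicator $\uno{h(x_i)\le r_i}$ is not continuous on $\UC$, so the backward equation for $F$ and $G$ only holds in a weak sense, off a shock set; the paper's uniqueness argument is not a generic ``bounded solutions are unique'' statement but constructs admissible test functions solving the adjoint equation by flipping the Fredholm formula upside down via skew time reversibility again.
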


Note that the determinant in \eqref{fred} is the Fredholm determinant of an operator acting on the $n$-fold direct sum of $\ell^2(\zz_{>0})$, while \eqref{eq:Frs-detQr1} is that of an $n\times n$ matrix.

The way we originally derived this formula was as a limit of an analogous formula for parallel TASEP with general initial data obtained in \cite{caterpillars}.
In the regime when each particle in parallel TASEP attempts to move with probability $p$ close to $1$, the height function of that model can be thought of as a discrete time version of PNG \cite{gatesWestcott}.
For parallel TASEP there is a biorthogonal ensemble representation, derived in \cite{borodFerSas}, from which a Fredholm determinant formula for its multipoint distributions can be derived based on a generalization of the method introduced in \cite{fixedpt} in the context of continuous time TASEP \cite{caterpillars}.
We will pursue this route to \eqref{fred} in an upcoming paper, where we will also show that in fact parallel TASEP converges to PNG, after appropriate rescaling, as $p\to1$.
A potential alternate approach could be to start from the formula in Thm. 2 of \cite{johanssonRahmanPNG1}, which is an analogue of Schutz's formula for TASEP in the PNG context.
In principle, one could hope to pass from this to a Fredholm determinant formula.
But it has only been accomplished for a few special initial conditions (see \cite{johanssonRahmanPNG1}).
In this paper, we will prove Thm. \ref{thm:PNG-fred} by directly checking that the right hand side of \eqref{fred} solves the Kolmogorov backward equation for PNG (see Sec.\ref{sec:whyhit1}).

\begin{rem}
The finite propagation speed of the PNG dynamics implies that if $|x_1-x_2|>2t$ then $h(t,x_1)$ and $h(t,x_2)$ are independent.
This fact can actually be seen from the Fredholm determinant formula \eqref{fred}: restricting to the two-point case for simplicity, in such a situation we have $P^{\hit(h_0)}_{x_2-t,x_1+t}=0$, so $(K_r)_{2,1}=0$, and thus the kernel is upper-triangular, which means that its Fredholm determinant equals the product of the Fredholm determinants of its diagonal entries, i.e. $\det(I-(K_r)_{1,1})_{\ell^2(\zz_{>0})}\det(I-(K_r)_{2,2})_{\ell^2(\zz_{>0})}$.
Note that this means, in particular, that there is no dependence on the $(K_r)_{1,2}$ entry of the kernel, which is the only one depending on $h_0$ outside of $[x_1-t,x_1+t]\cup[x_2-t,x_2+t]$ (which cannot affect the distribution of $h(t,x_1)$ and $h(t,x_2)$).
The same argument works when there are several clusters of $x_i$'s which are separated by distance at least $2t$.
\end{rem}

The non-Abelian Toda equations for PNG follow from \eqref{fred} and the structure of the kernel $K_r$.
A key relation to this effect which is satisfied by the kernel is 
\begin{equation}
\partial_\eta K_r(u,v) = K_{r-1}(u+1,v) - K_r(u + 1,v),\quad
\partial_\zeta K_r(u,v) = K_{r-1}(u,v+1)-K_r(u,v + 1),\label{eq:lin-intro}
\end{equation}
provided $r_i\geq r_0(t,x_i)$ for each $i$.
The two identities together yield $\p_\eta\p_\zeta K_r=K_{r+1}-2K_r+K_{r-1}$, which effectively provides a linear evolution for $K_r$ from which the PNG multipoint distributions can be recovered through the Fredholm determinant, thus presenting the model as a stochastic integrable system.
The identities in \eqref{eq:lin-intro} follow directly (and for all $r$) from the definition \eqref{eq:PNG-kernel-two-sided-hit}/\eqref{eq:matrixKPNG} if one ignores the dependence of the scattering transform $\J^{h_0}_{x_i-t,x_j+t}$ on the $x_i$'s and $t$.
Showing that this factor does not contribute to $\p_\eta K_r$ and $\p_\zeta K_r$ requires a separate argument, see Sec. \ref{sec:forcing}.

The formula \eqref{fred} for the PNG finite dimensional distributions can be expressed equivalently in terms of the kernel $K^{\uptext{ext}}$ from \eqref{eq:matrixKPNG}, which is how formulas of this type have often been written in the literature.
This way of writing the formula is convenient for some computations.
To state it, for a fixed vector $r\in\zz^n$ and indices $x_1<\dotsm<x_n$  let 
\begin{equation}\label{eq:defChis}
\P_r(x_j,u)=\uno{u>r_j}\qqand\bP_r(x_j,u)=\uno{u\leq r_j},
\end{equation}
which are regarded as multiplication operators acting on the space $\ell^2(\{x_1,\dotsc,x_m\}\times\zz)$. We will also use this notation when $n=1$ and $r\in\zz$, writing $\P_r(u)=1-\bP_r(u)=\uno{u>r}$.
Then we have, for $r_i\geq r_0(t,x_i)$, $i=1,\dotsc,n$,
\begin{equation}\label{fred2} 
F(t, x_1,\ldots, x_n, r_1,\ldots, r_n)= \det(I - \P_rK^{\uptext{ext}}\P_r)_{\ell^2(\{x_1,\dotsc,x_n\}\times\zz}.
\end{equation}

On the other hand, the basic operators which make up the kernels appearing in \eqref{eq:hatKh0ab} and \eqref{eq:PNG-kernel-two-sided-hit} can be expressed as integral operators with kernels having an explicit contour integral formula:
\begin{equation}\label{eq:Scontour}
e^{2t\nabla+x\Delta}(u_1, u_2) = e^{- 2 \fx} \frac{1}{2\pi\I}\oint_{\gamma_0} \frac{\d z}{z^{u_2 - u_1 + 1}} e^{\ft (z - z^{-1}) + \fx (z + z^{-1})},
\end{equation}
where $\gamma_0$ is any simple, positively oriented contour around the origin.
When $\ft > |\fx|$ this kernel can be expressed in terms of the Bessel function of the first kind, $J_n(x) = \frac{1}{2\pi\I}\oint_{\gamma_0} \d z\, e^{x (z - z^{-1}) / 2} / z^{n + 1}$:
\begin{equation}\label{eq:SBessel}
e^{2t\nabla+x\Delta}(u_1,u_2)
= e^{- 2 \fx} \left( \tfrac{\ft - \fx}{\ft + \fx} \right)^{(u_2 - u_1) / 2}\! J_{u_2 - u_1} \bigl(2 \sqrt{\ft^2 - \fx^2}\bigr).
\end{equation}
When $t=0$ there is another Bessel function expression for the kernel of $e^{\fx\Delta}$ for $\fx\geq0$ (i.e., for the semigroup of the walk $\fN$):
\begin{equation}\label{eq:QBessel}
e^{\fx\Delta}(u_1, u_2) = e^{- 2 x} I_{|u_2 - u_1|}(2x),
\end{equation}
where $I_n(2x) = \frac{1}{2\pi\I}\oint_{\gamma_0} \d z\, e^{x (z + z^{-1})} / z^{n + 1}$ is the modified Bessel function of the first kind. 
Formula \eqref{eq:Scontour} can be proved by differentiation in $t$ and $x$, while \eqref{eq:SBessel} and \eqref{eq:QBessel} follow from the standard contour integral formulas for the Bessel functions \cite{NIST:DLMF}.

\subsection{Why hit kernels solve the Kolmogorov equation} \label{sec:whyhit1}

Our proof of Thm. \ref{thm:PNG-fred} will proceed by showing {\em directly} that the function $\FFF$ defined as the Fredholm determinant on the right hand side of \eqref{fred} satisfies the Kolmogorov backward equation.
The proof is transparent but there are some technicalities which tend to obscure the main idea.
Here we explain the main idea, the technicalities are taken care of in Sec. \ref{sec:whyhit}.

Let $\hat\SL$ denote the infinitesimal generator of the PNG height function Markov process.
It consists of a continuous part and a jump part.  The jump part acting on the determinant turns out to produce a rank one perturbation.  This is because the only way the hit kernel changes is by the jump producing a hit where previously there was none. But in this case, we know the spatial position $x$ of the hit and the hit kernel factors into the probability for $\fN$ to go from $u$ to $h_0(x)+1$ and the probability to go from there to $v$, hence a function of $u$ times a function of $v$, i.e.  rank one.  The other part of the generator, as well as $\p_t$, acts as a first order differential operator and therefore from properties of the Fredholm determinant we get
\begin{equation}
(\p_t - \hat\SL)\FFF = \FFF\tr\!\big((I-K_r)^{-1} (\p_t -\hat\SL)K_r\big).
\end{equation}
So the reason that the Fredholm determinant $\FFF$ satisfies the Kolmogorov equation $(\p_t -\hat\SL)\FFF$ is that the kernel itself does!
To show this, i.e. that 
\begin{equation}\label{kol}
(\p_t -\hat\SL)K_r =0,
\end{equation}
note first that from the structure of $K_r$ (see \eqref{eq:hatKh0ab} and \eqref{eq:PNG-kernel-two-sided-hit}), differentiating in $t$ produces twice the commutator of $K_r$ with $\nabla$.
The generator $\hat\SL$, on the other hand, only acts on the $P^{\hit(h)}_{a,b}$ part of the kernel $K_r$.
So the special property of the hit kernel which makes \eqref{kol} hold is that
\begin{equation}\label{com}
\hat{\SL}P^{\hit(h)}_{a,b}(u,v)=2[P^{\hit(h)}_{a,b},\nabla](u,v).
\end{equation}
There is a caveat that this can only hold above the curve, i.e. for $u>h(a)+1$, $v>h(b)+1$, and also that in the computation of the $t$ derivative we may in principle get additional terms coming from differentiating the scattering transform $\J^{h}_{x_i-t,x_j+t}$.
We ignore this for now, guiding the reader to Sec. \ref{sec:whyhit} for the rigorous proof.
The point we want to make is that \eqref{com} is an elementary consequence of skew time reversibility, as we explain next.

Consider a random path $g$ in $[a,b]$ which has the same law as $\fN$ conditioned on $\fN(a) = u$ and $\fN(b) = v$.
We choose $g$ to be lower semi-continuous, and extend it as $\infty$ outside the interval $[a,b]$.
Then, recalling that $\Phi(g)$ (defined in \eqref{eq:def-Phi}) is the indicator of the event that a path $g$ ever becomes less than or equal to $0$, we have
\begin{equation}\label{eq:deltalim2}
 P_{a,b}^{\hit(h)}(u,v) = \fE_{a,u;b,v}[\Phi(g - h )] e^{(b - a)\Delta}(u, v),
\end{equation}
where $\fE_{a,u;b,v}$ denotes the expectation with respect to the law of the random path $g$, and then
\begin{equation}
 \hat{\SL}_h P_{a,b}^{\hit(h)}(u,v) = \fE_{a,u;b,v}[\hat{\SL}_h\Phi(g - h ))] e^{(b - a)\Delta}(u, v)= \fE_{a,u;b,v}[\hat{\SL}^*_g\Phi(g - h ))] e^{(b - a)\Delta}(u, v),
\end{equation}
by skew time reversibility \eqref{infinitesimalstr}.
The subscripts $h$ and $g$ indicate which height function the generators are acting on.
If the expectation were over the invariant measure, the right hand side would now vanish.
So it is not hard to guess that with the pinned measure, one gets boundary terms.
They can be computed, with Prop. \ref{prop:invm-general-adjoint} giving  the right hand side of \eqref{com}.

We feel that the proof of \eqref{fred} using the backward equation has considerable explanatory power.
Although it turns out to be somewhat technical, the key fact that makes it work is transparent, explaining why hit kernels based on the invariant measure produce transition probabilities for such models, and, we hope, opening the way to stochastic analysis of the KPZ fixed point itself.
In \cite{nqr-kolmogorov}, the backward equation was also proved for the TASEP Fredholm determinant formulas.
However, the key mechanism is not apparent there. The identification of this mechanism is one of the most important contributions of this article.
It is also important to note that a proof like this, via the backward equation, can only be achieved once one has a formula for general initial data.

\subsection{KPZ universality}  

The \emph{strong KPZ universality conjecture}  states that for any model in the class there is an analogue of the height function and if $\ep^{1/2} h_0( \ep^{-1}x)\longrightarrow\fh_0(x)$ as $\ep\to0$ then there are (non-universal)
$c$ and $C_\ep$ such that
\begin{equation}\label{eq:123}
\ep^{1/2} h(c\tts\ep^{-3/2} t, \ep^{-1}x)  -  C_\ep t\xrightarrow[\ep\to0]{}\mathfrak{h}(t,x;\fh_0),
\end{equation} 
where $\mathfrak{h}(t,x;\fh_0)$ is
the KPZ fixed point starting from $\mathfrak{h}_0$ \cite{fixedpt}.  This can alternatively be interpreted as the definition of the KPZ class.
 KPZ universality has been achieved in only a few cases, usually by showing that Fredholm determinant formulas for the transition probabilities converge to those of the KPZ fixed point.
There are two steps: 1. Pointwise convergence of the operator kernels; 2. Upgrade to trace class convergence, or analogous bounds.  
The latter tends to be tedious and difficult, and is sometimes skipped in the literature. In the case of PNG, the pointwise convergence of the kernels can be checked by observation.  It will be upgraded to trace class in a future article.
But for PNG, there is anyway a more direct route using the variational formula \eqref{varform}.  Implicit in the article \cite{DV} is the statement that under the 1:2:3 scaling, \eqref{eq:123}, $\mathscr{A}_t(x,y)$ converges to the rescaled Airy sheet $t^{1/3}\mathscr{A}( t^{-2/3} x, t^{-2/3}y)$ and therefore that if one has uniform convergence on compact sets of the rescaled initial data to a compactly supported $\mathfrak{h}_0$, then at time $t>0$ one has 
\[\ep^{1/2} h(\ep^{-3/2} t, \ep^{-1}x)  -  2\ep^{-1} t\xrightarrow[\ep\to0]{}\sup_{y\in\rr}\big\{t^{1/3}\mathscr{A}( t^{-2/3} x, t^{-2/3}y)+\fh_0(y)\}\]
in the same sense.
The right hand side is the variational description of the KPZ fixed point $\fh(t,x;\fh_0)$ (see \cite{nqr-rbm,fixedpt,DOV}).

For the KPZ fixed point $\mathfrak{h}(t,x)$, we showed in \cite{kp} that 
\[\gq=\partial_r^2 \log\pp(\mathfrak{h}(t,x)\le r)\]
satisfies the scalar Kadomtsev--Petviashvili (KP-II) equation
\begin{equation}\label{eq:KP-II}
\partial_t \gq + \tfrac12\partial_r \gq^2 + \tfrac1{12}\partial_r^3 \gq + \tfrac14\partial_r^{-1} \partial_x^2 \gq = 0.
\end{equation}
An equation, with the flavor of \eqref{3h'}/\eqref{eq:Frs-detQr1}, was derived in that case also for the multipoint case.
In order to write it down, we let $\fK=\fK(t;x_1,\dotsc,x_n;r_1,\dotsc,r_n;\fh_0)$ denote the \emph{Brownian scattering operator} introduced in \cite{fixedpt}, which plays the role of the PNG kernel $K_r$ for the KPZ fixed point, so that
\[\FF(t;x_1,\dotsc,x_n;r_1,\dotsc,r_n;\fh_0)\coloneqq\pp(\fh(t,x_i)\leq r_i,\,i=1,\dotsc,n)
=\det(\fI-\fK)_{\oplus_n L^2([0,\infty))}.\]
Consider also the variables $\bar r=r_1+\dotsm+r_n$, $\overline x=x_1+\dotsm+x_n$, and write as before $\p_{\bar r}=\p_{r_1}+\dotsm+\p_{r_n}$ and $\p_{\overline x}=\p_{x_1}+\dotsm+\p_{x_n}$.
Finally let
\[\QQ=(\fI-\fK)^{-1}\fK(0,0),\]
which is in $GL(n)$.
Then the result of \cite{kp} is that
\[\p_r\log\FF=\tr \QQ\]
while $\QQ$ and its derivative  $\gq= \p_r\QQ$ solve the matrix KP equation
\begin{equation}\label{eq:matKP}
\p_t\gq+\tfrac12\tts\p_r\gq^2+\tfrac1{12}\p_r^3\gq+\tfrac14\p_x^2\QQ+\tfrac12[\gq,\tts\p_x\QQ]=0,
\end{equation}
where $[A,B] = AB-BA$.

The following gives a twist on the convergence of PNG to the KPZ fixed point: Instead of convergence of kernels, one can check convergence of equations.
The argument which we will present is far from giving a complete proof, which would require, in addition to controlling the expansions, a uniqueness proof at the level of the KP equations for the KPZ fixed point.
This will be the subject of future research.  
At any rate, the route of \cite{DV} is preferable for a rigorous proof here.  
However, we believe that convergence of equations has considerable explanatory value as a direct and intuitive route to the universal fluctuations.
Well-known behaviours, such as the Tracy-Widom GUE and GOE one-point asymptotic fluctuations in KPZ models, are explained by their appearance as special self-similar solutions of  \eqref{eq:KP-II}, so such a derivation helps us understand the emergence of non-standard fluctuations in the KPZ class.
The idea is that their appearance is through such \emph{normal form} equations.

In the flat case $\fh_0\equiv0$, the $n=1$ $\gq$ in \eqref{eq:KP-II} does not depend on $x$ and the equation simplifies to the Korteweg-de Vries (KdV).
The derivation of the KdV equation from the Toda lattice \eqref{1ode} is a folklore result with a long history.
However, examination of the literature actually reveals a consensus that it does not actually hold unless one reduces the problem, often by considering waves traveling in only one direction.
For example,  \cite{MR891103} states that, ``it cannot be literally true'' that KdV is the limit of the Toda lattice ``since KdV is first order in time and antisymmetric'' (in $r$), but instead has to be derived from first order versions, the Langmuir, or Kac-van Moerbeke lattice.
This approach goes back to considerable work on the problem of convergence to KdV by Toda himself.
In  \cite{doi:10.1143/JPSJ.34.18} it is the Boussinesq equation that is derived from Toda, which is then reduced to KdV by considering only 
one-way waves.

The following informal derivation (in the more general setting of non-Abelian 2D Toda and matrix KP) shows that the misconception was because the unexpected, highly refined 1:2:3 scaling was missed.  It is our hope that it inspires further work.

\subsubsection{Scalar KP equation}

We start with the scalar equation \eqref{eq:waveToda}, where the derivation is easier to follow.
The 1:2:3 rescaling \eqref{eq:123} means we are interested in $\Fep_\ep$ defined by
\begin{equation}\label{one23}
 F(t,x,r) = \Fep_\ep (\ep^{3/2}t,\ep^{}x, \ep^{1/2}(r-c t)).
\end{equation}
For now we let the linear shift in the third argument depend on a constant $c>0$, to be determined.
For the derivation, we have to expand $\Fep_\ep$ in Taylor series.
We do not attempt a justification here.
Furthermore, in the $r$ variable, $\Fep_\ep$ really lives on $\ep^{1/2}\zz$, yet we will pretend it is defined on $\rr$ and still expand in Taylor series.  This type of argument is standard in such derivations, though, again, considerable work would be required to justify it.

Under this scaling, \eqref{eq:waveToda} becomes
\begin{equation}\label{two}
\tfrac14(\ep^3\partial_t^2 -2\ep^2 c\partial^2_{tr} + \ep c^2\partial^2_r -\ep^2\partial_x^2) \log \Fep = \tfrac{\Fep( \cdot +\ep^{1/2})\Fep( \cdot- \ep^{1/2})}{\Fep^2} - 1,
\end{equation}
where for notational simplicity we removed the $\ep$ from the subscript in $\Fep_\ep$.
Expanding \eqref{two} out and multiplying by $\Fep^2$, we get
\begin{multline}
\tfrac{\ep c^2}4\Fep\ts\partial_r^2\Fep- \tfrac{\ep c^2}4(\partial_r \Fep)^2
-\tfrac{\ep^2c}2\Fep\ts\partial_{tr}^2\Fep+ \tfrac{\ep^2c}2\partial_t \Fep\ts\partial_r \Fep-\tfrac{\ep^2}4\Fep\ts\partial_x^2\Fep+ \tfrac{\ep^2}4(\partial_x \Fep)^2
\\
=\ep \Fep\ts\partial_r^2\Fep -\ep (\partial_r \Fep)^2 +\ep^2\big(\tfrac1{12} \Fep\ts\partial_r^4\Fep -\tfrac1{3}\p_r \Fep\ts\partial_r^3\Fep +\tfrac1{4}(\p^2_r \Fep)^2\big)+o(\ep^2).
\end{multline}
Clearly we should choose average growth rate $c=2$ in order for the $O(\ep)$ terms to cancel. We are left with an equation at order $\ep^2$ which says that the associated bilinear form should vanish:
\begin{align}
\Fep\partial_{tr}^2\Fep- \partial_t \Fep\ts\partial_r \Fep+\tfrac{1}4\Fep\ts\partial_x^2\Fep- \tfrac{1}4(\partial_x \Fep)^2
+\tfrac1{12} \Fep\ts\partial_r^4\Fep -\tfrac1{3}\p_r \Fep\ts\partial_r^3\Fep +\tfrac1{4}(\p^2_r \Fep)^2=0.
\end{align}
This is the Hirota form of the scalar KP equation, and a simple computation shows that it means that $\gq=\p_r^2 \log\Fep$ satisfies \eqref{eq:KP-II}.

\subsubsection{Matrix KP equation}

Next we do the same thing with the non-Abelian Toda equations. Start with \eqref{3h'} in the form
\begin{equation}
\p_{\zeta\eta}^2 Q_{r} Q_{r}^{-1} - \p_\eta Q_{r}  Q_{r}^{-1} \p_\zeta Q_{r} Q_{r}^{-1} +  Q_{r} Q_{r-1}^{-1}-  Q_{r+1} Q_{r}^{-1}=0.\label{3ha}
\end{equation}
Since we already learned the average growth rate $c=2$ we write 
\begin{equation}
Q=I + \ep^{1/2} \QQ_\ep(\ep^{3/2}t,\ep^{}x, \ep^{1/2}(r-2 t)).
\end{equation}
Note the difference between $Q=R(1,1)$ in our problem and $\QQ=RK(0,0)$ at the KPZ fixed point level which is reflected in this scaling.
Then \eqref{3ha} becomes $\Omega_1+\Omega_2+\Omega_3=0$ where
\begin{equation}\label{gggg}
\begin{aligned}
\Omega_1=&\tfrac14 \ep^{1/2}( \ep^3\p_t^2-4\ep^2\p^2_{tr} +4\ep\p_r^2-\ep^2\p_x^2)\QQ(r)(I-\ep^{1/2} \QQ(r)+\ep \QQ(r)^2+o(\ep)),\\
\Omega_2= & -\tfrac14 \ep (\ep^{3/2}\p_t -2\ep^{1/2}\p_r + \ep\p_x)\QQ(r)(I-\ep^{1/2} \QQ(r)+\ep \QQ(r)^2+o(\ep))\\
&\qquad\qquad\times (\ep^{3/2}\p_t -2\ep^{1/2}\p_r - \ep\p_x)\QQ(r)(I-\ep^{1/2} \QQ(r)+\ep \QQ(r)^2+o(\ep)),\\
\Omega_3=&(I + \ep^{1/2}\QQ(r))(I+\ep^{1/2} \QQ(r-\ep^{1/2}))^{-1} - (I + \ep^{1/2}\QQ(r+\ep^{1/2}))(I+\ep^{1/2} \QQ(r))^{-1},
\end{aligned}
\end{equation}
and where, as before, we have removed the subscript $\ep$ in $\QQ_\ep$ for convenience; in this decomposition, $\Omega_1$ and $\Omega_2$ come from the first and second terms in \eqref{3ha}, respectively, while $\Omega_3$ combines the last two.
Now writing $\QQ$ for $\QQ(r)$ and $\QQ'$ for $\p_r\QQ(r)$, we have 
\begin{align}
I+\ep^{1/2}\QQ(r+\ep^{1/2})&=I+\ep^{1/2}\QQ+\ep\QQ'+\tfrac12\ep^{3/2}\QQ''+\tfrac16\ep^2\QQ'''+\tfrac1{24}\ep^{5/2}\QQ''''+o(\ep^{5/2}),\\
(I+\ep^{1/2} \QQ(r))^{-1}&=I-\ep^{1/2}\QQ+\ep\QQ^2-\ep^{3/2}\QQ^3+\ep^2\QQ^4-\ep^{5/2}\QQ^5+o(\ep^{5/2}),\\
\shortintertext{and similarly}
(I+\ep^{1/2} \QQ(r-\ep^{1/2}))^{-1}&=I+\textstyle\sum_{i=1}^5(-1)^i\QQ(r-\ep^{1/2})^i+o(\ep^{5/2})\\
&\hspace{-1.45in}=I-\ep^{1/2}\QQ+\ep(\QQ'+\QQ^2)-\ep^{3/2}(\tfrac12\QQ''+\QQ\QQ'+\QQ'\QQ+\QQ^3)\\
&\hspace{-1.4in}+\ep^2(\tfrac16\QQ'''+(\QQ')^2+\tfrac12\QQ\QQ''+\tfrac12\QQ''\QQ+\QQ'\QQ^2+\QQ\QQ'\QQ+\QQ^2\QQ'+\QQ^4)\\
&\hspace{-1.4in}-\ep^{5/2}\Big(\tfrac1{24}\QQ''''+\tfrac16(\QQ'''\QQ+\QQ\QQ''')+\tfrac12(\QQ'\QQ''+\QQ''\QQ')+\tfrac12(\QQ''\QQ^2+\QQ\QQ''\QQ+\QQ^2\QQ'')\\
&\hspace{-1.3in}-((\QQ')^2\QQ+\QQ'\QQ\QQ'+\QQ(\QQ')^2)+(\QQ'\QQ^3+\QQ\QQ'\QQ^2+\QQ^2\QQ'\QQ+\QQ^3\QQ')+\QQ^5\Big)+o(\ep^{5/2}).
\end{align}
Using these expansions in the definition of $\Omega_3$ and collecting terms carefully, one sees that only terms of order $\ep^{3/2}$ and higher remain, and after simplification one gets
\begin{align}\label{hhhh}
&\Omega_3=  -\ep^{3/2}\QQ'' + \ep^2( (\QQ')^2+ \QQ\QQ'')-\ep^{5/2}\Big((\QQ')^2\QQ+\QQ'\QQ\QQ'+\QQ(\QQ')^2 + \QQ''\QQ^2 \\
&\hspace{3.2in} + \tfrac12(\QQ''\QQ'+\QQ''\QQ) + \tfrac1{12} \QQ''''\Big) + o(\ep^{5/2}).
\end{align}
Now we note that the only term of order $\ep^{3/2}$ and lower in the first three lines of \eqref{gggg} is $\p_r^2 \QQ$, which cancels the first term in \eqref{hhhh}.  
We also have cancellation at order $\ep^2$.
Thus the equation appears as the vanishing of terms of order $\ep^{5/2}$.
From \eqref{gggg} we have
\begin{align}
\ep^{5/2}~~\uptext{coeff. of } \Omega_1=& -\p_{tr}^2\QQ-\tfrac14\p_x^2\QQ+\p_r^2\QQ \QQ^2,\\
\ep^{5/2}~~\uptext{coeff. of } \Omega_2=&-\tfrac12 \p_r\QQ\p_x\QQ + \tfrac12\p_x\QQ \p_r\QQ +\p_r\QQ\QQ\p_r\QQ+ (\p_r\QQ)^2\QQ.
\end{align}
Adding them together with the $\ep^{5/2}$ coefficient of $\Omega_3$ we get the matrix KP equation
\begin{equation}
-\p_{tr}^2\QQ-\tfrac14\p_x^2\QQ -\tfrac12 [\p_r\QQ,\p_x\QQ ]
- \tfrac12\p_r^2\QQ\tts\p_r\QQ - \tfrac12\p_r\QQ\tts\p_r^2\QQ - \tfrac1{12} \p_r^4\QQ=0.
\end{equation}

\subsection{Narrow wedge and flat examples, discrete Painlev\'e II}

\subsubsection{Narrow wedge initial data}

Consider PNG with narrow wedge initial condition $\mathfrak{d}_0$, defined in \eqref{eq:def-nw}.
The scaling limit of the height function $h(t,x)$ for fixed $t=1$ was first derived in \cite{prahoferSpohn}, where the limiting Airy$_2$ process was first described, using the transfer matrix method for the Fermi field associated to a multilayer version of the process constructed using dynamics based on the Robinson-Schensted-Knuth (RSK) correspondence.
RSK is also behind the methods employed in \cite{johansson} to study a discrete version of PNG, for which uniform convergence on compacts to the same Airy$_2$ process was 
obtained.

We begin by showing how the formulas obtained earlier in the literature can be recovered from Thm. \ref{thm:PNG-fred}.
Note that for narrow wedge initial data one has $h(t,x)=-\infty$ for all $|x|>t$.
This can be thought of alternatively as restricting the model (and, in particular, the nucleations) to the domain $\{(\ft,\fx)\in[0,\infty)\times\rr\!:|\fx|\leq\ft\}$, which is how it has been often defined in the literature in this case (see e.g. \cite{prahoferSpohn}).
The random walk $\fN$ can only hit $\hypo(\mathfrak{d}_0)$ at the origin, so
$P^{\hit(\mathfrak{d}_0)}_{x_i - t, x_j + t} = \uno{\fx_i - \ft\leq0\leq\fx_j + \ft} e^{(\ft - \fx_i) \Delta} \bP_{0} e^{(\fx_j - t) \Delta}$,
and thus $\J^{\mathfrak{d}_0}_{x_i-t,x_j+t}=\uno{\fx_i - \ft\leq0\leq\fx_j + \ft}\ts\bP_0$ and the kernel in \eqref{eq:PNG-kernel-two-sided-hit} equals
\begin{equation}\label{eq:K-ext-nw}
K^{\ext}(\fx_i, \cdot; \fx_j, \cdot) = - \uno{\fx_i < \fx_j} e^{(\fx_j-\fx_i) \Delta} + \uno{\fx_i - \ft\leq0\leq\fx_j + \ft} e^{- 2 \ft \nabla - \fx_i \Delta} \bP_{0} e^{2 \ft \nabla + \fx_j \Delta}.
\end{equation}
The indicators $\uno{\fx_i\leq\ft}$ and $\uno{\fx_j\geq-\ft}$ are natural because the model is essentially restricted to the domain $|\fx|\leq\ft$.
 In particular, if $\fx_{\ell} > \ft$ for some $\ell$ and the $x_j$'s are ordered, then $\fx_{j} > \ft$ for all $\ell \leq j \leq m$ so $K^{\uptext{ext}}(\fx_i, \cdot; \fx_j, \cdot) = - \uno{\fx_i < \fx_j} e^{(\fx_j-\fx_i)\Delta}$ for $i \geq j \geq \ell$, and thus $K^{\uptext{ext}}$ can be decomposed as 
{\footnotesize$\left[\begin{array}{c|c}
(K^{\uptext{ext}})(\fx_i,\cdot;\fx_j,\cdot)_{1\leq i,j\leq\ell-1} & \star\\\hline
0 & {\sf U}
\end{array}\right]$},
where the value of the $\star$ is irrelevant and ${\sf U}$ is strictly upper triangular.
Thus in this case we have $F(t,x_1,\dotsc,x_n,r_1,\dotsc,r_n)=\det \bigl(I-\P_{r}  K^{\PNG}_{\ft, \ext} \P_{a} \bigr)_{\ell^2(\{\fx_1, \dotsc, \fx_{\ell-1}\} \times \zz)}$, which equals $F(t,x_1,\dotsc,x_\ell,r_1,\dotsc,r_\ell)$.
An analogous statement holds in the case $\fx_\ell<-t$ for some $\ell$.

We focus now on \eqref{eq:K-ext-nw} in the case $|\fx_i|<\ft$ for all $i$ (we throw away the case when some $\fx_i=\pm\ft$ in order to simplify the presentation; at any rate this case is physically uninteresting because by definition of the process we know that $h(\ft,\pm\ft)=0$ almost surely).
Assume first that $\fx_i\geq\fx_j$.
Using \eqref{eq:SBessel}, the kernel \eqref{eq:K-ext-nw} turns into 
\begin{equation}\label{bes1}
K^{\uptext{ext}}(\fx_i, u_i; \fx_j, u_j) 
= e^{2 (\fx_i - \fx_j)} \sum_{\ell \leq 0} \left( \tfrac{\ft + \fx_i}{\ft - \fx_i} \right)^{\frac{u_i - \ell}{2}}\!\! J_{u_i - \ell} \bigl({\scriptstyle{2 \sqrt{\ft^2 - \fx_i^2}}}\bigr) 
\left( \tfrac{\ft - \fx_j}{\ft + \fx_j} \right)^{\frac{u_j - \ell}{ 2} }J_{u_j - \ell} \bigl({\scriptstyle{2 \sqrt{\ft^2 - \fx_j^2}}}\bigr).
\end{equation}
If instead we have $\fx_i < \fx_j$, we write $e^{(\fx_j-\fx_i)\Delta}=e^{-2t\nabla-x_i\Delta}e^{2t\nabla+x_j\Delta}$, and obtain \eqref{bes1} with a minus sign in front.
This shows that $K^{\uptext{ext}}(\fx_j, u_j; \fx_i, u_i)=e^{2(x_i-x_j)}B^*_\ft(u_i, -\fx_i; u_j, -\fx_j)$, where $B_t$ is the \emph{extended discrete Bessel kernel} derived in \cite[Eq.~3.52]{prahoferSpohn}\footnote{After correcting a minor typo in that paper: $\ell+1/2$ inside the Heaviside function in (3.52) there should be $\ell-1/2$.}. 
Having the adjoint of $B_t$ does not change the value of the Fredholm determinant, nor does the conjugation of the kernel by $e^{2(x_i-x_j)}$, while flipping the sign of the $x_i$'s corresponds to reflecting the model about the $x$-axis, which again does not change anything because the dynamics and the initial condition are symmetric.
Hence, after setting $t=1$, this recovers the formula derived in \cite{prahoferSpohn}.

Consider now the one-point case.
Using the above formula we have 
\[K^{\ext}(x,u;x,v)=\left( \tfrac{\ft + \fx}{\ft - \fx} \right)^{(u - v) / 2}B_s(u,v),\qquad B_s(u,v) = \sum_{\ell \leq 0} J_{u - \ell} \bigl(2s\bigr) J_{v - \ell} \bigl(2s\bigr)\]
with $s=\sqrt{t^2-x^2}$.
$B_s$ is the usual \emph{discrete Bessel kernel} \cite{borodinOkounkovOlshanski,johanssonPlancherel}, which is \emph{integrable} in the sense of \cite{IIKS}:
\[B_s(u,v)=\frac{s}{u-v}(J_{u-1}(2s)J_v(2s)-J_u(2s)J_{v-1}(2s))\quad\uptext{for}~ u\neq v\] and $B_s(u,u)=s(\frac{\d}{\d u}J_{u-1}(2s)J_u(2s)-J_{u-1}(2s)\frac{\d}{\d u}J_u(2s))$.
From \cite{borodinOkounkov}, removing the conjugation $((\ft+\fx)/(\ft-\fx))^{(u-v)/2}$ from $K^{\ext}$, we can rewrite 
\begin{equation}\label{eq:Fs-def}
F_r(s) \coloneqq F(t,x,r)=\det (I-\P_rB_s\P_r)_{\ell^2(\zz)}=e^{-s^2}D_{r-1}(\varphi_s),
\end{equation}
where $D_n(\varphi_s)$ is the $n\times n$ Toeplitz determinant associated with the weight $\varphi_s(\zeta)=e^{s(\zeta+\zeta^{-1})}$ (setting $D_{-1}(\varphi_s)=1$); more explicitly, we have (for $r\geq1$)
\begin{equation}
F_r(s) = e^{-s^2}\det(I_{i-j}(2s))_{i,j=0,\ldots, r-1},
\end{equation}
where the $I_n$ are modified Bessel functions of the first kind (see \eqref{eq:QBessel}).
This formula goes back to \cite{gessel-P-rec}, and was the starting point of the analysis in \cite{baikDeiftJohansson}.

The Toeplitz determinants $D_n(\varphi_s)$ are intimately related to the orthogonal polynomials on the unit circle associated with the weight $\varphi_s$.
In the Szeg\H{o} three term recurrence for such orthogonal polynomials on the circle with respect to a general weight \cite{szego-book}, only one coefficient $\alpha_n$ depends on the choice of weight.
Thus the $\alpha_n$'s, known as the \emph{Verblunsky coefficients}, ``encode'' the weight, and can be thought of as the analogue of reflection coefficients in this context.
In the case of our weight $\varphi_s$ they are real, and related to $F_r$ through
\begin{equation}
\alpha_r^2 = 1- F_rF_{r+2}/{F_{r-1}^2}.
\end{equation}
They are known (see e.g. \cite[Sec.~3]{vanAssche-book}) to satisfy the discrete Painlev\'e II equation
\begin{equation}\label{eq:dpII}
-s(1-\alpha_r^2) (\alpha_{r+1}+\alpha_{r-1}) = (r+1)\alpha_r
\end{equation}
(see also \cite{borodin-discretegap}) as well as the Ablowitz-Ladik lattice
\begin{equation}\label{allatt}
\p_s\alpha_r=(1-\alpha_r^2)(\alpha_{r+1}-\alpha_{r-1})
\end{equation}
(see also \cite{MR1794352}).
The Ablowitz-Ladik hierarchy is a reduction of the 2D Toda hierarchy \cite{MR3803581}.  However, it is not clear if \eqref{allatt} can be obtained directly from our equation with the given symmetry,
\begin{equation}\label{rtdt}
\tfrac14 (\p_s^2+ \tfrac1{s}\p_s) \log F_r = \tfrac{F_{r+1}F_{r-1}}{F_r^2} -1.
\end{equation}
We mention also that in Thm.~7.12 of \cite{BDS} it is shown, using the Riemann-Hilbert problem for the Toeplitz determinant $D_n(\varphi_s)$, that the functions $g_{m} = - \log (\tfrac{F_{-2m+1}}{F_{-2m}} (1 - \alpha_{-2m - 1}))$, defined for $m \leq 0$, satisfy the Toda lattice \eqref{1ode}.  

\subsubsection{Flat initial data} \label{flat1}

Consider now PNG with flat initial condition $h_0\equiv0$.
For fixed $t=1$, this case was analyzed in \cite{borodFerSas}.
Clearly it is enough to consider $r_i\geq0$ for each $i$.
In view of \eqref{fred2}, this means that we are interested in the kernel evaluated at values of $u_i>0$ for each $i$.
In that region, the scattering kernel $e^{- 2 \ft \nabla - \fx_i \Delta} \J^{0}_{\fx_i - \ft, \fx_j + \ft} e^{2 \ft \nabla + \fx_j \Delta} = e^{- 2 \ft \nabla + t \Delta} P^{\hit(0)}_{x_i - t, x_j + t} e^{2 \ft \nabla - t \Delta}$ equals
\begin{equation}\label{eq:flatex}
 \uno{2 \ft \geq \fx_i - \fx_j} \big(e^{- 2 \ft \nabla + t \Delta} \bP_0 e^{2 \ft \nabla + (x_j - x_i + t) \Delta} + e^{- 2 \ft \nabla + t \Delta} \P_0 P^{\hit(0)}_{x_i - t, x_j + t} e^{2 \ft \nabla - t \Delta}\big).
\end{equation}
A simple computation using \eqref{eq:Scontour} shows that the first term inside the parenthesis can be represented by the contour integral
$\frac{e^{2 (\fx_i - \fx_j)}}{(2\pi\I)^2}\oint_{\ts\gamma_0} \d z \oint_{\ts\gamma_0}\tsm \d w \frac{z^{u_i-1}}{w^{u_j} (w-z)} e^{2 \ft (w - z) + (\fx_j - \fx_i) (w + w^{-1})}$
with the contours chosen so that $|w|>|z|$.  But in the case of interest, $u_i>0$,  the integrand is analytic in $z$  and hence this term vanishes.
 The second term on the right hand side of \eqref{eq:flatex} is computed using the reflection principle for $\fN$: assuming $x_j-x_i+2t\geq0$ and $u>0$, and writing $\ux=x_i-t$, $\ox=x_j+t$,
\[P^{\hit(0)}_{\ux,\ox}(u,v)=e^{(\ox-\ux)\Delta}(u,-v)\uno{v>0}+e^{(\ox-\ux)\Delta}(u,v)\uno{v\leq0}.\]
$P^{\hit(0)}_{\ux,\ox}e^{2 \ft \nabla - t \Delta}(u,v)$ equals the sum of two integrals, 
$\frac{e^{2(\ox-\ux)}}{(2\pi\I)^2}\ttsm\oint_{\ts\gamma_0}\tsm\d z\tsm \oint_{\ts\gamma_0'}\tsm \d w\, \frac{z^{u}}{w^{v} (1-wz)} e^{(\ox-\ux)(z+\frac1z)-\frac{2t}w}$ with contours chosen so that $|wz|<1$, and 
$\frac{e^{2(\ox-\ux)}}{(2\pi\I)^2}\oint_{\ts\gamma_0} \d z \oint_{\ts\gamma_0}'\tsm \d w \frac{z^{u-1}}{w^{v} (w-z)} e^{(\ox-\ux)(z+\frac1z)-\frac{2t}w}$
with contours chosen so that  $|w|>|z|$.
The two integrals can be brought together if we choose contours with $|z| < 1$ and $|z| < |w| < 1/|z|$, and simplifying we get
\begin{equation}\label{flatcomp}
\textstyle P^{\hit(0)}_{\ux,\ox} e^{2 \ft \nabla - t \Delta} (u, v) 
=\frac{e^{2(\ox-\ux)}}{(2\pi\I)^2}\oint_{\ts\gamma_0} \d z \, \frac{1 - z^2}{z^{1 - u}} e^{(\ox-\ux) (z + z^{-1})}\oint_{\ts\gamma_0'}\tsm \d w\, \frac1{w^{v} (w - z)(1 - zw)} e^{- 2 \ft w^{-1}}.
\end{equation}
Expanding the $w$ contour to a contour $\gamma_0''$ with $|w|>1/|z|$ and picking up the residue at $w=z^{-1}$, the $w$ integral can be written as $\frac{1}{2 \pi\I} \oint_{\gamma_0''} \frac{\d w}{w^{v} (w - z)(1 - zw)} e^{- 2 \ft w^{-1}} - \frac{z^{v}}{z^2 - 1} e^{- 2 \ft z}$, and changing variables $w\longmapsto1/w$, this expression becomes
$ \frac{1}{2 \pi\I} \oint_{\gamma_0'''}\d w\, \frac{w^v}{(w - z)(1 - zw)} e^{- 2 \ft w} - \frac{z^{v}}{z^2 - 1} e^{- 2 \ft z}$,
where the contour includes only the singularity at $0$. 
We are interested only in the case $v>0$, for which the $w$ integral vanishes, and thus using this for such $v$ in the above expression we get that \eqref{flatcomp} is given by 
$ e^{2 (\ox-\ux)} \frac{1}{2 \pi\I}\oint_{\gamma_0} \frac{\d z}{z^{u + v + 1}} e^{2\ft z + (\fx_j - \fx_i) (z + z^{-1})}$,
where in the last integral we changed $z$ to $z^{-1}$.
Continuing the computation in the same way we get, for $u_i,u_j>0$, $e^{- 2 \ft \nabla + t \Delta} \P_0 P^{\hit(0)}_{\ux,\ox} e^{2 \ft \nabla - t \Delta}(u_i, u_j)=e^{2 (\fx_i - \fx_j)} \frac{1}{2\pi\I}\oint_{\gamma_{0}} \frac{\d w}{w^{u_i}} \frac{1}{2\pi\I}\oint_{\gamma_0} \frac{\d z}{z^{u_j + 1} (z-w)} e^{2 \ft (z - w^{-1}) + (\fx_j - \fx_i) (z + z^{-1})}$ with contours chosen so that $|z|>|w|$.
Now we enlarge the $w$ contour to a circle of radius $r$, $r>|z|$.
With this new contour the integral is clearly bounded by a constant times $r\tts e^{-2t/r}/r^{u_i+1}$, which goes to $0$ as $r\to\infty$.
Hence we are only left with the residue at $w=z$, which finally yields, for $u_i,u_j>0$,
\begin{align}
  \textstyle K^{\uptext{ext}}(\fx_i, u_i; \fx_j, u_j) &\textstyle = - e^{2 (\fx_i - \fx_j)} I_{|u_i - u_j|}(2(\fx_j-\fx_i)) \uno{\fx_i < \fx_j} \\
  &\textstyle \qquad + \uno{2 \ft \geq \fx_i - \fx_j} e^{2 (\fx_i - \fx_j)} \frac{1}{2 \pi\I}\oint_{\gamma} \frac{\d w}{w^{u_i + u_j + 1}} e^{(\fx_j - \fx_i) (w + w^{-1})} e^{2\ft (w - w^{-1})}.
\end{align}
When $x_j - x_i \leq 2t$, the kernel coincides with the one obtained in \cite[Prop.~4]{borodFerSas}.
When $x_j-x_i<-2t$ this does not coincide exactly with the expression in that result, which has an absolute value in the indicator function, but in terms of the determinant it does not make any difference.

In the one-point case the flat kernel is independent of the spatial variable $x$, and simplifies to 
\[\textstyle K^\uptext{ext}(u,v)=\frac{1}{2 \pi\I}\oint_{\gamma} \frac{\d w}{w^{u+v + 1}} e^{2\ft (w - w^{-1})}=J_{u+v}(4\ft).\]
The kernel is Hankel, and in fact the associated Fredholm determinant is known to be equal to a Toeplitz-plus-Hankel determinant \cite{baikRainsDuke1},
\begin{equation}\label{eq:Fs-flat}
F_r(s) \coloneqq F(s,0,r) = e^{-s^2}\det(I_{i-j}(2s) - I_{i+j+2}(2s))_{i,j=0,\ldots, r-1}
\end{equation}
where $I_n$ are modified Bessel functions of the first kind (see \eqref{eq:QBessel}).  By analogy with the narrow wedge case, one expects a connection with discrete Painlev\'e.
Such a connection has appeared implicitly in the literature and we include it here for completeness; we thank Jinho Baik for explaining it to us.

From Eqn.~4.18 in \cite{baikRainsDuke2}, the determinant on the right hand side of \eqref{eq:Fs-flat} can be written as $\prod_{j \geq r} N_{2j+2}(s)^{-1} (1 - \pi_{2j+2}(0; s))$ where $\pi_k(\zeta; s)$ is the monic orthogonal polynomial of degree $k$ on the unit circle with respect to the weight $\varphi_s(\zeta)$ defined below \eqref{eq:Fs-def} and $N_k(s)$ is its norm, defined in \cite[Eq.~4.10]{baikRainsDuke2}.
Writing $b_k(s) = -\pi_k(0; s)$ and $a_k(s) = -N_{k-1}(s)^{-1}$, we get from this that
\[
{F_r(s)}/{F_{r+1}(s)} = -a_{2r + 3}(s) (1 + b_{2r + 2}(s)).
\]
The relation to Painlev\'e is provided by Eqn.~1.6 and Thm.~5.1 of \cite{Baik-RH}, which show that the $b_k$'s satisfy the discrete Painlev\'e II equation \eqref{eq:dpII} with $\alpha_r=b_{r-1}$ while the $a_k$'s can be recovered from the $b_k$'s through
$a_k = (1-b_k^2) a_{k+1}$.
Using arguments similar to those in \cite[Sec. 7.3]{BDS}, it should also be possible to derive the Toda lattice \eqref{1ode} for flat PNG from these formulas.

\section{Derivation of the non-Abelian Toda equation}\label{sec:nonAbelianToda}

Our goal in this section is to prove Thm. \ref{thm:nonAbelian}, using the result of Thm.~\ref{thm:PNG-fred}. 
The proof is divided in three parts.
In Sec. \ref{sec:genkerres}, we abstract away details particular to PNG and prove the non-Abelian Toda equations for a general class of matrix kernels $K_r$ satisfying certain structural conditions.
In Sec. \ref{sec:pfMain} we explain how the general result is used to derive the non-Abelian Toda equations for a simplified version of the PNG kernel where the dependence on the interval $[x_i-t,x_j+t]$ in the scattering transform $\J^{h_0}_{x_i-t,x_j+t}$ in \eqref{eq:PNG-kernel-two-sided-hit} is dropped.
Finally in Sec. \ref{sec:forcing} we show that dropping this dependence does not matter in the region above $r_0(t,x)$.

\subsection{General kernel}\label{sec:genkerres}

\begin{thm}\label{thm:nonAbelianToda-general}
Fix two integers $\munderbar r<\bar r-1$ and some open domain $O\subseteq\rr^2$.
Let $K_{\eta,\zeta,r}$ be a family of kernels defined for $r\in\{\munderbar r, \munderbar r+1,\dotsc,\bar r\}$ and $(\eta,\zeta)\in O$, which acts on $\oplus_n\ell^2(\zz_{>0})$, the $n$-fold direct sum of $\ell^2(\zz_{>0})$. 
Write $K_r=K_{\eta,\zeta,r}$ and suppose that its matrix kernel $K_r(u,v)$ satisfies, for all $u,v>0$,
\begin{enumerate}[label=\uptext{(\arabic*)}]
\item $K_{r+1}(u,v)=K_{r}(u+1,v+1)$,~~ $r\in\{\munderbar r,\dotsc,\bar r-1\}$,
\item $\partial_\eta K_r(u,v) = K_{r-1}(u+1,v) - K_r(u + 1,v)$, ~~$r\in\{\munderbar r+1,\dotsc,\bar r\}$,
\item $\partial_\zeta K_r(u,v) = K_{r-1}(u,v+1)-K_r(u,v + 1)$, ~~$r\in\{\munderbar r+1,\dotsc,\bar r\}$.
\end{enumerate}
Assume that, for all $r\in\{\munderbar r,\munderbar r+1,\dotsc,\bar r\}$ and fixed $(\eta,\zeta)\in O$, $K_r$ is trace class as an operator acting on $\oplus_n\ell^2(\zz_{>0})$, and that for such $r$ the derivatives in (2) and (3) above exist in trace norm.
Assume furthermore that $I-K_r$ is invertible for all $r\in\{\munderbar r,\dotsc,\bar r\}$ and define the invertible $n\times n$ matrix
\begin{align}
Q_r=(I-K_r)^{-1}(1,1).
\end{align}
Then for $r\in\{\munderbar r+1,\dotsc,\bar r-1\}$ and $(\eta,\zeta)\in O$, $Q_r$
satisfies the non-Abelian 2D Toda equation
\begin{equation}
\p_{\zeta}(\p_\eta Q_{r} Q_{r}^{-1})+  Q_{r} Q_{r-1}^{-1}-  Q_{r+1} Q_{r}^{-1}=0.\label{eq:2-2}
\end{equation}
\end{thm}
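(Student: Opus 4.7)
The plan is to verify \eqref{eq:2-2} by direct algebraic manipulation of the resolvent $R_r=(I-K_r)^{-1}$, using the Schur complement formula and standard resolvent identities. Let $V$ denote the shift isometry on $\ell^2(\zz_{>0})$ defined by $(Vf)(u)=f(u-1)$ for $u\geq 2$ and $(Vf)(1)=0$, lifted diagonally to $\oplus_n\ell^2(\zz_{>0})$; let $\pi_1$ be evaluation at the first coordinate; and let $P_1=\pi_1^T\pi_1$ be the rank-$n$ projection onto the first block. In this language, hypotheses (1), (2), (3) read $K_{r+1}=V^*K_rV$, $\partial_\eta K_r=V^*(K_{r-1}-K_r)$, and $\partial_\zeta K_r=(K_{r-1}-K_r)V$. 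I will repeatedly use the identities $V^*V=I$, $VV^*=I-P_1$, $V^*\pi_1^T=0=\pi_1 V$, and the resolvent identity $K_rR_r=R_rK_r=R_r-I$.

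\textbf{Step 1.} I first derive a telescoping expression for $\partial_\eta Q_r\cdot Q_r^{-1}$. Differentiating $R_r$ via (2), reducing with $K_rR_r=R_r-I$ and the commutation $V^*K_{r-1}=K_rV^*+V^*K_{r-1}P_1$ (which follows from (1) by right-multiplying $V^*K_{r-1}V=K_r$ by $V^*$), and taking the $(1,1)$-block, one obtains
\begin{equation*}
    \partial_\eta Q_r=-R_r(2,1)+\pi_1 R_rV^*K_{r-1}\pi_1^T\cdot Q_r.
\end{equation*}
Property (1) further implies that the compression of $I-K_r$ to $(I-P_1)\oplus_n\ell^2(\zz_{>0})$ is unitarily equivalent via $V$ to $I-K_{r+1}$, so its inverse on that subspace is $VR_{r+1}V^*$; the Schur complement formula then gives $R_r(2,1)=\pi_1 R_{r+1}V^*K_r\pi_1^T\cdot Q_r$. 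Substituting yields the telescoping identity
\begin{equation*}
    \partial_\eta Q_r\cdot Q_r^{-1}=B_r-B_{r+1},\qquad B_r:=\pi_1 R_rV^*K_{r-1}\pi_1^T.
\end{equation*}
It is therefore enough to prove
\begin{equation}\label{eq:keyid}
    \partial_\zeta B_r=I-Q_rQ_{r-1}^{-1},
\end{equation}
since then $\partial_\zeta(\partial_\eta Q_rQ_r^{-1})=\partial_\zeta B_r-\partial_\zeta B_{r+1}=Q_{r+1}Q_r^{-1}-Q_rQ_{r-1}^{-1}$, which is precisely \eqref{eq:2-2}.

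\textbf{Step 2.} The core of the argument is \eqref{eq:keyid}. Differentiating $B_r$ using (3) produces two groups of terms, one coming from $\partial_\zeta R_r=R_r(K_{r-1}-K_r)VR_r$ and one from $\partial_\zeta K_{r-1}=(K_{r-2}-K_{r-1})V$. For the first group I use the symmetric commutation $K_{r-1}V=VK_r+P_1K_{r-1}V$ (from (1) at level $r-1$) together with $K_rR_r=R_r-I$, $\pi_1 V=0$, and $VV^*=I-P_1$ to extract a factor of $\pi_1 R_r\pi_1^T=Q_r$ and a boundary contribution $Q_rK_{r-1}(1,1)$. For the second group, applying (1) at levels $r-1$ and $r-2$ collapses $V^*(K_{r-2}-K_{r-1})V=K_{r-1}-K_r$, so this term simplifies to $\pi_1 R_r(K_{r-1}-K_r)\pi_1^T$. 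After collecting and using $\pi_1 R_rK_r\pi_1^T=Q_r-I$, the various $\pi_1 R_rK_{r-1}\pi_1^T$ contributions cancel and one is left with
\begin{equation*}
    \partial_\zeta B_r=I-Q_r+Q_r\bigl[K_{r-1}(1,1)+\pi_1 K_{r-1}VR_rV^*K_{r-1}\pi_1^T\bigr].
\end{equation*}
The Schur complement applied to $I-K_{r-1}$ in the $P_1\oplus(I-P_1)$ decomposition (using property (1) at level $r-1$ to identify the inverse of the compressed block with $VR_rV^*$) shows that the bracketed expression equals $I-Q_{r-1}^{-1}$, which yields \eqref{eq:keyid}.

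The main obstacle is the bookkeeping of the boundary contributions arising from $VV^*=I-P_1\neq I$: each time $V$ or $V^*$ is commuted past $K_r$ or $K_{r-1}$, one picks up a $P_1$-supported correction. The point of the argument is that these corrections are precisely those that assemble into the Schur complement formula for $Q_{r-1}^{-1}$, which is what makes the compact identity \eqref{eq:keyid} emerge. Once \eqref{eq:keyid} is identified as the right intermediate step, the rest of the proof is mechanical.
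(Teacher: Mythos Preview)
Your proof is correct and takes a genuinely different route from the paper's. The paper lifts $Q_r$ to the operator $\cY_r=I+P_1R_rK_rP_1$ on $\oplus_n\ell^2(\zz_{>0})$, defines operator analogues $\cU_r=\cY_r\cY_{r-1}^{-1}$ and $\cV_r=-\p_\eta\cY_r\,\cY_r^{-1}$, and proves $\p_\zeta\cV_r+\cU_{r+1}-\cU_r=0$ by a direct expansion, relying on a preparatory lemma that gives closed-form expressions for $\cY_r^{-1}$ and $\cY_r\cY_{r-1}^{-1}$ in terms of $R_r$, $R_{r\pm1}$ and the shift. Your argument instead stays at the $n\times n$ level and hinges on the telescoping identity $\p_\eta Q_r\,Q_r^{-1}=B_r-B_{r+1}$ with $B_r=\pi_1R_rV^*K_{r-1}\pi_1^T$, reducing everything to the single scalar identity $\p_\zeta B_r=I-Q_rQ_{r-1}^{-1}$. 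The Schur complement plays the same structural role in both proofs (it is what underlies the paper's lemma), but you invoke it explicitly and only twice, whereas the paper packages it into reusable operator identities. Your route is shorter and makes the mechanism (a discrete ``potential'' for $V_r$) more transparent; the paper's route is more systematic and its lemma could be reused elsewhere.

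One minor point: your Step~2 differentiates $K_{r-1}$ in $\zeta$, which invokes hypothesis~(3) at level $r-1$ and hence $K_{r-2}$; strictly speaking this restricts you to $r\geq\munderbar r+2$. The paper's proof has the same hidden constraint (through the mixed derivative $\p_\eta\p_\zeta K_r$), so this is not a defect relative to the paper, but it is worth being aware of.
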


We stress that, although we used the same notation, $K_r$ and $Q_r$ in this result are general and not necessarily related to PNG (in particular, the index $r$ in these objects denotes an integer rather than an element of $\zz^n$).

Before starting the proof, we need to introduce some notations.
For $r\in\{\munderbar r,\munderbar r+1,\dotsc,\bar r\}$ let
\[R_r= (I-K_r)^{-1},\qquad\uptext{so}\quad Q_r=R_r(1,1),\]
and define, for $f\in\ell^2(\zz_{>0})$,
\[\sigma f(u)=f(u+1),\qquad\sigma^*f(u)=f(u-1)\uno{u>1},\]
which have explicit kernels given by $\sigma(u, v) = \uno{v = u+1}$ and $\sigma^*(u, v) = \uno{v = u-1>0}$.

\begin{rem}\label{rem:bp}
In \cite{MR891103} it is shown, in this language, that if one has a scalar kernel $K_r(t)$ satisfying $K_{r+1}= \sigma K_r = K_r \sigma^*$ and $\p_tK_r= K_{r+1}-K_{r-1}$, then $ p_r=\det(I+K_r)$ satisfies the one-sided bilinear Toda lattice equation $\dot{p}_{r+1} p_r- p_{r+1}\dot{p}_r = p_{r+2}p_{r-1}-p_{r+1}p_r$, which can be transformed into the classic Toda lattice \eqref{1ode} (see the introduction of \cite{MR891103}).
\end{rem}
We will use the same notation $\sigma$ for the diagonal matrix kernel with $\sigma$ in each diagonal entry, which acts on $f\in\oplus_n\ell^2(\zz_{>0})$ as above.
Then (1) in the assumptions of the theorem becomes
\begin{equation}\label{eq:sigmaKsigma}
 K_{r+1}=\sigma K_r\sigma^*,
\end{equation}
while 
\[\sigma\sigma^*=I\qqand P\coloneqq I-\sigma^*\sigma\quad\uptext{is rank $n$}.\]
$P$ is a diagonal matrix kernel with diagonal entries $\Pi$ which have an explicit kernel given by
\begin{equation}\label{eq:Pi}
\Pi (u, v) = \uno{u = v = 1}.
\end{equation}
On the other hand, (2) and (3) in the assumptions of the theorem can be expressed in terms of $\sigma$ and $\sigma^*$ as
\begin{equation}
\p_\eta K = \sigma(K_{r-1}-K_r),\qqand\p_\zeta K_r = (K_{r-1}-K_r)\sigma^*.\label{eq:petazeta1}
\end{equation}
By taking the $\eta$ derivative of the first or the $\zeta$ derivative of the second, we also have
\begin{equation}
\p_{\eta}\p_{\zeta}K_r = K_{r+1}-2K_r+ K_{r-1}.\label{eq:petazeta2}
\end{equation}

The proof of Thm. \ref{thm:nonAbelianToda-general} is based on two identities which we collect in the next result:

\begin{lem}\label{lemmafortoda}
\leavevmode
\begin{enumerate}[label=\uptext{(\roman*)}]
\item $(I+PR_rK_rP)^{-1}=(I-P)+(I-K_r)(I+\sigma^*R_{r+1}\sigma K_r)P$.
\item $(I+PR_rK_rP)(I+PR_{r-1}K_{r-1}P)^{-1}= I+ PR_r(K_r-K_{r-1})(I+\sigma^*R_r\sigma K_{r-1})P$.
\end{enumerate}
\end{lem}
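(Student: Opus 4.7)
The strategy is direct algebraic verification. The manipulation relies on four elementary facts, used repeatedly: the projection property $P^2 = P$ together with the annihilations $\sigma P = 0$ and $P\sigma^* = 0$, all immediate from $\sigma\sigma^* = I$; the shift relation $K_{r+1} = \sigma K_r\sigma^*$ furnished by assumption (1); and the resolvent identities $R_sK_s = R_s - I$ and $R_r - R_{r-1} = R_r(K_r - K_{r-1})R_{r-1}$. A useful consequence of the shift relation is $\sigma^*\sigma K_s\sigma^* = \sigma^*K_{s+1}$, or, rearranged, $\sigma K_s = K_{s+1}\sigma + \sigma K_s P$.

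For (i), set $A = PR_rK_rP$ and let $B$ denote the proposed inverse. Since $A(I-P) = 0$, one has $(I+A)B = (I-P) + (I+A)(I-K_r)(I+\sigma^*R_{r+1}\sigma K_r)P$, so it suffices to show the second summand equals $P$. I would split $I+A = (I-P) + PR_rP$. The $(I-P)$-piece, after using $(I-P)\sigma^* = \sigma^*$, reduces to $\sigma^*R_{r+1}\sigma K_rP - \sigma^*\sigma K_rP - \sigma^*\sigma K_r\sigma^*R_{r+1}\sigma K_rP$, which telescopes to zero via the shift-relation consequence and $K_{r+1}R_{r+1} = R_{r+1} - I$. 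The $PR_rP$-piece simplifies to $P$ by writing the inner $P$ as $I - \sigma^*\sigma$: one resulting term collapses to $P$ via $R_r(I-K_r) = I$ and $P\sigma^* = 0$, and the other is killed by the identity $\sigma(I-K_r)(I+\sigma^*R_{r+1}\sigma K_r)P = 0$, which is verified by the same mechanism together with $\sigma P = 0$.

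For (ii), the key observation is that $R_sK_s = R_s - I$ gives $(I+PR_rK_rP) - (I+PR_{r-1}K_{r-1}P) = P(R_r - R_{r-1})P$, which by the resolvent identity equals $PR_r(K_r - K_{r-1})R_{r-1}P$. Thus the left-hand side is $I + PR_r(K_r-K_{r-1})R_{r-1}P(I+PR_{r-1}K_{r-1}P)^{-1}$. Substituting (i) at $r-1$ into the trailing inverse and using $P(I-P) = 0$ reduces this to $I + PR_r(K_r-K_{r-1})R_{r-1}PR_{r-1}^{-1}(I+\sigma^*R_r\sigma K_{r-1})P$. The target differs only by replacing $R_{r-1}PR_{r-1}^{-1}$ by $I$, so using $R_{r-1}PR_{r-1}^{-1} = I - R_{r-1}\sigma^*\sigma R_{r-1}^{-1}$ it remains to show $\sigma R_{r-1}^{-1}(I+\sigma^*R_r\sigma K_{r-1})P = 0$. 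This follows by the same cancellation pattern: write $\sigma(I-K_{r-1}) = (I-K_r)\sigma - \sigma K_{r-1}P$ (from the shift relation), and then use $\sigma P = 0$, $P\sigma^* = 0$ and $(I-K_r)R_r = I$ to telescope the remaining terms.

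The main obstacle is essentially bookkeeping rather than a conceptual barrier: the algebra is dense, the cancellations are invisible at first sight, and one must correctly track which factor of $P$, $\sigma$, or $\sigma^*$ absorbs into which term. The key conceptual point in both parts is that the shift relation $K_{r+1} = \sigma K_r\sigma^*$, together with the annihilations $\sigma P = P\sigma^* = 0$ and the resolvent identity, always conspires to make the apparently troublesome terms telescope. Identity (i) carries the real content; (ii) is essentially a corollary obtained by combining (i) with the resolvent identity.
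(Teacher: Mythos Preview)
Your proof is correct and follows essentially the same algebraic route as the paper. The only organizational difference is that for (i) the paper first isolates an intermediate identity $(I+PR_rK_r)^{-1}=(I-K_r)(I+\sigma^*R_{r+1}\sigma K_r)$ and then passes to $(I+PR_rK_rP)^{-1}$ via $(I-P)+(I+PR_rK_r)^{-1}P$, whereas you verify the target directly by splitting $I+PR_rK_rP=(I-P)+PR_rP$; the underlying cancellations (in particular your key identity $\sigma(I-K_r)(I+\sigma^*R_{r+1}\sigma K_r)P=0$) are the same in both versions, and part (ii) proceeds identically in both.
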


\begin{proof}
The first step of the proof is to show the following two simple identities:
\begin{align}
(I+PR_rK_r)^{-1}&=(I-K_r)(I+\sigma^*R_{r+1}\sigma K_r),\label{eq:lv1}\\
(I+PR_rK_rP)^{-1}&=(I-P)+(I+PR_rK_r)^{-1}P.\label{eq:lv2}
\end{align}
For the first one, use the fact that $R_rK_r=K_rR_r=R_r-I$
to write $I+PR_rK_r=I-P+PR_r=(I-\sigma^*\sigma K_r)R_r$, and then multiply by $(I-K_r)(I+\sigma^*R_{r+1}\sigma K_r)$ to get
\begin{align}
&(I+PR_rK_r)(I-K_r)(I+\sigma^*R_{r+1}\sigma K_r)=(I-\sigma^*\sigma K_r)(I+\sigma^*R_{r+1}\sigma K_r)\\
&\hspace{0.8in}=I-\sigma^*\sigma K_r+\sigma^*R_{r+1}\sigma K_r-\sigma^*\sigma K_r\sigma^*R_{r+1}\sigma K_r=I,
\end{align}
where in the last equality we used $\sigma K_r\sigma^*=K_{r+1}$ together with $R_rK_r=K_rR_r=R_r-I$ again.

To get \eqref{eq:lv2}, decompose $I+PR_rK_rP$ as $(I+PR_rK_r)-PR_rK_r(I-P)$ and then multiply by $(I-P)+(I+PR_rK_r)^{-1}P$ on the right.
The result can be simplified using the fact that $P$ is a projection and we get
\[I-PR_rK_r(I-P)(I+PR_rK_r)^{-1}P=I,\]
the last equality following from the easily checked fact that $(I+PR_rK_r)^{-1}P=P(I+R_rK_rP)^{-1}$.

The identity in (i) now follows directly from \eqref{eq:lv1} and \eqref{eq:lv2}.
For (ii), write the left hand side as 
\[I+P(R_rK_r-R_{r-1}K_{r-1})P(I+PR_{r-1}K_{r-1}P)^{-1}\] 
and then use (i) to rewrite this as
\begin{equation}
I+P(R_rK_r-R_{r-1}K_{r-1})P(I-K_{r-1})(I+\sigma^*R_r\sigma K_{r-1})P,\label{eq:ccc}
\end{equation}
where we used again the fact that $P$ is a projection.
Now use \eqref{eq:sigmaKsigma} to write
\begin{align}
(I-P)(I-K_{r-1})(I+\sigma^*R_r\sigma K_{r-1})P&=\sigma^*\sigma(I-K_{r-1})(I+\sigma^*R_r\sigma K_{r-1})P\\
&=(\sigma^*\sigma(I-K_{r-1})+\sigma^*\sigma K_{r-1})P=\sigma^*\sigma P=0,
\end{align}
and use this to rewrite \eqref{eq:ccc} further as 
\begin{equation}
 I+P(R_rK_r-R_{r-1}K_{r-1})(I-K_{r-1})(I+\sigma^*R_r\sigma K_{r-1})P.\label{eq:cccc}
\end{equation}
Since $R_{r-1}K_{r-1}=K_{r-1}R_{r-1}$ and $R_rK_r=R_r-I$ we have 
\[(R_rK_r-R_{r-1}K_{r-1})(I-K_{r-1})=R_rK_r(I-K_{r-1})-K_{r-1}=R_r(K_r-K_{r-1})\]
and using this in \eqref{eq:cccc} shows that it is equal to the desired right hand side.
\end{proof}

\begin{proof}[Proof of Thm. \ref{thm:nonAbelianToda-general}] 
Let
\begin{equation}
U_r=Q_rQ_{r-1}^{-1},\qquad
V_r=-\p_\eta Q_{r}Q_{r}^{-1}.
\end{equation}
Then \eqref{eq:2-2} can be rewritten as
\begin{equation}
\p_\zeta V_r +U_{r+1}- U_{r}=0.\label{2}
\end{equation}
Our goal then will be to prove this last equation.

In order to achieve this, it is convenient to lift \eqref{2} to an equivalent equation for operators acting on $\oplus_n\ell^2(\zz_{>0})$.
To this end, we introduce the operators
\[\cX_r=PR_rK_rP,\qquad\cY_r=I+\cX_r=I+PR_rK_rP.\]
From the previous lemma, $\cY_{r}$ is invertible.
On the other hand, since $K_r$ is differentiable in $\eta$ and $\zeta$ in trace norm, its inverse $R_r$ is also differentiable, and since $P$ is finite rank then $\cX_r$, and thus also $\cY_r$ and $\cY_r^{-1}$, are differentiable in $\eta$ and $\zeta$ in trace norm.
In particular, we may introduce the following analogs of $U_r$ and $V_r$ in terms of the operator $\cY_r$:
\begin{align}
\mathcal{U}_r=\cY_r\cY_{r-1}^{-1},\qquad
\mathcal{V}_r&=-\p_\eta \cY_{r}\cY_{r}^{-1}.
\end{align}
We claim that \eqref{2} is equivalent to the same equation at the level of $\mathcal{U}_r$ and $\mathcal{V}_r$:
\begin{equation}
\p_\zeta \mathcal{V}_r +\mathcal{U}_{r+1}- \mathcal{U}_{r} =0.\label{2a}
\end{equation}
To see this note first that, in view of \eqref{eq:Pi}, $P$ is a projection onto an $n$-dimensional subspace of $\oplus_n\ell^2(\zz_{>0})$, call it $Z$.
Then we may think of $\cY_r$ as acting on $Z\oplus Z^\perp$ as a block matrix operator
$\cY_r=\left[\begin{smallmatrix}
I_Z+\cX_r & 0 \\
0 & I_{Z^\perp}
\end{smallmatrix}\right]$,
and we have $\cY_r^{-1}=\left[\begin{smallmatrix}(I_Z+\cX_r)^{-1}&0\\0&I_{Z^\perp}\end{smallmatrix}\right]$.
In this way we thus have
\[\cU_r=\begin{bmatrix}
(I_Z+\cX_r)(I_Z+\cX_{r-1})^{-1} & 0 \\
0 & I_{Z^\perp}\end{bmatrix},\qquad
\cV_r=\begin{bmatrix}
\p_\eta\cX_r(I_Z+\cX_{r-1})^{-1} & 0 \\
0 & I_{Z^\perp}\end{bmatrix}.\]
In particular, \eqref{2a} is completely trivial for the second row in this block representation.
For the first row we simply note that $Z$ is isomorphic to $\rr^n$ and the matrix of the linear transformation $I_Z+\cX_r$ on $\rr^n$ is $Q_r$, so the equation is equivalent to \eqref{2}.

Hence our goal is to show \eqref{2a}.
Using $R_rK_r=K_rR_r=R_r-I$ we have
\begin{gather}
\p_\eta\tts\cY_r=P\p_\eta R_rP=PR_r\p_\eta K_rR_rP,\qquad
\p_\zeta\tts\cY_r=P\p_\zeta R_rP=PR_r\p_\zeta K_rR_rP,\\
\p_\zeta\p_\eta\tts\cY_r=P\p_\zeta\p_\eta R_rP=PR_r(\p_\zeta\p_\eta K_r+\p_\zeta K_rR_r\p_\eta K_r+\p_\eta K_rR_r\p_\zeta K_r)R_rP,
\end{gather}
so
\begin{align}
\p_\zeta\cV_r&=-\p_\zeta\p_\eta \cY_{r}\cY_{r}^{-1}+\p_\eta \cY_{r}\cY_{r}^{-1}\p_\zeta \cY_{r}\cY_{r}^{-1}\label{eq:pzetaVrp}\\
&=-PR_{r}\Big[\p_\zeta\p_\eta K_{r} + \p_\zeta K_{r}R_{r}\p_\eta K_{r}+\p_\eta K_{r}R_{r}\p_\zeta K_{r}-\p_\eta K_{r}R_{r}P\cY_{r}^{-1}PR_{r}\p_\zeta K_{r}\Big]R_{r}P\cY_{r}^{-1}.
\end{align}
Now from (i) of Lem. \ref{lemmafortoda}  we have $R_{r}P\cY_{r}^{-1}=R_{r}P(I-K_{r})(I+\sigma^*R_{r+1}\sigma K_{r})P$, while
\begin{align}
R_{r}P(I-K_{r})(I+\sigma^*R_{r+1}\sigma K_{r})
&=I+\sigma^*R_{r+1}\sigma K_{r}-R_{r}\sigma^*\sigma(I-K_{r})(I+\sigma^*R_{r+1}\sigma K_{r})\\
&\hspace{-40pt}=I+\sigma^*R_{r+1}\sigma K_{r}-R_{r}(\sigma^*\sigma(I-K_{r})+\sigma^*(I-K_{r+1})R_{r+1}\sigma K_{r})\\
&\hspace{-40pt}=I+\sigma^*R_{r+1}\sigma K_{r}-R_{r}\sigma^*\sigma=I+\sigma^*R_{r+1}\sigma K_{r}-R_{r}(I-P),
\end{align}
where in the second equality we simplified using \eqref{eq:sigmaKsigma}, and therefore $R_{r}P\cY_{r}^{-1}=(I+\sigma^*R_{r+1}\sigma K_{r})P$.
Using this in \eqref{eq:pzetaVrp} gives
\begin{equation}
\p_\zeta\cV_r = -PR_r\Omega(I+\sigma^*R_{r+1}\sigma K_r)P,\label{eq:pzetaVr}
\end{equation}
where $\Omega$ equals (using again the formula for $R_{r}P\cY_{r}^{-1}$)
\begin{align}
& \p_\zeta\p_\eta K_r + \p_\zeta K_rR_r\p_\eta K_r+\p_\eta K_rR_r\p_\zeta K_r-\p_\eta K_rR_rP\cY_r^{-1}PR_r\p_\zeta K_r\label{eq:crbr}\\
& \;\; =  (K_{r+1}-2K_r+K_{r-1})+(K_r- K_{r-1})\sigma^*R_r\sigma(K_r- K_{r-1})\\
& \quad + \sigma(K_r- K_{r-1})R_r(K_r- K_{r-1})\sigma^*-\sigma(K_r- K_{r-1})(I+\sigma^*R_{r+1}\sigma K_r)PR_r(K_r- K_{r-1})\sigma^*
\end{align}
\begin{align}
&\;\; =  (K_{r+1}-2K_{r}+K_{r-1})+(K_{r}- K_{r-1})\sigma^*R_{r}\sigma(K_{r}- K_{r-1})\\
&\quad -\sigma(K_{r}- K_{r-1})\sigma^*R_{r+1}\sigma K_{r}PR_{r}(K_{r}- K_{r-1})\sigma^*+\sigma(K_{r}- K_{r-1})\sigma^*\sigma R_{r}(K_{r}- K_{r-1})\sigma^*\\
& \;\;= (K_{r+1}\!-\!2K_{r}\!+\!K_{r-1})+(K_{r}\!-\! K_{r-1})\sigma^*R_{r}\sigma(K_{r}\!-\! K_{r-1}) + (K_{r+1}\!-\!K_{r})R_{r+1}(K_{r+1}\!-\!K_{r}),
\end{align}
and where we used \eqref{eq:sigmaKsigma} again together with the identity $R_{r+1}\sigma K_{r}P-\sigma=R_{r+1}\sigma K_{r}(I-\sigma^*\sigma)-\sigma=-R_{r+1}\sigma(I-K_{r})$.
On the other hand, (ii) of Lem. \ref{lemmafortoda} gives
\[\cU_{r+1} -\cU_{r} = PR_{r+1}(K_{r+1}-K_{r})(I+\sigma^*R_{r+1}\sigma K_{r})P-PR_{r}(K_{r}-K_{r-1})(I+\sigma^*R_{r}\sigma K_{r-1})P,\]
and using $R_r(K_{r+1}-K_{r})R_{r+1}=R_{r+1}-R_r$ we have $R_{r+1}(K_{r+1}-K_{r})=R_{r}(K_{r+1}-K_{r})(I+R_{r+1}(K_{r+1}-K_{r}))$.
Then 
\begin{multline}
\cU_{r+1} -\cU_{r} = PR_{r}(K_{r+1}-K_{r})(I+\sigma^*R_{r+1}\sigma K_{r})P
PR_{r}(K_{r}-K_{r-1})(I+\sigma^*R_{r}\sigma K_{r-1})P
\\
\qquad + PR_{r}(K_{r+1}-K_{r})R_{r+1} (K_{r+1}-K_{r})(I+\sigma^*R_{r+1}\sigma K_{r})P.
\end{multline}
Comparing with \eqref{eq:pzetaVr}, the last term on the right hand cancels with the piece coming from the third term in \eqref{eq:crbr}, 
so we get
\begin{align}
\p_\zeta\cV_r + \cU_{r+1} -\cU_{r}
&= PR_{r}(K_{r}-K_{r-1})(I+\sigma^*R_{r+1}\sigma K_{r})P\\
&\quad\qquad-PR_{r}(K_{r}-K_{r-1})\sigma^*R_{r}\sigma(K_{r}-K_{r-1})(I+\sigma^*R_{r+1}\sigma K_{r})P\\
&\quad\qquad-PR_{r}(K_{r}-K_{r-1})(I+\sigma^*R_{r}\sigma K_{r-1})P\\
&= PR_{r}(K_{r}-K_{r-1})\sigma^*(R_{r+1}\sigma K_{r}-R_{r}\sigma K_{r-1} )P
\\& \quad\qquad-PR_{r}(K_{r}- K_{r-1})\sigma^*R_{r}\sigma(K_{r}- K_{r-1})(I+\sigma^*R_{r+1}\sigma K_{r})P
\\&= PR_{r}(K_{r}-K_{r-1}) \sigma^*[R_{r+1}   -  R_{r} 
-  R_{r}K_{r+1}R_{r+1}  +R_{r}K_{r}R_{r+1} ]\sigma K_{r}P.
\end{align}
The term in brackets vanishes as $R_rK_r=K_rR_r=R_r-I$, yielding \eqref{2a} and completing the proof.
\end{proof}

\subsection{PNG kernel}\label{sec:pfMain}

Recall our definition $(K_r)_{ij}(u,v)=K^{\uptext{ext}}(u+r_i,v+r_j)$.
In Sec. \ref{sec:main} we used $r$ to denote the vector $(r_1,\dotsc,r_n)\in\zz^n$.
Here it will be more convenient to regard $r_1,\dotsc,r_n$ as fixed parameters and $r$ as an auxiliary variable taking values in $\zz$, and redefine
\begin{equation}
(K_r)_{ij}(u,v)=K^{\uptext{ext}}(u+r_i+r,v+r_j+r).\label{eq:KrKext}
\end{equation}
This notation is consistent with our usage of $K_{r\pm1}$ in Sec. \ref{sec:main}, and then \eqref{3h'} coincides with \eqref{2} with $r=0$.
Therefore, to show that \eqref{3h'} holds at the given choice of parameters $(r_1,\dotsc,r_n)$ satisfying $r_i>r_0(t,x_i)$ for each $i$, it is enough to show that $K_r$ satisfies the assumptions of Thm. \ref{thm:nonAbelianToda-general} with $\munderbar r=-1$ and $\bar{r}=1$.

That $K_r$ satisfies (1) of Thm. \ref{thm:nonAbelianToda-general} is straightforward by definition, so we turn to assumptions (2) and (3) of that result.
We fix $i$ and $j$ and focus on the $i,j$ entry of $K_r$, which we denote by $K^{ij}_r$.
We need to check that (2) and (3) hold for $r\in\{\munderbar{r}+1,\dotsc,\bar{r}\}$, which in our setting means $r\in\{0,1\}$.
From \eqref{eq:PNG-kernel-two-sided-hit} we have that
$\p_t K^{ij}_r(u, v)$ equals $K^{ij}_r(u - 1, v) - K^{ij}_r(u + 1, v) + K^{ij}_r(u, v - 1) - K^{ij}_r(u, v + 1)$ plus a term coming from differentiating the scattering operator, and $\p_{\overline x} K^{ij}_r(u, v) $ equals $-K^{ij}_r(u - 1, v) - K^{ij}_r(u + 1, v) + K^{ij}_r(u, v - 1) + K^{ij}_r(u, v + 1)$ plus a similar term.
Using this we get, making now the scattering terms explicit,
\begin{equation}
\begin{aligned}
\p_\eta K^{ij}_r(u, v) &= K^{ij}_r(u, v - 1) - K^{ij}_r(u + 1, v) + \W^{(\eta)}_{t,x_i,x_j}(u+r_i+r,v+r_j+r),\\
\p_{\zeta} K^{ij}_r(u, v) &= K^{ij}_r(u-1,v)-K^{ij}_r(u, v + 1) + \W^{(\zeta)}_{t,x_i,x_j}(u+r_i+r,v+r_j+r)
\end{aligned}\label{eq:petazeta3}
\end{equation}
with
\begin{equation}\label{eq:Wetazeta0}
 \W^{(\zeta)}_{t,x,x'}=e^{-2\ft\nabla-\fx\Delta}\Big(\p_a\J^{h_0}_{a,b}\big|_{\substack{a=x-t\\b=x'+t}}\Big)e^{2\ft\nabla+x'\Delta},\quad
 \W^{(\eta)}_{t,x,x'}=e^{-2\ft\nabla-\fx\Delta}\Big(\p_b\J^{h_0}_{a,b}\big|_{\substack{a=x-t\\b=x'+t}}\Big)e^{2\ft\nabla+x'\Delta}.
\end{equation}
Hence (2) and (3) will follow (since $K_r(u, v - 1)=K_{r-1}(u+1, v)$,  $K_r(u-1, v)=K_{r-1}(u, v+1)$) if we show that the $\W^{(\eta)}_{t,x_i,x_j}$ and $\W^{(\zeta)}_{t,x_i,x_j}$ terms on the right hand side of \eqref{eq:petazeta3} vanish for $u,v>0$ or, what is the same (since we are assuming $r_i>r_0(t,x_i)$, $r_j>r_0(t,x_j)$, and $r\in\{0,1\}$), that
\begin{equation}\label{eq:Wetazeta}
W^{(\eta)}_{t,x_i,x_j}(u,v)=W^{(\zeta)}_{t,x_i,x_j}(u,v)=0\quad\forall u>r_0(t,x_i)+1,\,v>r_0(t,x_j)+1.
\end{equation}
We do this in the next subsection.

$K_r$ itself is not trace class in general, but there is a multiplication operator $\vartheta$ (independent of $t$, the $x_i$'s and the $r_i$'s) such that the conjugated kernel $\vartheta K_r\vartheta^{-1}$ is trace class.
This is proved in Prop. \ref{prop:trcl}, and since the above argument remains valid if we replace $K_r$ by this conjugation, and since both \eqref{3h'} and \eqref{eq:Frs-detQr1} also do not change under this conjugation, we may perform this replacement to prove Thm. \ref{thm:nonAbelian}.
The necessary differentiability in trace norm of the conjugated kernel appears in Appdx. \ref{sec:trcl-diff}.

It only remains to show that $I-K_r$ is invertible for $r\in\{-1,0,1\}$.
In fact, this is true for all $r\geq-1$, which is equivalent (by \eqref{eq:KrKext}) to showing that $K^{\uptext{ext}}$ is invertible whenever $r_i\geq r_0(t,x_i)$ for each $i$.
We prove this by appealing to Thm. \ref{thm:PNG-fred} and a simple probabilistic argument.
Consider the event that there are no nucleations inside the region $C_{t,x_1,x_m}=\{(s,y)\!:s\in[0,t],\,x_1-s\leq y\leq x_m+s\}$, which obviously has positive probability.
By definition of the PNG dynamics we necessarily have in that case that $h(t,x_i)=r_0(t,x_i)$ for each $i$, and then since we are taking $r_i\geq r_0(t,x_i)$ we have
\[\pp_{h_0}(h(t,x_i)\leq r_i)\geq\pp_{h_0}(\uptext{no nucl. in }C_{t,x_1,x_m})>0.\]
But by \eqref{fred2} the left hand side equals the Fredholm determinant of the identity minus the (trace class, after conjugation) kernel $K^{\uptext{ext}}$, so $I-K^{\uptext{ext}}$ is invertible.

To finish the proof of the theorem we need to prove \eqref{eq:Frs-detQr1}.
By \eqref{fred} we have $F(t, x_1,\ldots, x_n, r_1+1,\ldots, r_n+1)=\det(I-K_{r+1})_{\oplus_n\ell^2(\zz_{>0})}$, where we keep thinking of $r$ as a scalar variable.
The kernel $K_r$ satisfies the hypotheses of Thm. \ref{thm:nonAbelian}, so in particular the formalism employed there also applies here.
In particular, by \eqref{eq:sigmaKsigma} and the cyclic property of the Fredholm determinant we have
\begin{align}
F(t, x_1,\ldots, x_n, r_1\tsm+\tsm1,\ldots, r_n\tsm+\tsm1)&=\det(I-\sigma K_r\sigma^*)_{\oplus_n\ell^2(\zz_{>0})}=\det(I-(I-P)K_r)_{\oplus_n\ell^2(\zz_{>0})}\\
&\hspace{-0.6in}=\det(I+PR_rK_r)_{\oplus_n\ell^2(\zz_{>0})}\det(I-K_r)_{\oplus_n\ell^2(\zz_{>0})}\label{eq:FdetF}\\
&\hspace{-0.6in}=\det(I+PR_rK_r)_{\oplus_n\ell^2(\zz_{>0})}F(t, x_1,\ldots, x_n, r_1,\ldots, r_n).
\end{align}
Now $P$ is a projection onto an $n$-dimensional subspace of $\oplus_n\ell^2(\zz_{>0})$, and the matrix of the linear transformation $P+PR_rK_r$ on $\rr^n$ is given by $\big(\uno{i=j}+(R_rK_r)_{ij}(1,1)\big)_{i,j=1}^n=\big((R_r)_{ij}(1,1)\big)_{i,j=1}^n$ (since $R_rK_r=R_r-I$).
Hence
\[\det(I+PR_rK_r)_{\oplus_n\ell^2(\zz_{>0})}=\det((I-P)+(P+PR_rK_r))_{\oplus_n\ell^2(\zz_{>0})}=\det(R_r(1,1)),\]
the last being a determinant of an $n\times n$ matrix.
Using this and the definition $Q_r=R_r(1,1)$ in \eqref{eq:FdetF} yields \eqref{eq:Frs-detQr1}.

\subsection{The forcing term}\label{sec:forcing}

Our goal now is to prove \eqref{eq:Wetazeta}.
In fact, we will prove a more general result, which will also play an important role when we prove in Sec. \ref{PNGMarkov} that our Fredholm determinant satisfies the Kolmogorov backward equation for PNG.
To state it we introduce the following notational convention: when an object $A_a$ depends on a parameter $a\in\rr$, then the notations $A_{a^-}$ and $A_{a+}$ will always mean
\[A_{a^+}=\lim_{s\searrow a}A_s\qqand A_{a^-}=\lim_{s\nearrow a}A_s,\]
assuming these limits are well defined.
We also define $P^{\nohit(h)}_{a,b}=e^{(b-a)\Delta}-P^{\hit(h)}_{a,b}$.
In the following lemma and its proof the above notational convention will be used to denote by $P^{\hit\tsm/\tsm\nohit(h)}_{a^+,b}$ and $P^{\hit\tsm/\tsm\nohit(h)}_{a,b^-}$ the transition probabilities for $\fN$ hitting/not hitting $\hypo(h)$ on the half-open intervals $(a,b]$ and $[a,b)$, respectively.

\begin{lem}\label{lem:petazeta4}
Let $t\geq0$ and $x,x'\in\rr$ and let $\W^{(\zeta)}_{t,x,x'}$ and $\W^{(\eta)}_{t,x,x'}$ be defined as in \eqref{eq:Wetazeta0}.
Then
\begin{equation}\label{eq:petazeta4gen}
\W^{(\zeta)}_{t,x,x'}(u,v)=0\quad\forall\tts u>h_0(x-t)+1,\qquad
\W^{(\eta)}_{t,x,x'}(u,v)=0\quad\forall\tts v>h_0(x'+t)+1.
\end{equation}
Moreover,
\begin{equation}\label{eq:petazeta5gen}
\begin{aligned}
\W^{(\zeta)}_{t,x,x'}(h_0(x-t)+1,v)&=-\tfrac12P^{\nohit(h_0)}_{(x-t)^+,x'+t}e^{2t+2t\nabla-t\Delta}(h_0(x-t),v),\\
\W^{(\eta)}_{t,x,x'}(u,h_0(x'+t)+1)&=-\tfrac12e^{2t-2t\nabla-t\Delta}P^{\nohit(h_0)}_{x-t,(x'+t)^-}(u,h_0(x'+t)).
\end{aligned}
\end{equation}
\end{lem}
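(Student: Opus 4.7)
The strategy is to compute $\p_a\J^{h_0}_{a,b}$ and $\p_b\J^{h_0}_{a,b}$ directly, extract their natural rank-one structure, and then exploit the triangular form of the operators that remain after conjugation by the outer propagators. I would first rewrite $\J^{h_0}_{a,b} = I - e^{a\Delta}P^{\nohit(h_0)}_{a,b}e^{-b\Delta}$ (which follows from $P^{\hit(h_0)}_{a,b} + P^{\nohit(h_0)}_{a,b} = e^{(b-a)\Delta}$), reducing the task to differentiating the no-hit kernel. A short Markov decomposition at time $b$, together with a small-$\eps$ expansion of the walker's motion on $[b,b+\eps]$, shows that only a walker sitting at the critical height $\fN(b) = h_0(b)+1$ can enter $\hypo(h_0)$ via a single down-jump of rate $1$; higher walkers need at least two down-jumps and contribute at order $\eps^2$. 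This yields
\[
\p_b P^{\nohit(h_0)}_{a,b}(u,v) = P^{\nohit(h_0)}_{a,b}\Delta(u,v) - P^{\nohit(h_0)}_{a,b}(u, h_0(b)+1)\uno{v = h_0(b)},
\]
and a symmetric formula for $\p_a$. Substituting into $\p_b\J^{h_0}_{a,b}$, the bulk $\Delta$ terms (one from $\p_bP^{\nohit(h_0)}_{a,b}$ and one from $\p_b e^{-b\Delta}$ acting through the minus sign) cancel exactly, leaving the rank-one expression
\[
\p_b \J^{h_0}_{a,b}(u,v) = [e^{a\Delta}P^{\nohit(h_0)}_{a,b}](u, h_0(b)+1)\cdot e^{-b\Delta}(h_0(b), v).
\]

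Next I conjugate this rank-one formula by the outer propagators in the definition of $\W^{(\eta)}$. Using the semigroup property with $a = x-t$ and $b = x'+t$, the products $e^{-2t\nabla-x\Delta}e^{a\Delta}$ and $e^{-b\Delta}e^{2t\nabla+x'\Delta}$ collapse to $e^{-2t\nabla-t\Delta}$ and $e^{2t\nabla-t\Delta}$. A direct computation from the contour integral representation \eqref{eq:Scontour} produces the explicitly triangular kernels
\[
e^{-2t\nabla-t\Delta}(u_1,u_2) = e^{2t}\frac{(-2t)^{u_2-u_1}}{(u_2-u_1)!}\uno{u_2\geq u_1},\qquad e^{2t\nabla-t\Delta}(u_1,u_2) = e^{2t}\frac{(-2t)^{u_1-u_2}}{(u_1-u_2)!}\uno{u_2\leq u_1}.
\]
Since the right factor becomes $e^{2t\nabla-t\Delta}(h_0(x'+t),v)$, it vanishes whenever $v > h_0(x'+t)$, which is stronger than the first claim \eqref{eq:petazeta4gen} for $\W^{(\eta)}$. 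The corresponding vanishing for $\W^{(\zeta)}$ follows from the symmetric argument using the upper-triangular kernel on the left.

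The second identity \eqref{eq:petazeta5gen} refines this by giving the value precisely at $v = h_0(x'+t)+1$, where the rank-one expression above evaluates trivially to zero (the triangular kernel is being evaluated just beyond its support). The nontrivial content comes from possible jump discontinuities of $h_0$ at $x'+t$: when $h_0((x'+t)^-) < h_0(x'+t)$, the walker can remain strictly above $h_0$ on $[x-t, x'+t)$ while approaching the post-jump level $h_0(x'+t)$ as its left-limit, and these paths are precisely the ones counted by $P^{\nohit(h_0)}_{x-t,(x'+t)^-}(u, h_0(x'+t))$. To obtain the boundary value one computes the derivative at such jump points as the symmetric combination of the one-sided left and right derivatives of $\J^{h_0}_{a,b}$, whose averaging produces the prefactor $-\tfrac12$; applying the same conjugation as above then yields the operator $e^{2t-2t\nabla-t\Delta}$ on the left. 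The parallel formula for $\W^{(\zeta)}$ follows by swapping the roles of $\p_a$ and $\p_b$ and interchanging the one-sided limits $(x-t)^+ \leftrightarrow (x'+t)^-$. The main technical obstacle is this last step: although the bulk computation, the cancellation, and the triangularity argument are quite direct, the careful bookkeeping at discontinuity points of $h_0$—and the identification of the precise $-\tfrac12$ prefactor from averaging the one-sided derivatives—is the subtle part of the proof.
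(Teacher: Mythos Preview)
Your approach is essentially the same as the paper's—working with $P^{\nohit}$ instead of $P^{\hit}$ is a trivial reformulation, and the cancellation of the $\Delta$ terms plus the triangularity of $e^{\pm2t\nabla-t\Delta}$ is exactly the mechanism in the paper's proof. However, there is a genuine gap in the handling of the boundary value.

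Your formula
\[
\p_b P^{\nohit(h_0)}_{a,b}(u,v) = P^{\nohit(h_0)}_{a,b}\Delta(u,v) - P^{\nohit(h_0)}_{a,b}(u, h_0(b)+1)\uno{v = h_0(b)}
\]
is only the \emph{right} derivative: your Markov decomposition is at time $b$ with expansion on $[b,b+\eps]$. When $h_0$ has an up-jump at $b$, the left derivative differs, because a walker sitting at level $h_0(b)$ at time $b-\eps$ is \emph{not} in the hypograph (since $h_0(b^-)<h_0(b)$) but \emph{is} at time $b$. The correct two-sided derivative—this is the content of the paper's Lem.~\ref{lem:pbPhit}—has its extra term at $v=h_0(b)+1$, involves the half-open kernel $P^{\nohit(h_0)}_{a,b^-}$, and carries the coefficient $\tfrac12$. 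In particular, your rank-one formula for $\p_b\J$ gives $0$ at $v=h_0(x'+t)+1$ (the factor $e^{2t\nabla-t\Delta}(h_0(x'+t),v)$ vanishes there), so it cannot by itself produce \eqref{eq:petazeta5gen}; and your assertion that $W^{(\eta)}$ vanishes for all $v>h_0(x'+t)$ is false at $v=h_0(x'+t)+1$ when there is a jump.

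Your diagnosis in the last paragraph is on the right track but not carried out. The $\tfrac12$ is not a generic ``averaging'' artifact: computing both one-sided derivatives, one finds that only the left one acquires the boundary contribution $P^{\nohit}_{a,b^-}(u,h_0(b))\uno{v=h_0(b)+1}$ (with coefficient $1$), while the right one does not, so the symmetric derivative picks up half of it. To complete the argument you must actually compute $\p_{b^-}P^{\nohit}_{a,b}$ (or equivalently $\p_{b^-}P^{\hit}_{a,b}$), identify the half-open kernel $P^{\nohit}_{a,b^-}$, and then conjugate; this is exactly what the paper does. You also omit the degenerate cases $x-t>x'+t$ (trivial) and $x-t=x'+t$, which the paper treats separately.
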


Since $r_0(t,x)>h_0(x\pm t)$ for all $x$, \eqref{eq:petazeta4gen} clearly implies \eqref{eq:Wetazeta}, finishing the proof of Thm. \ref{thm:nonAbelian}.
Note that in this use of the lemma in the proof Thm. \ref{thm:nonAbelian} we have only employed the condition $r_k>h_0(t,x_k)$; the stronger condition $r_k>r_0(t,x_k)$ assumed in the theorem is required to ensure that $K_{r}$ is invertible where needed.
The identities in \eqref{eq:petazeta5gen}, on the other hand, will be used in Sec. \ref{sec:whyhit}.

\begin{rem}\label{rem:1ptforcing}
In the one-point case there is a transparent (though partial) argument that the extra derivatives coming from  the dependence of $\J^{h_0}_{x-t,x+t}$ on $t$ and $x$ do not contribute to the computation (i.e. \eqref{eq:Wetazeta}).
Fix $t\geq0$, $x\in\rr$.
For $a\leq b$, let
$
K^{a,b}_r(u,v)=e^{- 2 \ft \nabla - \fx \Delta} \J^{h_0}_{a,b} e^{2 \ft \nabla + \fx \Delta}(u+r,v+r)$, the kernel from \eqref{eq:matrixKPNG} in the one-point case except that in the scattering transform appearing in \eqref{eq:PNG-kernel-two-sided-hit} we remove the dependence on $x-t$ and $x+t$ and replace it by arbitrary parameters $a,b$.
If we let
\[F_r(t,x)=F_{r,x-t,x+t}(t,x),\qquad F_{r,a,b}(t,x)=\det(I-K^{a,b}_r)_{\ell^2(\zz_{>0})}\]
 for $r\geq r_0(t,x)$, then a simple adaptation of the arguments from the last subsection  tells us that $\tfrac14(\p_t^2-\p_x^2)\log F_{r}-\frac{F_{r+1}F_{r-1}}{F_r^2}$ is given by 
\begin{equation}\label{eq:TodaForcing}
-\big(\p_{x+t}\p_a\log(F_{r,a,b})-\p_{x-t}\p_b\log(F_{r,a,b})-\p_a\p_b\log(F_{r,a,b})\big)\big|_{a=x-t,b=x+t}.
\end{equation}
To prove that the 2D Toda equation \eqref{eq:waveToda} holds, we need to show that this \emph{forcing term} vanishes for $r>r_0(t,x)$.
The argument we present next will show that the left derivative $\p_{b^-}\log(F_{r,a,b})|_{b=x+t}$ vanishes; the same argument shows $\p_{a^+}\log(F_{r,a,b})|_{a=x-t}=0$.
 Introduce  truncated initial data
$h_0^{[a,b]}(x) = h_0(x) \uno{x \in [a,b]} - \infty \cdot \uno{x \notin [a,b]}$.
If $x-t\leq a\leq b\leq x+t$ then  $P^{\hit(h_0^{[a,b]})}_{x-t,x+t}=e^{(a-x+t)\Delta}P^{\hit(h_0)}_{a, b}e^{(x+t-b)\Delta}$ because $\fN$ cannot hit $\hypo(h_0^{[a,b]})$ outside $[a,b]$.  Then we may write $\J^{h_0}_{a,b}=e^{(x-t)\Delta} P^{\hit(h_0^{[a,b]})}_{x-t,x+t} e^{-(x+t)\Delta}=\J^{h_0^{[a,b]}}_{x-t,x+t}$.
In other words, when $[a,b]\subseteq[x-t,x+t]$, the scattering transform for $h_0$ in the interval $[a,b]$ is the same as for the truncated initial data $h_0^{[a,b]}$ in the interval $[x-t,x+t]$, and thus
\begin{equation}
  F_{r,a,b}(t,x) = F_r(t,x;h_0^{[a,b]}) = \pp_{h_0^{[a,b]}} \bigl(h(t,x) \leq r\bigr)= \pp_{\mathfrak{d}_x^{-r}} \bigl(h(t,\cdot)\leq -h_0^{[a,b]}\bigr).
\end{equation}
The third equality comes from the skew time reversal invariance of PNG \eqref{eq:timerev}, with $\mathfrak{d}_x^{-r}$ a shifted narrow wedge given by $\mathfrak{d}_x^{-r}(y)=-r$ if $x = y$ and $-\infty$ otherwise.
In view of this we may compute $\p_{b^-} F_{r, a, b}\big|_{b=x+t}= \lim_{\delta \to 0} \delta^{-1} \left(\pp_{\mathfrak{d}_x^{-r}} \bigl(h(t,\cdot)\leq -h_0^{[a,x+t]} \bigr) - \pp_{\mathfrak{d}_x^{-r}} \bigl(h(t,\cdot)\leq -h_0^{[a,x+t-\delta]}\bigr) \right)$ as
\[-\lim_{\delta \to 0} \delta^{-1} \pp_{\mathfrak{d}_x^{-r}} \bigl(\forall\y\in [a, x+t-\delta],~h(t,y) \leq -h_0(y); ~\exists z\in (x+t-\delta, x+t], h(t,z) > -h_0(z) \bigr).\]
The key is that the derivative is being computed at the edge of the forward light cone, and that, in view of the initial condition, $h(t,x+t)=-r$.
Suppose first that $x+t$ is not a jump point for $-h_0$ so that, in particular, $h_0$ is constant, and equal to $h_0(x+t)$, on $[x+t-\delta,x+t]$ if $\delta$ is small enough.
Then on the event inside the probability we have $h(t,x+t-\delta)\leq -h_0(x+t)$ while $h(t,x+t)=-r\leq-h_0(x+t)$.
Hence, for the event to occur, $h(t,y)$ has to jump up \emph{and then down} in the interval $[x+t-\delta,x+t]$, which has probability $\mathcal{O}(\delta^2)$.
The same holds if $h_0$ has a down jump at $x+t$, because $h_0$ is upper semi-continuous, so it is still constant on $[x+t-\delta,x+t]$.
The only relevant possibility then is that $h_0$ has an up jump at $x+t$.
In this case, and up to terms of order $\delta^2$, the event inside the probability will occur if $h(t,y)$ stays below $-h_0$ on $[a,x+t-\delta]$ and $-r=h(t,x)>-h_0(x+t)$, but we are assuming $r\geq h_0(x+t)$.  This explains why the forcing terms \eqref{eq:TodaForcing} vanish. 
\end{rem}

We turn now to the full proof of Lem. \ref{lem:petazeta4}.
The first step is to prove the following preliminary result:

\begin{lem}\label{lem:pbPhit}
For $a<b$ and $h\in\UC$,
\begin{subequations}
\begin{align}
\p_a P^{\hit(h)}_{a,b}(u,v)&=\Delta P^{\hit(h)}_{a^+,b}(u,v)+\tfrac12\uno{u=h(a)+1}P^{\nohit(h)}_{a^+,b}(h(a),v)\quad\forall\tts u>h(a),\label{eq:pbPhita}\\
\p_b P^{\hit(h)}_{a,b}(u,v)&=P^{\hit(h)}_{a,b^-}\Delta(u,v)+\tfrac12P^{\nohit(h)}_{a,b^-}(u,h(b))\uno{v=h(b)+1}\quad\;\forall\tts v>h(b).\label{eq:pbPhitb}
\end{align}
\end{subequations}
\end{lem}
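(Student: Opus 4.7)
The plan is to derive both identities from the forward Kolmogorov equation for the random walk $\fN$ killed at $\hypo(h)$, combined with the strong Markov property. The two identities are symmetric under swapping $a \leftrightarrow b$ and reversing the ``time'' direction of the walk, so I focus on \eqref{eq:pbPhitb}; identity \eqref{eq:pbPhita} then follows by the same argument with the endpoints interchanged.

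The starting point is the decomposition, valid for $v > h(b)$,
\[P^{\hit(h)}_{a,b}(u,v) = e^{(b-a)\Delta}(u,v) - P^{\nohit(h)}_{a,b^-}(u,v),\]
which holds because $\fN(b) = v > h(b)$ is incompatible with $\ftau = b$, so the disjoint partition $\{\ftau < b\} \cup \{\ftau \geq b\}$ gives exactly the two pieces on the right. Differentiating in $b$, and using $\partial_b e^{(b-a)\Delta} = e^{(b-a)\Delta}\Delta$, reduces the problem to computing $\partial_b P^{\nohit(h)}_{a,b^-}(u,v)$.

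I would then apply a first-step decomposition at $b$: conditioning on $\fN(b) = w > h(b)$ and using the strong Markov property gives
\[P^{\nohit(h)}_{a,(b+\epsilon)^-}(u,v) = \sum_{w > h(b)} P^{\nohit(h)}_{a,b^-}(u,w)\,q^{[h]}_\epsilon(w,v),\]
where $q^{[h]}_\epsilon$ is the killed-walk propagator on $\{w > h(b)\}$. Taylor-expanding $q^{[h]}_\epsilon$ to first order in $\epsilon$ yields the standard killed-walk forward equation. In the interior $v > h(b)+1$ this coincides with $P^{\nohit(h)}_{a,b^-}\Delta(u,v)$, matching the pure Laplacian term in \eqref{eq:pbPhitb}. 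At the boundary $v = h(b)+1$, the ``free'' Laplacian $P^{\nohit(h)}_{a,b^-}\Delta(u,h(b)+1)$ formally contains the boundary value $P^{\nohit(h)}_{a,b^-}(u, h(b))$, whereas the killed-walk dynamics sets this contribution to zero; reconciling the two discrepancies when reassembling $\partial_b P^{\hit(h)}_{a,b}$ produces the source term $\tfrac12 P^{\nohit(h)}_{a,b^-}(u,h(b))\uno{v = h(b)+1}$.

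The main obstacle is pinning down the coefficient $\tfrac12$. Since $\fN$ is integer-valued and cadlag with no a.s.\ jump at any fixed time, $P^{\nohit(h)}_{a,b^-}(u, h(b))$ vanishes whenever $h$ is locally constant at $b$, so the source term is nontrivial only at upward jumps of $h$. A careful case analysis at such jumps --- matching the left- and right-derivatives of $P^{\hit(h)}_{a,b}$ at $v = h(b)+1$ while keeping track of the half-open interval conventions $[a,b]$ versus $[a,b)$ encoded in the $b^-$ notation --- produces the correct factor. Once the full identity \eqref{eq:pbPhitb} is established for $\partial_b$, the symmetric argument with the walk's space-time direction reversed, applied at the left endpoint $a$, yields \eqref{eq:pbPhita}.
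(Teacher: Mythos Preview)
Your approach is correct and will produce the identities, but it takes a somewhat different route from the paper's proof, and your account of the $\tfrac12$ is not quite sharp.

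The paper works directly with the hit kernel. It writes the symmetric difference quotient and decomposes $P^{\hit(h)}_{a,b+\ep}=P^{\hit(h)}_{a,b-\ep}e^{2\ep\Delta}+P^{\nohit(h)}_{a,b-\ep}P^{\hit(h)}_{b-\ep,b+\ep}$ via the strong Markov property at time $b-\ep$, so that
\[
\p_bP^{\hit(h)}_{a,b}=P^{\hit(h)}_{a,b^-}\Delta+P^{\nohit(h)}_{a,b^-}\lim_{\ep\searrow0}(2\ep)^{-1}P^{\hit(h)}_{b-\ep,b+\ep}.
\]
The entire content is then a short-time analysis of $P^{\hit(h)}_{b-\ep,b+\ep}(\eta,v)$ for $\eta>h(b^-)$, $v>h(b)$: the only $O(\ep)$ contribution occurs when $\eta=h(b)$ (so $h$ must jump up at $b$) and $v=h(b)+1$, coming from the walk staying put until time $b$ and making one upward jump in $(b,b+\ep]$. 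The $\tfrac12$ is just $e^{\ep\Delta}(0,1)/(2\ep)\to\tfrac12$.

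Your route via $P^{\hit}=e^{(b-a)\Delta}-P^{\nohit}$ and the forward equation for the killed walk is equally valid, but your description of the boundary mechanism is slightly off. It is not that ``the killed-walk dynamics sets the contribution $P^{\nohit(h)}_{a,b^-}(u,h(b))$ to zero'' in a single computation. Rather, the one-sided derivatives differ: the right derivative $\p_b^+P^{\nohit}_{a,b^-}(u,h(b)+1)$ sees the barrier at level $h(b)$ and therefore drops the $P^{\nohit}_{a,b^-}(u,h(b))$ term from $\Delta$, while the left derivative $\p_b^-$ sees the barrier at $h(b^-)<h(b)$, for which $h(b)$ is interior, and keeps it. Averaging the two one-sided derivatives is what produces the $\tfrac12$. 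You allude to this (``matching the left- and right-derivatives'') but your earlier Markov decomposition at $b$ with $w>h(b)$ only captures the right-hand increment; to complete the argument cleanly you need a parallel decomposition at $b-\ep$ with $w>h(b^-)$. Once you do both, the bookkeeping of $b$ vs.\ $b^-$ that you flag as delicate becomes straightforward. The paper's symmetric difference quotient sidesteps this by handling both sides at once.
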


Note that the second term on the right hand side of each identity only appears when $h$ has a jump at the edges of $[a,b]$ (more precisely, a down jump at $a$ for \eqref{eq:pbPhita} and an up jump at $b$ for \eqref{eq:pbPhitb}), because if $h(b^-)=h(b)$ then $P^{\nohit(h)}_{a,b^-}(u,h(b))=0$, and similarly for the other edge.
Note also that the derivatives may become singular without the restrictions on $u$ and $v$.
For example, take $h(x)$ to be $-\infty$ for $x<1$ and $0$ for $x\geq1$.
Then $P^{\hit h}_{0,b}(0,0)$ is not even continuous at $b=1$.

\begin{proof}
We will only prove \eqref{eq:pbPhitb}, \eqref{eq:pbPhita} is completely analogous (and can also be derived by considering the adjoint of the kernel in the second line and reversing the direction of the random walk).
Decomposing $P^{\hit(h)}_{a,b+\ep}$ as $P^{\hit(h)}_{a,b-\ep}e^{2\ep\Delta}+P^{\nohit(h)}_{a,b-\ep}P^{\hit(h)}_{b-\ep,b+\ep}$, one checks directly that
\begin{equation}
\p_{b}P^{\hit(h)}_{a,b}=\lim_{\ep\searrow0}(2\ep)^{-1}\big(P^{\hit(h)}_{a,b+\ep}-P^{\hit(h)}_{a,b-\ep}\big)=P^{\hit(h)}_{a,b^-}\Delta+P^{\nohit(h)}_{a,b^-}\lim_{\ep\searrow0}(2\ep)^{-1}P^{\hit(h)}_{b-\ep,b+\ep},\label{eq:dbP1}
\end{equation}
where $P^{\hit(h)}_{a,b^-}$ is the transition probability for $\fN$ without hitting $\hypo(h)$ on the half-open interval $[a,b)$.

Consider the limit on the right hand side of \eqref{eq:dbP1}.
Since $P^{\nohit(h)}_{a, b^-}(u,\eta)=0$ if $\eta\leq h(b^-)$,  we only need to consider $(2\ep)^{-1}P^{\hit(h)}_{b-\ep,b+\ep}(\eta,v)$ for $\eta>h(b^-)$.
By upper semi-continuity we may assume that $\ep$ is small enough so that $h(s)\leq h(b)$ for all $s\in[b-\ep,b+\ep]$.
If $\eta>h(b)$ then $P^{\hit(h)}_{b-\ep,b+\ep}(\eta,v)=\mathcal{O}(\ep^2)$, because to go from $\eta$ to $v$ hitting $\hypo(h)$ involves $\fN$ jumping at least twice in the interval $[b-\ep,b+\ep]$ (first down to go below level $h(b)$, and then up to get to $v>h(b)$).
Now suppose $h(b^-)<\eta\leq h(b)$.
If $v-\eta>1$ then $P^{\hit(h)}_{b-\ep,b+\ep}(\eta,v)=\mathcal{O}(\ep^2)$.
The only other alternative is $\eta=h(b)$, $v=h(b)+1$, in which case $\fN$ can go from $\eta$ to $v$ hitting $\hypo(h)$ in $[b-\ep,b+\ep]$ by staying put up to time $b$ and then jumping up by one inside the interval $(b,b+\ep]$.
The conclusion is that for $\eta>h(b^-)$ and $v>h(b)$,
\[P^{\hit(h)}_{b-\ep,b+\ep}(\eta,v)=e^{-\ep}e^{\ep\Delta}(0,1)\uno{\eta=h(b)>h(b^-),v=h(b)+1}+\mathcal{O}(\ep^2).\]
Using this in \eqref{eq:dbP1} and computing the limit we deduce that, for $v>h(b)$,
\[\p_bP^{\hit(h)}_{a,b}(u,v)=P^{\hit(h)}_{a,b^-}\Delta(u,v)+\tfrac12P^{\nohit(h)}_{a,b^-}(u,h(b))\uno{v=h(b)+1}\uno{h(b^-)<h(b)}.\]
The last indicator function can be removed (because $P^{\nohit(h)}_{a,b^-}(u,h(b))=0$ if $h(b)=h(b^-)$) and we get the claimed identity.
\end{proof}

We also state the following simple result, which we will use often:

\begin{lem}\label{lem:ett}
The kernel $e^{2t\nabla-t\Delta}$ is lower triangular, i.e. $e^{2t\nabla-t\Delta}(u,v)=0$ for all $v-u\geq1$.
\end{lem}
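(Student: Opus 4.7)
The plan is to use the explicit contour integral representation \eqref{eq:Scontour} with the specialization $x=-t$. Substituting this into the formula, the exponent $t(z-z^{-1})+x(z+z^{-1})$ collapses to $-2t/z$, so
\begin{equation}
e^{2t\nabla-t\Delta}(u,v) = e^{2t}\frac{1}{2\pi\I}\oint_{\gamma_0}\frac{\d z}{z^{v-u+1}}\,e^{-2t/z}.
\end{equation}
The entire $z$-dependence of the integrand is concentrated at the origin; the essential singularity is encoded in the Laurent expansion $e^{-2t/z}=\sum_{k\geq 0}\frac{(-2t)^k}{k!}z^{-k}$, which has only nonpositive powers of $z$.

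Then I would apply the residue theorem. Multiplying by $z^{-(v-u+1)}$, the integrand becomes $\sum_{k\geq 0}\frac{(-2t)^k}{k!}z^{-k-(v-u+1)}$, so the integral equals the coefficient of $z^{-1}$, which arises only from the index $k=u-v$. Under the hypothesis $v-u\geq 1$ this would require $k\leq -1$, which is impossible for $k\in\zz_{\geq 0}$, so the residue vanishes and hence $e^{2t\nabla-t\Delta}(u,v)=0$.

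I expect no real obstacle: the calculation is a one-line residue computation, and the only conceptual point is the observation that conjugating by $e^{-t\Delta}$ kills the ``$+z$'' piece of $z+z^{-1}$ in the exponent, leaving only negative powers of $z$, which is precisely what makes the kernel lower triangular. As a sanity check, one can alternatively argue purely algebraically: the matrix of $2\nabla-\Delta$ has entries $(2\nabla-\Delta)(u,v)=2\delta_{v,u}-2\delta_{v,u-1}$, which is strictly lower triangular in the sense that the only nonzero entries occur at $v\leq u$; since products (and hence power series) of such operators preserve this triangular structure, $e^{t(2\nabla-\Delta)}(u,v)=0$ for $v>u$, in agreement with the contour integral calculation.
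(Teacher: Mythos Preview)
Your proof is correct and takes essentially the same approach as the paper: both specialize \eqref{eq:Scontour} at $x=-t$ to obtain $e^{2t}\frac{1}{2\pi\I}\oint_{\gamma_0}\frac{\d z}{z^{v-u+1}}e^{-2t/z}$ and observe the integral vanishes for $v-u\geq 1$, the paper phrasing this as ``no residue at infinity'' while you compute the residue at $0$ directly via the Laurent expansion of $e^{-2t/z}$. Your additional algebraic sanity check (that $2\nabla-\Delta$ is lower triangular, hence so is its exponential) is a nice independent confirmation not present in the paper.
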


\begin{proof}
From \eqref{eq:Scontour} we have $e^{2 t \nabla -t \Delta}(u,v) = e^{2\ft} \frac{1}{2\pi\I}\oint_{\gamma_r} \frac{\d z}{z^{v - u + 1}} e^{-2\ft z^{-1}}$, 
which vanishes if $v-u\geq1$ since then there is no residue at infinity.
\end{proof}

\begin{proof}[Proof of Lem. \ref{lem:petazeta4}]
We will only consider the $\eta$ the derivatives, corresponding to the right edge of the interval $[x-t,x'+t]$; the $\zeta$ derivatives follow in the same way.
Given $a,b\in\rr$, write
\begin{equation}\label{eq:L0}
\W=e^{- 2 t \nabla - x \Delta} \J^{h_0}_{a,b} e^{2 t \nabla + x' \Delta},
\end{equation}
so that what we are trying to compute is $\p_b\W(u,v)|_{a=x-t,b=x'+t}$ for $v>h_0(x'+t)$.

Start by assuming that $x-t<x'+t$, which means that in $\W$ we are taking $a<b$.
By definition of $\J^{h_0}_{a,b}$ we have $\p_{b}\J^{h_0}_{a,b}=e^{a\Delta}\big(-P^{\hit(h_0)}_{a,b}\Delta+\p_{b}P^{\hit(h_0)}_{a,b}\big)e^{-b\Delta}$, so
\begin{equation}
\p_{b}\W=e^{- 2 t \nabla +(a-x)\Delta}\big(-P^{\hit(h_0)}_{a,b}\Delta + \p_{b}P^{\hit(h_0)}_{a,b}\big)e^{2 t \nabla + (x'-b)\Delta}.\label{eq:dbminus0}
\end{equation}
Evaluating at $b=x'+t$, the last factor on the right becomes $e^{2 t \nabla -t \Delta}$, and then by Lem. \ref{lem:ett}, 
if we want to compute $\p_bW\big|_{b=x'+t}(u,v)$ for $v>h_0(x'+t)$ then we only need to consider the factor $\p_bP^{\hit(h_0)}_{a,b}\big|_{b=x'+t}(u,\eta)$ for $\eta>h_0(x'+t)$.
In this case Lem. \ref{lem:pbPhit} shows that $\p_bP^{\hit(h_0)}_{a,b}\big|_{b=x'+t}(u,\eta)=P^{\hit(h_0)}_{a,(x'+t)^-}\Delta(u,\eta)+\frac12P^{\nohit(h_0)}_{a,(x'+t)^-}\Delta(u,h_0(x'+t))\uno{\eta=h_0(x'+t)+1}$, so going back to \eqref{eq:dbminus0} and using $P^{\hit(h_0)}_{x-t,x'+t}-P^{\hit(h_0)}_{x-t,(x'+t)^-}=-P^{\nohit(h_0)}_{x-t,(x'+t)^-}$ we deduce that
\begin{multline}\label{eq:peta23}
\p_bW\big|_{\substack{a=x-t\\b=x'+t}}(u,v)=-e^{- 2 t \nabla-t\Delta}P^{\nohit(h_0)}_{x-t,(x'+t)^-}\bP_{h_0(x'+t)}\Delta e^{2t\nabla-t\Delta}(u,v)\\
 +\tfrac12e^{-2t\nabla-t\Delta}P^{\nohit(h_0)}_{x-t,(x'+t)^-}(u,h_0(x'+t))e^{2t\nabla-t\Delta}(h_0(x'+t)+1,v).
\end{multline}
From Lem. \ref{lem:ett}, the last factor on the second line vanishes if $v>h_0(x'+t)+1$, so by our assumption on $v$ this term only contributes to the last expression in the case $v=h_0(x'+t)+1$.
Now we focus on the first term on the right hand side, and consider the factor $\bP_{h_0(x'+t)}\Delta e^{2 t \nabla -t \Delta}(\xi,v)$ ($\xi$ is an intermediate variable in the kernel compositions), which involves $\bP_{h_0(x'+t)}e^{2 t \nabla -t \Delta}(\xi+i,v)$ for $i\in\{-1,0,1\}$.
Using Lem. \ref{lem:ett} again together with the assumption $v>h_0(x'+t)$, all these terms vanish except in the case $i=1$, $\xi+1=v=h_0(x'+t)+1$, and we conclude that $\bP_{h_0(x'+t)}\Delta e^{2 t \nabla -t \Delta}(\xi,v)=\uno{\xi+1=v=h_0(x'+t)+1}e^{2t\nabla-t\Delta}(h_0(x'+t)+1,v)$, and hence that 
\begin{equation}\label{eq:peta24}
\begin{multlined}
    e^{- 2 t \nabla-t\Delta}P^{\nohit(h_0)}_{x-t,(x'+t)^-}\bP_{h_0(x'+t)}\Delta e^{2t\nabla-t\Delta}(u,v)\\
=e^{-2t\nabla-t\Delta}P^{\nohit(h_0)}_{x-t,(x'+t)^-}(u,h_0(x'+t))e^{2t\nabla-t\Delta}(h_0(x'+t)+1,v)\uno{v=h_0(x'+t)+1}.
\end{multlined}
\end{equation}
Using this, together with the identity $e^{2t\nabla-t\Delta}(\eta,\eta)=e^{2t}$ coming from \eqref{eq:Scontour}, in \eqref{eq:peta23} yields
\begin{equation}
\p_bW\big|_{\substack{a=x-t\\b=x'+t}}(u,v)=-\tfrac12e^{-2t\nabla-t\Delta}P^{\nohit(h_0)}_{x-t,(x'+t)^-}(u,h_0(x'+t))e^{2t}\uno{v=h_0(x'+t)},\label{eq:peta25}
\end{equation}
which proves the second identity in each of \eqref{eq:petazeta4gen} and \eqref{eq:petazeta5gen} under the assumption $x+t<x'+t$.

Now if $x-t>x'+t$, then $\J^{h_0}_{x-t,b}=0$ for $b$ in a neighborhood of $x'+t$, and the identities are trivial.
What is left is to consider the case $x-t=x'+t$, for which we need to compute 
\[e^{- 2 t \nabla-t\Delta}\big(\p_{b}P^{\hit(h_0)}_{a,b}\big|_{a=b=x'+t}\big)e^{2 t \nabla -t\Delta}=\lim_{\ep\searrow0}(2\ep)^{-1}e^{- 2 t \nabla-t\Delta} (P^{\hit(h_0)}_{x'+t,x'+t+\ep}-P^{\hit(h_0)}_{x'+t,x'+t-\ep}) e^{2 t \nabla -t\Delta}.\]
We have $P^{\hit(h_0)}_{x'+t,x'+t-\ep}=0$ (because $\ep>0$) while proceeding as above we have, for $\xi>h_0(x'+t)$, $P^{\hit(h_0)}_{x'+t,x'+t+\ep}(\eta,\xi)=\ep\uno{\eta=h_0(x'+t),\,\xi=h_0(x'+t)+1}+\mathcal{O}(\ep^2)$ and hence furthermore that the above limit equals $\frac12e^{- 2 t \nabla-t\Delta}(u,h_0(x'+t))e^{2 t \nabla -t\Delta}(h_0(x'+t)+1,v)$ when evaluated at $u>h_0(x-t)$, $v>h_0(x'+t)$.
Using this in \eqref{eq:dbminus0} and proceeding as above show that $\p_bW\big|_{\substack{a=x-t\\b=x'+t}}(u,v)$ equals
\[-e^{- 2 t \nabla-t\Delta}\bP_{h_0(x'+t)}\Delta e^{2 t \nabla-t\Delta}(u,v)+\tfrac12e^{2t- 2 t \nabla-t\Delta}(u,h_0(x'+t))\uno{v=h_0(x'+t)+1}.\]
Exactly the same argument we used to get to \eqref{eq:peta24} and \eqref{eq:peta25} shows that the first term equals $e^{2t- 2 t \nabla-t\Delta}(u,h_0(x'+t))\uno{v=h_0(x'+t)+1}$, so the above expression equals $-\frac12e^{2t- 2 t \nabla-t\Delta}(u,h_0(x'+t))\uno{v=h_0(x'+t)+1}$, which yields the $\eta$ derivatives in \eqref{eq:petazeta4gen} and \eqref{eq:petazeta5gen} in the remaining case $x-t=x'+t$. 
\end{proof}

\section{PNG as a Markov process}\label{PNGMarkov}

We will need to develop some basic Markov machinery for PNG in order to show that the Fredholm determinant expressions \eqref{fred} uniquely solve the backward (Kolmogorov) equations. To our surprise, there does not seem to be any previous reference, except  \cite{MR607609} where the equation is written down and invariant measures computed from a physics viewpoint.

Let us briefly describe the plan.  First of all, PNG is constructed directly as a Markov process, with a generator $\hat\SL$ coming from general Markov process theory.  Continuous time simple random walks are shown to be invariant modulo the height shift and  the adjoint $\hat\SL^*$ is computed.  It is exactly the generator of the backward dynamics for PNG.  Note that one can also think of this as the forward process ``upside down'', i.e. $h\mapsto -h$.  

The transition probabilities should satisfy the backward equation, but there is a technical issue which permeates the entire proof: $\varphi(h)=\uno{h(x_i)\le r_i, i=1,\ldots,n}$ is \emph{not} a continuous function on $\UC$   (for example, if $h_\ep(x)=\log \uno{x\ge \ep}$ then 
$h_\ep\longrightarrow h_0$ in $\UC$, but $\uno{h_\ep(0)\le -1}=1$ for $\ep>0$ and $0$ for $\ep=0$).  But the  formulas are only for exactly $\ee_{h}( \varphi(h(t)))$.  In fact, the backward equations only hold in a weak sense and one cannot just appeal to general theory.  For the Fredholm determinants the key behind the backward equations is the identity \eqref{eq:LP}, which shows that the time evolution of the scattering kernels corresponds to the action of the generator.  

Once one knows that the determinantal formulas satisfy the backward equation in an appropriate sense, uniqueness is provided by the existence of a large enough class of solutions of the adjoint equation (see Prop. \ref{prop:4.7}).  This is now automatic, since the Fredholm determinant formulas can be ``flipped on their head'' to produce solutions of the adjoint equation, which is just the backward equation for the solution upside down.  

\subsection{Semigroup}\label{states}

The state space $\UC$ consists of upper semi-continuous height functions $h:\mathbb{R}\to\mathbb{Z}\cup\{-\infty\}$.  A function is upper semi-continuous if and only if its hypograph $\hypo(f)=\{ (x,y): y\le f(x)\}$ is closed. We compactify at $-\infty$ by introducing a metric on $\mathbb{Z}\cup\{-\infty\}$ so that $[-\infty, 0]$ has finite measure.  Then $f_n\longrightarrow f$ if there exist\footnote{In \cite{fixedpt} this is mistakenly written as ``for all $M\ge 1$''.} $M_\ell\to \infty$ such that $\hypo(f_n) \cap [-M_\ell,M_\ell] \longrightarrow \hypo(f) \cap [-M_\ell,M_\ell]$ in the Hausdorff topology. Equivalently, $f_n\longrightarrow f$ if for each $x$, $\limsup_{n\to \infty} f_n(x_n) \le f(x)$ whenever $x_n\to x$  and there exists $x_n\to x$ with $\liminf_{n\to \infty} f_n(x_n)\ge f(x)$.

The first order of business is to construct the Markov semigroup acting on continuous functions $\mathscr C(\UC)$ on $\UC$.  Note that because of the finite propagation speed, unlike models such as the exclusion process, there is an elementary construction.  

Start with a space-time set $\mathcal N$ of nucleation points which is locally finite.  Let $\mathcal N_A$ denote the restriction to a subset $A$ of space-time.
Define a map $\mathcal T_{s,t,x}$ taking a $\UC$ function $h$ on $[x-(t-s),x+t-s]$ ``at time $s$'' and $\mathcal N_A$, $A={\{(u,y): |y-x|\le t-u, s\le u \le t\}}$  into $h(t,x)$ as follows. Let the points of $\mathcal N_A$
be denoted $(u_1,y_1), \ldots, (u_M,y_N)$ with ordered times $u_1< \cdots <u_M$.  $M\le N$ since several points may have the same $u_i$.  On  the time interval $(s, u_1)$ we evolve $h$ to $h((u_1)_-,z)= \sup_{|\tilde z-z|\le u_1-s} h(\tilde z)$.  At time $u_1$ there are nucleation points $y_1,\ldots,y_{m_1}$ and we add $1$ to  $h$ at each to get  $h(u_1,z)=h((u_1)_-,z)+\sum_{i=1}^{m_1} \mathbf{1}_{z=y_i}$.
Now on the time interval $( u_1,u_2)$ we evolve $h$ to
$h((u_2)_-,z)= \sup_{|\tilde z-z|\le u_2-u_1} h(u_1,\tilde z)$ and then, at time $u_2$ there are nucleation points $y_{m_1+1},\ldots,y_{m_2}$ and we add $1$ to  $h$ at each to get  $h(u_2,z)=h((u_2)_-,z)+\sum_{i=m_1+1}^{m_2} \mathbf{1}_{z=y_i}$, etc.  Since there are finitely many  points in $\mathcal N_A$, the process is well-defined.  Since $\mathcal T_{s,t,x}$ knows already to only use the points of $\mathcal N$ in $A$ and the part of $h$ in $[x-(t-s),x+t-s]$, we can just think of it as a map 
$
(h,\mathcal N)\mapsto  \mathcal T_{s,t,x}(h,\mathcal N)
$.
It has the property that for $\mathfrak d_y$ defined in \eqref{eq:def-nw},
\begin{equation}\label{varform0}
\mathcal T_{s,t,x}(h,\mathcal N)  =\max_{|y-x|\le t-s}\{\mathcal T_{0,t,x}(\mathfrak d_y,\mathcal N)+ h(y)\},
\end{equation}
as well as the semigroup property that for $s<u<t$,
 \begin{equation}\label{semiform}
\mathcal T_{s,t,x}(h,\mathcal N)  =\mathcal T_{u,t,x}(
\mathcal T_{s,u,\cdot}(h,\mathcal N),\mathcal N).
\end{equation}
In other words, given the background nucleations, PNG is actually a well-defined, \emph{deterministic} Markov semigroup on $\UC$.  
We then choose this background $\mathcal N$ as a rate $2$ space-time Poisson point process, and this constructs our PNG model as
\begin{equation}\label{varform2}
h(t,x)=h(t,x;h_0)=\mathcal T_{0,t,x}(h_0,\mathcal N).
\end{equation}

$\UC$ is a complete, separable metric space, though as it is infinite dimensional, it is not locally compact. 
If we define, for $f\in \mathscr C (\UC)$, 
\begin{equation}
\PPP_tf(h) = \ee[ f(\mathcal T_{0,t,\cdot}(h,\mathcal N))],
\end{equation}
then $\{\PPP_t, t\ge 0\}$ is a semigroup.  To see this, note first that $\PPP_0$ is the identity.  By \eqref{semiform},
\begin{equation}
\PPP_{s+t}f(h)=  \ee[ f(\mathcal T_{s,s+t,\cdot}(\mathcal T_{0,s,\cdot}(h,\mathcal N_{0,s}), \mathcal N_{s,s+t})],
\end{equation}
where the subscripts on the $\mathcal N$ indicate which time interval is being used.  Since the Poisson processes on the two time intervals are independent, we can take the expectation over $\mathcal N_{s,s+t}$ first to get 
\begin{equation}
  \ee[ \PPP_tf(\mathcal T_{0,s,\cdot}(h,\mathcal N_{0,s})]= \PPP_s\PPP_tf(h).
\end{equation}
 $ \PPP_t f$ are also strongly  continuous. 
Recall that such a semigroup is Markov if, in addition, $\PPP_t 1=1$ and $\PPP_t f\ge 0$ for all nonnegative $f\in \mathscr C(\UC)$.  Both are immediate from the definition. 
The generator $\hat \SL$ is defined for $f\in \mathscr{D}(\hat\SL)$ by
\begin{equation}
\hat\SL f=\lim_{t\searrow 0}\tfrac1{t} (\PPP_t f- f),
\end{equation}
where $\mathscr{D}(\hat\SL)$ is just the set of those $f\in \mathscr C(\UC)$ for which the limit exists strongly (i.e. in  $\mathscr C(\UC)$, with the sup topology).
The Hille-Yosida theorem \cite[Thm. 1.5 and 2.6]{ethierKurtz}  tells us that $\mathscr{D}(\hat\SL)$ is dense in $\mathscr C(\UC)$ and that for $f\in  \mathscr C(\UC)$ and $t>0$, $\PPP_t\in \mathscr{D}(\hat\SL)$ with  
\begin{equation}\label{HYbackeq}
\p_t \PPP_t f=\hat\SL \PPP_t f.
\end{equation}

There is also a version of PNG on $[-L,L)$ with periodic boundary conditions.
We call the corresponding generator $\hat\SL_L$.  Clearly, everything we have described has an analogue in that case.

\begin{rem}  Our situation is unusual.  Usually one starts with a generator and attempts to build from it a semigroup/Markov process.  In the case of PNG, the semigroup/Markov process is explicit, so we can just read off the generator. 
\end{rem}

\subsection{Generator}

The above description of the generator is not explicit and 
in order to do calculations, one has to introduce coordinates.   A $\UC$ function essentially consists of unit steps up and down, together with an absolute height at some point, say $h(0)$. 
We consider cases where the locations of the steps are an at most countably infinite vector of points $\y_i\in \rr$, $i\in \zz$, which for simplicity we will assume to be ordered by their label, $\y_i\le \y_{i+1}$.  
To each step $\y_i$ corresponds a $\pm1$ variable $\sigma_i$; we set $\sigma_i=+1$ if the step is up and $\sigma_i=-1$ if the step is down (i.e. $\lim_{\ep\searrow 0} h(\y_i-\sigma_i\ep)=h(\y_i)-1$).  We interpret a pair $(\y_i,\sigma_i)$ as a particle with spin $\sigma_i$ located at $\y_i$.
Note that this allows for steps of an arbitrary size because we are allowing for jump locations to coincide.

It is certainly not the case that all $\UC$ functions fit this description.  
For example, $\mathfrak d_0$ ``wants'' to correspond to $h(0)=0$ and $(\y_i,\sigma_i)=(0^-,+1)$ for $i<0$ and $(0^+,-1)$ for $i\ge 0$, but this is not allowed by the above description.  Another example is the indicator function of the Cantor set.
It is in $\UC$, but it certainly cannot be written in this manner.
Furthermore, the valid $\UC$ function $h(x)=-\infty$, $x<0$, $h(x)=0$, $x\ge 0$ corresponds, for example, to $(\y_i,\sigma_i)=(0,+1)$, $i<0$ and $(\infty, \pm 1)$, $i\ge 0$.
And a configuration like $(\y_i,\sigma_i)=(i,+1)$, $i<0$ and $(1,+1)$, $i\ge 0$, corresponds to a \emph{non}-$\UC$ function taking the illegal value $+\infty$ on $[1,\infty)$.

On the other hand, these cases are extreme.  If one looks within one of the invariant measures for the process (see Prop. \ref{prop:invm}), the point process $\y_i$ is locally finite and simple (no coincidences) with probability one.
If we restrict to locally finite configurations, the map $h(x) \longleftrightarrow (h(0), (\y_i,\sigma_i)_{i\in\zz})$ is one-to-one up to shifts in numbering if we make the convention that unneeded particles are placed at either $+\infty$ with a plus sign, or $-\infty$ with a minus sign.
Furthermore, it is not hard to see that the dynamics stays within this class.

So we will work with generators on this restricted subspace 
\begin{multline}
\hat{\mathcal{X}}=\zz\times\mathcal{X},\quad\mathcal{X}=\big\{(\vec y,\vec \sigma)  \in((\rr\cup\{-\infty,\infty\}) \times\{-1,1\})^\zz\!:\vec y\uptext{ is locally finite},\,y_i\leq y_{i+1}\,\forall i,\\ 
y_i=y_{i+1}\uptext{ only if }(\sigma_i,\sigma_{i+1}) = (+1,-1)\big\},\label{eq:calX}
\end{multline}
and we say $h\in\hat{\mathcal{X}}$ whenever the pair $(\vec y,\vec \sigma)$ corresponding to $h$ is in $\mathcal{X}$.
At the end, we extend the results to $\UC$ by continuity from above.
It is reminiscent of entrance laws in diffusion theory.  Here one is misled because the narrow wedges $\mathfrak d_y$ appear to lead to the simplest evolution, but they do not fit nicely within the explicit computations with the generator. 

The dynamics of the vector $(\y_i,\sigma_i)_{i\in\zz}\in\mathcal{X}$ is also Markovian, and we let $\SL$ denote its generator.
The second Markov process has a slightly reduced description, as we lose the absolute height, but we will see that it is essentially enough for our purposes.
The state space for the $(\y_i,\sigma_i)_{i\in\zz}$ process is $\mathcal X$ with the topology inherited from the $\UC$ topology on the associated height functions (with, say, $h(0)=0$).

To describe the generator $\SL$, we first need to describe a core for its domain.
This will consist of \emph{local} functions $F$, which are regular in some sense.  
\emph{Local} means there is a finite interval $(a,b)$ so that $F$ depends only on $(\y_i,\sigma_i)$ with $\y_i\in (a,b)$.
We are assuming that the configuration is locally finite, so there is a finite number $n$ of $\y_i$'s in the interval.  
We may as well relabel them $\y_1 \leq \y_2 \leq \cdots \leq \y_n$, and let $\sigma_1,\ldots,\sigma_n$ be their corresponding spins. 
Let $E^{a,b}_n$ be the set of such configurations.
For $n = 0$ we set $E^{a,b}_0 = \{\emptyset\}$. We define $E^{a,b} = \bigcup_{n =0}^\infty E^{a,b}_n$ to be the set of all configurations of finitely many particles in $(a,b)$. 
We will use interchangeably the two notations $F( \dotsc, (\y_{i},\sigma_{i}), (\y_{i+1},\sigma_{i+1}),\dotsc) = F(\vec \y, \vec \sigma)$, where $\vec \y = (\dotsc, \y_i, \y_{i+1}, \dotsc)$ and $\vec \sigma = (\dotsc, \sigma_i, \sigma_{i+1}, \dotsc)$.  Denote by $F_n$ the restriction\footnote{We always mean that $F$ only depends on variables in the interval $(a,b)$, so this just means we are considering the restriction of $F$ to the situation where there are $n$ particles in that interval. }  of $F$ to $E^{a,b}_n$.

An important consequence of the $\UC$ topology is that configurations with $\y_i=\y_{i+1}$ and $\sigma_i=-1$, $\sigma_{i+1}=+1$, are identified with the configuration where the two points are removed from the list.
Thus continuity of $F$ with respect to the $\UC$ topology corresponds to \emph{matching conditions} between its restrictions,
\begin{equation}\label{eq:boundary}
\lim_{\y_{i+1}-\y_i\searrow 0}F_n( \ldots,(\y_i,-1),(\y_{i+1},+1),\ldots)= F_{n-2}( \ldots, (\y_{i-1},\sigma_{i-1}), (\y_{i+2},\sigma_{i+2}),\ldots).
\end{equation}
 
The dynamics of the up/down steps consists of two pieces, a deterministic part, with generator $\SL_{\gdet}$, and a stochastic part, with generator $\SL_{\gcr}$, for \emph{creation},
\begin{equation}\label{eq:L}
\SL = \SL_{\gdet} + \SL_{\gcr}.
\end{equation}
This generator acts on local functions $F$, smooth in the interior of each $ E_n^{a,b}$.
The deterministic part $\SL_{\gdet}$ has each up-step moving to the left with unit speed, and each down-step moving to the right with unit speed, so  that at interior points,
\begin{equation}\label{eq:L-det}
\SL_{\gdet} F_n(\vec{y},\vec{\sigma}) = - \sum_{i=1}^n\sigma_i \partial_{\y_i} F_n(\vec{y},\vec{\sigma}).
\end{equation}
The stochastic part of the dynamics has new  $(+1,-1)$ pairs created randomly uniformly with rate $2$,
\begin{equation}\label{eq:L-cr}
\SL_{\gcr} F_n(\vec{y},\vec{\sigma}) = 2 \int_{-\infty}^\infty \bigl(F_{n+2}( C_z(\vec{y},\vec{\sigma}) )- F_n(\vec{y},\vec{\sigma} )\bigr) \d z,
\end{equation}
where the configuration $C_z(\vec{y},\vec{\sigma}) $ is obtained by inserting $(z, +1)$ and $(z, -1)$ into $(\vec{y},\vec{\sigma}) $ to get an element of $E^{a,b}_{n+2}$. Since $F$ is local the integral is really only over $(a,b)$.
Since we only consider $F\in\mathscr{C}_0$ as candidates for the domain of the generator, $F_n$ are necessarily all bounded and \eqref{eq:L-cr} always makes sense.  The domain of $\SL$ therefore consists of functions for which the right hand side of \eqref{eq:L-det} is in $\mathscr{C}_0$ and our core consists of all local functions in the domain.

The analogous generator in the periodic case, on $[-L,L)$, without the heights, will be called $\SL_L$.
The periodic case differs from the whole line case in that many configurations $(y_i,\sigma_i)$ do not correspond to valid height functions.

In order to write the full generator $\hat\SL$ of the PNG height function in this language we would need to take into account the absolute height.
To this end one would need to count the number of up/down steps which have crossed a fixed point, for example the origin, to the right/left up to time $t$.
Adding this part of the dynamics to get the full generator amounts to adding an additional term on the right hand side of \eqref{eq:L}.
It would not be so hard to write the resulting $\hat\SL$, but we will never  need its explicit form in the computations.

\subsection{Invariant measures}\label{sec:inv}

The following measures 
$\{\mu_\rho\}_{ \rho\in (0,\infty)}$
are invariant for the $(\y_i,\sigma_i)_{i\in\zz}$ process on the line:  Fix $\rho\in (0,\infty)$ and let $\Xi^+$ and $\Xi^-$ be independent Poisson point processes on $\rr$ with intensities $\rho$ and $\rho^{-1}$, respectively (so, for instance, $\pp(|\Xi^+ \cap [a, b]| = n) = \frac{(\rho (b-a))^n}{n!} e^{- \rho(b-a)}$ for any $a<b$).
Superpose $\Xi^+$ and $\Xi^-$, order the resulting points $(\y_i)_{i\in \zz}$, and assign $\sigma_i=+1$ to points coming from $\Xi^+$ and $\sigma_i=-1$ to points coming from $\Xi^-$.
This results in a configuration $(\vec\y,\vec\sigma)$ which, in simple words, has $+1$ points and $-1$ points as independent Poisson point processes with intensities $\rho$ and $\rho^{-1}$, respectively.
$\mu_\rho$ denotes the law of this configuration.

We also have the analogous measures $\mu_{L,\rho}$ which are just the restrictions of $\mu_\rho$ to $[-L,L)$.
Note that while it is not always possible to build a periodic height function  out of a configuration $(y_i,\sigma_i)$ on $[-L,L)$, if one starts PNG with a periodic height function one can run the process of up and down steps and build a periodic height function out of the result at a later time.

\begin{prop}\label{prop:invm}
For any $\rho>0$, the measure $\mu_\rho$ is invariant for the $(\vec{\y},\vec\sigma)$ system.
\end{prop}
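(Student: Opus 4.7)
The plan is to verify invariance through the generator criterion: show $\int\SL F\,\d\mu_\rho=0$ for every $F$ in a core of $\SL$. A convenient core consists of local functions $F$ which, for some $a<b$, depend only on particles inside $(a,b)$ and are supported away from $\{y_i=a\}\cup\{y_i=b\}$, so that facet contributions at $y=a,b$ vanish automatically. The key combinatorial facts are that $\mu_\rho$ restricted to $n$-particle configurations in $(a,b)$ has density $e^{-(\rho+\rho^{-1})(b-a)}\rho^{\sigma_1+\dotsm+\sigma_n}$ on the ordered simplex $\{a\le y_1\le\dotsm\le y_n\le b\}$ indexed by the spin sequence $\vec\sigma$ --- this density is constant in each $y_i$ --- and that inserting or removing an adjacent $(+1,-1)$ or $(-1,+1)$ pair preserves $\sigma_1+\dotsm+\sigma_n$.

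Split $\SL=\SL_{\gdet}+\SL_{\gcr}$. I would integrate
\[
\int\SL_{\gdet}F\,\d\mu_\rho=-\sum_{n,\vec\sigma}\rho^{\sum_j\sigma_j}e^{-(\rho+\rho^{-1})(b-a)}\int\sum_i\sigma_i\p_{y_i}F_n\,\d\vec y
\]
by parts on the simplex. Interior contributions drop (the density is constant in $y_i$), and combining the facets $y_i=y_{i-1}$ and $y_i=y_{i+1}$ from adjacent indices yields
\[
\int\SL_{\gdet}F\,\d\mu_\rho=\sum_{n,\vec\sigma,i}(\sigma_{i+1}-\sigma_i)\rho^{\sum_j\sigma_j}e^{-(\rho+\rho^{-1})(b-a)}J_{i,n}(\vec\sigma),
\]
where $J_{i,n}(\vec\sigma)=\int F_n(\vec y,\vec\sigma)|_{y_i=y_{i+1}}\,\d\vec y_{[n]\setminus\{i\}}$. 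Only mixed pairs contribute: $(\sigma_i,\sigma_{i+1})=(+1,-1)$ (``dipole'', coefficient $-2$) and $(-1,+1)$ (``annihilation'', coefficient $+2$). In the annihilation case, the $\UC$ matching condition \eqref{eq:boundary} identifies $F_n|_{y_i=y_{i+1}}$ with $F_{n-2}$ of the configuration with the $(i,i+1)$ pair removed.

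For the creation generator, $\int\SL_{\gcr}F\,\d\mu_\rho=2\int\!\int_a^b F(C_z)\,\d z\,\d\mu_\rho-2(b-a)\int F\,\d\mu_\rho$. In the first integral, the map $(\vec y,\vec\sigma, z)\mapsto C_z(\vec y,\vec\sigma)$ is (almost everywhere) a bijection onto the slice of $(n+2)$-configurations containing an adjacent $(+1,-1)$ pair at equal positions; since insertion preserves the spin sum, the pushforward of $\d\mu_\rho\otimes\d z$ matches the natural measure on that slice, and reindexing $n+2\to n$ gives exactly
\[
2\sum_{n,i,\vec\sigma:(\sigma_i,\sigma_{i+1})=(+,-)}\rho^{\sum_j\sigma_j}e^{-(\rho+\rho^{-1})(b-a)}J_{i,n}(\vec\sigma),
\]
which cancels the $(+,-)$ dipole piece from $\SL_{\gdet}$. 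For the remaining $(-,+)$ annihilation piece, apply the matching condition and reindex $n\to n+2$: for each $(n-2)$-configuration and each of its $n-1$ gaps one can reinsert a $(-,+)$ pair, and integrating over the insertion location contributes a factor equal to the gap length; summing over gaps telescopes to $b-a$, producing $2(b-a)\int F\,\d\mu_\rho$, which cancels the constant term from $\SL_{\gcr}$. Thus $\int\SL F\,\d\mu_\rho=0$.

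Extension from the core to $\mathscr{D}(\SL)$ is by density, and the periodic case $\mu_{L,\rho}$ is handled identically with $[-L,L)$ in place of $(a,b)$ and the telescoping giving $2L$. The principal obstacle is bookkeeping: carrying out the two combinatorial reindexings consistently (especially tracking the ordered-simplex geometry and the sum over gaps), and verifying that the $\UC$-continuity of $F$ really does enforce the $(-,+)$ identification on the diagonals. Intuitively, the calculation is the algebraic counterpart of the fact that under $\mu_\rho$ the rate of $(-,+)$ annihilations per unit length equals $2$, exactly balancing the creation rate.
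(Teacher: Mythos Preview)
Your generator computation is essentially the paper's: the integration by parts on the ordered simplex, the splitting of facet terms into $(+,-)$ and $(-,+)$ contributions, the use of the matching condition \eqref{eq:boundary} on the $(-,+)$ diagonals, the reindexing of the creation term, and the telescoping of gap lengths to produce the $(b-a)$ factor all appear in the proof of Prop.~\ref{prop:invm-general} in the same form. (The paper actually proves a more refined identity, against the constraint $H^m_{a,b}$, which tracks the boundary contributions at $y_1=a$ and $y_{k+\ell}=b$; your choice of $F$ is designed to suppress these.)

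The gap is in the last step. You assert that $\int\SL F\,\d\mu_\rho=0$ for all $F$ in a core implies invariance, but the standard criterion (Echeverria's theorem, as in Ethier--Kurtz) requires local compactness of the state space, which fails here. The paper deals with this by first applying the criterion to the \emph{periodic} process on $[-L,L)$, where the martingale problem is well-posed and Echeverria applies, to conclude that $\mu_{L,\rho}$ is invariant there. Invariance of $\mu_\rho$ on the full line is then obtained by a coupling argument exploiting finite propagation speed: run the periodic and full-line processes from the same (restricted) initial data with the same noise; for any local $F$ and any fixed $t>0$, once $L$ is large enough the two configurations agree on the support of $F$ at time $t$, so $\ee[F(\vec y(t),\vec\sigma(t))]$ equals the periodic expectation, which by invariance of $\mu_{L,\rho}$ equals $\ee_{\mu_\rho}[F]$. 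You mention the periodic case only as a side remark, not as the route to the full-line statement; that is the missing ingredient.
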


In order to prove the result we need to  show that $\int\tsm\SL F \d\mu_\rho = 0$.
We will actually perform a more general computation, which will be useful later on, and from which the proposition will follow.
Let
\[H^m_{a,b}(\vec y,\vec\sigma)=\uno{\sum_{i\in\zz:\ts y_i\in[a,b]}\sigma_i=m},\]
$m\in\zz$.
In terms of the associated height function $h$, $H^m_{a,b}$ is just the indicator function of the event that $h(b)-h(a)=m$.

\begin{prop}\label{prop:invm-general}
Fix $a<b$ in $\rr$ and $u\in\zz$, and suppose $F$ is a function which is continuous in $\UC$, which depends only on the $(y_i,\sigma_i)$ where $y_i\in (a,b)$ and on the value $u=h(a)$, and which is such that each $F_n$ is continuously differentiable in $y_i$ in the interior of $E_n^{a,b}$.
Then
\begin{multline}\label{invstat}
\int \SL F(\vec{\y},\vec\sigma;u)H^m_{a,b}(\vec y,\vec\sigma)\,\d\mu_\rho(\vec{\y},\vec\sigma)=-\int (F(\vec{\y},\vec\sigma;u)-F(\vec{\y},\vec\sigma;u+1))H^{m-1}_{a,b}(\vec y,\vec\sigma)\,\d\mu_\rho\\
+\int (F(\vec{\y},\vec\sigma;u)-F(\vec y,\vec\sigma;u-1)H^{m+1}_{a,b}(\vec y,\vec\sigma)\,\d\mu_\rho.
\end{multline}
Furthermore, in the periodic case with period $L$, for any $F\in \mathscr{D}(\SL_L)$,   $\int \SL_L F \d\mu_{L,\rho}=0$.
\end{prop}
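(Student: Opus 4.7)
The plan is to establish \eqref{invstat} by direct computation on the explicit product structure of $\mu_\rho$ as independent Poisson processes of $+1$'s at rate $\rho$ and $-1$'s at rate $\rho^{-1}$ on $[a,b]$. Decompose $\SL=\SL_{\gdet}+\SL_{\gcr}$ as in \eqref{eq:L} and handle each part in turn. The deterministic part is treated by integration by parts in each particle position, producing boundary contributions at $a$ and $b$ together with interior-coincidence contributions; the creation part is treated via a Campbell-type rewriting. The periodic statement $\int \SL_L F\,\d\mu_{L,\rho}=0$ is then the boundary-free specialization on the torus.

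For $\SL_{\gdet}F=-\sum_i\sigma_i\p_{y_i}F$, integration by parts in each $y_i$ over $[a,b]$ gives four boundary contributions, one per combination of species in $\{+1,-1\}$ and endpoint in $\{a,b\}$. A Palm-type reindexing ($n_\pm\mapsto n_\pm-1$) converts each boundary term into an integral of $F$ against $\mu_\rho$ with one particle fixed at the endpoint, carrying the weight $\rho$ or $\rho^{-1}$ from the Poisson density, and with the count restriction $H^m_{a,b}$ shifted to $H^{m\mp 1}_{a,b}$. The key matching identifications come from the $\UC$-continuity of $F$: the upper-semicontinuous value $h(a)$ either jumps by $\pm 1$ or stays put depending on which species of step arrives at $a$ and from which side, so the limit of $F$ with a particle at an endpoint equals $F$ on the reduced configuration with the parameter $u$ either unchanged or shifted by $\pm 1$. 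Assembling these identifications over the four boundary contributions reproduces the four terms on the right-hand side of \eqref{invstat}.

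Working in the ordered formulation $a<y_1<\cdots<y_n<b$, integration by parts additionally produces interior-coincidence contributions at $y_i=y_{i+1}$. These cancel against $\SL_{\gcr}F=2\int_a^b[F(C_z)-F]\,\d z$ in two stages. At $(+1,-1)$ coincidences, a change of variables identifies each coincidence-slice integral in the $n$-particle ordered measure with an insertion of a $(+1,-1)$ pair at $z$ in the $(n-2)$-particle ordered measure; the creation rate $2$ is precisely the constant needed for the combinatorics to match for every $\rho$, because the geometric collision rate between $+$'s (density $\rho$, speed $1$) and $-$'s (density $\rho^{-1}$, speed $1$) is $\rho\cdot\rho^{-1}\cdot 2=2$. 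At $(-1,+1)$ coincidences (forbidden in $\mathcal{X}$ as genuine configurations), the matching condition \eqref{eq:boundary} collapses $F|_{y_i=y_{i+1}}$ to $F_{n-2}$ on the annihilated configuration, and these terms then pair with the subtraction piece of $\SL_{\gcr}F$.

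The main obstacle is the careful bookkeeping of Poisson weights through the Palm reindexings and the $\UC$-matching identifications, and verifying that the creation/annihilation and cross-species cancellations are exactly $\rho$-independent, so as to produce the $\rho$-free form of the right-hand side of \eqref{invstat}. A secondary subtlety is the two-sided $\UC$-continuity at endpoints: the upper-semicontinuous value $h(a)$ responds differently to a step arriving from inside versus outside $(a,b)$, and both one-sided limits must be compatible in order for $F$ to qualify as continuous on all of $\UC$; this compatibility is what keeps the boundary-at-$b$ contributions from carrying any $u$-dependence and what forces the reduced-configuration form appearing in the right-hand side of \eqref{invstat}.
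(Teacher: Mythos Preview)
Your proposal is correct and is essentially the paper's own argument: integrate $\SL_{\gdet}$ by parts over the ordered simplex to produce endpoint boundary terms plus interior coincidence terms at $y_i=y_{i+1}$; use the matching condition \eqref{eq:boundary} at $(-1,+1)$ coincidences and a reindexing at $(+1,-1)$ coincidences to cancel the interior terms exactly against $\SL_{\gcr}$ (the paper writes this via a single rate-$r$ Poisson process with bias $p$, using $r^2p(1-p)=\rho\cdot\rho^{-1}=1$, but your two-species picture is equivalent); then reinterpret the four endpoint boundary terms as the right-hand side. The periodic statement is indeed just the boundary-free case.

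One caution about the point you flag as the ``main obstacle.'' Your Palm reindexing is correct that the boundary term coming from a $+1$ particle at an endpoint carries an extra factor $\rho$, and a $-1$ particle an extra $\rho^{-1}$; these do \emph{not} cancel against anything, so the computation actually yields
\[
-\rho\!\int\!(F(\cdot;u)-F(\cdot;u+1))H^{m-1}_{a,b}\,\d\mu_\rho+\rho^{-1}\!\int\!(F(\cdot;u)-F(\cdot;u-1))H^{m+1}_{a,b}\,\d\mu_\rho.
\]
The paper's random-walk rewriting of \eqref{eq:Bdet} at the end of its proof suppresses the same factors. The identity \eqref{invstat} is therefore literally correct only for $\rho=1$, which is the sole case used later (in Prop.~\ref{prop:LP} the measure is $\mu_1$). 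So do not expect the $\rho^{\pm1}$ to disappear; they are a harmless inaccuracy in the stated formula, not a defect in your argument. Your remark about ``two-sided $\UC$-continuity at $b$'' is unnecessary: the $b$-boundary terms carry no $u$-shift simply because $F$ is supported in $(a,b)$, so a particle at $b$ drops out of $F$ without touching $u$; the $\UC$-continuity enters only at $a$, where you should read $u$ as $h(a^+)$ so that both species of step produce the correct $u\to u\pm 1$ shift.
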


\begin{proof} 
Throughout the proof it will be convenient to think of a configuration $(\vec y,\vec\sigma)$ chosen from $\mu_\rho$ as follows: sample a single Poisson point process $\Xi$ with intensity $r$, order the points $y_i$, $i\in \zz$, and then for each $i$ flip a coin to give $\sigma_i=1$ with probability $p$ and $\sigma_i=-1$ with probability $1-p$.
For this to match the above description we need to choose $r$ and $p$ so that $rp=\rho$ and $r(1-p)=\rho^{-1}$.
The parameters $r$ and $p$ are dependent, but we will keep them in because it clarifies the computation.  

Since $F$ is supported on $(a,b)$, we only need to focus on the particles of $\Xi$ inside that interval.
The number $n$ of those particles has distribution Poisson$[r(b-a)]$ and their positions $y_1 \leq \cdots \leq y_n$ are the ordering of independent uniformly distributed variables on  $[a,b]$.
The event in the indicator function defining $H^m_{a,b}$ means that there are $k$ pluses and $\ell$ minuses with $n=k+\ell$, $m=k-\ell$.
From this it follows after a computation, writing $D_{k+\ell}$ for the domain $\{a \leq \y_1 < \cdots < \y_{k+\ell} \leq b\}$, that 
\begin{equation}\label{eq:intFmu}
\int F(\vec{\y},\vec\sigma;u)H^{m}_{a,b}(\vec y,\vec\sigma)\,\d\mu_\rho(\vec{\y},\vec\sigma) = \sum_{\substack{k,\ell\geq0:\\k-\ell=m}}\upsilon_{k,\ell} \sum_{\substack{\vec\sigma \in \{-1,1\}^{k+\ell}:\\|\{i:\ts\sigma_i=1\}|=k}}\int_{D_{k+\ell}} F_{k+\ell}(\vec{y},\vec{\sigma};u)\,\d {\y_1} \dotsm \d {\y_{k+\ell}}
\end{equation}
where $\upsilon_{k,\ell} =  e^{- r(b-a)}r^{k+\ell}p^k(1-p)^{\ell}$ (and where we used that the volume of $D_{k+\ell}$ is $\frac{(b-a)^{k+\ell}}{(k+\ell)!}$).
To simplify notation below, we write $F_{k+\ell}$ for $F_{k+\ell}(\vec{y},\vec{\sigma};u)$, omit the $\d {\y_1} \cdots \d {\y_{k+\ell}}$, and write the $k,\ell$ and $\vec\sigma$ sums  as $\sum_{k,\ell,\vec\sigma}$.
Using this we have
\begin{equation}\label{eq:stoch1}
\int \SL_{\gcr} F(\vec{\y},\vec\sigma;u)H^{m}_{a,b}(\vec y,\vec\sigma)\,\d\mu_\rho(\vec{\y},\vec\sigma)=2\sum_{k,\ell,\vec\sigma}\upsilon_{k,\ell} \int_a^b\d z\int_{D_{k+\ell}}\bigl(F_{k+\ell+2}(C_z(\vec{\y},\vec{\sigma});u)- F_{k+\ell}(\vec{\y},\vec{\sigma};u)\bigr)
\end{equation}
for the stochastic part of the dynamics \eqref{eq:L-cr}, while for the deterministic part \eqref{eq:L-det} we have
\begin{equation}
\int\SL_{\gdet}F (\vec\y,\vec\sigma;u)H^{m}_{a,b}(\vec y,\vec\sigma)\,\d\mu_\rho(\vec\y,\vec\sigma)= \sum\nolimits_{k,\ell,\vec\sigma}\upsilon_{k,\ell} \sum_{i=1}^{k+\ell}(-\sigma_i)\int_{D_{k+\ell}}\tsm\p_{\y_i}F_{k+\ell}.\label{eq:bozz}
\end{equation}
We will rewrite each of these two expressions, starting with the deterministic part.

Integrating each $\p_{y_i}F$ in \eqref{eq:bozz} from $y_{i-1}$ to $y_{i+1}$ we get two terms, one from the interior points (whenever $k+\ell\geq2$),
\begin{equation}
\sum\nolimits_{k,\ell,\vec\sigma}\upsilon_{k,\ell} \Big(-\sum_{i=1}^{k+\ell-1}\sigma_i\int_{D_{k+\ell}}\tsm F_{k+\ell}\cdot\delta_{\y_i=\y_{i+1}}+\sum_{i=2}^{k+\ell}\sigma_i\int_{D_{k+\ell}}\tsm F_{k+\ell}\cdot\delta_{\y_i=\y_{i-1}}\Big),\label{eq:Sdet-pre}
\end{equation}
and one from the outer limits of the two boundary integrals,
\begin{equation}\label{eq:Bdet}
\sum\nolimits_{k,\ell,\vec\sigma}\upsilon_{k,\ell}\Big(-\sigma_{k+\ell}\int_{D_{k+\ell}}\tsm F_{k+\ell}\cdot\delta_{\y_{k+\ell}=b}+\sigma_{1}\int_{D_{k+\ell}}\tsm F_{k+\ell}\cdot\delta_{\y_{1}=a}\Big).
\end{equation}
Changing variables $i\longmapsto i+1$ in the second sum inside the brackets on the right hand side of \eqref{eq:Sdet-pre}, the whole expression in brackets becomes $\sum_{i=1}^{k+\ell-1}(\sigma_{i+1}-\sigma_i)\int_{D_{k+\ell}}\tsm F_{k+\ell}\cdot\delta_{\y_i=\y_{i+1}}$, 
which equals
\begin{equation}
-2\sum_{1\leq i\leq k+\ell-1}\uno{\sigma_i=-\sigma_{i+1}=1}\int_{D_{k+\ell}}\tsm F_{k+\ell}\cdot\delta_{\y_i=\y_{i+1}}
+2\sum_{1\leq i\leq k+\ell-1}\uno{-\sigma_i=\sigma_{i+1}=1}\int_{D_{k+\ell}}\tsm F^{\wedge i}_{k+\ell-2}\cdot\delta_{\y_i=\y_{i+1}}\label{eq:cancellat}
\end{equation}
by the continuity condition \eqref{eq:boundary}, with $F^{\wedge i}_{k+\ell-2}$ referring to $F_{k+\ell-2}$ evaluated at $(\vec{\y},\vec\sigma)$ with the $i$-th and $(i+1)$-th entries of $\vec{\y}$ and $\vec\sigma$ removed.
Now we focus on the second sum in \eqref{eq:cancellat}.
In the $i$-th term, the $\y_i$ and $\y_{i+1}$ integrals only involve the delta function, so they yield a factor $\y_{i+2}-\y_{i-1}$, where we set $y_0=a$ and $y_{k+\ell+1}=b$ for convenience.
In other words, the second sum in \eqref{eq:cancellat} equals
\[2\sum_{1\leq i\leq k+\ell-1}\uno{-\sigma_i=\sigma_{i+1}=1}\ts(y_{i+2}-y_{i-1})\int_{D^{\wedge i}_{k+\ell}}\tsm F^{\wedge i}_{k+\ell-2},\]
where the domain $D^{\wedge i}_{k+\ell}$ is now $\{a\leq y_1<\dotsm<y_{i-1}<y_{i+2}<\dotsm<y_{k+\ell}\leq b\}$.
If we now perform the summation over $\vec\sigma$ on this term, the sums over $\sigma_i$ and $\sigma_{i+1}$ disappear (the integrand does not depend on these variables) and then, after relabeling variables, the remaining sum over $i$ is telescopic and the result is (recalling that this term is present only for $k+\ell\geq2)$
\[(b-a)\uno{k+\ell\geq2}\sum\nolimits_{k-1,\ell-1,\vec\sigma}\int_{D_{k+\ell-2}}\tsm F_{k+\ell-2}.\]
Using this above shows that \eqref{eq:Sdet-pre} is given by
\begin{equation}
-2\sum\nolimits_{k,\ell,\vec\sigma}\upsilon_{k,\ell}\sum_{\substack{1\leq i\leq k+\ell-1:\\\sigma_i=-\sigma_{i+1}=1}}\int_{D_{k+\ell}}\tsm F_{k+\ell}\cdot\delta_{\y_i=\y_{i+1}}
+2r^2p(1-p)(b-a)\sum\nolimits_{k,\ell}\upsilon_{k,\ell}\int_{D_{k+\ell}}\tsm F_{k+\ell},\label{eq:exprLdet}
\end{equation}
where in the second term we have changed variables $(k,\ell)\longmapsto(k+1,\ell+1)$ in the sum, giving the $r^2p(1-p)$ prefactor. 
We conclude that 
\begin{equation}\label{eq:exprLdetcr}
\int\! \SL_{\gdet} F(\vec y,\vec\sigma;u)H^{m}_{a,b}(\vec y,\vec\sigma)\,\d\mu_\rho(\vec y,\vec\sigma)=\eqref{eq:Bdet}+\eqref{eq:exprLdet}.
\end{equation}

Now consider the right hand side of \eqref{eq:stoch1}, and split it as the difference of two expressions.
In the second one the $z$ integral simply yields an extra factor $b-a$, giving $\frac{1}{r^2p(1-p)}$ times the second term in \eqref{eq:exprLdet}.
The first one can be rewritten according to the position $z$ of the newly created pair among the $\y_i$'s as 
\begin{equation}
    2\sum\nolimits_{k,\ell,\vec\sigma}\upsilon_{k,\ell}\sum_{\substack{1\leq i\leq k+\ell+1\\\ts\sigma_i=-\sigma_{i+1}=1}}\int_{D_{k+\ell+2}}F_{k+\ell+2}\cdot\delta_{\y_i=\y_{i+1}},\label{eq:newpair}
\end{equation}
and after changing variables $(k,\ell)\longmapsto(k-1,\ell-1)$ we get $r^2p(1-p)$ times the first term in \eqref{eq:exprLdet}.
Using now $r^2p(1-p)=1$ we deduce from this that $\int\! \SL_{\gcr} F(\vec y,\vec\sigma;u)H^{m}_{a,b}(\vec y,\vec\sigma)\,\d\mu_\rho(\vec y,\vec\sigma)$ equals minus \eqref{eq:exprLdet}, and hence from \eqref{eq:exprLdetcr} that
\begin{equation}
\int\! \SL F(\vec y,\vec\sigma;u)H^{m}_{a,b}(\vec y,\vec\sigma)\,\d\mu_\rho(\vec y,\vec\sigma) = \eqref{eq:Bdet}.
\end{equation}
 
Although we did not introduce all the necessary notation for the periodic case, it should be clear from the computations that in that case we would get $\int \SL_L F(\vec{\y},\vec\sigma;u)\,\d\mu_{L,\rho}(\vec{\y},\vec\sigma)=0$.

It only remains to rewrite \eqref{eq:Bdet}.
Consider the sum corresponding to the first term in the brackets in that expression.
Here $ F_{k+\ell}\cdot\delta_{\y_{k+\ell}=b}$ is being integrated against the measure $\upsilon_{k,\ell}\uno{a\leq \y_1<\dotsm<\y_{k+\ell}\leq b}$ over $k,\ell\geq0$ with $k-\ell=m$ and over choices of $\vec\sigma\in\{-1,1\}^{k+\ell}$ with $k$ of the jumps being up.
This measure simply encodes the jump times and steps of the random walk $\fN$ (with up jumps at rate $\rho$ and down jumps at rate $\rho^{-1}$) on the event that it goes from $0$ at time $a$ to $m$ at time $b$.
In the case $\sigma_{k+\ell}=1$,  the delta function in the integrand corresponds to the last jump being up, and taking place at time $b$.
We may think of this as the walk going to $m-1$ instead of $m$, with $k-1$ up jumps (note here $k\geq1$ because otherwise $\sigma_{k+\ell}$ cannot take the value $-1$).
Hence (and since $F$ is supported on the open interval $(a,b)$) this term can be written as $\int F(\vec y,\vec\sigma;u)H^{m-1}_{a,b}(\vec y,\vec\sigma)\,\d\mu_\rho(\vec y,\vec\sigma)$.
If $\sigma_{k+\ell}=-1$, a similar thing happens, except that now we need to multiply by the indicator function  $H^{m+1}_{a,b}$.
This gives the first terms in each of the integrals on the right hand side of \eqref{invstat}.
For the other term on the right hand side of \eqref{eq:Bdet}, which involves a $\delta_{\y_1=a}$ factor, we proceed similarly and get terms which involve the walk going from $\pm1$ to $m$.
In order to express them in terms of integrals containing indicator functions $H^{m\pm1}_{a,b}$ we need to shift the path associated to $u$ and $(\vec{\y},\vec\sigma)$ to paths going from $0$ to $m\mp1$, and to this end we need to adjust the absolute height parameter $u$ to $u\pm1$, yielding the other two terms on the right hand side of \eqref{invstat}.
\end{proof}

\begin{proof}[Proof of Prop. \ref{prop:invm}]
First of all, because of the explicit construction described at the beginning of this section, the martingale problem for $\SL_L$ is well-posed (Thms. 4.1 and 2.6 of \cite{ethierKurtz}) and therefore by Echeverria's theorem (Prop. 9.2 of \cite{ethierKurtz}), a measure is invariant for the process if for all $f$ in a core for $(\SL_L,\mathscr{D}(\SL_L))$,
$\int \SL_L f \d\mu =0$.
This is the second statement of Prop. \ref{prop:invm-general}, so we know that each $\mu_{L,\rho}$ is invariant for the process on $[-L,L)$ with periodic boundary conditions.  Now start the full line $(\vec{y},\vec{\sigma})$ process with measure $\mu_\rho$ and let $F$ be any local function and $t>0$.  We run the periodic process on $[-L,L)$ using the same noise as the full line process and using the same initial data, all restricted to $[-L,L)$.
If $L$ is chosen large enough depending on the range of $F$ and $t$, then the configuration of the two processes at time $t$ coincides within the range of $F$, because of the finite speed of propagation.   
But we have shown that the distribution of the periodic process is $\mu_{L,\rho}$.
Therefore for the full line process $\ee[ F(\vec{y}(t),\vec{\sigma}(t))] =\ee_{\mu_\rho}[ F]$.
Since local functions $F$ are enough to determine the measure $\mu_\rho$, we have proven that $\mu_\rho$ is invariant. 
\end{proof}

\subsection{Adjoint generator}\label{sec:adjoint}

Our next goal is to compute the adjoint of $\SL$ with respect to the  $ L^2(E^{a,b},\mu_\rho)$ inner product (for some fixed $\rho>0$).
Recall that we are interpreting elements of $E^{a,b}$ as encoding the jumps of an upper semi-continuous function, and endowing it with the topology that it inherits from $\UC$.
Let us now interpret instead the elements of $E^{a,b}$ as the jumps of a lower semi-continuous function $g\in\LC=\{f\!:-f\in\UC\}$,
with the topology of local Hausdorff convergence of epigraphs. 
A function $G\!:E^{a,b}\longrightarrow\rr$ is therefore continuous  if its restrictions to $E^{a,b}_n$ satisfy the \emph{dual matching conditions}
\begin{equation}\label{eq:boundary2}
\lim_{\y_{i}-\y_{i-1}\searrow 0}G_n( \ldots,(\y_{i-1},+1),(\y_{i},-1),\ldots)= G_{n-2}( \ldots, (\y_{i-2},\sigma_{i-2}), (\y_{i+1},\sigma_{i+1}),\ldots).
\end{equation}

Define
\begin{equation}\label{eq:L-cr-star}
\SL^*_{\gcr} G_n(\vec{x},\vec{\sigma}) = 2 \int_{-\infty}^\infty \bigl(G_{n+2}( C^*_x(\vec{\y},\vec{\sigma}) )- G_n(\vec{\y},\vec{\sigma} )\bigr) \d\vec y,
\end{equation}
where the configuration $C^*_z(\vec{\y},\vec{\sigma}) $ is obtained by inserting $(z, -1)$ and $(z, +1)$ into $(\vec{\y},\vec{\sigma})$.
We will also use $\nu_\rho$ to denote the product measure of counting measure on the height $h(0)\in\zz$ at the origin with the invariant measure $\mu_\rho$, $\rho>0$.

\begin{prop}\label{prop:adjoint-gen}
\leavevmode
\begin{enumerate}[label=\uptext{\arabic*.}]
\item  In the above setting, the adjoint $\SL^*$ of $\SL$ on $L^2(\mu_\rho)$  is given by
\begin{equation}\label{eq:L-adj}
\SL^* = - \SL_{\gdet} + \SL^*_{\gcr}.
\end{equation}
\item  Let $\hat{\SL}^*$ be the adjoint of $\hat{\SL}$ on $L^2(\nu_\rho)$.  
Then $\hat{\SL}^*$ is the generator of the backwards PNG process $h^{\gets}(t,x)$ introduced after \eqref{varform2}.
\end{enumerate}
\end{prop}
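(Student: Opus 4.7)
The plan is to verify the adjoint identity $\langle F, \SL G\rangle_{L^2(\mu_\rho)} = \langle \SL^* F, G\rangle_{L^2(\mu_\rho)}$ on a dense class consisting of $F$ local and continuous on $\UC$ and $G$ local and continuous on $\LC$, both smooth inside each $E^{a,b}_n$. Writing each side as a sum over $k,\ell,\vec\sigma$ of integrals on the simplex $D_{k+\ell}$ against the weight $\upsilon_{k,\ell} = e^{-r(b-a)} r^{k+\ell} p^k (1-p)^\ell$ exactly as in the proof of Prop. \ref{prop:invm-general}, I would treat the deterministic and creation parts separately.

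For the deterministic piece, integration by parts in each $y_i$ on $(y_{i-1}, y_{i+1})$ (with $y_0 = a$, $y_{k+\ell+1} = b$) converts $-\sigma_i\partial_{y_i}G$ into $+\sigma_i\partial_{y_i}F$, producing $-\SL_\gdet F$ on the $F$ side. The outer boundaries at $y_1=a$ and $y_{k+\ell}=b$ vanish because $F,G$ are supported in the open interval $(a,b)$, while the interior boundaries telescope exactly as in the derivation of \eqref{eq:cancellat} into a difference of two sums: one over coincident $(-,+)$ pairs ($\sigma_i=-$, $\sigma_{i+1}=+$) and one over coincident $(+,-)$ pairs. I would then apply the $\UC$-matching condition \eqref{eq:boundary} to collapse $F$ at the $(-,+)$ coincident pairs into $F_{k+\ell-2}^{\wedge i}$, and the dual $\LC$-matching condition \eqref{eq:boundary2} to collapse $G$ at the $(+,-)$ coincident pairs into $G_{k+\ell-2}^{\wedge i}$.

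For the creation piece, expand $\int F\,\SL_\gcr G\,d\mu_\rho$ and $\int \SL^*_\gcr F\cdot G\,d\mu_\rho$ using \eqref{eq:L-cr} and \eqref{eq:L-cr-star}. The diagonal $-FG$ contributions cancel between the two expressions, and for the insertion pieces I would reindex $(k,\ell)\mapsto(k-1,\ell-1)$, absorbing the factor $r^2 p(1-p)=\rho\cdot\rho^{-1}=1$ that reflects the invariance of $\mu_\rho$ under insertion of either pair orientation (compare with \eqref{eq:newpair}). The $\SL_\gcr G$ insertion term then becomes a sum of the form $\sum_{i:\,\sigma_i=+,\,\sigma_{i+1}=-}\int F_{k+\ell-2}^{\wedge i}\,G_{k+\ell}\,\delta_{y_i=y_{i+1}}$, while the $\SL^*_\gcr F$ insertion term becomes $\sum_{i:\,\sigma_i=-,\,\sigma_{i+1}=+}\int F_{k+\ell}\,G_{k+\ell-2}^{\wedge i}\,\delta_{y_i=y_{i+1}}$. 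These are precisely the expressions produced by the deterministic boundary terms after the matching collapse, with the correct signs, so the contributions cancel and the adjoint identity holds. The main obstacle is this combinatorial bookkeeping; what makes everything fit is the duality between the two matching conditions, which is exactly paired with the two types of coincident-pair boundaries from the integration by parts, together with the numerical coincidence $r^2p(1-p)=1$.

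For part 2, construct the backward PNG process $h^\gets(t;f)$ by the same deterministic-plus-Poisson recipe as in Sec. \ref{states}, but with the roles reversed: up-steps move right at unit speed, down-steps move left, upward peaks annihilate upon collision, and $(-,+)$-pairs (downward troughs) are nucleated at rate $2$. This is a well-defined Markov semigroup on $\LC$ that satisfies the pathwise identity $h^\gets(t;f)=-h(t;-f)$ under the natural coupling of the Poisson noise. Reading off its $(\vec y,\vec\sigma)$-generator together with the associated absolute-height dynamics, the deterministic part picks up a sign to give $-\SL_\gdet$, the creation part nucleates the opposite pair orientation to give $\SL^*_\gcr$, and the total coincides with the $\hat\SL^*$ produced in Part 1. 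The finite-time form of this identification is precisely the skew time reversal \eqref{eq:timerev2}, which holds for the backward process by construction.
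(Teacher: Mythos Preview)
Your approach is essentially the paper's---integration by parts on the simplex for $\SL_\gdet$, coincident-pair boundary terms handled via the two matching conditions, and the reindexing $(k,\ell)\mapsto(k-1,\ell-1)$ together with $r^2p(1-p)=1$ for the creation pieces. The structural ideas are all present, but the setup contains a swap that makes the argument fail as written.

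You write the adjoint identity as $\langle F,\SL G\rangle=\langle\SL^*F,G\rangle$ with $F$ continuous on $\UC$ and $G$ continuous on $\LC$. This puts $\SL$ on the $\LC$ function and $\SL^*$ on the $\UC$ function, which is backwards: the domain of $\SL$ is the $\UC$-continuous class (condition \eqref{eq:boundary}) and that of $\SL^*$ the $\LC$-continuous class (condition \eqref{eq:boundary2}). Concretely, with your labeling $\SL_\gcr G\equiv0$, since $G_{n+2}(C_z(\vec y,\vec\sigma))=G_n(\vec y,\vec\sigma)$ by \eqref{eq:boundary2} (the inserted $(+,-)$ pair collapses for $\LC$-continuous $G$), and likewise $\SL^*_\gcr F\equiv0$ by \eqref{eq:boundary}. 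So neither creation insertion term is the nontrivial sum you claim, and nothing remains to cancel the interior $\SL_\gdet$ boundary terms. Equivalently, your deterministic boundary yields $F^{\wedge i}G$ at $(-,+)$ pairs and $FG^{\wedge i}$ at $(+,-)$ pairs, while your creation reinterpretation yields $F^{\wedge i}G$ at $(+,-)$ pairs and $FG^{\wedge i}$ at $(-,+)$ pairs; the pair types are swapped and the expressions do not match.

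The fix is simple: either take $G$ to be $\UC$-continuous and $F$ to be $\LC$-continuous in $\langle F,\SL G\rangle=\langle\SL^*F,G\rangle$, or (as the paper does) keep $F\in\UC$, $G\in\LC$ but prove $\langle\SL F,G\rangle=\langle F,\SL^*G\rangle$. Then the relevant creation pieces are $\langle\SL_\gcr F,G\rangle$ (involving $F(C_z)$, which does \emph{not} collapse) and $\langle F,\SL^*_\gcr G\rangle$ (involving $G(C^*_z)$, which does \emph{not} collapse), and after reindexing these produce exactly the $(+,-)$ and $(-,+)$ boundary contributions with the right signs. One smaller point: the outer boundaries at $y_1=a$ and $y_{k+\ell}=b$ do not vanish merely because $F,G$ are local on $(a,b)$---locality only forces $F_nG_n|_{y_1=a}=F_{n-1}G_{n-1}$; the actual mechanism is that after summing over the spin of the boundary particle and shifting $(k,\ell)$, the left- and right-endpoint contributions (each equal to $\pm(\rho-\rho^{-1})\int FG\,\d\mu_\rho$) cancel one another.

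Part 2 is fine and matches the paper's brief justification.
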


\begin{proof}
The second statement is direct, so we focus on the first one.
Assume $F$ and $G$ have common support $(a,b)$.
Proceeding analogously to \eqref{eq:intFmu}, integration by parts shows that $\int (\SL_{\gdet} F) G\,\d\mu_\rho+\int F (\SL_{\gdet} G)\,\d\mu_\rho$ equals
\begin{multline}
\textstyle\sum_{k,\ell=0}^\infty \sum_{\vec\sigma \in \{-1,1\}^{k+\ell}}\upsilon_{k,\ell}\left[\sum_{i = 2}^{k+\ell} \sigma_i \int_{\y_{i-1} = \y_{i}} F_{k+\ell} G_{k+\ell}-\sum_{i = 1}^{k+\ell-1} \sigma_i \int_{\y_i = \y_{i+1}} F_{k+\ell} G_{k+\ell}\right]\\
\textstyle=\sum_{k,\ell=0}^\infty \sum_{\vec\sigma \in \{-1,1\}^{k+\ell}}\upsilon_{k,\ell}\sum_{i = 1}^{k+\ell-1} (\sigma_{i+1}-\sigma_i) \int_{\y_i = \y_{i+1}} F_{k+\ell} G_{k+\ell}.
\end{multline}
The terms on the right hand side with $\sigma_i=\sigma_{i+1}$ vanish, while those with $\sigma_i\neq\sigma_{i+1}$ correspond to a $(\sigma_i,-\sigma_i)$ pair at $\y_i$, and thus using \eqref{eq:boundary} and \eqref{eq:boundary2} we get that the above equals
\begin{equation}
\textstyle\sum_{k,\ell=1}^\infty \sum_{\vec\sigma \in \{-1,1\}^{k+\ell}}\upsilon_{k,\ell}
\sum_{i = 1}^{k+\ell-1}\!\int_{\y_i = \y_{i+1}}\! \big(F^{\wedge i}_{k+\ell-2} G_{k+\ell}\tts\uno{\sigma_{i+1}=-\sigma_i=1}-F_{k+\ell} G^{\wedge i}_{k+\ell-2}\tts\uno{\sigma_i=-\sigma_{i+1}=1}\big),
\end{equation}
where we are using the notation from the proof of Prop. \ref{prop:invm-general}.
We have restricted the sum in $k$ and $\ell$ to start from $1$, because the indicator functions inside the integral imply that there has to be at least one up jump and at least one down jump. 
Now we may reinterpret the integral $\sum_{i = 1}^{k+\ell-1}\int_{\y_i = \y_{i+1}}F^{\wedge i}_{k+\ell-2} G_{k+\ell}\uno{\sigma_{i+1}=-\sigma_i=1}$ as $\int_a^b\left(\int F_{k+\ell-2}G_{k+\ell}(C^*_z(\vec{\y},\vec\sigma))\right)\tsm\d z$, where the inner integral is over $k+\ell-2$ variables.
The analogous thing can be done for the other term in the integral and then, after changing $(k,\ell)\longmapsto(k-1,\ell-1)$ and using that the change of measure $r^2p(1-p)=1$, we get that the above expression equals
\[\textstyle2\sum_{k,\ell=0}^\infty \sum_{\vec\sigma \in \{-1,1\}^{k+\ell}}\upsilon_{k,\ell}\int\left(\int\big(F_{k+\ell}G_{k+\ell+2}(C^*_z(\vec{\y},\vec \sigma))-F_{k+\ell+2}(C^*_z(\vec{\y},\vec \sigma))G_{k+\ell})\right)\d z.\]
From \eqref{eq:L-cr} and \eqref{eq:L-cr-star}, the right hand side is exactly
$-\int\tsm \big((\SL_{\gcr}^* F) G - F (\SL_{\gcr}^* G)\big) \d\mu_\rho$.
\end{proof}

The computation made in Prop. \ref{prop:invm-general} for integration of $\SL$ against the pinned invariant measure can be repeated for the adjoint generator $\SL^*$.
This is the version of the result we will actually need to use below:

\begin{prop}\label{prop:invm-general-adjoint}
In the context of Prop. \ref{prop:invm-general}, suppose now that $F$ is continuous in $\LC$.
Then
\begin{multline}\label{invstat-adjoint}
\int \SL^* F(\vec{\y},\vec\sigma;u)H^m_{a,b}(\vec y,\vec\sigma)\,\d\mu_\rho(\vec{\y},\vec\sigma)=\int (F(\vec{\y},\vec\sigma;u)-F(\vec{\y},\vec\sigma;u+1))H^{m-1}_{a,b}(\vec y,\vec\sigma)\,\d\mu_\rho(\vec{\y},\vec\sigma)\\
-\int (F(\vec{\y},\vec\sigma;u)-F(\vec y,\vec\sigma;u-1)H^{m+1}_{a,b}(\vec y,\vec\sigma)\,\d\mu_\rho(\vec{\y},\vec\sigma).
\end{multline}
\end{prop}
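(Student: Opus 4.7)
The plan is to reduce Prop. \ref{prop:invm-general-adjoint} to Prop. \ref{prop:invm-general} via the spin-flip involution $\tilde H(\vec{y}, \vec{\sigma}; u) = H(\vec{y}, -\vec{\sigma}; -u)$. Reversing the sign of each $\sigma_i$ exchanges the matching conditions \eqref{eq:boundary} and \eqref{eq:boundary2}, so $\tilde F$ is continuous in $\UC$ whenever $F$ is continuous in $\LC$, and the smoothness of each $F_n$ in $y_i$ transfers to $\tilde F_n$. The same involution pushes $\mu_\rho$ forward to $\mu_{\rho^{-1}}$ (the $+$ and $-$ rates swap) and sends $H^m_{a, b}(\vec{y}, \cdot)$ to $H^{-m}_{a, b}(\vec{y}, \cdot)$.

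The heart of the argument is the intertwining $\SL \tilde F = \widetilde{\SL^* F}$. For the deterministic part, the factor $\sigma_i$ in \eqref{eq:L-det} combined with the sign flip inside $\tilde F$ produces $\SL_{\gdet} \tilde F = -\widetilde{\SL_{\gdet} F}$. For the stochastic part, when one computes $\SL_{\gcr} \tilde F$ from \eqref{eq:L-cr}, the $(+1, -1)$ pair inserted by $C_z$ becomes, after the spin flip inside $\tilde F$, a $(-1, +1)$ pair inserted into the spin-flipped configuration, i.e.\ the insertion $C^*_z$ from \eqref{eq:L-cr-star} applied to $(\vec y, -\vec\sigma)$; hence $\SL_{\gcr} \tilde F = \widetilde{\SL^*_{\gcr} F}$. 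Summing and using \eqref{eq:L-adj} yields the intertwining.

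Putting the pieces together, $\int \SL^* F(\vec{y}, \vec{\sigma}; u)\, H^m_{a, b}(\vec{y}, \vec{\sigma})\, \d\mu_\rho$ equals $\int \SL \tilde F(\vec{y}, -\vec{\sigma}; -u)\, H^m_{a, b}(\vec{y}, \vec{\sigma})\, \d\mu_\rho$, which under the change of variables $\vec{\sigma} \longmapsto -\vec{\sigma}$ becomes $\int \SL \tilde F(\vec{y}, \vec{\sigma}; -u)\, H^{-m}_{a, b}(\vec{y}, \vec{\sigma})\, \d\mu_{\rho^{-1}}$. Applying Prop. \ref{prop:invm-general} to $\tilde F$ (which is $\UC$-continuous) at parameter $-u$, index $-m$, and measure $\mu_{\rho^{-1}}$, and then reversing the change of variables using $\tilde F(\vec{y}, -\vec{\sigma}; -u \pm 1) = F(\vec{y}, \vec{\sigma}; u \mp 1)$, produces \eqref{invstat-adjoint}; the sign difference between \eqref{invstat} and \eqref{invstat-adjoint} emerges naturally from the simultaneous replacements $m \to -m$ and $u \to -u$.

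The only substantive verification is the intertwining, and within it the identification $\SL_{\gcr} \tilde F = \widetilde{\SL^*_{\gcr} F}$, which hinges on recognising that the spin flip exchanges the insertions $C_z$ and $C^*_z$. Equivalently, one could prove the result by repeating the long direct computation of Prop. \ref{prop:invm-general} with $\SL^* = -\SL_{\gdet} + \SL^*_{\gcr}$ and with \eqref{eq:boundary2} replacing \eqref{eq:boundary}; the same cascade of interior cancellations from \eqref{eq:exprLdet}--\eqref{eq:newpair} occurs with the roles of coincident $(+1, -1)$ and $(-1, +1)$ pairs exchanged, and only boundary terms at $y = a, b$ survive, now with opposite sign to those in \eqref{eq:Bdet}, consistent with the sign change in \eqref{invstat-adjoint}.
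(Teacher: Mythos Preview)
Your proof is correct and takes a genuinely different route from the paper. The paper simply says that the proof is ``the same as that of Prop.~\ref{prop:invm-general}, using the formula $\SL^* = -\SL_{\gdet} + \SL^*_{\gcr}$ and the fact that $F$ now satisfies the dual conditions \eqref{eq:boundary2}, which together lead to the same expression as in the right hand side of \eqref{invstat} with a minus sign in front.'' In other words, the paper reruns the entire integration-by-parts and inclusion-exclusion argument, tracking how the sign on $\SL_{\gdet}$ and the swap from \eqref{eq:boundary} to \eqref{eq:boundary2} conspire to flip the overall sign of the boundary terms \eqref{eq:Bdet}.

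Your spin-flip involution $\tilde F(\vec y,\vec\sigma;u)=F(\vec y,-\vec\sigma;-u)$ is a cleaner way to get there: it packages all three sign changes (in $\SL_{\gdet}$, in the matching conditions, and in the creation operator $C_z$ versus $C_z^*$) into a single symmetry, and then invokes Prop.~\ref{prop:invm-general} at parameters $(-u,-m,\rho^{-1})$ rather than redoing its proof. The intertwining $\SL\tilde F=\widetilde{\SL^*F}$ is checked correctly, and the bookkeeping with $H^{-m}_{a,b}$ and $\mu_{\rho^{-1}}$ goes through since Prop.~\ref{prop:invm-general} is stated for arbitrary $\rho>0$. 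What your approach buys is that no part of the long computation \eqref{eq:stoch1}--\eqref{eq:newpair} needs to be revisited; what the paper's approach buys is that it makes transparent, line by line, exactly where the sign flip enters (namely in \eqref{eq:Bdet}). You also sketch the paper's direct route in your final paragraph, so you have both arguments in hand.
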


The proof is the same as that of Prop. \ref{prop:invm-general}, using the formula $\SL^* = - \SL_{\gdet} + \SL^*_{\gcr}$ and the fact that $F$ now satisfies the dual conditions \eqref{eq:boundary2}, which together lead to the same expression as in the right hand side of \eqref{invstat} with a minus sign in front.

\subsection{Backward equation}\label{sec:backw}

It would be convenient if we could conclude from \eqref{HYbackeq} that our transition probabilities $F(t,h)$, defined by 
\begin{equation}\label{mtp2}
F(t,h; \vec{x},\vec{r}) = \pp_{h}( h(t,x_i)\le r_i, i=1,\ldots,n)= \PPP_t \varphi(h),\quad \varphi= \uno{h(x_i)\le r_i, i=1,\ldots,n},
\end{equation}
satisfy the backward equation
$\partial_t F= \hat\SL F$.
Unfortunately, this function $\varphi$ is \emph{not} continuous on $\UC$, and it does not follow.  In fact, even for $t>0$, $F(t,h)\not\in \mathscr{D}(\hat\SL)$ and the backward equation cannot hold in the classical sense.  
However, we are forced to deal with them as it is only this particular class of functions for which we have exact formulas.
We now show that the backward equation is satisfied in a weak sense.

It is not too hard to see that if $h$ corresponds to $(y_i,\sigma_i)$ then the transition probabilities \eqref{mtp2}, for given $x_1,\dotsc,x_n$, can have step discontinuities on the \emph{shock set},
\begin{equation}
\mathcal S (\vec{x})= \{(\vec{y},\vec{\sigma}, t)\!:\, y_i- \sigma_i t = x_j\uptext{ for some }i,j\}.
\end{equation}
Note that this set depends on $\vec{x}$.
It is essentially just a finite collection of lines of slope $\pm 1$ and its complement is open. We will refer to any such object below, for some finite $\vec{x}$, as ``a shock set''.
One can also think of it in terms of the height function $h$.
It is the collection of $h$ with an up jump at some point $x_k+t$ or a down jump at some point $x_k-t$, for some $k$.
At such points, $\p_t F$ and $\p_{y_i}F$ are ill-defined for particular values of the $r_i$'s and it is impossible for $\p_tF=\hat{\SL}F$ to hold in the classical sense.
On the other hand, it is always true at such points that the Dirac delta functions in $(\p_t - \p_{y_i})F$ cancel, leaving at most a bounded singularity.
In other words, $F$ satisfies $(\p_t-\hat{\SL})F=0$ off a co-dimension $1$ set $\mathcal S(\vec{x})$ where it is still well behaved, and this can be used to give meaning to the backward equation for $F$.
For our purposes, the following weak version of the backward equation is the most convenient:

\begin{prop}\label{prop:4.5} 
$F$ given by \eqref{mtp2} is a weak solution of the backward equation in the following sense:
for any $\varphi=\varphi(s,h)$ satisfying 
1. $\varphi$ is continuously differentiable off a shock set,
2. $\varphi(s,h)=0$ for all $s\in[0,t]$ if $\max_ir_0(t,x_i)\le -K$ for some $K>0$,
3. $|\varphi(s,h)|$ and $|(\p_s+\hat{\SL}^*)\varphi(s,h)|$ are bounded uniformly in $s\in[0,t]$ and $h$ in the support of $\nu_\rho$,
we have
\begin{equation}\label{ibp3}
 \int_0^t\int (-\p_s - \hat{\SL}^*)\varphi F\ts\d\nu_\rho\ts\d s = -\int \varphi (t) F(t)\ts\d\nu_\rho +\int \varphi (0) F(0)\ts\d\nu_\rho.
\end{equation}
\end{prop}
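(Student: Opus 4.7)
The plan is to derive \eqref{ibp3} by starting from a classical backward equation for a regularized version of $F$ and passing to the limit, using the duality from Prop.~\ref{prop:adjoint-gen} together with the invariance of $\nu_\rho$ from Prop.~\ref{prop:invm}. First, approximate $\chi(h)\coloneqq\uno{h(x_i)\leq r_i,\,i=1,\dotsc,n}$ from above by continuous functions $\chi_\delta\in\mathscr{C}(\UC)$ with $0\leq\chi_\delta\leq 1$ and $\chi_\delta\searrow\chi$ pointwise as $\delta\searrow 0$ (for instance by smoothing each indicator into a Lipschitz drop of width $\delta$, which is well defined on $\UC$ since $\chi_\delta$ only sees the restrictions of $h$ to neighborhoods of the $x_i$'s). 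Setting $F_\delta(s,h)\coloneqq\PPP_s\chi_\delta(h)$, the Hille--Yosida statement \eqref{HYbackeq} places $F_\delta(s,\cdot)\in\mathscr{D}(\hat\SL)$ with $\p_s F_\delta=\hat\SL F_\delta$ for all $s\in(0,t]$.

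For a test function $\varphi$ satisfying (1)--(3), I then compute
\[\frac{\d}{\d s}\int \varphi(s,h)\,F_\delta(s,h)\,\d\nu_\rho(h)=\int\p_s\varphi\,F_\delta\,\d\nu_\rho+\int\varphi\,\hat\SL F_\delta\,\d\nu_\rho.\]
Invoking the adjoint relation from Prop.~\ref{prop:adjoint-gen} to move the generator from $F_\delta$ onto $\varphi$, the second term equals $\int\hat\SL^*\varphi\,F_\delta\,\d\nu_\rho$, and integrating $s$ from $0$ to $t$ yields \eqref{ibp3} with $F_\delta$ in place of $F$.

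Now send $\delta\searrow 0$. By monotone convergence inside the expectation defining $\PPP_s$, $F_\delta(s,h)\searrow F(s,h)$ pointwise for every $h\in\UC$. Since $0\leq F_\delta\leq 1$ and both $|\varphi|$ and $|(\p_s+\hat\SL^*)\varphi|$ are uniformly bounded by (3), the support condition (2)---combined with the fact that under $\nu_\rho$ the event $\{\max_i r_0(t,x_i)>-K\}$ has finite mass up to a $\nu_\rho$-null tail---furnishes an integrable dominating function uniformly in $(\delta,s)$, so dominated convergence transfers \eqref{ibp3} from $F_\delta$ to $F$.

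The main obstacle is justifying the adjoint identity $\int\varphi\,\hat\SL F_\delta\,\d\nu_\rho=\int\hat\SL^*\varphi\,F_\delta\,\d\nu_\rho$: hypothesis (1) places $\varphi$ only piecewise in $C^1$ off a shock set, so $\varphi$ does not lie in the classical domain of $\hat\SL^*$ as computed in Prop.~\ref{prop:adjoint-gen}. I would resolve this by mollifying $\varphi$ in the variables along which it is smooth, establishing the identity for the mollified test function directly from the explicit formula $\hat\SL^*=-\SL_{\gdet}+\SL^*_{\gcr}$ (plus the dual of the absolute-height part of $\hat\SL$, which is trivial), and then letting the mollification vanish. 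The shock set being a codimension-one union of deterministic lines $y_i-\sigma_i s=x_j$ guarantees that it carries no $\d s\otimes\d\nu_\rho$ mass, and the dual matching conditions \eqref{eq:boundary2} forbid boundary contributions in the limit, so hypothesis (3) ensures the limiting identity holds with exactly the $\hat\SL^*\varphi$ appearing in \eqref{ibp3}.
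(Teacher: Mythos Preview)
Your overall architecture---approximate the discontinuous $\chi$ by a continuous $\chi_\delta$, use Hille--Yosida to get a classical backward equation for $F_\delta=\PPP_s\chi_\delta$, dualize, integrate in $s$, and pass to the limit---is the same as the paper's. The paper carries this out with the spatial mollification
\[
f_\ep(h)=(2\ep)^{-n}\int_{|z_i-x_i|<\ep,\,i=1,\dotsc,n}\uno{h(z_i)\le r_i,\,i=1,\dotsc,n}\,\d\vec z,
\]
and then passes to the limit via a careful dominated convergence argument.

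There are two genuine soft spots in your version. First, your proposed $\chi_\delta$ (``smoothing each indicator into a Lipschitz drop of width $\delta$'') is a Lipschitz function of the point values $h(x_i)$, and point evaluation $h\mapsto h(x_i)$ is \emph{not} continuous on $\UC$; so this $\chi_\delta$ is not in $\mathscr C(\UC)$ and Hille--Yosida does not apply to it. You need a spatial regularization, exactly as in $f_\ep$ above, to land in $\mathscr C(\UC)$.

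Second, the dominated convergence step is incomplete. The measure $\nu_\rho$ is the product of counting measure on $h(0)\in\zz$ with $\mu_\rho$, hence infinite, and the event $\{\max_i r_0(t,x_i)>-K\}$ does \emph{not} have finite $\nu_\rho$-mass (it contains all configurations with $h(0)$ large). Condition~(2) only truncates from below; to get an integrable dominator you must also use that $F_\delta(s,h)=0$ once $h(0)$ is large enough (because the deterministic part of the dynamics already pushes $h(s,x_i)$ above $r_i$). The paper makes this two-sided truncation explicit by slicing on $A_k=\{\overline h-\underline h=k\}$ with $\nu_\rho(A_k)\le \tilde C^k/k!$ and observing that on $A_k$ both $\varphi$ (via condition~(2)) and $F_\delta$ (via the dynamics) vanish outside a finite range of $h(0)$.

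Your discussion of the duality step (mollifying $\varphi$ to place it in the domain of $\hat\SL^*$) is reasonable; the paper simply says ``integrating by parts'' here, treating this as routine given that $\PPP_t f_\ep\in\mathscr D(\hat\SL)$.
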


\begin{proof}
Fix $\vec{x},\vec{r}$ and recall $\nu_\rho$ is the product of counting measure on $h(0)\in\zz$ with the invariant measure $\mu_\rho$. The function $f_\ep(h)=(2\ep)^{-n}\int_{|z_i- x_i|<\ep,\tts i=1,\dotsc,n}\uno{h(z_i)\le r_i,\tts i=1,\ldots,n}\ts\d\vec{z}$ is in $\mathscr{C}_0$ and so, by the general theory described at the beginning of Sec. \ref{PNGMarkov},  $\PPP_t f_\ep= \ee_h[f_\ep(h(t))]$ does satisfy $(\p_t-\hat\SL) \PPP_t f = 0$.   Integrating by parts, \eqref{ibp3} holds with $F$ replaced by $\PPP_t f_\ep$. Furthermore, from the explicit description of the process at the beginning of Sec. \ref{PNGMarkov}, it is clear that $\PPP_t f_\ep(h) \longrightarrow F(t,h)= F(t,h; \vec{x},\vec{r})$ as $\ep\searrow0$ uniformly in $h$ and bounded sets of $t$.
Therefore, taking $\ep\searrow0$ in the (weak) backward equation \eqref{ibp3} for $\PPP_t f_\ep$, we deduce that the same equation holds for $F$ provided that we can pass the limit through the integration.

In order to justify this, the only real issue is the sum over $h(0)\in\zz$ implicit in $\nu_\rho$.
Let $I\subseteq\rr$ be the smallest (finite) interval containing the origin, each $x_i$,
and every point at distance at most $t+1$ from any of these points. Let $\overline h$ and $\underline h$ be the supremum and infimum of $h$ over this interval.
The event $A_k= \{\overline h- \underline h = k\}$ satisfies $\nu_\rho(A_k) \le \tilde C^k/k!$ with a $\tilde C$ depending on $\rho$, $t$ and the $x_i$'s.
On the other hand, on $A_k$, $\varphi(s,h)=0$ for all $s\in[0,t]$ and $h$ with $h(0)+k\leq-K$, since $\overline h\geq\max_ir_0(t,x_i)$.
Similarly, on $A_k$ we have $r_0(s,x_i)$ is bounded below by $h(0)-k$ for each $i$ and each $s\in[0,t]$, and hence that the height function at time $s\in[0,t]$ is also bounded below by $h(0)-k$, so that (by definition) $F(s,h)$ vanishes for $h(0)-k>\overline{r}\coloneqq\max\{r_1,\dotsc,r_n\}$.
This last fact is also valid for $\PPP_tf_\ep(h)$ if $\ep<1$ in view of our choice of dependent region $I$.
Now we consider the left hand side of \eqref{ibp3} with $\PPP_tf_\ep(h)$.
Splitting the $\nu_\rho$ integral  according to the value of $k$, using these facts to truncate the $h(0)$ sum, bounding $F$ by $1$ in the remaining region, and using the assumption to bound $|(\p_s+\hat{\SL}^*)\varphi|$ uniformly, the absolute value of the integrand can be bounded uniformly in $\ep\in(0,1)$ in a way which makes the whole integral be bounded by
\[\textstyle C\tts t\sum_{k\geq0}\sum_{h(0)=-K-k}^{\underline{r}+k}\frac1{k!}\tilde C^k<\infty.\]
The same holds for the right hand side of \eqref{ibp3}, and this shows that we may pass the $\ep\searrow 0$ limit through the integration in \eqref{ibp3} as needed.
\end{proof}

The next theorem says that (a truncated version of) the Fredholm determinant satisfies the backward equation in the same sense.

\begin{thm}\label{thm:Kolmogorov}
Fix $r_1,\dotsc,r_n\in\zz$, $x_1,\dotsc,x_n\in\rr$, and $h\in\UC$, and let $K_r$ be the kernel defined in \eqref{eq:matrixKPNG} using these fixed parameters and $h_0=h$.
For $t>0$ let
\begin{equation}
\FFF(t,h)=\det(I-K_r)_{\oplus_n\ell^2(\zz_{>0})}\uno{r_i\geq r_0(t,x_i),\ts i=1,\dotsc,n}\label{eq:Fth}
\end{equation}
  with $r_0$ defined using the given $h$.
Then:
\begin{enumerate}[label=\uptext{(\roman*)}]
\item $\lim_{t\searrow0}\FFF(t,h)=\uno{r_1\geq h(x_1),\dotsc,r_n\geq h(x_n)}$.
\item $\FFF$ satisfies 
  1. $(\p_t-\hat \SL)\FFF=0$ off ${\mathcal S}(\vec{x})$;
  2. $\FFF(s,h)=0$ for all $s\in[0,t]$ in a finite interval if $\inf_{i=1,\dotsc,n}\inf_{|z-x_i|\leq t}h(z)$ is sufficiently large;
  3. $|\FFF(s,h)|$ and $|(\p_s-\hat{\SL})\FFF(t,h)|$ are bounded uniformly in $s\in[0,t]$ and $h$ in the support of $\nu_\rho$.
\item  For any \emph{admissible} $\varphi$ (meaning $\varphi$ satisfies 1, 2, and 3 of Prop. \ref{prop:4.5}), 
\begin{equation}\label{ibp4}
 \int_0^t\int (-\p_s - \hat{\SL}^*)\varphi\hat F \ts\d\nu_\rho\tts\d s = -\int \varphi (t) \hat F(t)\ts\d\nu_\rho +\int \varphi (0) \hat F(0)\ts\d\nu_\rho.
\end{equation}
\end{enumerate}
\end{thm}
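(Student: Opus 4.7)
The plan is to address the three claims in order, with the main analytic work concentrated on (ii)1, and then to obtain (iii) as a relatively standard consequence of (ii) via integration by parts.

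For the initial condition (i), I would take $t \searrow 0$ inside the Fredholm determinant \eqref{fred}. As $t \to 0$, the scattering transform $\J^{h_0}_{x_i - t, x_j + t}$ degenerates: along the diagonal $x_i = x_j$ the bridge has no time to move, so $P^{\hit(h_0)}_{x_i, x_i}$ collapses to multiplication by $\bP_{h_0(x_i)}$; off-diagonal it reduces to an explicit composition of free evolutions with $\bP_{h_0}$. A direct block-triangular computation then shows that $\lim_{t\searrow 0}\det(I-K_r) = \prod_{i=1}^n \uno{r_i \geq h(x_i)}$; since $r_0(0,x)=h(x)$, the indicator in \eqref{eq:Fth} plays along and (i) follows.

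For the pointwise backward equation (ii)1, I would follow the outline of Sec.~\ref{sec:whyhit1} and make it rigorous. Using trace-class differentiability of $K_r$ (Appdx.~\ref{sec:trcl-diff}), the identity $\p\log\det(I-K_r)=-\tr((I-K_r)^{-1}\p K_r)$ reduces the question to the kernel-level identity $(\p_t-\hat\SL)K_r(u,v)=0$ for $u,v>0$. I split $\p_t K_r$ into two pieces: (a) a commutator piece from differentiating the $e^{\pm 2t\nabla}$ conjugations, yielding $2[K_r,\nabla]$; and (b) a forcing piece from differentiating the endpoints $x_i-t,\,x_j+t$ inside $\J^{h_0}_{x_i-t,x_j+t}$, which by Lem.~\ref{lem:petazeta4} vanishes for $u>h_0(x_i-t)+1$ and $v>h_0(x_j+t)+1$, hence in our region since $r_i\geq r_0(t,x_i)\geq h_0(x_i\pm t)$. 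The generator $\hat\SL$ only touches the $P^{\hit(h)}_{a,b}$ factor; writing this as a bridge expectation as in \eqref{eq:deltalim2} and using infinitesimal skew time-reversibility \eqref{infinitesimalstr} together with Prop.~\ref{prop:invm-general-adjoint} on the pinned walk, I would derive $\hat\SL P^{\hit(h)}_{a,b}(u,v) = 2[P^{\hit(h)}_{a,b},\nabla](u,v)$ for $u>h(a)+1$, $v>h(b)+1$. After conjugating by $e^{\mp 2t\nabla\mp x\Delta}$ this matches the commutator from (a), and the two terms cancel.

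Parts (ii)2 and (ii)3 are comparatively routine. For (ii)2, if $\inf_i\inf_{|z-x_i|\leq t}h(z)$ is sufficiently large, then $r_0(s,x_i)>r_i$ for some $i$ in a neighborhood of $t$, killing the indicator in \eqref{eq:Fth}. For (ii)3, the uniform bound $|\FFF|\leq C$ follows from trace-norm continuity after the conjugation by $\vartheta$ from Prop.~\ref{prop:trcl}, and the bound on $|(\p_s-\hat\SL)\FFF|$ follows from $\FFF\cdot\tr((I-K_r)^{-1}(\p_t-\hat\SL)K_r)$ combined with the trace-norm differentiability of $K_r$, using finite propagation to localize to a compact space-time window. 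Finally, (iii) follows from (ii) by integration by parts in $s$ in the spirit of Prop.~\ref{prop:4.5}: the shock set $\mathcal S(\vec x)$ has $\nu_\rho$-measure zero (it imposes codimension-one conditions on a Poisson point process), so $(\p_s-\hat\SL)\FFF=0$ holds $\nu_\rho$-a.e.; the uniform bounds from (ii)3 together with the vanishing from (ii)2 (which controls the sum over $h(0)\in\zz$ via the same envelope used in Prop.~\ref{prop:4.5}) give absolute integrability; the adjoint relation $\int\varphi\hat\SL\FFF\,\d\nu_\rho = \int(\hat\SL^*\varphi)\FFF\,\d\nu_\rho$ then yields \eqref{ibp4} with only the $s=0$ and $s=t$ boundary contributions surviving.

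The main obstacle is the kernel-level identity in (ii)1, and within it the precise matching of boundary terms. Both the skew-time-reversal computation of $\hat\SL P^{\hit(h)}_{a,b}$ via Prop.~\ref{prop:invm-general-adjoint} and the forcing terms of Lem.~\ref{lem:petazeta4} produce contributions localized exactly at $u = h(a)+1$ and $v = h(b)+1$, on the edge of the region where the identity is claimed. The book-keeping needed to verify that, after the $e^{\mp 2t\nabla \mp x\Delta}$ conjugations, these boundary pieces leave no residue in the region $u,v>0$ is the delicate step. I would track the cancellations using the explicit formulas \eqref{eq:petazeta5gen} together with the lower-triangularity of $e^{2t\nabla - t\Delta}$ from Lem.~\ref{lem:ett}, which ensures that the pinning corrections on the bridge side pair off exactly against the endpoint derivatives of the scattering transform.
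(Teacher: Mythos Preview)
Your overall strategy aligns with the paper's, and the kernel-level computation (the commutator identity $\hat\SL P^{\hit(h)}_{a,b}=2[P^{\hit(h)}_{a,b},\nabla]$ plus boundary corrections, matched against the forcing terms of Lem.~\ref{lem:petazeta4} via the triangularity of $e^{\pm 2t\nabla - t\Delta}$) is exactly right. But there are two genuine gaps in the passage from kernel to determinant.

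First, the identity $\p\log\det(I-K_r)=-\tr((I-K_r)^{-1}\p K_r)$ you invoke requires $I-K_r$ to be invertible, and this is not known at this stage: invertibility is deduced \emph{from} Thm.~\ref{thm:PNG-fred}, which in turn rests on Thm.~\ref{thm:Kolmogorov}, so the argument is circular. The paper sidesteps this via Lem.~\ref{lem:d-rk1}, replacing the resolvent by the Fr\'echet derivative $\mathsf{D}'(K)$ of $K\mapsto\det(I-K)$, which is always defined. More seriously, the creation part $\SL_{\gcr}$ of the generator is \emph{not} a first-order differential operator: it is an integral over finite jumps $h\mapsto h+\uno{z}$, so the relation $\SL_{\gcr}\det(I-K)=\mathsf{D}'(K)(\SL_{\gcr}K)$ is not a consequence of trace-class differentiability at all. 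The paper supplies the missing ingredient: Lem.~\ref{lem:cr-rk1} shows that each increment $K(h+\uno{z})-K(h)$ is \emph{rank one}, and Lem.~\ref{lem:d-rk1} proves that for rank-one $A$ one has $\det(I-K-A)-\det(I-K)=\mathsf{D}'(K)(A)$ \emph{exactly}, not merely to first order. This is what makes $\SL_{\gcr}$ behave like a derivation on the determinant and permits the reduction to the kernel-level identity; your proposal does not address it.

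Second, for (ii)3 the bound on $|(\p_s-\hat\SL)\FFF|$ must also hold \emph{on} the shock set, and there it is not zero. The relevant situation is when the truncation indicator $\uno{r_i\geq r_0(t,x_i)}$ in \eqref{eq:Fth} itself jumps, e.g.\ when $h$ has an up jump at $x_k+t$ and $r_k=r_0(t,x_k)-1$. Then $\FFF$ jumps from a nonzero Fredholm determinant to $0$ as $t$ crosses the shock, and $(\p_t-\hat\SL)\FFF$ is a nonvanishing limit of difference quotients that must be bounded directly (the paper's Case~3b and the estimate \eqref{eq:detKth-bd} in Appdx.~\ref{sec:pf-detKth-bd}). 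Your argument, which reads this bound off from $\FFF\cdot\tr((I-K_r)^{-1}(\p_t-\hat\SL)K_r)$, never sees this contribution because it implicitly works only in the region where the indicator equals~$1$.
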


Note that we have set $\FFF(t,h)=0$ in \eqref{eq:Fth} when $r_i<r_0(t,x_i)$ for some $i$.
That $\FFF$ should be zero in this region is clear physically: our ultimate goal is to show that $\FFF(t,h)$ corresponds to the transition probabilities for the PNG height function (with initial data $h$), and these probabilities are $0$ whenever $r_i<r_0(t,x_i)$ for some $i$.
We need to introduce this explicit truncation because some of the computations on $K_r(u,v)$ which we will perform to prove the backward equation become singular for $u$ or $v$ near or below one of the $r_0(t,x_i)$'s.
We conjecture, however, that the Fredholm determinant $\det(I-K_r)_{\oplus_n\ell^2(\zz_{>0})}$ is already $0$ if $r_i<r_0(t,x_i)$ for some $i$.
Proving such a statement in general for a Fredholm determinant is hard, and we do not attempt it here.
Alternatively, one could attempt to prove this by taking a scaling limit of formulas for parallel TASEP, which we will do in an upcoming paper.

On the other hand, it is easy to see that the Fredholm determinant $\det(I-K_r)_{\oplus_n\ell^2(\zz_{>0})}$ as $t\searrow 0$ satisfies the initial condition.
From \eqref{eq:hatKh0ab}, we have
$e^{-x_i\Delta}\J^{h}_{x_i,x_j}e^{x_j\Delta}=P^{\hit(h)}_{x_i,x_j}$
(recall here that, by convention, the right hand side is zero if $x_i>x_j$), and 
using this in \eqref{eq:PNG-kernel-two-sided-hit} yields
\begin{equation}\label{eq:KPNG0}
K^{\uptext{ext}}\big|_{t=0}(x_i,\cdot;x_j,\cdot)=-P^{\nohit(h)}_{x_i,x_j}\uno{x_i<x_j}+\bP_{h(x_i)}\uno{x_i=x_j}.
\end{equation}
By Prop.~\ref{prop:trcl}, $K_r(\ep,h)\longrightarrow K_r(0,h)$ as $\ep \to 0$ in trace norm in $\oplus_n\ell^2(\zz_{>0})$, after conjugation, while from \eqref{eq:matrixKPNG} and \eqref{eq:KPNG0} we have that $K(0,h)$ is upper triangular with the $i$-th entry in the diagonal equal to $\bP_{h(x_i)-r_i}$.
Hence, since the Fredholm determinant is continuous in trace norm, after conjugating as in Appdx. \ref{sec:trcl-ker} we deduce that
\begin{equation}
    \textstyle\det(I-K(\ep,h))\xrightarrow[\ep\to0]{}\det(I-K(0,h))=\prod_{i=1}^n\det(I-\bP_{h(x_i)-r_i})_{\ell^2(\zz>0)}=\prod_{i=1}^n\uno{h(x_i)\leq r_i}.\label{eq:inidet}
\end{equation}
This yields the initial condition (i) for $\hat F$: if some $r_i<h(x_i)$ then $r_i<r_0(t,x_i)$ for all $t>0$ and the condition holds trivially, while if $r_i\geq h(x_i)$ for all $i$ then by upper semi-continuity of $h$ we have $r_i\geq r_0(\ep,x_i)$ for small enough $\ep$, and hence for such $\ep$ we have $\FFF(\ep,h)=\det(I-K_r(\ep,h))$ and the initial condition is provided by \eqref{eq:inidet}.

Before we turn to the proof of \eqref{ibp4} in the next subsection, let us see how it proves Thm. \ref{thm:PNG-fred}.  

\begin{prop}\label{prop:4.7}
For all finite $\vec{x}$, $\vec{r}$ and all $t>0$, $F(t,h)=\hat F(t,h)$ $\nu_\rho$-a.e.
\end{prop}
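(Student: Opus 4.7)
The plan is to establish $F=\FFF$ by a duality argument: both satisfy the same weak backward equation with the same initial data, and their difference can be tested against a rich family of solutions of the adjoint equation --- which we already have at hand in the transition probabilities of the backward PNG process $h^{\gets}$. Writing $G = F - \FFF$, subtracting the weak backward equations from Prop.~\ref{prop:4.5} (for $F$) and Thm.~\ref{thm:Kolmogorov}(iii) (for $\FFF$), and using that both have initial datum $\uno{h(x_i)\leq r_i,\ i=1,\dotsc,n}$ (the latter by Thm.~\ref{thm:Kolmogorov}(i)), we obtain, for every admissible $\varphi$,
\begin{equation}\label{eq:plan-key}
\int \varphi(t,h)\,G(t,h)\,d\nu_\rho(h) = \int_0^t\!\!\int (\partial_s + \hat{\SL}^*)\varphi\cdot G\,d\nu_\rho\,ds.
\end{equation}

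To force the right-hand side to vanish I would take as test function, for arbitrary $m\in\nn$, $\vec y\in\rr^m$, $\vec q\in\zz^m$,
\[\varphi(s,h) = \pp_h\!\bigl(h^{\gets}(t-s,y_j)\geq q_j,\ j=1,\dotsc,m\bigr).\]
By Prop.~\ref{prop:adjoint-gen}(2), $h^{\gets}$ is generated by $\hat{\SL}^*$, so the verbatim analogue of Prop.~\ref{prop:4.5} applied to $h^{\gets}$ shows that $\varphi$ is a weak solution of $(\partial_s+\hat{\SL}^*)\varphi=0$ in exactly the sense needed to kill the right-hand side of \eqref{eq:plan-key}. This collapses \eqref{eq:plan-key} to
\[\int \uno{h(y_j)\geq q_j,\ j=1,\dotsc,m}\,G(t,h)\,d\nu_\rho(h)=0,\]
and since $\nu_\rho$ is supported on locally finite configurations, the cylinder indicators on the left form, as $m,\vec y,\vec q$ vary, a $\pi$-system generating the Borel $\sigma$-algebra on the support of $\nu_\rho$. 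A standard monotone class argument then upgrades this to $G(t,\cdot)=0$ $\nu_\rho$-a.e., which is the claim.

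The main obstacle is that this $\varphi$ is not literally admissible in the sense of Prop.~\ref{prop:4.5}. Conditions 1 (differentiability off a shock set) and 3 (uniform boundedness of $\varphi$ and $(\partial_s+\hat{\SL}^*)\varphi$) follow once one has the analogue of Thm.~\ref{thm:Kolmogorov} for $h^{\gets}$, which is automatic since $-h^{\gets}$ evolves as ordinary forward PNG (see the discussion after \eqref{eq:timerev2}), so the entire apparatus of Sec.~\ref{sec:whyhit1} transfers. Condition 2 is more delicate, because it requires $\varphi$ to vanish when $h$ is very low near the \emph{original} points $x_i$ appearing in $F$, while $\varphi$ is built from $h^{\gets}$ evaluated at \emph{new} points $y_j$. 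The workaround is to mimic the smoothing at the start of the proof of Prop.~\ref{prop:4.5}: replace the indicator defining $\varphi$ by a local bounded continuous approximation, and multiply by a smooth cutoff supported where the height of $h$ in the $t$-neighborhood of the $x_i$'s is moderate. The commutator of this cutoff with $\hat{\SL}^*$ is a lower-order term that is negligible as the cutoff is relaxed, and the limit family of $\psi$'s still generates the $\sigma$-algebra needed to conclude.
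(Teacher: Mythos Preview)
Your approach is essentially the paper's: both test $F-\FFF$ against solutions of the adjoint equation produced by running the process backwards (equivalently, applying Thm.~\ref{thm:Kolmogorov} to the flipped process $-h$), then conclude by a density argument. The structure and the final measure-theoretic step are correct.

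The one place you diverge is the handling of condition~2, and here your cutoff-and-commutator workaround is both unnecessary and not clearly justified (the commutator of a height-dependent cutoff with $\hat\SL^*$ is not obviously negligible, and making this precise would require genuinely new estimates). The paper's fix is much simpler: require the test points $y_1,\dotsc,y_m$ to \emph{contain} the original points $x_1,\dotsc,x_n$. Then condition~2 is automatic. Indeed, the test function is, via Thm.~\ref{thm:Kolmogorov} applied to $-h$, the truncated Fredholm determinant $\FFF(t-s,-h;\vec y,-\vec q)$, which vanishes as soon as $-q_j>\sup_{|z-y_j|\leq t-s}(-h(z))$ for some $j$, i.e., as soon as $\inf_{|z-y_j|\leq t-s}h(z)<q_j$ for some $j$. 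If $\max_i r_0(t,x_i)\leq -K$ then $h\leq -K$ on the $t$-neighbourhood of each $x_i=y_{j(i)}$, so the truncation is triggered at $j=j(i)$ once $K>-q_{j(i)}$. The restricted family with $\{x_i\}\subseteq\{y_j\}$ is still rich enough to determine $G\,\d\nu_\rho$ and finish.

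A minor additional point: the object whose admissibility you invoke via Thm.~\ref{thm:Kolmogorov} is the Fredholm determinant $\FFF$ for the flipped process, not the transition probability itself (which is what you wrote down). They agree at $s=t$ by Thm.~\ref{thm:Kolmogorov}(i), so the final data is the desired indicator either way, but conditions~1 and~3 come from Thm.~\ref{thm:Kolmogorov}(ii) and hence attach to $\FFF$, not to the transition probability.
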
 

\begin{proof} 
From \eqref{ibp3} and \eqref{ibp4}, for any $\varphi$ which is admissible (i.e. satisfying 1, 2, 3 of Prop.  \ref{prop:4.5}), 
\begin{equation}\label{ibp5}
\textstyle\int_0^t\int (-\p_s - \hat{\SL}^*)\varphi (F-\hat F)\,\d\nu_\rho\,\d s = -\int \varphi (t) (F(t)- \hat F(t))\,\d\nu_\rho. 
\end{equation}  
Let $m>n$.
 For any $y_1,\ldots,y_m$ containing $x_1,\ldots,x_n$, and $q_1,\ldots,q_m$ we can construct an admissible $\varphi$ with 
$(\p_s + \hat{\SL}^*)\varphi =0$ off its shock set and  $\varphi(t,h)=\uno{h(y_i)= q_i, i=1,\ldots, m}$.
This is done by solving the model backwards in time using Thm. \ref{thm:Kolmogorov} and skew time reversal invariance \eqref{eq:timerev2} to produce a Fredholm determinant solution of $(\p_s + \hat{\SL}^*)\tilde\varphi =0$ off the shock set with final condition $\uno{a_1\leq h(y_1),\dotsc,a_m\leq h(y_m)}$ for any $a_1,\dotsc,a_m$.
Thm. \ref{thm:Kolmogorov} ensures that these $\tilde\varphi$'s are admissible, and since the class of admissible $\varphi$ is closed under finite sums and differences, we can use them to produce such a $\varphi$ with final data of the form $\uno{h(y_i)= q_i, i=1,\ldots, m}$.
The reason to include $x_1,\ldots,x_n$ among the $y_i$'s is to ensure that condition 2 of Prop. \ref{prop:4.5} holds. 
For this choice of $\varphi$ \eqref{ibp5} becomes
\begin{equation}
\textstyle\int_{h(y_i)= q_i, i=1,\ldots, m}\big(F(t,h)-\hat F(t, h)\big)\d\nu_\rho=0,
\end{equation}
and now the fact that this holds for any $m>n$ and any $y_1,\dotsc,y_m$ (including the $x_i$'s) and $q_1,\dotsc,q_m$ implies the theorem.   
\end{proof}

\begin{proof}[Proof of Thm.  \ref{thm:PNG-fred}]
The previous proposition shows that \eqref{fred} holds whenever $h$ is in the support of $\nu_\rho$.
To go to general $h\in\UC$ we will approximate by functions in this class.
The probabilities \eqref{mtp2} are not continuous with respect to the $\UC$ topology, but $\UC$ has a partial order, $h_1\prec h_2$ if $h_1(x)\le h_2(x)$, and both sides of \eqref{fred} are continuous with respect to local hypograph convergence with decreasing limits (for the Fredholm determinant on the right hand side of \eqref{fred} this is proved in Prop. \ref{prop:trcl}).
Since every $\UC$ function is the decreasing limit of functions in the support of $\nu_\rho$, the result follows.
\end{proof}

\subsection{Hit kernel} \label{sec:whyhit}

The goal here is to provide the details for the argument sketched in Sec. \ref{sec:whyhit1}. 
The precise statement is Thm. \ref{thm:Kolmogorov}.
The key to its proof is the following result, which is mainly a consequence of skew time reversibility together with our invariant measure computations from Secs. \ref{sec:inv} and \ref{sec:adjoint}: 

\begin{prop}\label{prop:LP}
Fix $h\in\hat{\cX}\subseteq\UC$.
For any $a\leq b$ and for $u>h(a)$, $v>h(b)$,
\begin{multline}
\hat{\SL}P^{\hit(h)}_{a,b}(u,v)=2[P^{\hit(h)}_{a,b},\nabla](u,v)-\tfrac12P^{\nohit(h)}_{a^+,b}(h(a),v)\uno{u=h(a)+1}\\
-\tfrac12P^{\nohit(h)}_{a,b^-}(u,h(b))\uno{v=h(b)+1},\label{eq:LP}
\end{multline}
where the bracket denotes the commutator, i.e. $[A,B]=AB-BA$.
\end{prop}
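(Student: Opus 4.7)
The plan is to implement the strategy sketched in Sec. \ref{sec:whyhit1}.  Starting from the probabilistic representation \eqref{eq:deltalim2},
\[P^{\hit(h)}_{a,b}(u,v)=\fE_{a,u;b,v}[\Phi(g-h)]\,e^{(b-a)\Delta}(u,v),\]
where $g$ is the random lower semi-continuous path on $[a,b]$ distributed as $\fN$ conditioned on $g(a)=u$ and $g(b)=v$ (and $g\equiv\infty$ outside $[a,b]$), the operator $\hat{\SL}_h$ commutes with the $g$-expectation, and by the infinitesimal skew time reversibility \eqref{infinitesimalstr}, $\hat{\SL}_h\Phi(g-h)=\hat{\SL}^*_g\Phi(g-h)$.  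This reduces the computation to evaluating the pinned expectation of $\hat{\SL}^*_g\Phi(g-h)$.

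Taking $\rho=1$ to match $\fN$, the pinned expectation rewrites as $(e^{(b-a)\Delta}(u,v))^{-1}\int\hat{\SL}^*_g F(u)\,H^{v-u}_{a,b}\,d\mu_1$, where $F(\vec y,\vec\sigma;u')=\Phi(g_{u'}-h)$ and $g_{u'}$ denotes the path with absolute height $u'$ at $a$ and fixed jumps $(\vec y,\vec\sigma)$.  Applying Prop. \ref{prop:invm-general-adjoint} produces four terms, each directly identifiable as a shifted hit probability: for instance, $\int F(u+1)H^{v-u-1}_{a,b}\,d\mu_1=P^{\hit(h)}_{a,b}(u+1,v)$ (paths from $u+1$ to $v$), $\int F(u)H^{v-u-1}_{a,b}\,d\mu_1=P^{\hit(h)}_{a,b}(u,v-1)$, and analogously for the remaining two.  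Assembling them produces exactly the commutator $2[P^{\hit(h)}_{a,b},\nabla](u,v)$.

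The main subtlety, and the source of the two boundary correction terms, is that $F=\Phi(g-h)$ is not continuous in $g\in\LC$, so Prop. \ref{prop:invm-general-adjoint} does not apply verbatim.  The continuity hypothesis there enters through the matching condition \eqref{eq:boundary2}, which is used to cancel certain contributions in an integration-by-parts step; this cancellation fails for $\Phi(g-h)$ in configurations where the pinned path meets $h$ at an endpoint of $[a,b]$.  The hard part is tracking these non-cancelling contributions.  Redoing the integration-by-parts argument of Prop. \ref{prop:invm-general-adjoint} for $F=\Phi(g-h)$, the endpoint pinning encoded by $H^{v-u}_{a,b}$ still eliminates interior residuals, leaving only terms localized at $a^+$ and $b^-$; these contribute only when the pinning forces $g$ to sit exactly one level above the hypograph at the corresponding endpoint, i.e.\ when $u=h(a)+1$ or $v=h(b)+1$, and they produce respectively $-\tfrac12 P^{\nohit(h)}_{a^+,b}(h(a),v)$ and $-\tfrac12 P^{\nohit(h)}_{a,b^-}(u,h(b))$, with the $\tfrac12$ factor reflecting the one-sided nature of the endpoint contributions (exactly parallel to the $\tfrac12$ appearing in Lem. \ref{lem:pbPhit}).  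A clean way to execute this bookkeeping is to approximate $\Phi$ from above by functionals continuous in $\LC$ (smoothing the indicator along the $g$-direction), apply Prop. \ref{prop:invm-general-adjoint}, and pass to the limit, identifying the boundary corrections as the residual contributions concentrated near $\{g=h\}$ at the endpoints of the pinning interval.
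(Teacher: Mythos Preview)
Your overall strategy matches the paper's, and the derivation of the commutator from Prop.~\ref{prop:invm-general-adjoint} is correct. The gap is in your explanation of the boundary corrections. You locate them in a failure of the $\LC$ matching condition \eqref{eq:boundary2} for $F=\Phi(g-h)$, but this is not the mechanism: an up--down pair in $g$ at the same location leaves the $\LC$ representative of $g$ unchanged and hence leaves $\Phi(g-h)$ unchanged, so \eqref{eq:boundary2} holds (there are measure-zero exceptions where the pair straddles a jump of $h$, but these do not affect the $\mu_1$-integral). Consequently Prop.~\ref{prop:invm-general-adjoint} applied to $\Phi(g-h)$ yields exactly the commutator and nothing more; your proposed smoothing of $\Phi$ would likewise produce only the commutator and miss the corrections entirely.

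In the paper's proof the boundary terms enter at an \emph{earlier} step: the interchange $\hat{\SL}_h\,\fE_{a,u;b,v}[\Phi(g-h)]=\fE_{a,u;b,v}[\hat{\SL}_h\Phi(g-h)]$. When $u>h(a)+1$ and $v>h(b)+1$ this is justified by dominated convergence, since for fixed $g$ the event $\{\Phi(g-h(\delta))\neq\Phi(g-h)\}$ requires a nucleation and so has probability $O(\delta)$. When $v=h(b)+1$ and $h$ has an up jump at $b$, however, the deterministic leftward drift of that endpoint jump under $\hat{\SL}$ can create a hit without any nucleation (whenever $g$ dips to level $h(b)$ just before $b$), so the interchange fails. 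The paper therefore isolates the single $\partial_{y_0}$ term in $\SL_{\gdet}$ corresponding to that endpoint jump of $h$ and computes its contribution to $\hat{\SL}_h P^{\hit(h)}_{a,b}$ directly, exactly as in Lem.~\ref{lem:pbPhit}; this produces $-\tfrac12 P^{\nohit(h)}_{a,b^-}(u,h(b))\uno{v=h(b)+1}$. With the problematic derivative removed, the remainder of $\hat{\SL}$ can be interchanged and processed via Prop.~\ref{prop:invm-general-adjoint} to give the commutator. The term at $a$ arises symmetrically. So the correction is a feature of the $h$-dynamics at the endpoints of $[a,b]$, not of any discontinuity of $\Phi$ in the $g$-variable.
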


Note that the two boundary terms on the right hand side of \eqref{eq:LP} only arise when $h$ has jump at the boundary of $[a,b]$ (more precisely, a down jump at $a$ in the first case or an up jump at $b$ in the second case), since otherwise the no hit probabilities vanish by upper semi-continuity of $h$.
A key point will be that these terms match those which we obtained in Lem. \ref{lem:pbPhit}.

\begin{proof}[Proof of Prop. \ref{prop:LP}]
Assume first that $u>h(a)+1$ and $v>h(b)+1$.
Let $g$ denote a random path in $[a,b]$ which has the same law as $\fN$ conditioned on $\fN(a) = u$ and $\fN(b) = v$.
We choose $\fN$ to be lower semi-continuous, and extend it as $\infty$ outside the interval $[a,b]$.
Let also $\fE_{a,u;b,v}$ denote the expectation with respect to the law of $g$, and recall the notation $\Phi(g)=1-\uno{g>0}$ from \eqref{eq:def-Phi}.
Then we may write
\begin{equation}
\begin{aligned}
\hat{\SL}_{h}P^{\hit(h)}_{a,b}(u,v)&=\lim_{\delta\searrow 0} \frac{1}{\delta}\ee\Big(P^{\hit(h(\delta,\cdot))}_{a,b}(u,v)-P^{\hit(h(0,\cdot)}_{a,b} (u,v)\Big)\\
&=\lim_{\delta\searrow 0} \frac{1}{\delta}\fE_{a,u;b,v}\Big(\ee\big(\Phi(g-h(\delta,\cdot))-\Phi(g-h(0,\cdot))\big)\Big) e^{(b - a)\Delta}(u,v),
\end{aligned}\label{eq:eefe}
\end{equation}
where $\ee$ denotes the expectation with respect to the PNG dynamics and the PNG height function $h(\delta,\cdot)$ is taken to start, slightly abusing notation, at $h(0,\cdot)=h$, and where we have written $\hat\SL_{h}$ to stress that the generator acts on $h$.
Here we have used Fubini to change the order of expectations, justified by the definite sign of the integrands.
By definition of $\Phi$, the inner expectation equals the probability that the fixed path $g$ goes below $h(\delta,\cdot)$ but not below $h(0,\cdot)=h$.
But since $u>h(a)+1$ and $v>h(b)+1$ and since both $g$ and $h$ have finitely many jumps inside, say, the interval $[a-1,b+1]$, if $\delta$ is small enough then for this to happen there necessarily has to be a nucleation inside that interval before time $\delta$, and since this is an event with probability $\mathcal{O}(\delta)$, by the dominated convergence theorem we can take the limit inside the first expectation to get
\begin{equation}\label{ellhit}\hat{\SL}_{h}P^{\hit(h)}(u,v)=\fE_{a,u;b,v}\Big(\hat\SL_{h}\Phi(g-h)\Big)e^{(b - a)\Delta}(u,v)=\fE_{a,u;b,v}\Big(\hat\SL^*_{g}\Phi(g-h)\Big)e^{(b - a)\Delta}(u,v),\end{equation}
the second equality following from infinitesimal skew time reversal invariance \eqref{infinitesimalstr}.

Here the generator $\hat{\SL}^*$ is the (reversed) generator of the full height process.  We claim that it makes no difference if in the last expression we replace
$\fE_{a,u;b,v}\hat\SL^*_{g}\Phi(g-h)$ by $\fE_{a,u;b,v}\SL^*_{g}\Phi(g-h)$
where 
$\SL^*$ is the (reversed) generator of the height differences, i.e. it does not take account of the absolute height.  This is because to go from $\SL$ to the generator of the full height process, one only has to track the height at a single point $z$, which can be chosen arbitrarily.  If we choose $z$ far from the support of any of our functions, by finite speed of propagation we have  $(\hat{\SL}^*-\SL^*)\Phi=0$.

Thus the last term of \eqref{ellhit} can be rewritten, using the notation from Props. \ref{prop:invm-general} and \ref{prop:invm-general-adjoint}, as
\[\int \SL^*_gF^h(\vec y,\vec\sigma;u)H^{v-u}_{a,b}(\vec y,\vec\sigma)\,\d\mu_1(\vec y,\vec\sigma),\]
where $F^h(\vec y,\vec\sigma;u)$ equals $\Phi$ applied to $g-h$ with $g$ given as the path in $[a,b]$ obtained from the configuration of jump locations and signs $(\vec y,\vec\sigma)$ there and which takes the value $u$ at $a$, continued as $\infty$ outside the interval ($\mu_1$ is the invariant measure for the $(\vec y,\vec\sigma)$ process with a density of pluses and minuses equal to $1$).
We are thus precisely in the setting of Prop. \ref{prop:invm-general-adjoint}, and hence the last integral equals
\begin{multline}
\int (F^h(\vec\y,\vec\sigma;u)-F^h(\vec\y,\vec\sigma;u+1))H^{v-u-1}_{a,b}(\vec y,\vec\sigma)\,\d\mu_1(\vec y,\vec\sigma)\\
\quad - \int (F^h(\vec\y,\vec\sigma;u)-F^h(\vec\y,\vec\sigma;u-1))H^{v-u+1}_{a,b}\,\d\mu_1(\vec y,\vec\sigma).
\end{multline}
In view of our choice of $F^h$, the above expression equals
\[P^{\hit(h)}_{a,b} (u,v-1)-P^{\hit(h)}_{a,b} (u+1,v)-P^{\hit(h)}_{a,b} (u,v+1)+P^{\hit(h)}_{a,b} (u-1,v)=2[P^{\hit(h)}_{a,b},\nabla](u,v),\]
proving \eqref{eq:LP} in the case $u>h(a)+1$, $v>h(b)+1$.

If $u=h(a)+1$ or $v=h(b)+1$ then we need to be a bit more careful.
The issue arises when $h$ has a jump at the edge of the interval (just as in Lem. \ref{lem:pbPhit}).
Assume for instance that $v=h(b)+1$ and that $h$ has an up jump at $b$ (it is not hard to see that the above argument applies if there is no such jump).
Then if the fixed path $g$ inside the first expectation is such that $g(s)-h(s)>0$ for $s\in[a,b)$ and $g(s)=h(b)$ for some $s\in[b-\delta,b)$, we clearly have $\Phi(g-h(\delta,\cdot))-\Phi(g-h(0,\cdot))=1$ due to the leftward movement of the up jump in the PNG dynamics, so interchanging the limit and the expectation as we did after \eqref{eq:eefe} becomes more delicate.
The solution in this case is simply to compute the action in $\hat{\SL}$ of the derivative term coming from $\SL_{\gdet}$ corresponding to the movement of that jump of $h$ separately.
This can be done as in the proof of Lem. \ref{lem:pbPhit}, noting that, in this computation, moving the up jump at $b$ slightly to the left has the same effect as shifting $b$ slightly to the right; as in the lemma, this derivative then yields a term of the form
\begin{equation}
P^{\hit(h)}_{a,b^-}\Delta(u,h(b))+\tfrac12P^{\nohit(h)}_{a,b^-}(u,h(b))
=P^{\hit(h)}_{a,b}\Delta(u,h(b))-\tfrac12P^{\nohit(h)}_{a,b^-}(u,h(b)),\label{eq:Phitbdry}
\end{equation}
where in the equality we used the identities $P^{\hit(h)}_{a,b}(u,h(b))-P^{\hit(h)}_{a,b^-}(u,h(b))=P^{\nohit(h)}_{a,b^-}(u,h(b))$ and $P^{\hit(h)}_{a,b}(u,h(b)\pm1)=P^{\hit(h)}_{a,b^-}(u,h(b)\pm1)$.
The first term on the right hand side of \eqref{eq:Phitbdry} corresponds to the contribution of the corresponding term in $\SL_{\gdet}$ ignoring the boundary effect, and comes together with the rest of $\SL$ to produce $2[P^{\hit(h)}_{a,b},\nabla](u,v)$ as above.
The second term on the right hand side of \eqref{eq:Phitbdry}, on the other hand, yields the last term on the right hand side of \eqref{eq:LP}.
A similar computation handles the case $u=h(a)$ when $h$ has a down jump at $a$, and yields the additional boundary term in \eqref{eq:LP}.
\end{proof}

Before turning to the proof of the backward equation we need two additional results.
The first one will be useful in handling the creation part of the generator:

\begin{lem}\label{lem:cr-rk1}
Let $h\in\UC$, $t\geq0$, and $x_1<\dotsm<x_n$, and let $K(h)$ denote the PNG kernel \eqref{eq:PNG-kernel-two-sided-hit} defined using the given $x_i$'s, $t$, and $h_0=h$.
Then for any fixed $z\in\rr$, $K(h+\uno{z})-K(h)$ is rank one.
\end{lem}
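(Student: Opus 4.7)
The first summand $-\uno{x_i<x_j}e^{(x_j-x_i)\Delta}$ of the extended kernel in \eqref{eq:PNG-kernel-two-sided-hit} does not depend on $h$, so the entire difference $K(h+\uno{z})-K(h)$ is carried by the scattering-transform piece. Writing $\J^{h}_{a,b}=e^{a\Delta}P^{\hit(h)}_{a,b}e^{-b\Delta}$ and collapsing neighboring exponentials, the $(i,j)$-entry of the difference becomes
\[
(K(h+\uno{z})-K(h))(x_i,\cdot;x_j,\cdot)=e^{-2t\nabla-t\Delta}\big[P^{\hit(h+\uno{z})}_{x_i-t,x_j+t}-P^{\hit(h)}_{x_i-t,x_j+t}\big]e^{2t\nabla-t\Delta},
\]
so everything reduces to the structural form of the difference of hit kernels.

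Next I would exploit that $\hypo(h+\uno{z})=\hypo(h)\cup\{(z,h(z)+1)\}$. Hence a realization of $\fN$ with $\fN(a)=u$ and $\fN(b)=v$ hits $\hypo(h+\uno{z})$ but not $\hypo(h)$ if and only if $z\in[a,b]$, it passes through the isolated new point $(z,h(z)+1)$, and it avoids $\hypo(h)$ throughout $[a,b]$. By the strong Markov property at time $z$, this probability factorizes as
\[
P^{\hit(h+\uno{z})}_{a,b}(u,v)-P^{\hit(h)}_{a,b}(u,v)=\uno{z\in[a,b]}\tts P^{\nohit(h)}_{a,z}(u,h(z)+1)\tts P^{\nohit(h)}_{z,b}(h(z)+1,v),
\]
where $a=x_i-t$, $b=x_j+t$. (If $h(z)=-\infty$ then $h+\uno{z}=h$ and both sides are zero, so this case is vacuous.)

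Finally I would upgrade this pointwise identity to a rank one statement at the operator level. Since the $x_i$'s are ordered, the indicator splits as $\uno{x_i-t\leq z\leq x_j+t}=\uno{x_i\leq z+t}\tts\uno{x_j\geq z-t}$, so the $i$-dependence is confined to the left factor and the $j$-dependence to the right. Setting
\[
\alpha_i(u)=\uno{x_i\leq z+t}\tts\big(e^{-2t\nabla-t\Delta}P^{\nohit(h)}_{x_i-t,\tts z}\big)(u,h(z)+1),\quad
\beta_j(v)=\uno{x_j\geq z-t}\tts\big(P^{\nohit(h)}_{z,\tts x_j+t}e^{2t\nabla-t\Delta}\big)(h(z)+1,v),
\]
the computation above yields $(K(h+\uno{z})-K(h))(x_i,u;x_j,v)=\alpha_i(u)\beta_j(v)$, which is the kernel of the rank one operator $|\alpha\rangle\langle\beta|$ on $\ell^2(\{x_1,\dotsc,x_n\}\times\zz)$, as required. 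I do not expect a serious obstacle; the only care points are the $h(z)=-\infty$ convention and the observation that when $z\notin[x_1-t,x_n+t]$ both $\alpha$ and $\beta$ vanish, reflecting the finite propagation speed.
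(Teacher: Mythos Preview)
Your proof is correct and follows essentially the same route as the paper's: both reduce to the hit-kernel difference, use that $\hypo(h+\uno{z})=\hypo(h)\cup\{(z,h(z)+1)\}$ together with the Markov property at the fixed time $z$ to factor the difference as a product of two no-hit probabilities through the single point $(z,h(z)+1)$, and then read off the rank-one tensor structure. The paper absorbs your indicator $\uno{z\in[x_i-t,x_j+t]}$ into the convention $P^{\nohit(h)}_{a,b}=0$ for $a>b$, but this is only a cosmetic difference.
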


\begin{proof}
In order to compare the two kernels we need to look at the hit probabilities involving $h$ and $h+\uno{z}$.
We have
\[P^{\hit(h+\uno{z})}_{x_i-t,x_j+t}=P^{\hit(h)}_{x_i-t,x_j+t}+P^{\nohit(h)}_{x_i-t,z}\delta_{h(z)+1}P^{\nohit(h)}_{z,x_j+t},\]
where for convenience we set $P^{\nohit(h)}_{a,b}=0$ if $a>b$ (so that this covers both the case when $z$ is in $[x_i-t,x_j+t]$ and when it is not).
Then $K(h+\uno{z})(x_i,\cdot;x_j,\cdot)-K(h)(x_i,\cdot;x_j,\cdot)$ equals
\begin{align}
&e^{-2t\nabla-x\Delta}\big(\J^{h+\uno{z}}_{x_i-t,x_j+t}-\J^{h}_{x_i-t,x_j+t}\big)e^{2t\nabla+x'\Delta}
=e^{-2t\nabla-t\Delta}\big(P^{\hit(h+\uno{z})}_{x_i-t,x_j+t}-P^{\hit(h)}_{x_i-t,x_j+t}\big)e^{2t\nabla-t\Delta}\\
&\hspace{1.6in}=e^{-2t\nabla-t\Delta}P^{\nohit(h)}_{x_i-t,z}\delta_{h(z)+1}P^{\nohit(h)}_{z,x_j+t}e^{2t\nabla-t\Delta}
=f_{i}\otimes g_{j}
\end{align}
with $f_{i}(u)=e^{-2t\nabla-t\Delta}P^{\nohit(h)}_{x_i-t,z}(u,h(z)+1)$ and $g_{j}(v)=P^{\nohit(h)}_{z,x_j+t}e^{2t\nabla-t\Delta}(h(z)+1,v)$.
\end{proof}

The second thing we need before proving Thm. \ref{thm:Kolmogorov} is an abstract result about Fredholm determinants.
To motivate it, consider the case $r_i>r_0(t,x)$ for each $i$ and suppose for a moment that we knew that the kernel $K=\P_rK^{\ext}\P_r$ is such that $I-K$ is invertible; this fact will actually follow from Thm. \ref{thm:PNG-fred}, because its Fredholm determinant equals $F(t,x_1,\dotsc,x_n,r_1,\dotsc,r_n)$, which is positive under our assumption on the $r_i$'s (see the argument in Sec. \ref{sec:pfMain}).  However, the proof of Thm. \ref{thm:PNG-fred} depends on Thm. \ref{thm:Kolmogorov}.
Assuming that $I-K$ is invertible we have the standard identity
\[\p_t\det(I-K)=-\det(I-K)\tr((I-K)^{-1}\p_tK),\]
provided $\p_tK$ exists in trace class (in our setting this fails at the shock set, but we ignore this issue in this explanation).
Similarly $\SL_{\gdet}\det(I-K)=-\det(I-K)\tr((I-K)^{-1}\SL_{\gdet}K)$, because $\SL_{\gdet}$ is a first order differential operator.
The key is that the same holds for the creation part thanks to the last lemma and another standard identity: if $A$ is rank one and $I-K$ is invertible, then
\begin{equation}\label{eq:rk-1}
\det(I-K+A)-\det(I-K)=-\det(I-K)\tr((I-K)^{-1}A).
\end{equation}
What is crucial here is that $\p_t$, $\SL_{\gdet}$ and $\SL_{\gcr}$ all give rise to expressions of the same form, leading to
\begin{align}
    (\p_t-\hat\SL)\det(I-K)=(\p_t-\SL)\det(I-K)&=-\det(I-K)\tr((I-K)^{-1}(\p_t-\SL)K)\\
    &=-\det(I-K)\tr((I-K)^{-1}(\p_t-\hat\SL)K),
\end{align}
where we have dropped and then added back the dependence on the absolute height following the argument in the paragraph after \eqref{ellhit}.

In order to achieve this when we do not know a priori that $I-K$ is invertible, we proceed as follows.
Let $\mathcal{H}$ be a separable Hilbert space and let $\mathcal{B}_1$ be the space of trace class operators on $\mathcal{H}$.
Define $\mathsf{D}\!:\mathcal{B}_1\longrightarrow\cc$ through
\begin{equation}
\mathsf{D}(K)=\det(I-K),\label{eq:D-fred}
\end{equation}
where the Fredholm determinant is computed on $\mathcal{H}$.

\begin{lem}\label{lem:d-rk1}
$\mathsf{D}$ is Fr\'echet differentiable with Fr\'echet derivative $\mathsf{D}'(K)$.
In particular, if $K_a\in\mathcal{B}_1$ depends on a parameter $a\in\rr$ and the derivative $\p_aK_a$ exists in trace norm, then
\begin{equation}\label{eq:lemD1}
\p_a\det(I-K_a)=\mathsf{D}'(K_a)(\p_aK_a).
\end{equation}
On the other hand, if $K\in\mathcal{B}_1$ and $A$ is a rank one operator acting on $\mathcal{H}$, then
\begin{equation}\label{eq:lemD2}
\det(I-K-A)-\det(I-K)=\mathsf{D}'(K)(A).
\end{equation}
\end{lem}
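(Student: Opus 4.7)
The plan is to deduce both \eqref{eq:lemD1} and \eqref{eq:lemD2} from the classical fact that the Fredholm determinant is an entire function on $\mathcal{B}_1$. Specifically, I would invoke the standard result that $K \mapsto \det(I-K)$ is entire analytic as a map $\mathcal{B}_1 \to \cc$ (see, e.g., Simon's \emph{Trace Ideals and Their Applications}, Thm.~5.2), which in particular implies Fréchet differentiability at every $K \in \mathcal{B}_1$ with bounded linear derivative $\mathsf{D}'(K)\!:\mathcal{B}_1 \to \cc$. Identity \eqref{eq:lemD1} is then immediate from the chain rule applied to the composition $a \mapsto K_a \mapsto \mathsf{D}(K_a)$, since by hypothesis $a \mapsto K_a$ is Fréchet differentiable from $\rr$ to $\mathcal{B}_1$ with derivative $\p_a K_a$.

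For \eqref{eq:lemD2}, consider the entire function $\phi(z) = \det(I-K-zA) = \mathsf{D}(K+zA)$, $z \in \cc$ (note that rank-one operators are automatically in $\mathcal{B}_1$). By \eqref{eq:lemD1} applied to the trace-norm-differentiable family $z \mapsto K + zA$, we have $\phi'(z) = \mathsf{D}'(K+zA)(A)$, so $\phi'(0) = \mathsf{D}'(K)(A)$. Since $\phi(0) = \det(I-K)$ and $\phi(1) = \det(I-K-A)$, it suffices to show that $\phi$ is affine in $z$. To this end, fix an orthonormal basis $\{e_i\}$ of $\mathcal{H}$ and use the standard Plemelj--Smithies expansion
\begin{equation*}
\det(I - M) = \sum_{n=0}^\infty (-1)^n \sum_{i_1 < \dotsb < i_n} \det\bigl((M_{i_j i_k})_{j,k=1}^n\bigr), \qquad M_{ij} = \langle e_i, Me_j\rangle,
\end{equation*}
which converges absolutely and uniformly on trace-norm-bounded subsets of $\mathcal{B}_1$. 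With $M = K + zA$, each $n \times n$ subdeterminant $\det(K_{i_j i_k} + z A_{i_j i_k})$ expands as a polynomial in $z$ by multilinearity; and if $A = f \otimes g$, then the rows $(A_{i_p, i_k})_k$ are all proportional to the single vector $(\langle g, e_{i_k}\rangle)_k$, so any term which picks rows from $A$ in two or more positions contains two proportional rows and vanishes. Only the constant term and the linear-in-$z$ terms survive, so each subdeterminant, and hence $\phi$, is affine in $z$. Thus $\phi(1) - \phi(0) = \phi'(0) = \mathsf{D}'(K)(A)$, which is \eqref{eq:lemD2}.

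The main technical point is the justification of differentiating the series termwise and the uniform convergence of the Plemelj--Smithies expansion on trace-norm-bounded subsets of $\mathcal{B}_1$; both are classical and I would appeal to the standard references rather than reprove the estimates here. The actual content of the argument — beyond these black-box analytic facts — is the elementary multilinear observation that a finite-dimensional determinant is affine under a rank-one perturbation of its entries, which is then lifted to the infinite-dimensional setting via the expansion.
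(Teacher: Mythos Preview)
Your proof is correct, and it takes a genuinely different route from the paper's. Both arguments handle \eqref{eq:lemD1} identically via the chain rule and a citation to Simon; the divergence is in \eqref{eq:lemD2}. The paper fixes $K$ and $A$, defines $\mathsf{R}(\mu)=\mathsf{D}(\mu K+A)-\mathsf{D}(\mu K)-\mathsf{D}'(\mu K)(A)$, observes that $\mathsf{R}$ is analytic in $\mu$, and uses the explicit formula $\mathsf{D}'(K)(L)=-\det(I-K)\tr((I-K)^{-1}L)$ together with the classical rank-one identity \eqref{eq:rk-1} to get $\mathsf{R}(\mu)=0$ whenever $I-\mu K$ is invertible; since $K$ is compact, this excludes only a discrete set of $\mu$, and analytic continuation gives $\mathsf{R}(1)=0$. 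Your approach instead shows directly that $z\mapsto\det(I-K-zA)$ is affine by unpacking the Fredholm expansion and using the elementary finite-dimensional fact that a determinant is affine under a rank-one perturbation of its rows. The paper's argument is slicker in that it never opens up the expansion and treats the invertible case as a black box, while yours is more self-contained: it does not need the explicit form of $\mathsf{D}'$, does not invoke \eqref{eq:rk-1}, and makes transparent that the identity is nothing but the finite-dimensional matrix determinant lemma lifted to trace class through the expansion. One small remark: the passage from ``each subdeterminant is affine'' to ``$\phi$ is affine'' deserves a sentence --- an absolutely convergent sum of affine functions is affine (e.g.\ check $\phi(0)-2\phi(1)+\phi(2)=0$ term by term), and this is exactly what the uniform convergence you cite provides.
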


The main point of the result (which is simple, but which we did not find stated in the literature) is that \eqref{eq:lemD2} holds without any assumption on $K$.

\begin{proof}
That $\mathsf{D}$ is Fr\'echet differentiable is standard, see e.g. \cite[Cor. 5.2]{simon}, while \eqref{eq:lemD1} follows from that and the chain rule.
The same result in \cite{simon} shows that, in the special case when $I-K$ is invertible, the Fr\'echet derivative of $\mathsf{D}$ has the following form: for each $L\in\mathcal{B}_1$,
\begin{equation}
\mathsf{D}'(K)(L)=-\mathsf{D}(K)\tr((I-K)^{-1}L).\label{eq:rk-2}
\end{equation}
Now fix $K,A\in\mathcal{B}_1$ with $A$ rank one and, for $\mu\in\cc$, define
\[\mathsf{R}(\mu)=\mathsf{D}(\mu K+A)-\mathsf{D}(\mu K)-\mathsf{D}'(\mu K)(A).\]
The three objects on the right hand side are analytic in $\mu$ (for $\mathsf{D}(\mu K)$ this is standard, see e.g. \cite[Lem. 3.3]{simon}, while for the other two it is implicit in Cor. 5.2 of the same book), so $\mu\longmapsto\mathsf{R}(\mu)$ is an analytic function.
On the other hand, if $\mu^{-1}$ is not in the spectrum of $K$ then, by \eqref{eq:rk-1} and \eqref{eq:rk-2},
\[\mathsf{D}(\mu K+A)-\mathsf{D}(\mu K)=-\mathsf{D}(\mu K)\tr((I-\mu K)^{-1}A)=\mathsf{D'}(\mu K)(A).\]
Hence $\mathsf{R}(\mu)=0$ whenever $\mu^{-1}$ is not in the spectrum of $K$.
$K$ is compact, so its spectrum is discrete.
Hence we deduce that $\mathsf{R}(\mu)=0$ for all $\mu\in\cc$ and thus, in particular, for $\mu=1$.
\end{proof}

\begin{proof}[Proof of Thm. \ref{thm:Kolmogorov}]
The initial condition (i) was already verified.
The second point of (ii) is straightforward from the definition \eqref{eq:Fth} of $\FFF$, while (iii) follows directly from 1 in (ii) and integration by parts.
To get the estimate on $|\FFF(s,h)|$ in the third point of (ii) we use Prop. \ref{prop:trcl} together with the bound $|\tsm\det(I-A)|\leq e^{\|A\|_1}$ for any trace class operator $A$ on a given (separable) Hilbert space, with $\|A\|_1$ its trace norm \cite[Lem. 3.3]{simon}.
They give $|\FFF(s,h)|\leq C(1+e^{c\tts\max_i(r_0(s,x_i)-r_i)})\uno{r_i\geq r_0(s,x_i),\ts i=1,\dotsc,n}$ with constants $c,C>0$ valid for all $s\in[0,t]$, which is clearly bounded uniformly as needed (note the key role played here by the truncation).
The estimate on $|(\p_s-\hat\SL)\FFF|(s,h)$ follows from 1 of (ii) off the shock set, and will be proved in Case 3b of this proof on $\mathcal{S}(\vec x)$.

The rest of the proof will be devoted to showing the two remaining conditions: that $(\p_t-\hat\SL)\FFF=0$ off $\mathcal{S}(\vec x)$ and that $|(\p_t-\hat\SL)\FFF(t,h)|$ is bounded uniformly on $\mathcal{S}(\vec x)$ for $t$ in a compact interval and $h\in\UC$.  Throughout we use $\SL$
instead of $\hat\SL$, i.e. we drop the dependence on the absolute height, following the argument in the paragraph after \eqref{ellhit} which tells us that $(\hat{\SL}-\SL)\FFF=0$, $(\hat{\SL}-\SL)K=0$ and $(\hat{\SL}-\SL)P^{\hit(h)}_{a,b}=0$.

We divide the proof in several cases, according to the position of the parameters $r_i$ and whether we are not at a shock.
Throughout the proof all determinants and traces are computed on $\ell^2(\{x_1,\dots,x_n\}\times\zz)$.

\vskip3pt\noindent\emph{Case 1}: Suppose $r_k\geq r_0(t,x_k)$ for each $k$ and write (recalling from \eqref{eq:matrixKPNG} that $K^{\uptext{ext}}(x_i,u;x_j+v)=(K_r)_{ij}(u-r_i,v-r_j)$)
\[K=\P_rK^{\uptext{ext}}\P_r,\qquad\uptext{so that}\quad \FFF=\det(I-K).\]
$K$ only depends on the jumps of $h$ inside $[\min\{x_1,\dotsc,x_n\}-t,\max\{x_1,\dotsc,x_n\}+t]$, so $\SL_{\gdet}$ acts as a sum of finitely many derivatives.
$\p_t-\SL_{\gdet}$ thus acts as a first order differential operator, and from the arguments in Appdx. \ref{sec:trcl-diff} it follows that (the limit which defines) $(\p_t-\SL_{\gdet})K$ exists in trace norm.
Then by \eqref{eq:lemD1} we have
\begin{equation}\label{eq:F-time-deriv}
(\p_t-\SL_{\gdet})\FFF = \mathsf{D}'(K)((\p_t-\SL_{\gdet})K).
\end{equation}
Now we consider ${\SL}_{\gcr}$.
Using \eqref{eq:L-cr} and the notation from Lem. \ref{lem:cr-rk1} we have
\[\textstyle{\SL}_{\gcr}\det(I-K(h))=2\int_{-\infty}^\infty\tsm\big(\tsm\det(I-K(h+\uno{z}))-\det(I-K(h))\big)\tts\d z,\]
and since $K(h+\uno{z})$ is a rank one perturbation of $K(h)$ by the same lemma, Lem. \ref{lem:d-rk1} now shows that the difference in the integrand on the right hand side equals $\mathsf{D}'(K)(K(h+\uno{z})-K(h))$.
This gives
\begin{equation}
\textstyle{\SL}_{\gcr}\det(I-K)=\mathsf{D}'(K)\!\left(2\int_{-\infty}^\infty(K(h+\uno{z})-K(h))\,\d z\right)=\mathsf{D}'(K)(\SL_{\gcr}K).
\end{equation}
From this and \eqref{eq:F-time-deriv} we get $(\p_t-\SL_{\gdet}-\SL_{\gcr})\FFF=\mathsf{D}'(K)((\p_t-\SL_{\gdet}-\SL_{\gcr})K)$, or, in other words,  $(\p_t-\SL)\FFF=\mathsf{D}'(K)((\p_t-\SL)K)$.  

Thus we have shown
\begin{equation}\label{eq:cu0}
(\p_t-{\SL})\FFF=\mathsf{D}'(K)((\p_t-{\SL})K).
\end{equation}
Therefore (recalling $K$ includes the projections $\P_r$) $(\p_t-{\SL})\FFF=0$ will follow in this case if we prove that, for $K_{ij}=K(x_i,\cdot;x_j,\cdot)$, $(\p_t-{\SL})K_{ij}(u,v)=0$ for all $u>r_i$, $v>r_j$ or slightly more generally, due to the assumption $r_i\geq r_0(t,x_i)$, $r_j\geq r_0(t,x_j)$, that
\begin{equation}\label{eq:ptLK22}
(\p_t-{\SL})K_{ij}(u,v)=0\qquad\forall\,u>h(x_i-t),~v>h(x_j+t).
\end{equation}
Note that in this case we are getting $(\p_t-{\SL})\FFF=0$ irrespective of whether we are or not at a shock.

By definition of $K_{ij}$ and \eqref{eq:PNG-kernel-two-sided-hit} we have
\begin{equation}\label{eq:K-time-deriv}
\p_t K_{ij} = e^{- 2 \ft \nabla - \fx_i \Delta} \Bigl( 2 [ \J^{h}_{\fx_i - \ft, \fx_j + \ft}, \nabla] + \p_\ft \J^{h}_{\fx_i - \ft, \fx_j + \ft}\Bigr) e^{2 \ft \nabla + \fx_j \Delta}
\end{equation}
(here we have used the fact that $\nabla$ and $\Delta$, and hence also their exponentials, commute).
Now take $u>h(x-t_i)$, $v>h(x_j+t)$ as in \eqref{eq:ptLK22}.
Lem. \ref{lem:petazeta4} shows that in this case
\[e^{-2\ft\nabla-\fx_i\Delta}\big(\p_\ft \J^{h}_{\fx_i - \ft, \fx_j + \ft}\big)e^{2 \ft \nabla + \fx_j \Delta}(u,v)
=-\uno{u=h(x-t_i)+1}\tts\gamma^1_{t,x_i,x_j}(v)-\uno{v=h(x_j+t)+1}\tts\gamma^2_{t,x_i,x_j}(u),\]
where the boundary terms are given by
\begin{align}
    \gamma^1_{t,x_i,x_j}(v)&=\tfrac12P^{\nohit(h)}_{(x_i-t)^+,x_j+t}e^{2t+2t\nabla-t\Delta}(h(x_i-t),v),\\
    \gamma^2_{t,x_i,x_j}(u)&=\tfrac12e^{2t-2t\nabla-t\Delta}P^{\nohit(h)}_{x_i-t+,(x_j+t)^-}(u,h(x_j+t)),
\end{align}
Using this and the definition of $\J^{h}_{a,b}$ in \eqref{eq:K-time-deriv} we get
\begin{multline}
\p_t K_{ij}(u,v) =  e^{- 2 \ft \nabla - \ft \Delta}\big(2[ P^{\hit(h)}_{\fx_i - \ft, \fx_j + \ft}, \nabla]\big) e^{2 \ft \nabla - \ft \Delta}(u,v)\label{eq:ptKij}\\
-\uno{u=h(x-t_i)+1}\tts\gamma^1_{t,x_i,x_j}(v)-\uno{v=h(x_j+t)+1}\tts\gamma^2_{t,x_i,x_j}(u),
\end{multline}
and we need to prove that the right hand side matches $\SL K_{ij}(u,v)$.
To see this we use the definition of $\J^{h}_{a,b}$ again to write
\begin{equation}
{\SL}K_{ij}=e^{- 2 \ft \nabla - \fx_i \Delta}\big({\SL}\J^{h}_{\fx_i - \ft, \fx_j + \ft}\big)e^{2 \ft \nabla + \fx_j \Delta}=e^{- 2 \ft \nabla - \ft \Delta}\big({\SL}P^{\hit(h)}_{\fx_i - \ft, \fx_j + \ft}\big)e^{2 \ft \nabla - \ft \Delta}.\label{eq:LKij}
\end{equation}
By Lem. \ref{lem:ett} the kernel $e^{2 \ft \nabla - \ft \Delta}$ is upper triangular, and similarly $e^{-2 \ft \nabla - \ft \Delta}=(e^{2 \ft \nabla - \ft \Delta})^*$ is lower triangular, so the restriction on $u$ and $v$ in \eqref{eq:ptLK22} means that in the last expression we only need to consider ${\SL}P^{\hit(h)}_{\fx_i - \ft, \fx_j + \ft}(y,z)$ with $y>h(x_i-t)$, $z>h(x_j+t)$.
But for such $(y,z)$ Prop. \ref{prop:LP} yields 
\begin{multline}
{\SL}P^{\hit(h)}_{\fx_i - \ft, \fx_j + \ft}(y,z)=2[P^{\hit(h)}_{\fx_i - \ft, \fx_j + \ft},\nabla](y,z)
-\tfrac12\uno{y=h(\fx_i-\ft)+1}P^{\nohit(h)}_{(\fx_i-\ft)^+,\fx_j+\ft}(h(\fx_j-\ft),z)\\
-\tfrac12P^{\nohit(h)}_{\fx_i-t,(\fx_j+\ft)^-}(y,h(\fx_j+\ft))\uno{z=h(\fx_j+\ft)+1},
\end{multline}
and hence from \eqref{eq:LKij} and the definitions of $\gamma^1_{t,x_i,x_j}$ and $\gamma^2_{t,x_i,x_j}$ we get that
\begin{multline}
    {\SL}K_{ij}(u,v)=e^{-2\ft\nabla-\ft\Delta}\big(2[P^{\hit(h)}_{\fx_i-\ft,\fx_j+\ft},\nabla]\big)e^{2\ft\nabla-\ft\Delta}(u,v)\\
    -e^{-2\ft\nabla-\ft\Delta}(u,h(\fx_i-\ft)+1)e^{-2t}\gamma^1_{t,x_i,x_j}(v)
    -e^{-2t}\gamma^2_{t,x_i,x_j}(u)e^{2t\nabla-t\Delta}(h(x_j+t)+1,v).
\end{multline}
We are computing this only for $u>h(\fx_i-\ft)$ and $v>h(\fx_j+\ft)$, so by the upper/lower triangularity of $e^{\pm2t\nabla-t\Delta}$ the second and third terms on the right hand side only survive for $u=h(\fx_i-\ft)+1$ and $v=h(\fx_j+\ft)-1$ respectively, and then since $e^{\pm2t\nabla-t\Delta}(\eta,\eta)=e^{2t}$ by \eqref{eq:Scontour}, the whole expression equals the right hand side of \eqref{eq:ptKij}, as desired.

\vskip3pt\noindent\emph{Case 2}: Suppose now that $r_k<r_0(t,x_k)$ for some $k$ and that we are not at a shock in the $x_k$ variable (i.e., that $h$ does not have a jump at $x_k+t$ or $x_k-t$).
Then $r_k<r_0(s,x_k)$ for all $s$ in a neighborhood of $t$, and for all such $s$ we then have $\FFF(s,h)=0$, so $\p_t\FFF(t,h)=0$.
Now consider ${\SL}\FFF$.
The deterministic part of the generator only moves the positions of the jumps of $h$, which lie away from $x_k\pm t$, while the creation part only moves $h$ up, which means that the inequality $r_k<r_0(t,x_k)$ (and hence the fact that $\FFF=0$) is preserved through the action of the generator.
Hence ${\SL}\FFF=0$, which gives us what we want.

\vskip3pt\noindent\emph{Case 3}: Next suppose that $r_k<r_0(t,x_k)$ for some $k$ and we are at a shock in the $x_k$ variable.
We split in three subcases.

\vskip3pt\noindent\emph{Case 3a}: Suppose $r_j<r_0(t,x_j)$ for some $j\neq k$ for which we are not at a shock in the $x_j$ variable.
Then the argument of case 2 applies, and we get again $(\p_t-{\SL})\FFF=0$.

\vskip3pt\noindent\emph{Case 3b}: Suppose now that  $r_j\geq r_0(t,x_j)$ for all $j\neq k$.
We will only consider the case when the $x_k$ shock comes from a jump of $h$ at $x_k+t$; a jump at the other edge, or at both edges, are handled similarly.
Additionally, we assume first for simplicity that there is only one jump at $x_k+t$, and explain at the end the extension to the case when there is a $(+1,-1)$ pair at that location.
Let $(\vec y,\vec\sigma)$ be the configuration of jump locations and signs associated to $h$.
By relabeling, we may assume that the (unique) jump at $x_k+t$ has label $0$, i.e. $\y_0=x_k+t$.
If $\sigma_0=-1$, i.e. there is a down jump at $x_k+t$, then by upper semi-continuity of $h$ we have that $r_0(s,z)$ is constant for $s\in[t-\ep,t+\ep]$, $z\in[x_k-\ep,x_k+\ep]$ and small $\ep$, so $\p_tF=0$ as above, and for the same reason ${\SL}\FFF=0$ (note here we need to consider the movement of the $\y_0$ jump, but it also does not affect $r_0(t,x_k)$).

We turn now to the case $\sigma_0=1$.
In particular, in this case we have $r_k<r_0(t+\ep,x_k)$, and hence $\FFF(t+\ep,h)=0$, for small $\ep>0$.
Note that, since we are assuming that the jump of $h$ at $x_k+t$ is up by one, $\lim_{s\nearrow t}r_0(s,x_k)=r_0(t,x_k)-1$.
If $r_k<r_0(t,x_k)-1$ then we also have $r_k<r_0(t-\ep,x_k)$ for small $\ep>0$, and as in the previous cases we have $\p_t\FFF={\SL}\FFF=0$.

The interesting situation is thus when $\sigma_0=1$ and $r_k=r_0(t,x_k)-1$.
This is the case for which $(\p_t-{\SL})\FFF$ is not $0$; we will show that it is bounded as needed.
In this case we have as above $\FFF(t+\ep,h)=0$, so
\[\FFF(t+\ep,h)-\FFF(t-\ep,h)=-\FFF(t-\ep,h).\]
Now we study ${\SL}\FFF$.
Once again, since the creation part of the generator only moves $h$ up (and we are assuming $r_k<r_0(t,x_k)$), we have $\SL_{\gcr}\FFF=0$.
On the other hand, it is clear that the deterministic part of the generator restricted to the jumps of $h$ other than the one at $\y_0=x_k+t$ cannot change the inequality $r_k<r_0(t,x)$, so that part of the action of the generator on $F$ vanishes.
This tells us that the only non-zero contribution to $\hat{\SL}F$ comes from the limit as $\ep\to0$ of $(2\ep)^{-1}$ times
\[\FFF(t,h)|_{\y_0=x_k+t-\ep}-\FFF(t,h)|_{\y_0=x_k+t+\ep},\]
i.e. the difference between computing $\FFF$ with the $\y_0$ jump moved to $x_k+t-\ep$ and to $x_k+t+\ep$ (recall that up steps move to the left).
We may assume that $\ep$ is small enough so that the only jump of $h$ in $[x_k+t-\ep,x_k+t+\ep]$ is the one at $x_k+t$.
When the $\y_0$ jump is moved to $x_k+t-\ep$ the inequality $r_k<r_0(t,x_k)$ remains (since $r_0(t,x_k)$ does not change) but, under our assumptions, when it is moved to $x_k+t+\ep$, $r_0(t,x_k)$ changes to $r_0(t,x_k)-1=r_k$, which means that the above difference equals $-\FFF(t,h)|_{\y_0=x_k+t+\ep}$.
The conclusion from all this is that, in the current situation,
\[(\p_t-{\SL})\FFF=\lim_{\ep\to0}(2\ep)^{-1}\big(-\FFF(t-\ep,h)+\FFF(t,h)|_{\y_0=x_k+t+\ep}\big).\]
Under our assumptions, when evaluating $\FFF$ at $t-\ep$ with $\y_0=x_k+t$ or at $t$ with $\y_0=x_k+t+\ep$, we are in the regime where $r_i\geq r_0(t,x_i)$ for each $i$, so that the $\FFF$'s on the right hand side are given by the corresponding Fredholm determinants, i.e.
\begin{equation}
(\p_t-{\SL})\FFF=\lim_{\ep\to0}(2\ep)^{-1}\big(\tsm\det(I-K(t,h)|_{\y_0=x_k+t+\ep})-\det(I-K(t-\ep,h))\big),\label{eq:cu}
\end{equation}
where we are including the dependence of $K$ on $t$, $h$ and $\y_0$ in the same way as for $F$.
Now we claim that there is a constant $C<\infty$ which is independent of $\ep$ and $h$ in the support of $\nu_\rho$ and bounded uniformly for $t$ in a compact interval such that
\begin{equation}\label{eq:detKth-bd}
\big|\!\det(I-K(t,h)|_{\y_0=x_k+t+\ep})-\det(I-K(t-\ep,h))\big|\leq C\tts\ep.
\end{equation}
We prove this in Appdx. \ref{sec:pf-detKth-bd}.
It shows that \noeqref{eq:detKth-bd}
$
    |(\p_t-{\SL})\FFF|\leq C
$
uniformly in $t$ in a finite set and $h\in\UC$, which is what we wanted.

The only situation left to consider in this case is when there is a $(+1,-1)$ pair at $x_k+t$, but it is easy to see that the above argument still works in this case: when the generator is applied to $\FFF$, the up jump moves left while the down jump moves right, but all the necessary inequalities as well as the estimates in Appdx. \ref{sec:pf-detKth-bd} remain valid.

\vskip3pt\noindent\emph{Case 3c}: The last possibility is that there are several indices $j_1,\dotsc,j_\ell$ (including $k$) for which $r_{j_i}<r_0(t,x_{j_i})$ and we are at a shock in each of those variables.
The proof in this case is basically the same.
When computing the limit defining $(\p_t-{\SL})\FFF$ there will be terms as above coming from the $t$ derivative for each of the $j_i$'s, and matching terms coming from ${\SL}\FFF$ coming from the deterministic movement of the jumps of $h$ at the corresponding points.
Most of these terms will vanish as above, and those which do not will lead a sum of at most $\ell$ differences as in \eqref{eq:cu}, which can be estimated separately.
\end{proof}

\appendix

\settocdepth{section}

\section{Initial data for non-Abelian Toda}\label{sec:multipt-ini}

In this appendix we compute the initial condition for the non-Abelian 2D Toda equation for PNG.
We focus on the equation \eqref{2a'} in terms of $U_r$ and $V_r$, and give expressions for these two matrices at $t=0$.
We stress that this is far from providing the whole boundary data for the equation, which we do not attempt here.
In everything that follows we work under the assumption $r_i\geq r_0(t,x_i)$, which for $t=0$ means we take, for each $i$,
\begin{equation}
r_i\geq h_0(x_i).\label{eq:rassum2}
\end{equation}

The formula \eqref{eq:KPNG0} for $K^{\uptext{ext}}\big|_{t=0}$ has the same structure as the (slightly formal, in that case) formula derived in \cite{kp}, in the setting of the KPZ fixed point, for the Brownian scattering operator at $t=0$.
Thus the same basic argument applies in our case, and it will lead to a formula for $Q_r\big|_{t=0}$ in \eqref{eq:Qrt0} below, providing a discrete analog of the formula derived in \cite{kp}.
However, in our setting we need to calculate $Q^{-1}_r\big|_{t=0}$ and $\p_\eta Q_r\big|_{t=0}$ as well.
This will require an additional computation, leading to more complicated formulas.
To express them we will need some:

\begin{notation}
For $i<j$ and $i\leq k\leq m\leq j$, let
\begin{multline}
P_{i,j}^{\geq h_0,\leq\check{\mathfrak{d}}_{k,m}}(u,v)
=\pp_{\fN(x_i)=u}\big(\fN(y)\geq h_0(y)~\forall\,y\in[x_i,x_j],\\
\fN(x_\ell)\leq r_\ell\uptext{ for each }x_\ell\in[x_k,x_m]\cap(x_i,x_j),\,\fN(x_j)=v\big).
\end{multline}
Note that in the probability we ask $\fN$ to stay above $h_0$ inside the whole interval $[x_i,x_j]$, and to stay below the heights $r_\ell$ only at those points $x_\ell$ among the $x_1<\dotsm<x_n$ with indices between $k$ and $m$ but excluding, if necessary, the two endpoints $x_i$ and $x_j$.
The notation $\check{\mathfrak{d}}_{k,m}$ represents an upside-down, multiple narrow wedge, defined as $\check{\mathfrak{d}}_{k,m}(x)=r_\ell$ if $x=x_\ell$ for some $x_\ell\in\{x_k,\dotsc,x_m\}$ and $\infty$ otherwise (so the event with upper bound inside the probability is the same as asking for $\fN(y)\leq\check{\mathfrak{d}}_{k,m}(y)$ for $y\in[x_i,x_j]$).
We extend this notation in several ways:

\noindent
 (i) If $k=m$ then we write $\check{\mathfrak{d}}_{k}$ instead of $\check{\mathfrak{d}}_{k,k}$.
 (ii) If we change, remove or add inequalities in the superscript on the left hand side, then all we do is change, remove or add the corresponding inequalities in the probability on the right hand side.
 (iii) When $u=r_i$ and $v=r_j$, we simply write
  \begin{equation}
  \bar P_{i,j}^{\geq h_0,\leq\check{\mathfrak{d}}_{i,j}}=P_{i,j}^{\geq h_0,\leq\check{\mathfrak{d}}_{i,j}}(r_i,r_j),
  \end{equation}
 and similarly if any of the inequalities are modified.
 (iv) We turn the last definition into an $n\times n$ matrix $\bar P^{\geq h_0,<\check{\mathfrak{d}}}_{\vec r}$ defined through (here for convenience we make the dependence on $\vec r=(r_1,\dots,r_n)$ explicit)
  \begin{equation}\label{eq:lastbPnotation}
  (\bar P_{\vec r}^{\geq h_0,<\check{\mathfrak{d}}})_{ij}=\uno{i<j}\bar P^{\geq h_0,<\check{\mathfrak{d}}_{i,j}}_{i,j}
  \end{equation}
 for each $i,j$ (note that this is using implicitly the convention that each $\bar P_{ij}=0$ for $i\geq j$). 
 (v) We let $\geqdot h_0$ denote the event that $\fN$ stays $\geq h_0$ but hits exactly $h_0$ along the way, i.e.
  \begin{equation}\label{eq:geqdot}
  P^{\geqdot h_0}_{i,j}=P^{\geq h_0}_{i,j}-P^{>h_0}_{i,j}.
  \end{equation}
 We extend this notation as before if we need to add further restrictions in the paths of $\fN$, and also to the case $\leqdot\check{\mathfrak{d}}$.
 (vi) Finally, we use the notation $\geqdot h_0\tts\scalebox{0.65}{$\curvearrowright$}\tts\leqdot\check{\mathfrak{d}}_{i,j}$ if we require the $\check{\mathfrak{d}}_{i,j}$ touching to occur after the first $h_0$ touching, and we also write
  \begin{equation}\label{eq:geqdotplusr}
  \big(\bar P^{\geqdot h_0,\leq\check{\mathfrak{d}}}_{+,\vec r}\big)_{ij}=P^{\geqdot h_0,\leq\check{\mathfrak{d}}_{i,j}}_{i,j}(r_i+1,r_j)
  \qqand\big(\bar P^{\geqdot h_0\tts\scalebox{0.65}{$\curvearrowright$}\tts\leqdot\check{\mathfrak{d}}}_{+,\vec r}\big)_{ij}=P^{\geqdot h_0\tts\scalebox{0.65}{$\curvearrowright$}\tts\leqdot\check{\mathfrak{d}}_{i,j}}_{i,j}(r_i+1,r_j).
  \end{equation}
\end{notation}

Additionally, we introduce the shift operators
\[\theta_rf(u)=f(u+r),\qquad\theta^*_rf(u)=f(u-r).\]
The action of $\theta_r$ is meant as taking place in all of $\ell^2(\zz)$ (which is why we are distinguishing $\theta_1$ and $\sigma$).
In terms of our formulas, where all (scalar) kernel products take place in $\ell^2(\zz_{>0})$, it is better to think of $\theta_r$/$\theta^*_r$ simply as a convenient notation, which modifies the kernel to its right/left, rather than as the action of an operator.
The point is that for a given kernel $A$ we have $\theta_r A\theta^*_r(u,v)=A(u+r,v+r)$ so we can, for instance, rewrite \eqref{eq:KPNG0} (using also \eqref{eq:rassum2}) as
\begin{equation}\label{eq:Kij3}
(K_r)_{ij}=-\uno{i<j}\theta_{r_i}\P_{r_i}P^{>h_0}_{i,j}\P_{r_j}\theta^*_{r_j}=-\uno{i<j}\theta_{r_i}P^{>h_0}_{i,j}\theta^*_{r_j},
\end{equation}
since $\theta_r\P_r=\P_0\theta_r$, $\P_r\theta_r^*=\theta_r^*\P_0$ and our kernels act on $\ell^2(\zz_{>0})$ so the $\P_0$ factors may be omitted.

We turn now to the computation.
We begin with a  formula for $Q_r\big|_{t=0}$:

\begin{prop}\label{prop:Qrt0}
Under the condition \eqref{eq:rassum2} we have
\begin{equation}\label{eq:Qrt0}
Q_r\big|_{t=0}=I-\bar P_{\vec r}^{\geq h_0,<\check{\mathfrak{d}}}\qqand
Q^{-1}_r\big|_{t=0}=I+\bar P_{\vec r}^{\geq h_0,\leq\check{\mathfrak{d}}}.
\end{equation}
\end{prop}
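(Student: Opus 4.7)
The plan is to exploit that $K_r\big|_{t=0}$ is strictly upper triangular in its $n\times n$ matrix indices, so its Neumann series $R_r\big|_{t=0}=\sum_{k=0}^{n-1}(K_r\big|_{t=0})^k$ terminates. Reading off each power at $(u,v)=(1,1)$ will yield an alternating sum of pinned random-walk probabilities, which I will collapse by inclusion--exclusion to produce the claimed formula for $Q_r\big|_{t=0}$. The second identity will then follow from an analogous combinatorial identity applied to $(I-\bar P^{\geq h_0,\,<\check{\mathfrak{d}}}_{\vec r})^{-1}$.

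From \eqref{eq:KPNG0} and \eqref{eq:Kij3} one has $(K_r)_{ij}\big|_{t=0}=-\uno{i<j}\tts\theta_{r_i}P^{>h_0}_{i,j}\theta^*_{r_j}$. Using $\theta^*_{r_\ell}\theta_{r_\ell}=\P_{r_\ell}$ on $\ell^2(\zz_{>0})$ to convert each intermediate index into a projector, one finds for $i<j$ and $k\geq1$ that
\[((K_r)^k)_{ij}\big|_{t=0}(1,1)=(-1)^k\tsm\sum_{i<i_1<\cdots<i_{k-1}<j}\tsm P^{>h_0,\,>\check{\mathfrak{d}}_{i_1,\dotsc,i_{k-1}}}_{i,j}(r_i+1,r_j+1).\]
Translation invariance of $\fN$ (shifting the walk by $-1$) turns each strict $>$ into $\geq$ and moves the endpoints from $r_i+1,r_j+1$ to $r_i,r_j$, so the summand equals $P^{\geq h_0,\,\geq\check{\mathfrak{d}}_{i_1,\dotsc,i_{k-1}}}_{i,j}(r_i,r_j)$. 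Summing over $k\geq1$ and applying inclusion--exclusion to the events $A_\ell=\{\fN(x_\ell)\geq r_\ell\}$, $\ell\in(i,j)$, on the pinned measure, the off-diagonal entries of $Q_r\big|_{t=0}$ collapse to $-\bar P^{\geq h_0,\,<\check{\mathfrak{d}}_{i,j}}_{i,j}$; combined with $(Q_r)_{ii}\big|_{t=0}=1$ and $(Q_r)_{ij}\big|_{t=0}=0$ for $i>j$, this proves the first identity.

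For the inverse, $\bar P^{\geq h_0,\,<\check{\mathfrak{d}}}_{\vec r}$ is itself strictly upper triangular, so $Q_r^{-1}\big|_{t=0}=I+\sum_{k\geq1}(\bar P^{\geq h_0,\,<\check{\mathfrak{d}}}_{\vec r})^k$ is a finite sum. By the Markov property of $\fN$, the scalar factors in the $(i,j)$ entry of $(\bar P^{\geq h_0,\,<\check{\mathfrak{d}}}_{\vec r})^k$ concatenate at the shared pinning heights $r_{i_m}$, giving the probability that $\fN$ travels from $r_i$ to $r_j$ staying $\geq h_0$, equals $r_{i_m}$ at each chosen $x_{i_m}$, and lies strictly below $r_\ell$ at every other interior index. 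Summing over $k$ and over the touching set $\{i_1,\dotsc,i_{k-1}\}\subseteq(i,j)$ partitions the event $\{\fN(x_\ell)\leq r_\ell\,\forall\ell\in(i,j)\}$ according to where equality is attained, yielding $\bar P^{\geq h_0,\,\leq\check{\mathfrak{d}}_{i,j}}_{i,j}$. The argument is purely combinatorial; the main obstacle, modest but genuine, is carefully tracking strict versus non-strict inequalities and the off-by-one shift needed to reconcile the value at $(1,1)$ with the pinned heights $(r_i,r_j)$ in the statement.
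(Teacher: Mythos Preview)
Your argument is correct and, for the computation of $Q_r\big|_{t=0}$, essentially identical to the paper's: both expand the terminating Neumann series of the strictly upper-triangular kernel $K_r\big|_{t=0}$, recognize the intermediate $\theta^*_{r_\ell}\P_0\theta_{r_\ell}=\P_{r_\ell}$ as the insertion of a projector, and collapse the alternating sum by inclusion--exclusion, then shift by one via translation invariance of $\fN$ to pass from $(>h_0,>\check{\mathfrak{d}})$ at $(r_i+1,r_j+1)$ to $(\geq h_0,<\check{\mathfrak{d}})$ at $(r_i,r_j)$.

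For $Q_r^{-1}\big|_{t=0}$ you take a different but equally clean route. The paper simply verifies that $I+\bar P_{\vec r}^{\geq h_0,\leq\check{\mathfrak{d}}}$ is the inverse by multiplying out $(I-\bar P_{\vec r}^{\geq h_0,<\check{\mathfrak{d}}})(I+\bar P_{\vec r}^{\geq h_0,\leq\check{\mathfrak{d}}})$ and checking that the cross terms collapse to $\bar P_{\vec r}^{\geq h_0,\leqdot\check{\mathfrak{d}}}$ via a first-hitting-time decomposition. You instead expand the Neumann series $\sum_{k\geq0}(\bar P_{\vec r}^{\geq h_0,<\check{\mathfrak{d}}})^k$ directly and interpret the sum over touching sets $\{i_1,\dotsc,i_{k-1}\}$ as a partition of the event $\{\fN(x_\ell)\leq r_\ell\ \forall\ell\in(i,j)\}$ according to where equality is attained. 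Both arguments rely on the same Markov factorization at the pinned heights $r_{i_m}$; yours has the mild advantage of being constructive (it produces the inverse rather than verifying a guess), while the paper's is marginally shorter once the answer is known.
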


\begin{proof}
In view of \eqref{eq:Kij3}, $K_r$ is strictly upper triangular, so $(R_r)_{ij}=\sum_{m=0}^{j-i}(K^m)_{ij}$ and we have $(R_r)_{ii}=I$ and $(R_r)_{ij}=0$ for $i>j$.
For $i<j$, on the other hand, we decompose $(K^m_r)_{ij}$ as $\sum_{\pi}\prod_{\ell=1}^{m}(K_r)_{\pi(\ell)\pi(\ell+1)}$, where the sum is over strictly increasing paths $\pi=(\pi(1),\dotsc,\pi(m+1))$ going from $i$ to $j$ along indices $\{1,\dotsc,n\}$.
Applying the inclusion-exclusion principle and using the fact that we are acting on $\ell^2(\zz_{>0})$ and $\theta^*_r\P_0\theta_r=\P_r$ in $\ell^2(\zz)$, we deduce that
\begin{equation}\label{eq:Rij3}
(R_{r})_{ij}=\uno{i=j}-\uno{i<j}\theta_{r_i}P^{>h_0,\leq\check{\mathfrak{d}}_{i,j}}_{i,j}\theta^*_{r_j}
\end{equation}
(see \cite[Eqn. (A.6)]{kp} for a similar argument), and then evaluating at $(1,1)$ gives
\[Q_{ij}=\uno{i=j}I-\uno{i<j}P^{>h_0,\leq\check{\mathfrak{d}}_{i,j}}(r_i+1,r_j+1)=\uno{i=j}I-\uno{i<j}P^{\geq h_0,<\check{\mathfrak{d}}_{i,j}}(r_i,r_j),\]
where in the second equality we used the translation invariance of $\fN$.
This is the first identity in \eqref{eq:Qrt0}.

For $Q^{-1}_r\big|_{t=0}$ it is now enough to check that $(I-\bar P_{\vec r}^{\geq h_0,<\check{\mathfrak{d}}})(I+\bar P_{\vec r}^{\geq h_0,\leq\check{\mathfrak{d}}})=I$, which is equivalent to $\bar P_{\vec r}^{\geq h_0,\leq\check{\mathfrak{d}}}-\bar P_{\vec r}^{\geq h_0,<\check{\mathfrak{d}}}=\bar P_{\vec r}^{\geq h_0,\leq\check{\mathfrak{d}}}\bar P_{\vec r}^{\geq h_0,<\check{\mathfrak{d}}}$.
The left hand side equals $\bar P_{\vec r}^{\geq h_0,\leqdot\check{\mathfrak{d}}}$, using the notation (v) introduced above.
The right hand side is trivially equal to this at or below the diagonal, while a simple computation using the upper triangularity of each factor shows that, above the diagonal, the product provides a decomposition of $\bar P_{\vec r}^{\geq h_0,\leqdot\check{\mathfrak{d}}}$ according to the last time the walk hits $\check{\mathfrak{d}}$.
\end{proof}

Next we need to compute $\p_\eta Q_r|_{t=0}$.

\begin{prop}
With the notation introduced above, and under condition \eqref{eq:rassum2}, we have
\begin{equation}
\p_\eta Q_{r}\big|_{t=0}=\bar P^{\geqdot h_0,\leq\check{\mathfrak{d}}}_{+,\vec r}-\bar P^{\geqdot h_0\tts\scalebox{0.65}{$\curvearrowright$}\tts\leqdot\check{\mathfrak{d}}}_{+,\vec r}.\label{eq:deltaQrrr}
\end{equation}
\end{prop}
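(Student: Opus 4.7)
The plan is to differentiate the resolvent identity, $\p_\eta R_r = R_r\,(\p_\eta K_r)\,R_r$, evaluate at the inner coordinates $(1,1)$, and use the explicit $t=0$ forms of $K_r$ and $R_r$ (from \eqref{eq:KPNG0} and \eqref{eq:Rij3}) together with the shift identity $P^{>h_0}(a+1,b+1)=P^{\geq h_0}(a,b)$ for the random walk $\fN$. First I would compute $(\p_\eta K_r)_{kl}\big|_{t=0}$ on $\ell^2(\zz_{>0})$ using \eqref{eq:petazeta3} and Lem. \ref{lem:petazeta4}. The bulk part $K_r^{kl}(u,v-1)-K_r^{kl}(u+1,v)$ collapses via the shift identity to
\[(\p_\eta K_r)_{kl}(u,v)\big|_{t=0} = \uno{k<l}\,P^{\geqdot h_0}_{x_k,x_l}(u+r_k,\,v+r_l-1)\qquad (u,v\geq 1),\]
and the forcing $W^{(\eta)}_{0,x_k,x_l}(u+r_k,v+r_l)$ vanishes identically: it is supported at $v+r_l=h_0(x_l)+1$, which under $r_l\geq h_0(x_l)$ and $v\geq 1$ forces $v=1$ and $r_l=h_0(x_l)$, and there \eqref{eq:petazeta5gen} reduces to $-\tfrac12 P^{>h_0}_{x_k,x_l^-}(u+r_k,h_0(x_l))$, which is zero because $\fN$ cannot remain strictly above $h_0$ on $[x_k,x_l)$ while landing exactly on $h_0(x_l)$ at $x_l$ (this would require a jump at the deterministic time $x_l$, an event of probability zero).

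Next, expanding the matrix triple product yields
\[\p_\eta Q_r^{ij}\big|_{t=0} = \sum_{i\leq k<l\leq j}\sum_{u,v\geq 1}\rho^{ik}(u)\,P^{\geqdot h_0}_{x_k,x_l}(u+r_k,v+r_l-1)\,\tilde\rho^{lj}(v),\]
where from \eqref{eq:Rij3} $\rho^{ii}(u)=\delta_{u1}$, $\rho^{ik}(u)=-P^{>h_0,\leq\check{\mathfrak{d}}_{i,k}}_{x_i,x_k}(r_i+1,u+r_k)$ for $k>i$, and analogously for $\tilde\rho^{lj}(v)$. Applying the shift identity in reverse to the right factor rewrites it as $P^{\geq h_0,<\check{\mathfrak{d}}_{l,j}}_{x_l,x_j}(v+r_l-1,r_j)$, so that after the substitutions $w_1=u+r_k\geq r_k+1$ and $w_2=v+r_l-1\geq r_l$ the three factors concatenate by the Markov property at height $w_1$ at $x_k$ and height $w_2$ at $x_l$. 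Each $(k,l)$ summand thus represents a walk from $(x_i,r_i+1)$ to $(x_j,r_j)$ that is $>h_0$ and $\leq r_m$ at intermediate $m\in(i,k)$ on $[x_i,x_k]$, $\geqdot h_0$ (with no intermediate constraint) on $[x_k,x_l]$, and $\geq h_0$ and $<r_m$ at intermediate $m\in(l,j)$ on $[x_l,x_j]$, with an overall sign $(-1)^{\uno{k>i}+\uno{l<j}}$.

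Finally, I would match the signed sum to the RHS by inclusion-exclusion. Each target walk ($\geqdot h_0$, $\leq\check{\mathfrak{d}}$, with first $h_0$ touch $\tau$ and no $\check{\mathfrak{d}}$ touch after $\tau$) is associated uniquely to the pair $(k,l)$ where $k\in[i,j-1]$ is the largest index with $\fN>h_0$ on $[x_i,x_k]$ and $\fN(x_m)\leq r_m$ for $m\in(i,k]$, and $l\in[k+1,j]$ is the smallest index with $\fN(x_l)\geq r_l$, with the conventions $k=i$ or $l=j$ if no such intermediate index exists. Under this assignment $\tau\in(x_k,x_l]$, and the surrounding constraints translate exactly to "$\fN(x_m)\leq r_m$ for $x_m\leq\tau$, $\fN(x_m)<r_m$ for $x_m>\tau$", recovering the target. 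As a sanity check, the $n=2$ case collapses to $\p_\eta Q_r^{12}\big|_{t=0}=(\p_\eta K_r)^{12}(1,1)=P^{\geqdot h_0}(r_1+1,r_2)$, matching $(\bar P^{\geqdot h_0,\leq\check{\mathfrak{d}}}_{+,\vec r})_{12}$; and the $n=3$ case can be verified by splitting $P^{\geqdot h_0}_{x_1,x_3}(r_1+1,r_3)$ according to the value of $\fN(x_2)$ and matching with the three signed $(k,l)$ summands. The main obstacle is implementing this inclusion-exclusion transparently for arbitrary $n$, so that the alternating signs from $\rho$ and $\tilde\rho$ collapse to the correct $\leq/<$ dichotomy at every intermediate $x_m$ without double-counting walks that pass through multiple $\check{\mathfrak{d}}$ touches.
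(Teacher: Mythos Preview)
Your setup matches the paper exactly: both start from $\p_\eta Q_r = (R_r\,\p_\eta K_r\,R_r)(1,1)$, compute $(\p_\eta K_r)_{kl}\big|_{t=0}=\theta_{r_k}P^{\geqdot h_0}_{k,l}\theta^*_{r_l-1}$ via the shift identity, and expand using \eqref{eq:Rij3}. One correction: your argument that $W^{(\eta)}$ vanishes fails when $h_0$ has an up jump at $x_l$ and $r_l=h_0(x_l)$, since then $h_0(y)=h_0(x_l)-1$ for $y$ just left of $x_l$, so $\fN$ can sit at level $h_0(x_l)$ on $(x_l-\ep,x_l]$ while remaining strictly above $h_0$ on $[x_k,x_l)$, no jump of $\fN$ at $x_l$ required. (The paper, for its part, simply invokes the simplified $\p_\eta K_r$ formula without revisiting this boundary case.)

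The real divergence is the final combinatorics, where your direct signed inclusion-exclusion over $(k,l)$ is not the paper's route. The paper computes $R_r\p_\eta K_r$ first and then right-multiplies by $R_r$; at this second stage a factor $\theta^*_{r_\ell-1}\P_0\theta_{r_\ell}=\P_{r_\ell-1}\theta_1$ appears (from $\p_\eta K_r$'s right shift by $r_\ell-1$ meeting $R_r$'s left shift by $r_\ell$). The trick is to \emph{first replace $\P_{r_\ell-1}$ by $\P_{r_\ell}$}: with this replacement, the four resulting terms are an honest (unsigned) inclusion-exclusion for the event ``$\fN\geqdot h_0$ and $\fN$ goes strictly above some $r_m$'', decomposed by the last such excursion before the first $h_0$-touch and the first such excursion after the last $h_0$-touch, yielding exactly $P^{\geqdot h_0,\leq\check{\mathfrak{d}}}_{i,j}(r_i+1,r_j)$. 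The correction $\P_{r_\ell-1}-\P_{r_\ell}=\uno{\{r_\ell\}}$ pins the intermediate value to $r_\ell$, and those remaining terms, decomposed by the first $\check{\mathfrak{d}}$-touch after the $\geqdot h_0$ segment, give $P^{\geqdot h_0\,\scalebox{0.65}{$\curvearrowright$}\,\leqdot\check{\mathfrak{d}}}_{i,j}(r_i+1,r_j)$. This splitting into two unsigned probability decompositions is precisely what circumvents the double-counting you flagged; it replaces your signed $(k,l)$ bookkeeping by two separate telescoping identities.
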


\begin{proof}
In all that follows we set $t=0$.
We have
\begin{equation}
\p_\eta Q_r=\p_\eta R_r(1,1)=R_r\p_\eta K_rR_r(1,1).\label{eq:detaQr}
\end{equation}
$R_r$ is upper triangular while $\p_\eta K_r$ is strictly upper triangular, so the same holds for $\p_\eta K_rR_r$ and $R_r\p_\eta K_rR_r$.
In view of this, in the computation that follows it is enough to consider $i<j$.

Recall $\p_\eta K_r(u,v)=-K_r(u+1,v)+K_r(u,v-1)$, so using \eqref{eq:Kij3} we have
\[(\p_\eta K_r)_{ij}=\theta_{r_i+1}P^{>h_0}_{i,j}\theta^*_{r_j}-\theta_{r_i}P^{>h_0}_{i,j}\theta^*_{r_j-1}=\theta_{r_i}P^{\geq h_0}_{i,j}\theta^*_{r_j-1}-\theta_{r_i}P^{>h_0}_{i,j}\theta^*_{r_j-1}=\theta_{r_i}P^{\geqdot h_0}_{i,j}\theta^*_{r_j-1},\]
where we used translation invariance and the notation introduced in \eqref{eq:geqdot}.
Using now \eqref{eq:Rij3} we get that 
\begin{equation}
\textstyle(R_r\p_\eta K_r)_{ij}=\theta_{r_i}P^{\geqdot h_0}_{i,j}\theta^*_{r_j-1}
-\sum_{i<k<j}\theta_{r_i}P^{>h_0,\leq\check{\mathfrak{d}}_{i+1,k-1},>\check{\mathfrak{d}}_k,\geqdot h_0}_{i,j}\theta^*_{r_j-1},
\end{equation}
where in the second $P_{i,j}$ operator the sequence of inequalities means in particular that $\fN>h_0$ in $[x_i,x_k]$ and $\fN\;\geqdot\; h_0$ in $[x_k+1,x_j]$ (there is no ambiguity at $x_k$ since, under \eqref{eq:rassum2}, the condition $>\check{\mathfrak{d}}_k$ implies $\fN(x_k)>h_0(x_k)$ anyway), and we used again that $\theta^*_r\P_0\theta_r=\P_r$ in $\ell^2(\zz)$.

Next we compute $(R_r\p_\eta K_rR_r)_{ij}$.
Using the last formula and \eqref{eq:Rij3}, it is given by
\begin{multline}
\textstyle\theta_{r_i}P^{\geqdot h_0}_{i,j}\theta^*_{r_j-1}
-\sum_{i<k<j}\theta_{r_i}P^{>h_0,\leq\check{\mathfrak{d}}_{i+1,k-1},>\check{\mathfrak{d}}_k,\geqdot h_0}_{i,j}\theta^*_{r_j-1}
-\sum_{i<k<j}\theta_{r_i}P^{\geqdot h_0}_{i,k}\theta^*_{r_k-1}\theta_{r_k}P^{>h_0,\leq\check{\mathfrak{d}}_{k,j}}_{k,j}\theta^*_{r_j}\\
\textstyle+\sum_{i<k<\ell<j}\theta_{r_i}P^{>h_0,\leq\check{\mathfrak{d}}_{i+1,k-1},>\check{\mathfrak{d}}_k,\geqdot h_0}_{i,\ell}\theta^*_{r_\ell-1}\theta_{r_\ell}P^{>h_0,\leq\check{\mathfrak{d}}_{\ell,j}}_{\ell,j}\theta^*_{r_j}.\label{eq:crazy5}
\end{multline}
Consider the factor $\theta^*_{r_k-1}\theta_{r_k}P^{>h_0,\leq\check{\mathfrak{d}}_{k,j}}_{k,j}\theta^*_{r_j}$.
If we make explicit the fact that everything acts on $\ell^2(\zz_{>0})$ except for the $\theta_{r}$'s to their right and the $\theta^*_r$'s to their left, we can write this as
\begin{equation}
\theta^*_{r_k-1}\P_0\theta_{r_k}P^{>h_0,\leq\check{\mathfrak{d}}_{k,j}}_{k,j}\theta^*_{r_j}
=\P_{r_k-1}\theta_{1}P^{>h_0,\leq\check{\mathfrak{d}}_{k,j}}_{k,j}\theta^*_{r_j}
=\P_{r_k-1}P^{\geq h_0,<\check{\mathfrak{d}}_{k,j}}_{k,j}\theta^*_{r_j-1},\label{eq:crazy6}
\end{equation}
where once again we used translation invariance.
Let us for a moment replace the projection $\P_{r_k-1}$ on the left by $\P_{r_k}$, and use this in the last two terms of the above formula.
We get
\begin{multline}
\textstyle\theta_{r_i}P^{\geqdot h_0}_{i,j}\theta^*_{r_j-1}
-\sum_{i<k<j}\theta_{r_i}P^{>h_0,\leq\check{\mathfrak{d}}_{i+1,k-1},>\check{\mathfrak{d}}_k,\geqdot h_0}_{i,j}\theta^*_{r_j-1}
-\sum_{i<k<j}\theta_{r_i}P^{\geqdot h_0,>\check{\mathfrak{d}}_k,<\check{\mathfrak{d}}_{k
+1,j-1},\geq h_0}_{i,j}\theta^*_{r_j-1}\\
\textstyle+\sum_{i<k<\ell<j}\theta_{r_i}P^{>h_0,\leq\check{\mathfrak{d}}_{i+1,k-1},>\check{\mathfrak{d}}_{k},\geqdot h_0,>\check{\mathfrak{d}}_\ell,<\check{\mathfrak{d}}_{\ell-1,j},\geq h_0}_{i,j}\theta^*_{r_j-1}.
\end{multline}
The second and third terms correspond to $\fN$ paths which stay $\geq h_0$ and which at some point touch $h_0$ and at some other point go strictly above one of the $r_k$'s; the second sum corresponds to a decomposition of the event that the path goes above some $r_k$ before it first touches $h_0$, while the third sum corresponds  to the event that the path goes above some $r_k$ after it last touches $h_0$.
Now we note that the sum in the last term is precisely a decomposition of the probability of the intersection of these two events.
Thus the last three terms above correspond to minus the probability that $\fN\;\geqdot\;h_0$ and it goes strictly above one of the $r_k$'s.
Using this in the last expression and evaluating at $(1,1)$ yields $P^{\geqdot h_0,\leq\check{\mathfrak{d}}_{i,j}}_{i,j}(r_i+1,r_j)$.

What remains is to account for the difference between $\P_{r_k-1}$ and $\P_{r_k}$ in \eqref{eq:crazy6}.
This difference is just $\uno{\{r_k\}}$, which corresponds to evaluation at $r_k$.
So, upon evaluating at $(1,1)$, the remaining terms we get from \eqref{eq:crazy5}/\eqref{eq:crazy6} amount to
\begin{multline}
\textstyle-\sum_{i<k<j}P^{\geqdot h_0}_{i,k}(r_i+1,r_k)P^{\geq h_0,<\check{\mathfrak{d}}_{k,j}}_{k,j}(r_k,r_j)\\
\textstyle+\sum_{i<k<\ell<j}P_{i,\ell}^{>h_0,\leq\check{\mathfrak{d}}_{i+1,k-1},>\check{\mathfrak{d}}_k,\geqdot h_0}(r_i+1,r_\ell)P_{\ell,j}^{\geq h_0,<\check{\mathfrak{d}}_{\ell,j}}(r_\ell,r_j).
\end{multline}
The first sum is a decomposition of the probability that $\fN\;\geqdot\; h_0$ and it touches some of the $r_k$'s at some point after it first touches $h_0$, remaining below $\check{\mathfrak{d}}$ after that.
The second sum corresponds to something similar, except that we also ask for $\fN$ to go strictly above some of the $r_k$'s, excluding $k=i$, before touching $h_0$.
The difference of the two sums then corresponds to the event that $\fN\;\geqdot\;h_0$, $\fN\;\leqdot\;\check{\mathfrak{d}}$ (with the obvious meaning), and that there is a $\check{\mathfrak{d}}$ touching after the first $h_0$ touching.
In the notation introduced above, we have proved that
$(\p_\eta Q_{r}|_{t=0})_{ij}=P^{\geqdot h_0,\leq\check{\mathfrak{d}}_{i,j}}_{i,j}(r_i+1,r_j)-P^{\geqdot h_0\tts\scalebox{0.65}{$\curvearrowright$}\tts\leqdot\check{\mathfrak{d}}_{i,j}}_{i,j}(r_i+1,r_j)$.
This yields \eqref{eq:deltaQrrr}.
\end{proof}

Finally we can write the initial data for the non-Abelian Toda equations \eqref{2a'}.  Like the Rosetta stone, all  notations can be deciphered from the preceding explanations.

\begin{prop}
Assuming $r_i\geq h_0(x_i)$ for each $i$,
\begin{equation}
U_r\big|_{t=0}=\big(I-\bar P_{\vec r}^{\geq h_0,<\check{\mathfrak{d}}}\big)\!\big(I+\bar P_{\vec r}^{>h_0,\leq\check{\mathfrak{d}}}\big),\qquad
V_r\big|_{t=0}=\big(\bar P^{\geqdot h_0\tts\scalebox{0.65}{$\curvearrowright$}\tts\leqdot\check{\mathfrak{d}}}_{+,\vec r}-\bar P^{\geqdot h_0,\leq\check{\mathfrak{d}}}_{+,\vec r}\big)\!\big(I+\bar P_{\vec r}^{\geq h_0,<\check{\mathfrak{d}}}\big).
\end{equation}
\end{prop}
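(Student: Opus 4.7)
The plan is to derive both formulas by direct substitution from Prop.~\ref{prop:Qrt0} and the preceding proposition into the definitions $U_r=Q_r Q_{r-1}^{-1}$ and $V_r=-\partial_\eta Q_r\,Q_r^{-1}$ recorded in \eqref{eq:27}.

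For $U_r|_{t=0}$, the left factor is immediate from Prop.~\ref{prop:Qrt0}. For the right factor I would apply Prop.~\ref{prop:Qrt0} to the shifted vector $\vec r-\mathbf{1}=(r_1-1,\dotsc,r_n-1)$, obtaining $Q_{r-1}^{-1}|_{t=0}=I+\bar P_{\vec r-\mathbf{1}}^{\geq h_0,\leq\check{\mathfrak{d}}}$, and then appeal to translation invariance of $\fN$: shifting any candidate trajectory upward by one unit turns a walk from $r_i-1$ to $r_j-1$ staying $\geq h_0$ with upper bounds $r_\ell-1$ at the interior $x_\ell$ into a walk from $r_i$ to $r_j$ staying $\geq h_0+1$ (i.e.\ $>h_0$) with upper bounds at the original levels $r_\ell$. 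This yields $\bar P_{\vec r-\mathbf{1}}^{\geq h_0,\leq\check{\mathfrak{d}}}=\bar P_{\vec r}^{>h_0,\leq\check{\mathfrak{d}}}$, which is the stated form. The borderline case $r_i=h_0(x_i)$ is harmless: the $i$-th row of both sides of the shifted identity vanishes, so the translation argument carries no content there.

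For $V_r|_{t=0}$, the left factor is $-\partial_\eta Q_r|_{t=0}$, which reads off directly from \eqref{eq:deltaQrrr}, and the right factor is $Q_r^{-1}|_{t=0}=I+\bar P_{\vec r}^{\geq h_0,\leq\check{\mathfrak{d}}}$ from Prop.~\ref{prop:Qrt0}. The only nontrivial step is to reconcile this second factor with the stated version containing $\bar P_{\vec r}^{\geq h_0,<\check{\mathfrak{d}}}$ instead. Since the two choices differ by $\bar P_{\vec r}^{\geq h_0,\leqdot\check{\mathfrak{d}}}$ (in the $\leqdot$ notation from (v) above), the equivalence of the two forms reduces to the identity
\[\bigl(\bar P^{\geqdot h_0\tts\scalebox{0.65}{$\curvearrowright$}\tts\leqdot\check{\mathfrak{d}}}_{+,\vec r}-\bar P^{\geqdot h_0,\leq\check{\mathfrak{d}}}_{+,\vec r}\bigr)\,\bar P_{\vec r}^{\geq h_0,\leqdot\check{\mathfrak{d}}}=0.\]

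I expect this identity to be the main technical obstacle. The natural approach is a path decomposition parallel to the inclusion-exclusion argument leading to \eqref{eq:Rij3}: at a fixed $(i,j)$ entry, expand the product as a sum over the junction index $k\in(i,j)$ and over the positions of the first $h_0$ touching and the $\check{\mathfrak{d}}$ touchings forced by the right factor; the sign difference between the two kernels in the left factor isolates precisely those trajectories whose $\check{\mathfrak{d}}$ touchings all precede the first $h_0$ touching, and each such configuration should cancel telescopically against the contribution arising from a different junction $k$ placed at the last pre-$h_0$ $\check{\mathfrak{d}}$ touching.
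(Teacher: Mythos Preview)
Your approach for $U_r|_{t=0}$ is exactly the paper's: substitute $Q_r|_{t=0}$ from Prop.~\ref{prop:Qrt0}, and for $Q_{r-1}^{-1}|_{t=0}$ apply the same proposition to $\vec r-\mathbf{1}$ and invoke translation invariance to rewrite $\bar P_{\vec r-\mathbf{1}}^{\geq h_0,\leq\check{\mathfrak{d}}}$ as $\bar P_{\vec r}^{>h_0,\leq\check{\mathfrak{d}}}$. The paper's proof is a single sentence saying precisely this.

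For $V_r|_{t=0}$ you have correctly noticed something the paper's proof does not address. Direct substitution of $-\p_\eta Q_r|_{t=0}$ from \eqref{eq:deltaQrrr} and $Q_r^{-1}|_{t=0}$ from Prop.~\ref{prop:Qrt0} produces the right factor $\big(I+\bar P_{\vec r}^{\geq h_0,\leq\check{\mathfrak{d}}}\big)$, with a non-strict inequality, whereas the displayed statement has $\big(I+\bar P_{\vec r}^{\geq h_0,<\check{\mathfrak{d}}}\big)$. The paper's proof says only that the formula ``follows directly from \eqref{eq:Qrt0} and \eqref{eq:deltaQrrr}'' and offers no further argument for the $V_r$ part; this strongly suggests the $<$ in the statement is a typographical slip for $\leq$, in which case your substitution already finishes the proof and the cancellation identity you propose is unnecessary. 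You are not missing an idea here; you have in fact caught an inconsistency between the statement and its one-line proof.
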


\begin{proof}
This follows directly from \eqref{eq:Qrt0} and \eqref{eq:deltaQrrr}, together with the identity $Q^{-1}_{r-1}\big|_{t=0}=I+\bar P_{\vec r}^{>h_0,\leq\check{\mathfrak{d}}}$, which in turn follows from \eqref{eq:Qrt0} and translation invariance.
\end{proof}

\section{Trace class estimates}\label{sec:tr-cl-estimate}

\subsection{Continuity}

The PNG kernel and transition probabilities are not continuous in the initial data  $h_0$ in $\UC$.
Consider, for simplicity, the one-point case.
The issue arises at the shock set $\mathcal{S}(x)$.
As an example, take $h_\ep(z)= \log(\uno{z\geq t+\ep})$, converging in $\UC$ to $h_0(z)= \log(\uno{z\geq t})$.
But $P^{\hit(h_0)}_{-t,t}=e^{2t\Delta}\bP_0$ while $P^{\hit(h_0^\ep)}_{-t,t}=0$. Writing $K_{h}$ for $K^\uptext{ext}_{h}(0,\cdot;0,\cdot)$ we get 
$K_{h_0}=e^{-2t\nabla+t\Delta}\bP_0e^{2t\nabla-t\Delta}$ and $K_{h_\ep}=0$.
Moreover $\pp_{h_0}(h(t,0)\leq-1)=0$ while $\pp_{h_\ep}(h(t,0)\leq-1)=1$. 

In the above example, $h_\ep\leq h_0$.
We will see that if we restrict instead to approximations of $h_0$ from above, continuity does hold.
To express this more precisely, we say that \emph{$h_n\longrightarrow h$ from above} in $\UC$ if $h_n\longrightarrow h$ in $\UC$ and $h_n(x)\geq h(x)$ for all $x\in\rr$ and all $n\in\nn$.  A function $F\!:\UC\longrightarrow\mathbb{R}$, is \emph{continuous from above} if $F(h_n)\longrightarrow F(h)$ whenever $h_n\longrightarrow h$ from above.

\begin{prop}\label{prop:cont1}
\leavevmode
\mbox{}\vskip-18pt\mbox{}
\begin{enumerate}[label=\uptext{(\roman*)}]
    \item For each $a<b$ in $\rr$ and $u,v$ in $\zz$, $P_{a,b}^{\hit(h)}(u,v)$ is continuous from above as a function of $h\in \UC$.
    \item For each $h\in \UC$, $x\in \rr$, $u,v\in \zz$, $P^{\hit(h)}_{x-t,x+t}(u,v)$ is right continuous in $t\ge 0$.
\end{enumerate}
\end{prop}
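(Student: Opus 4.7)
The plan is to apply dominated convergence to the bridge representation
\[
P^{\hit(h)}_{a,b}(u,v) = \fE_{a,u;b,v}[\Phi(g-h)]\cdot e^{(b-a)\Delta}(u,v)
\]
from \eqref{eq:deltalim2}, where $g$ is the lower semi-continuous random-walk bridge conditioned on $\fN(a)=u$, $\fN(b)=v$ and extended by $+\infty$ outside $[a,b]$. For (i), the monotonicity $h_n\geq h$ gives $\uno{g>h_n}\leq\uno{g>h}$, so $P^{\hit(h_n)}\geq P^{\hit(h)}$, and it suffices to show $\uno{g>h_n}\to\uno{g>h}$ for $\fE_{a,u;b,v}$-a.e.\ bridge $g$. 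On $\{g\not>h\}$ both sides are already zero; on $\{g>h\}$, if the conclusion failed then along a subsequence there would be $x_n\in[a,b]$ with $g(x_n)\leq h_n(x_n)$, and since $g$ has finitely many jumps (a.s.) we may pass to a further subsequence with $g(x_n)=u_\star$ constant and $x_n\to x$. Then $h_n(x_n)\geq u_\star$ forces $h(x)\geq u_\star$ via the $\UC$-convergence criterion $\limsup_n h_n(x_n)\leq h(x)$, while lower semi-continuity of $g$ gives $g(x)\leq\liminf_n g(x_n)=u_\star$; combining with $g(x)>h(x)$ yields $u_\star\geq g(x)>h(x)\geq u_\star$, a contradiction. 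Bounded convergence then gives (i).

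For (ii), I condition on the location of the first hit relative to the fixed endpoints $x-t,x+t$ to decompose
\[
P^{\hit(h)}_{x-s,x+s}=P^{\hit(h)}_{x-s,x-t}\,e^{(s+t)\Delta}+P^{\nohit(h)}_{x-s,x-t}\,P^{\hit(h)}_{x-t,x+t}\,e^{(s-t)\Delta}+P^{\nohit(h)}_{x-s,x-t}\,P^{\nohit(h)}_{x-t,x+t}\,P^{\hit(h)}_{x+t,x+s}.
\]
As $s\searrow t$ the semigroup factors converge kernel-wise. For the shrinking boundary kernels, upper semi-continuity of $h$ gives $\delta>0$ with $h(y)\leq h(x-t)$ on $(x-t-\delta,x-t]$ and $h(y)\leq h(x+t)$ on $[x+t,x+t+\delta)$, and $\fN$ makes no jumps on an interval of length $s-t$ with probability $1-O(s-t)$; combining these,
\[
P^{\hit(h)}_{x-s,x-t}(u,w)\to\uno{u\leq h(x-t)}\delta_{u,w},\qquad P^{\nohit(h)}_{x-s,x-t}(u,w)\to\uno{u>h(x-t)}\delta_{u,w},
\]
with the symmetric statement at $x+t$.

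Dominated convergence in the discrete kernel products (each factor is bounded by a probability kernel summable in the intermediate variable) then gives the pointwise limit
\[
\uno{u\leq h(x-t)}e^{2t\Delta}(u,v)+\uno{u>h(x-t)}\bigl[P^{\hit(h)}_{x-t,x+t}(u,v)+P^{\nohit(h)}_{x-t,x+t}(u,v)\uno{v\leq h(x+t)}\bigr],
\]
which coincides with $P^{\hit(h)}_{x-t,x+t}(u,v)$ in both cases: on $\{u\leq h(x-t)\}$ the walk already hits at $x-t$, so $P^{\hit(h)}_{x-t,x+t}(u,v)=e^{2t\Delta}(u,v)$; on $\{u>h(x-t)\}$ the correction vanishes because $v\leq h(x+t)$ would itself be a hit at $x+t$, forcing $P^{\nohit(h)}_{x-t,x+t}(u,v)=0$. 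The main subtle point is the use of lower semi-continuity of $g$ in (i): this choice of version closes the gap between $h(x)\geq u_\star$ (from $\UC$-convergence) and $g(x)\leq u_\star$ (from lsc), without which one would need a separate countability-based argument to rule out coincidences between jumps of $g$ and left-isolated peaks of $h$.
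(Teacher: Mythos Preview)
Your proof is correct, and both parts take a genuinely different route from the paper.

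For (i), the paper argues that the Lebesgue measure of the discrepancy set $A_n=\{x\in[a,b]:h_n(x)>h(x)\}$ tends to $0$, and then bounds $P^{\hit(h_n)}_{a,b}(u,v)-P^{\hit(h)}_{a,b}(u,v)$ by the probability that $\fN$ jumps somewhere in $A_n$. Your argument instead works pathwise on the bridge: you use the $\UC$-convergence criterion $\limsup_n h_n(x_n)\le h(x)$ together with the lower semi-continuity of $g$ to rule out persistent touching points, and then invoke bounded convergence. Your approach is self-contained (it does not need the measure-of-$A_n$ claim) and, as you note, makes explicit why the lsc convention on $g$ is exactly what closes the argument. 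The paper's route has the advantage of giving a quantitative rate (the difference is at most the probability of a jump in a set of vanishing measure), which your argument does not.

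For (ii), the paper couples the two walks and observes that on the event ``no jump in $[x-s,x-t]\cup[x+t,x+s]$'' (probability $1-O(s-t)$) the hitting events coincide, because upper semi-continuity forces $h(y)\le h(x\pm t)$ on those small intervals; this gives a one-line $O(s-t)$ bound on the difference. Your three-term decomposition according to the location of the first hit is longer but equally valid; the limits you claim for the boundary kernels are correct (including when $h(x\pm t)=-\infty$, since then $\max_{[x-s,x-t]}h\to-\infty$ by upper semi-continuity and the walk cannot reach that level in time $s-t$). The paper's approach is terser and yields the explicit rate; yours is more systematic and would generalize more readily if one needed finer control of each piece.
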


\begin{proof}  (i)  It is not hard to see that if $h_n$ are converging to $h$ from above in $\UC$, then the Lebesgue measure of the subset $A_n$ of $[a,b]$ on which $h$ is strictly less than $h_n$ is going to zero.
Since $P_{a,b}^{\hit(h_n)}(u,v)-P_{a,b}^{\hit(h)}(u,v)\geq0$ and it is bounded above by the probability that the walk $\fN$ has a downward jump in $A_n$, which goes to $0$, we conclude that the desired property holds.  (ii)  Let $t_\ep\searrow t$.  Since $h\in \UC$, for $\ep$ sufficiently small $h(x+t_\ep)\le h(x+t)$ and $h(x-t_\ep)\le h(x-t)$.  So it is not hard to see that the difference between $P^{\hit(h)}_{x-t_\ep,x+t_\ep}(u,v)$ and $P^{\hit(h)}_{x-t,x+t}(u,v)$ is controlled by the probability of a jump in $[x-t_\ep,x-t]\cup[x+t,x+t_\ep]$, which is of order $t_\ep-t$.
\end{proof}

\subsection{The kernel}\label{sec:trcl-ker}

For fixed $x_1<\dotsc<x_n$ we introduce the multiplication operators
\begin{equation}\label{eq:theta-conj}
\vartheta f(x_i,u)=\vartheta_if(x_i,\cdot)(u)\coloneqq\vartheta_i(u)f(x_i,u)\qquad\uptext{with}\qquad \vartheta_i(u)=(1+u^2)^{i}.
\end{equation}

\begin{prop}\label{prop:trcl}
For fixed $t>0$, $x_1<\dotsc<x_n$, $r_1,\dotsc,r_n\in\zz$ and $h_0\in\UC$, the conjugated kernel $\vartheta\P_rK^{\uptext{ext}}_{h_0}\P_r\vartheta^{-1}$ on $\ell^2(\{x_1,\dotsc,x_n\}\times\zz)$ is:
\begin{enumerate}[label=\uptext{(\roman*)}]
\item Trace class, with a bound on the trace norm of the form $C(1+e^{c\max_i(r_0(t,x_i)-r_i)})$, with $r_0$ as in \eqref{eq:r0}, for some $c,C<\infty$ which can be chosen uniformly in bounded sets of $t$.
\item Continuous from above on $\UC$ in trace norm.
\item Right continuous in $t$ in trace norm.
\end{enumerate}
\end{prop}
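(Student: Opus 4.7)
The plan is to prove (i) by writing each matrix entry of $\vartheta\ts\P_rK^{\uptext{ext}}_{h_0}\P_r\vartheta^{-1}$ as a product of two Hilbert--Schmidt operators, and then deduce (ii) and (iii) from (i) combined with Prop.~B.1 via a trace--norm dominated convergence argument.

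For (i), I would split $K^{\uptext{ext}}(x_i,\cdot;x_j,\cdot)$ into the ``free'' piece $-\uno{x_i<x_j}e^{(x_j-x_i)\Delta}$ and the ``scattering'' piece $e^{-2t\nabla-t\Delta}P^{\hit(h_0)}_{x_i-t,x_j+t}e^{2t\nabla-t\Delta}$, and estimate each $(i,j)$ block separately. After $\vartheta$ conjugation the block carries the asymmetric weight $\vartheta_i(u)/\vartheta_j(v)$. For each block I would pick an intermediate point $c_{ij}\in(x_i-t,x_j+t)$ and write the block as $A_{ij}B_{ij}$ by inserting the semigroup of $\fN$ at time $c_{ij}$ (and, for the scattering piece, by using the strong Markov property of $\fN$ at the first hitting time of $\hypo(h_0)$), distributing the polynomial weight evenly across the two factors. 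Using the explicit Bessel representations \eqref{eq:SBessel}, \eqref{eq:QBessel} together with the factorial decay $I_n(2s),J_n(2s)=O(s^n/n!)$ for $n$ large, each of $\|A_{ij}\|_2$ and $\|B_{ij}\|_2$ can be estimated. The free piece is essentially routine. For the scattering piece, the projection $\P_{r_i}$ forces $u>r_i$ but the walk must descend to level at most $r_0(t,x_i)$; when $r_i\geq r_0(t,x_i)$ this descent is free, and when $r_i<r_0(t,x_i)$ the Bessel asymptotics yield exponential growth $e^{c(r_0(t,x_i)-r_i)}$, which is precisely the factor in the claimed bound.

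For (ii), Prop.~B.1 (i) gives pointwise convergence of each entry of the conjugated kernel when $h_n\longrightarrow h$ from above. Moreover, $r_0(t,x_i;h_n)$ is monotone decreasing to $r_0(t,x_i;h)$ as $h_n\searrow h$, so the trace--norm bound from (i) holds uniformly in $n$. Trace--norm convergence of the product $A_{ij}^{(n)}B_{ij}^{(n)}\longrightarrow A_{ij}B_{ij}$ then follows by the standard criterion that pointwise kernel convergence of two factors together with Hilbert--Schmidt convergence of each (which is obtained via dominated convergence in the explicit Bessel integrals) yields trace--norm convergence of the product. Part (iii) is completely analogous: Prop.~B.1 (ii) provides right continuity of $P^{\hit(h_0)}_{x_i-t,x_j+t}(u,v)$ in $t$, $r_0(t,x_i)$ is right continuous in $t$ by upper semi-continuity of $h_0$, and the bound in (i) is uniform in a right--neighborhood of $t$.

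The main obstacle is the Hilbert--Schmidt estimate in the scattering block. One must simultaneously bound both factors by $C(1+e^{c(r_0(t,x_i)-r_i)/2})$ uniformly in $h_0\in\UC$, which requires a careful choice of cut point $c_{ij}$ and a careful distribution of the weights $\vartheta_i$, $\vartheta_j$ across the two factors. Since $P^{\hit(h_0)}_{x_i-t,x_j+t}$ is pointwise bounded by $e^{(x_j-x_i+2t)\Delta}$, a relatively crude Bessel bound on each factor suffices once the geometry is set up correctly; but getting the exponential factor sharp (in particular, depending only on $r_0(t,x_i)$ rather than on an awkward combination of the $r_0(t,x_k)$'s across all indices) requires locating the cut at the actual hitting time of $\hypo(h_0)$ and carefully summing over the possible hit locations, rather than at a fixed deterministic time.
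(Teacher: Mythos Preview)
Your overall plan---block-by-block, free piece via a Hilbert--Schmidt product, scattering piece via a hitting-time decomposition, and (ii)/(iii) by dominated convergence on top of Prop.~B.1---is indeed what the paper does. Two differences are worth noting. First, the paper replaces the Bessel asymptotics by the single contour bound $|e^{2t\nabla+x\Delta}(u,v)|\le C e^{c(|t|+|x|)}\lambda^{|u-v|}$ for any fixed $\lambda\in(0,1)$, which does all the work the factorial decay would do and keeps the estimates short. Second, for the scattering block the paper does not produce one $A_{ij}B_{ij}$ factorization; instead it writes the block as an \emph{average of rank-one kernels} indexed by hitting times and levels, and bounds the trace norm by the average of the products of the two $\ell^2$-norms.

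There is, however, a real gap in the step you flag as the main obstacle. Cutting at the first hitting time of $\hypo(h_0)$ on the full interval $[x_i-t,x_j+t]$ gives you a hit level bounded only by $\sup_{z\in[x_i-t,x_j+t]}h_0(z)$. When the $x_k$'s are separated by more than $2t$, this supremum need not be controlled by any $r_0(t,x_k)$, so your bound would not have the claimed form $C(1+e^{c\max_k(r_0(t,x_k)-r_k)})$. The paper resolves this by first applying inclusion--exclusion to split $P^{\hit(h_0)}_{x_i-t,x_j+t}$ at the deterministic point $x_0=\argmax_{z\in[x_i-t,\min\{x_i+t,x_j+t\}]}h_0(z)$, so that one of the resulting factors, $P^{\hit(h_0)}_{x_i-t,x_0}$, involves only hits inside $[x_i-t,x_i+t]$ with level at most $m_0=r_0(t,x_i)$. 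This is precisely the factor sitting against the \emph{growing} weight $\vartheta_i$, and confining its hit level to $\leq m_0$ is what produces $e^{c(r_0(t,x_i)-r_i)}$. The remaining factor $P^{\hit(h_0)}_{x_0,x_j+t}$ sits against the \emph{decaying} weight $\vartheta_j^{-1}$, so its hit level can be summed out for free (the paper just uses $\lambda^{(r_j-\eta_2)\vee0}\le 1$ and sums the hitting probability to $\le 1$). Only after this inclusion--exclusion split does the paper perform the hitting-time (rank-one) decomposition on each piece. Without this preliminary localization, your proposed cut at the hitting time does not yield the stated dependence on $r_0(t,x_i)$.
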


It is important in (i) to get a bound on the trace norm depending on $h_0$ only through the $r_0(t,x_i)$'s.
The reason is that in our proof of the backward equation in Sec. \ref{sec:backw} we need a bound on $\FFF(t,h_0)=\det(I-K_r)_{\oplus_n\ell^2(\zz_{>0})}\uno{r_i\geq r_0(t,x_i),\ts i=1,\dotsc,n}$ in terms of the supremum of $h_0$ on the region of dependence. The  abstract bound available for the absolute value of the Fredholm determinant is  $e^{\|K_r\|_1}$.
So thanks to the truncation coming from the indicator function in the definition of $\FFF$, the bound on the trace norm provided by the proposition in fact yields a \emph{uniform} bound on $|\FFF|$.

\begin{proof}
(i) It is enough to show that each entry $\P_{r_i}\vartheta_iK^{\uptext{ext}}(x_i,\cdot;x_j,\cdot)\vartheta^{-1}_j\P_{r_j}$ of the kernel is trace class on $\ell^2(\zz)$.  The main estimate we will use is the following very simple bound, valid for any $t,x\in\rr$:
\begin{equation}\label{eq:Spng-est}
\big|e^{2t\nabla+x\Delta}(u,v)\big|\leq C\tts e^{c(|t|+|x|)}\lambda^{|u-v|}
\end{equation}
for any fixed $\lambda\in(0,1)$ and constants $c,C<\infty$ which depend only on  $\lambda$.
This estimate follows directly from \eqref{eq:Scontour} after choosing $\gamma_0$ as a circle of radius $\lambda$ when $u>v$ and $1/\lambda$ otherwise.

Consider the first term on the right hand side of \eqref{eq:PNG-kernel-two-sided-hit}, which (after conjugating and projecting) equals
\begin{equation}
\P_{r_i}\vartheta_i e^{(x_j-x_i)\Delta}\vartheta^{-1}_j\P_{r_j}=\big(\P_{r_i}\vartheta_i e^{\frac12(x_j-x_i)\Delta}\vartheta^{-1/2}_{i+j}\big)\big(\vartheta_{i+j}^{1/2}e^{\frac12(x_j-x_i)\Delta}\vartheta^{-1}_j\P_{r_j}\big)\label{eq:fQdec}
\end{equation}
for fixed $i<j$, where the operators $\vartheta^{\pm1/2}_k$ correspond to multiplication by $\vartheta_k(u)^{\pm1/2}$.
Using the bound $\|AB\|_1\le \|A\|_2\|B\|_2$ is enough to show that each of the two factors is Hilbert-Schmidt.
By \eqref{eq:Spng-est} we have  
\[\textstyle\|\P_{r_i}\vartheta_i e^{\frac12(x_j-x_i)\Delta}\vartheta^{-1/2}_{i+j}\|_2^2\leq C^2\tts e^{c(x_j-x_i)}\sum_{u>r_i}(1+u^2)^{2i}\sum_{v\in\zz}\lambda^{2|u-v|}(1+v^2)^{-i-j}.\]
The sum in $v$ restricted to $|v|\leq|u|/2$ is bounded by $2\kappa|u|\lambda^{|u|}$ while the sum over $|v|>|u|/2$ is bounded by $C\tts 2^{2(i+j)}(1+u^2)^{-i-j}$ for some $C>0$ (depending on $\lambda)$, so
\[\textstyle\|\P_{r_i}\vartheta_i e^{\frac12(x_j-x_i)\Delta}\vartheta^{-1/2}_{i+j}\|_2^2\leq C\tts e^{c(x_j-x_i)}\sum_{u>r_i}((1+u^2)^{2i}|u|\lambda^{2(1-\kappa)|u|}+(1+u^2)^{-(j-i)})<\infty,\]
since $j-i\geq1$.
The second factor on the right hand side of \eqref{eq:fQdec} can be estimated in the same way.  

Now we turn to the second term in \eqref{eq:PNG-kernel-two-sided-hit}.
Fix $i,j$ and write $\ux=x_i-t$, $\ox=x_j+t$.
The kernel we are interested in equals $\P_{r_i}\vartheta_i e^{-2t\nabla-t\Delta}P^{\hit(h_0)}_{\ux,\ox}e^{2t\nabla-t\Delta}\vartheta^{-1}_j\P_{r_j}$.
If $\ux>\ox$ then $P^{\hit(h_0)}_{\ux,\ox}$ vanishes and there is nothing to prove, so assume that $\ux\leq\ox$.
$h_0$ is upper semi-continuous so we may define
\[\textstyle m_0=\max_{z\in[\ux,\min\{x_i+t,\ox\}]}h_0(z),\qquad x_0=\argmax_{z\in[\ux,\min\{x_i+t,\ox\}]}h_0(z),\]
where in the argmax we take, say, the rightmost point in the interval where the maximum is attained.
Write $e^{-2t\nabla-t\Delta}P^{\hit(h_0)}_{\ux,\ox}e^{2t\nabla-t\Delta}$, using the inclusion-exclusion principle, as
\begin{equation}\label{eq:bmSMdecomp}
 e^{-2t\nabla-t\Delta}\big(P^{\hit(h_0)}_{\ux,x_0}e^{(\ox-x_0)\Delta}+e^{(x_0-\ux)\Delta}P^{\hit(h_0)}_{x_0,\ox}-P^{\hit(h_0)}_{\ux,x_0}P^{\hit(h_0)}_{x_0,\ox}\big) e^{2t\nabla-t\Delta}.
\end{equation}
The reason we choose this decomposition, splitting the hit probabilities at $x_0$, is that it will allow us to get a bound on the trace norm that only depends on $h_0$ through $r_0(t,x_i)$, its maximum on the interval $[x_i-t,x_i+t]$ (which is contained in $[\ux,x_0]$).
This dependence will come from estimates on the factors involving $P^{\hit(h_0)}_{\ux,x_0}$ in this decomposition.
As we will see, the factors $P^{\hit(h_0)}_{x_0,\ox}$ are a bit simpler to handle, basically because in the conjugation they end up next to decaying factor $\vartheta_j^{-1}$ (as opposed to the factor $\vartheta_i$ on the left, which is blowing up).
In fact, we will only estimate the trace norm of the third term coming from the above expression after conjugating and projecting; the bound for the other two
is slightly simpler, and can be adapted easily from the one we present here.

Note that if $m_0=-\infty$ (as it may be) then $P^{\hit(h_0)}_{\ux,x_0}=0$ and there is nothing to prove.
So we assume $m_0>-\infty$.
Our goal is to estimate the trace norm of $\P_{r_i}\vartheta_ie^{-2t\nabla-t\Delta}P^{\hit(h_0)}_{\ux,x_0}P^{\hit(h_0)}_{x_0,\ox}e^{2t\nabla-t\Delta}\vartheta_j^{-1}\P_{r_j}$.
We can write the kernel of this operator as an average of rank one kernels $A_{s_1}(u,\eta_1)B_{s_2}(\eta_2,v)$, 
\begin{equation}\label{eq:bmSMhitdecomp}
 \textstyle \int_{\ux}^{x_0}\int_{x_0}^{\ox}\sum_{\eta_1,\eta_2\in\zz} \alpha(\eta_1,\d s_1,\eta_2,\d s_2) A_{s_1}(u,\eta_1)B_{s_2}(\eta_2,v)
 \end{equation}
 where $A_{s}=\P_{r_i}\vartheta_i e^{-2t\nabla+(x_0-s-t)\Delta} $, $B_s= e^{2t\nabla+(\ox-s_2-t)\Delta}\vartheta_j^{-1}\P_{r_j}$ 
 and $\alpha(\eta_1,\d s_1,\eta_2,\d s_2)$ is given by $\sum_{\xi\in\zz}\hat p_{\ux,\xi}(\eta_1,\d s_1) p_{x_0,\xi}(\eta_2,\d s_2)$, with $p_{x,\xi}(\eta,\d s)=\pp_{\fN(x)=\xi}(\tau\in\d s,\,\fN(\tau)=\eta)$ and $\hat p_{x,\xi}(\eta,\d s)=\pp_{\fN(x)=\xi}(\hat\tau\in\d s,\,\fN(\hat\tau)=\eta)$. Here $\tau$ and $\hat\tau$ are the hitting times by $\fN$ (restricted to $[x_0,\ox]$ and to $[\ux,x_0]$) of $\hypo(h_0)$ and $\hypo(\hat h_0)$, with $\hat h_0$ the reversed barrier given by $h_0(s)=h_0(\ux+x_0-s)$; the first one is a straightforward decomposition using the hitting time $\tau$ while the second one is similar after using the reversibility of the walk $\fN$.
We note that in the case $x_0=\ox$, the above is interpreted as forcing $\tau=x_0$ and $\eta=y$ provided $y\leq h_0(x_0)$, and $0$ otherwise, and similarly in the case $x_0=\ux$.

The trace norm of the average can be bounded by the average of the trace norms of the rank one operators.
But the square of the trace norm of a rank one operator is just the product of $\ell^2$ norms.
Each $\ell^2$ norm can be estimated by \eqref{eq:Spng-est}. Thus the trace norm of $\big\|\P_{r_i}\vartheta_ie^{-2t\nabla-t\Delta}P^{\hit(h_0)}_{\ux,x_0}P^{\hit(h_0)}_{x_0,\ox} e^{2t\nabla-t\Delta}\vartheta_j^{-1}\P_{r_j}\big\|_1$ is bounded by a constant depending on $\lambda$, $t$, $\ux$ and $\ox$ times
 \begin{multline}
\textstyle\sum_{\eta_1,\eta_2,\xi}\pp_{\fN(\ux)=\xi}(\hat \tau\in[\ux,x_0],\fN(\hat\tau)=\eta_1)\pp_{\fN(x_0)=\xi}(\tau\in[x_0,\ox],\fN(\tau)=\eta_2)\label{eq:cbu78}\\
\times(\vartheta_i(\eta_1)^2+\vartheta_i(r_i)^2)^{1/2}\lambda^{(r_i-\eta_1)\vee0}\lambda^{(r_j-\eta_2)\vee0}.
\end{multline}
If $\xi\leq m_0$,
then $\hat\tau=\ux$ and $\fN(\tau)=\xi$, which forces $\eta_1=\xi$.
Thus the part of the sum over $\xi\leq m_0$ is bounded by
\begin{equation}
\textstyle\sum_{\xi\leq m_0,\eta_2\in\zz}\pp_{\fN(x_0)=\xi}(\tau\in[x_0,\ox],\fN(\tau)=\eta_2)(\vartheta_i(\xi)^2+\vartheta_i(r_i)^2)^{1/2}\lambda^{(r_i-\xi)\vee0}.
\end{equation}
We can sum over $\eta_2$ in the probabilities, then drop the resulting $\pp_{\fN(x_0)=\xi}(\tau\in[x_0,\ox])$.
The resulting expression is summable, with a result which can be bounded by $C(1+ e^{c(m_0-r_i)})$.
On the other hand, if  $\xi>m_0$, we use the fact that $\fN(\hat\tau)\leq m_0$ if $\hat\tau\in[\ux,x_0]$, so we may restrict the $\eta_1$ sum to $\eta_1\leq m_0$. So this part of the sum is less than or equal to
\begin{equation}
    \textstyle\sum_{\eta_1\leq m_0,\xi>m_0}\pp_{\fN(\ux)=\xi}(\hat \tau\in[\ux,x_0],\fN(\hat\tau)=\eta_1)(\vartheta_i(\eta_1)^2+\vartheta_i(r_i)^2)^{1/2}\lambda^{(r_i-\eta_1)\vee0}.
\end{equation}
Summing in $\eta_1$ gives an expression which is bounded by $C(1+ e^{c(m_0-r_i)})$ again, and it only remains to estimate 
$\sum_{\xi>m_0}\pp_{\fN(\ux)=\xi}(\hat \tau\in[\ux,x_0])$.
Now $\pp_{\fN(\ux)=\xi}(\tau\in[x_0,\ox])\leq\pp_{\fN(x_0)=\xi}\!\left(\inf_{y\in[\ux,x_0]}\fN(x)\leq m_0\right)$ since $m_0$ maximizes $\hat h_0$ inside $[\ux,x_0]$.
We can compute the latter by the reflection principle, and bound it by $C\tts\lambda^{\xi-m_0}$, and summing over $\xi>m_0$ yields a new constant.
The conclusion is that \eqref{eq:cbu78} is bounded by $C(1+ e^{c(m_0-r_i)})$ for some $c,C<\infty$, as desired.  This proves (i). 

To prove (ii) and (iii), first of all if $h_\ep\to h$ in $\UC$ then there is a global constant $\bar h<\infty$ such that $h_\ep(x)\le \bar h$ for $x$ in the finite region of support $[x_1-t,x_n+t]$ up to time $t$, for  small $\ep>0$. 
The kernel $\vartheta\P_rK^{\uptext{ext}}_{h_0}\P_r\vartheta^{-1}$ is a composition of kernels each of which is continuous from above in $\UC$ or right continuous in $t$, pointwise, by Prop. \ref{prop:cont1} in the case of the hit kernel, or from \eqref{eq:Scontour} in the case of the conjugating operators $e^{\pm 2t\nabla\pm x\Delta}$.
If we look at one of the differences for which we have to control the trace norm, say $\P_r\vartheta K^{\uptext{ext}}_{h_\ep}\P_r\vartheta^{-1}-\vartheta\P_rK^{\uptext{ext}}_{h_0}\vartheta^{-1}\P_r$ (the other one is similar), and consider its $(i,j)$ entry, we may factor it as $\P_{r_i}\vartheta_ie^{-2t\nabla-t\Delta}(P^{\hit(h_\ep)}_{x_i-t,x_j+t}-P^{\hit(h)}_{x_i-t,x_j+t})e^{2t\nabla-t\Delta}\vartheta^{-1}_j\P_{r_j}$
We follow the estimation procedure from the proof of (i), keeping the absolute value of this difference.  
Replacing the absolute value of the difference by the sum, and the $h_\ep$ by the constant function $\bar h$ (which can only increase the hit probability), we obtain a term that by the estimation procedure of (i) is bounded, and serves as a dominating function for the difference.
Hence we can use the dominated convergence theorem to conclude the trace norm convergence from the pointwise convergence of the individual kernels.
\end{proof}

\subsection{Differentiability}\label{sec:trcl-diff}

Now we turn to checking that the (conjugated and projected) PNG kernel satisfies (2) and (3) from Thm. \ref{thm:nonAbelianToda-general} with the derivatives holding in trace class.
The arguments are very similar to the ones in Sec. \ref{sec:trcl-ker}, and we only write them in the case of the $\eta$ derivative, the $\zeta$ derivative being completely analogous.
As above, it is enough to check this separately for each entry $K^{\ext}(x_i,\cdot;x_j,\cdot)$, after conjugating and adding the projections.
We fix $i,j$ and let $K_{\eta,\zeta}$ denote this entry, where we have made the dependence on $\eta$ and $\zeta$ explicit.
Then we need to prove that
\[\big\|\P_{r_i}\vartheta_i\Big(\delta^{-1}\big(K_{\eta+\delta,\zeta}-K_{\eta,\zeta}\big)-\p_{\eta}K_{\eta,\zeta}\Big)\vartheta_j^{-1}P_{r_j}\big\|_1\xrightarrow[\delta\to0]{}0\]
with $\p_{\eta}K_{\eta,\zeta}(u,v)=K_{\eta,\zeta}(u,v-1)-K_{\eta,\zeta}(u+1,v)$ under the assumption $r_i>r_0(t,x_i)$, $r_j>r_0(t,x_j)$.
Using \eqref{eq:PNG-kernel-two-sided-hit} we can write $K_{\eta,\zeta}=-\uno{x_i<x_j}e^{(x_j-x_i)\Delta}+e^{-2(\eta_i-\zeta_i)\nabla-(\eta_i+\zeta_i)\Delta}\J^{h_0}_{2\eta_i,2\zeta_j}e^{2(\eta_j-\zeta_j)\nabla+(\eta_j+\zeta_j)\Delta}$,
with $\eta_k=\frac12(x_k+t)$, $\zeta_k=\frac12(x_k-t)$.
Then
\begin{multline}\label{eq:Kdelta}
K_{\eta+\delta,\zeta}-K_{\eta,\zeta}=(e^{-2\delta\nabla-\delta\Delta}-I)K_{\eta,\zeta}+e^{-2\delta\nabla-\delta\Delta}K_{\eta,\zeta}(e^{2\delta\nabla+\delta\Delta}-I)\\
+e^{-2(\eta_i+\delta-\zeta_i)\nabla-(\eta_i+\delta+\zeta_i)\Delta}\big(\J^{h_0}_{2\eta_i+2\delta,2\zeta_j}-\J^{h_0}_{2\eta_i,2\zeta_j}\big)e^{2(\eta_j+\delta-\zeta_j)\nabla+(\eta_j+\delta+\zeta_j)\Delta}.
\end{multline}
Focus on the first term on the right hand side.
After dividing by $2\delta$, the factor $e^{-2\delta\nabla-\delta\Delta}-I$ converges to $-(\nabla+\frac12\Delta)$, so we expect that the whole kernel, divided by $\delta$ and evaluated at $(u,v)$, should converge to $K_{\eta,\zeta}(u,v)-K_{\eta,\zeta}(u+1,v)$.
This convergence has to be proved in trace norm, after properly conjugating and adding the projections.
Following the arguments of Sec. \ref{sec:trcl-ker} we see that this essentially depends on estimating the $\ell^2$ norm of
\[\P_{r_i}\vartheta_i(u)\big((2\delta)^{-1}\big(e^{-2(t+\delta)\nabla-(x_i+\delta)\Delta}(u,\eta)-e^{2t\nabla+x_i\Delta}(u,\eta)\big)
-e^{2t\nabla+x_i\Delta}(u,\eta)+e^{2t\nabla+x_i\Delta}(u+1,\eta)\big)\]
for fixed $\eta\in\zz$, as a function of $u$.
Now \eqref{eq:Scontour} shows that
$\tfrac12\big(e^{-2(t+\delta)\nabla-(x+\delta)\Delta}(u,\eta)-e^{2t\nabla+x\Delta}(u,\eta)\big)=\frac1{2\pi\I}\oint_{\gamma}\d z\,\frac{e^{t(z^{-1}-z)-x(z+z^{-1})+2x}}{z^{\eta-u+1}}\frac{e^{-2\delta z+2\delta}-1}2=\frac{\delta}{2\pi\I}\oint_{\gamma}\d z\,\frac{e^{t(z^{-1}-z)+x(z+z^{-1})+2x}}{z^{\eta-u+1}}(1-z+\mathcal{O}(\delta))$,
where the $\mathcal{O}(\delta)$ term can be bounded uniformly in $z$ and $\eta$.
Using this in the above expression we see that the desired $\ell^2$ norm is bounded by $C\tts\delta\|\P_{r_i}\vartheta_ie^{-2t\nabla-t\Delta}(\cdot,\eta)\|_2$, and using this in the estimates of Sec. \ref{sec:trcl-ker} (see e.g. \eqref{eq:bmSMhitdecomp} and \eqref{eq:cbu78}) shows the desired convergence of $(2\delta^{-1})(e^{-2\delta\nabla-\delta\Delta}-I)K^{\eta,\zeta}$.

Going back to \eqref{eq:Kdelta}, the second term converges in essentially the same way as above, after dividing by $2\delta$, to $K_{\eta,\zeta}(u,v-1)-K_{\eta,\zeta}(u,v)$.
The last term in \eqref{eq:Kdelta} involves what in Sec. \ref{sec:pfMain} was defined as $\W^{(\eta)}_{t,x_i,x_j}$ (here there is an extra conjugation by $e^{-2t\nabla-t\Delta}$, which is of no consequence).
In Lem. \ref{lem:petazeta4} we showed that $\W^{(\eta)}_{t,x_i,x_j}(u,v)$ vanishes for $u>r_0(t,x_i)+1$, $v>r_0(t,x_j)+1$.
This is precisely our setting, because we are surrounding by projections $\P_{r_i}$ and $\P_{r_j}$ and assuming $r_i>r_0(t,x_i)$, $r_j>r_0(t,x_j)$.
What is left is to upgrade this pointwise computation to a convergence in trace norm, but this can be done without too much difficulty by using the arguments of this section in the proof of Lem. \ref{lem:petazeta4}; we leave the details to the reader.
This finishes the proof of the desired trace norm convergence of the $\zeta$ derivative.

\subsection{Proof of (\ref{eq:detKth-bd})}\label{sec:pf-detKth-bd}
Recall the setting of Case 3 in the proof of Thm. \ref{thm:Kolmogorov}.
We are given a $\UC$ function $h$ with locally finite $\pm1$ jumps and parameters $t$, $x_1,\dotsc,x_n$ and $r_1,\dotsc,r_n$ such that $h$ has an up jump of size one at $x_k+t$, no jumps at $x_k-t$ nor at $x_j\pm t$ for each $j\neq k$, and $r_j\geq r_0(t,x_j)$ for each $j\neq k$ while $r_0(t^-,x_k)=r_k=r_0(t,x_k)-1$.
$(\vec y,\vec\sigma)$ is the configuration of jump locations and signs associated to $h$, and as a convention we assume that the jump at $x_k+t$ has label $0$, i.e. $y_0=x_k+t$ (and $\sigma_0=1$).
Letting $K(t,h)$ denote the PNG extended kernel with these choices, which we will abbreviate as $K_t^0$, and $K(t,h)|_{\y_0=x_k+t+\ep}$ denote the same kernel with the jump at $y_0=x_k+t$ moved to $x_k+t+\ep$, which we will abbreviate as $K_t^\ep$, we want to show that
\begin{equation}
\big|\tsm\det(I-\P_rK_t^\ep\P_r)-\det(I-\P_rK_{t-\ep}^0\P_r)\big|\leq C\label{eq:cu22}
\end{equation}
for some $C>$ which does not depend on $\ep$ and $h$ and which is bounded uniformly in choices of $t$ on a compact set.
We will put tildes on top of kernels when they are conjugated by $\vartheta$ and multiplied by projections $\P_r$ on both sides as in the previous subsections.
Prop. \ref{prop:trcl} shows that both $\tilde K_{t-\ep}^0$ and $\tilde K_t^{\ep}$ are trace class, with trace norms which are bounded uniformly in $t$ in a compact set and $h\in\UC$, and in fact from the argument it is not hard to see that these are also bounded uniformly in $\ep\in(0,1)$.
Then, using the bound $|\ttsm\det(I-A)-\det(I-B)|\leq\|A-B\|_1e^{1+\|A\|_1+\|B\|_1}$ for trace class operators $A$ and $B$ we deduce that, for some suitably uniformly chosen constant $C$,
\begin{equation}\label{eq:cu23}
\big|\tsm\det(I-\tilde K_t^{\ep})-\det(I-\tilde K_{t-\ep}^0)\big|\leq C\big\|\tilde K_{t-\ep}^0-\tilde K_t^{\ep}\big\|_1.
\end{equation}

In order to estimate the norm on the right hand side of \eqref{eq:cu23} it is enough to do the same for each of the $i,j$ entries $\Delta_{ij}^{\ep}\coloneqq K_{t-\ep}^0(x_i,\cdot;x_j,\cdot)-K_t^{\ep}(x_i,\cdot;x_j,\cdot)$, which we decompose as
\begin{multline}\label{eq:cu65}
\Delta_{ij}^\ep=e^{-2(t-\ep)\nabla-(t-\ep)\Delta}P^{\hit(h)}_{x_i-t,x_j+t}e^{2(t-\ep)\nabla-(t-\ep)\Delta}-e^{-2t\nabla-t\Delta}P^{\hit(h)}_{x_i-t,x_j+t}e^{2t\nabla-t\Delta}\\
+e^{-2t\nabla-t\Delta}\big(P^{\hit(h)}_{x_i-t,x_j+t}-P^{\hit(h^{\ep})}_{x_i-t,x_j+t}\big)e^{2t\nabla-t\Delta},
\end{multline}
where $h^{\ep}$ denotes $h$ with the jump at $y_0=x_k+t$ moved to $x_k+t+\ep$.
The difference on the right hand side of the first line can be estimated in trace norm, after conjugating and projecting, using the arguments of Sec. \ref{sec:trcl-ker}, and it is not hard to obtain a bound of the form $C\tts\ep$; we omit the details.
Hence, in order to finish the proof we need to show that the same holds for the second line of \eqref{eq:cu65}, i.e. that
\begin{equation}\label{eq:cu167}
\big\|\vartheta_i\P_{r_i}e^{-2t\nabla-t\Delta}\big(P^{\hit(h)}_{x_i-t,x_j+t}-P^{\hit(h^{\ep})}_{x_i-t,x_j+t}\big)e^{2t\nabla-t\Delta}\vartheta_j^{-1}\P_{r_j}\big\|_1\leq C\tts\ep.
\end{equation}

To prove \eqref{eq:cu167} we need to separate cases according to where $x_k+t$ lies with respect to $x_i-t$ and $x_j+t$.
The simplest situation is $x_k+t\notin[x_i-t,x_j+t]$; in this case $h=h^\ep$ inside $[x_i-t,x_j+t]$ (for small enough $\ep$) so the left hand side of \eqref{eq:cu167} vanishes.
If $k=j$ then $\big(P^{\hit(h)}_{x_i-t,x_j+t}-P^{\hit(h^{\ep})}_{x_i-t,x_j+t}\big)(u,\xi)=P^{\hit(h)}_{x_i-t,x_j+t}(u,h(x_k+t)-1)\uno{\xi=h(x_k+t)-1}$, which gives $\big(P^{\hit(h)}_{x_i-t,x_j+t}-P^{\hit(h^{\ep})}_{x_i-t,x_j+t}\big)\vartheta_j^{-1}\P_{r_j}(u,v)=P^{\hit(h)}_{x_i-t,x_j+t}(u,h(x_k+t)-1)e^{2t\nabla-t\Delta}(h(x_k+t)-1,v)\vartheta_j(v)\uno{v>r_j}$, but in this case $r_j=r_k=r_0(t,x_k)-1\geq h(x_k+t)-1$, so by Lem. \ref{lem:ett} the kernel vanishes again.

Finally we consider \eqref{eq:cu167} when $x_k+t\in[x_i-t,x_j+t)$.
In particular, this implies that the interval is non-empty, and we may assume that it also includes $x_k+t+\ep$.
Note that the event inside the probabilities $P^{\hit(h)}_{x_i-t,x_j+t}$ and $P^{\hit(h^{\ep})}_{x_i-t,x_j+t}$ is the same if we restrict to paths where the random walk $\fN$ stays put between time $x_k+t$ and $x_k+t+\ep$, and event whose complement has probability $1-e^{-2\ep}\leq2\ep$.
The same argument as in the proof of Prop. \ref{prop:trcl} can now be employed to turn this into the desired bound \eqref{eq:cu167} (note that, $h$ and $h^\ep$ are, in fact, at Hausdorff distance $\ep$).

\vs

\noindent{\bf Acknowledgements.}
The authors thank Jinho Baik for comments on a draft of this article.
KM was partially supported by NSF grant DMS-1953859.
JQ was supported by the Natural Sciences and Engineering Research Council of Canada.
DR was supported by Centro de Modelamiento Matemático Basal Funds FB210005 from ANID-Chile and by Fondecyt Grants 1201914 and 1241974.
  
\printbibliography[heading=apa]

\end{document}